
\documentclass[numbers=enddot,12pt,final,onecolumn,notitlepage]{scrartcl}%
\usepackage[headsepline,footsepline,manualmark]{scrlayer-scrpage}
\usepackage[all,cmtip]{xy}
\usepackage{amsfonts}
\usepackage{amssymb}
\usepackage{amsmath}
\usepackage{amsthm}
\usepackage{framed}
\usepackage{comment}
\usepackage{color}
\usepackage[breaklinks=true]{hyperref}
\usepackage[sc]{mathpazo}
\usepackage[T1]{fontenc}
\usepackage{needspace}
\usepackage{tabls}
%TCIDATA{OutputFilter=latex2.dll}
%TCIDATA{Version=5.50.0.2960}
%TCIDATA{LastRevised=Friday, July 24, 2026 22:32:58}
%TCIDATA{SuppressPackageManagement}
%TCIDATA{<META NAME="GraphicsSave" CONTENT="32">}
%TCIDATA{<META NAME="SaveForMode" CONTENT="1">}
%TCIDATA{BibliographyScheme=Manual}
%TCIDATA{Language=American English}
%BeginMSIPreambleData
\providecommand{\U}[1]{\protect\rule{.1in}{.1in}}
%EndMSIPreambleData
\newcounter{exer}

\theoremstyle{definition}
\newtheorem{theo}{Theorem}[section]
\newenvironment{theorem}[1][]
{\begin{theo}[#1]\begin{leftbar}}
{\end{leftbar}\end{theo}}
\newtheorem{lem}[theo]{Lemma}
\newenvironment{lemma}[1][]
{\begin{lem}[#1]\begin{leftbar}}
{\end{leftbar}\end{lem}}
\newtheorem{prop}[theo]{Proposition}
\newenvironment{proposition}[1][]
{\begin{prop}[#1]\begin{leftbar}}
{\end{leftbar}\end{prop}}
\newtheorem{defi}[theo]{Definition}
\newenvironment{definition}[1][]
{\begin{defi}[#1]\begin{leftbar}}
{\end{leftbar}\end{defi}}
\newtheorem{remk}[theo]{Remark}
\newenvironment{remark}[1][]
{\begin{remk}[#1]\begin{leftbar}}
{\end{leftbar}\end{remk}}
\newtheorem{coro}[theo]{Corollary}
\newenvironment{corollary}[1][]
{\begin{coro}[#1]\begin{leftbar}}
{\end{leftbar}\end{coro}}
\newtheorem{conv}[theo]{Convention}
\newenvironment{convention}[1][]
{\begin{conv}[#1]\begin{leftbar}}
{\end{leftbar}\end{conv}}
\newtheorem{quest}[theo]{Question}
\newenvironment{question}[1][]
{\begin{quest}[#1]\begin{leftbar}}
{\end{leftbar}\end{quest}}
\newtheorem{warn}[theo]{Warning}

\newtheorem{conj}[theo]{Conjecture}

\newtheorem{exam}[theo]{Example}
\newenvironment{example}[1][]
{\begin{exam}[#1]\begin{leftbar}}
{\end{leftbar}\end{exam}}
\newtheorem{exmp}[exer]{Exercise}

\newenvironment{statement}{\begin{quote}}{\end{quote}}
\newenvironment{fineprint}{\begin{small}}{\end{small}}
\iffalse
\newenvironment{proof}[1][Proof]{\noindent\textbf{#1.} }{\ \rule{0.5em}{0.5em}}
\newenvironment{question}[1][Question]{\noindent\textbf{#1.} }{\ \rule{0.5em}{0.5em}}
\fi
\newcommand{\arinj}{\ar@{_{(}->}}
\newcommand{\arinjrev}{\ar@{^{(}->}}
\newcommand{\arsurj}{\ar@{->>}}
\newcommand{\arelem}{\ar@{|->}}
\newcommand{\arback}{\ar@{<-}}

\newcommand{\kk}{\mathbf{k}}

\let\sumnonlimits\sum
\let\prodnonlimits\prod
\let\cupnonlimits\bigcup
\let\capnonlimits\bigcap
\let\oplusnonlimits\bigoplus
\let\otimesnonlimits\bigotimes
\let\sqcupnonlimits\bigsqcup
\renewcommand{\sum}{\sumnonlimits\limits}
\renewcommand{\prod}{\prodnonlimits\limits}
\renewcommand{\bigcup}{\cupnonlimits\limits}
\renewcommand{\bigcap}{\capnonlimits\limits}
\renewcommand{\bigoplus}{\oplusnonlimits\limits}
\renewcommand{\bigotimes}{\otimesnonlimits\limits}
\renewcommand{\bigsqcup}{\sqcupnonlimits\limits}
\setlength\tablinesep{3pt}
\setlength\arraylinesep{3pt}
\setlength\extrarulesep{3pt}
\voffset=0cm
\hoffset=-0.7cm
\setlength\textheight{22.5cm}
\setlength\textwidth{15.5cm}

\newenvironment{noncompile}{}{}
\excludecomment{verlong}
\includecomment{vershort}
\excludecomment{noncompile}

\ihead{A Solomon Mackey formula for graded bialgebras, version 24 July 2026}
\ohead{page \thepage}
\cfoot{}
\begin{document}

\title{A Solomon Mackey formula for graded bialgebras}
\author{Darij Grinberg}
\date{corrected version,
24 July 2026}
\maketitle

\begin{abstract}
\textbf{Abstract.} Given a graded bialgebra $H$, we let $\Delta^{\left[
k\right]  }:H\rightarrow H^{\otimes k}$ and $m^{\left[  k\right]  }:H^{\otimes
k}\rightarrow H$ be its iterated (co)multiplications for all $k\in\mathbb{N}$.
For any $k$-tuple $\alpha=\left(  \alpha_{1},\alpha_{2},\ldots,\alpha
_{k}\right)  \in\mathbb{N}^{k}$ of nonnegative integers, and any permutation
$\sigma$ of $\left\{  1,2,\ldots,k\right\}  $, we consider the map
$p_{\alpha,\sigma}:=m^{\left[  k\right]  }\circ P_{\alpha}\circ\sigma
^{-1}\circ\Delta^{\left[  k\right]  }:H\rightarrow H$, where $P_{\alpha}$
denotes the projection of $H^{\otimes k}$ onto its multigraded component
$H_{\alpha_{1}}\otimes H_{\alpha_{2}}\otimes\cdots\otimes H_{\alpha_{k}}$, and
where $\sigma^{-1}:H^{\otimes k}\rightarrow H^{\otimes k}$ permutes the tensor factors.

We prove formulas for the composition $p_{\alpha,\sigma}\circ p_{\beta,\tau}$
and the convolution $p_{\alpha,\sigma}\star p_{\beta,\tau}$ of two such maps.
When $H$ is cocommutative, these generalize Patras's 1994 results (which, in
turn, generalize Solomon's Mackey formula).

We also construct a combinatorial Hopf algebra $\operatorname*{PNSym}$
(\textquotedblleft permuted noncommutative symmetric
functions\textquotedblright) that governs the maps $p_{\alpha,\sigma}$ for
arbitrary connected graded bialgebras $H$ in the same way as the well-known
$\operatorname*{NSym}$ governs them in the cocommutative case. We end by
outlining an application to checking identities for connected graded Hopf
algebras. \medskip

\textbf{Mathematics Subject Classifications:} 16T30, 05E05. \medskip

\textbf{Keywords:} Hopf algebra, bialgebra, Adams operations, graded connected
Hopf algebra, combinatorial Hopf algebra, noncommutative symmetric functions,
stalactic equivalence.

\end{abstract}

This is a preliminary report on a project. Its goal is to classify the
identities that hold between the natural ($\mathbf{k}$-linear) operations on
the category of graded $\mathbf{k}$-bialgebras. The following approach is
likely improvable (being entirely focused on unary operations $H\rightarrow
H$, despite the multi-input-multi-output operations $H^{\otimes k}\rightarrow
H^{\otimes\ell}$ being probably a more natural object of study) and rather
inchoate (the proofs lacking in elegance and readability), but the main result
(Theorem \ref{thm.sol-mac-1}) appears worthy of dissemination and -- to my
great surprise -- new. Even more surprisingly, a combinatorial Hopf algebra
(named $\operatorname*{PNSym}$ for \textquotedblleft permuted noncommutative
symmetric functions\textquotedblright) emerges from this study which, too,
seems to have hitherto escaped the eyes of the algebraic combinatorics
community. Thus I hope that this report will be of some use in the time before
the right proofs are found and written up (hopefully without requiring a
revision of the respective statements).

I would like to thank the organizers of the CATMI 2023 workshop in Bergen
(Gunnar Fl\o ystad, H\aa kon Gylterud, Hans Zanna Munthe-Kaas, Uwe Wolter) for
the occasion to present these ideas, and Marcelo Aguiar, Amy Pang and Sarah
Brauner for stimulating conversations. Two referees have furthermore given
useful and constructive feedback that has improved the paper. In July 2026,
GPT-5.6 Sol was used to correct some mistakes (probably too late for the
proceedings volume), in particular weakening Theorem \ref{thm.pas-lin-ind2}
and replacing a wrong Theorem 2.10 by what is now Remark \ref{rmk.splitting}.

\subsection{Introduction}

On any bialgebra $H$, we can define the \textit{Adams operations} (also known
as \textit{characteristic operations} or \textit{dilations})
$\operatorname*{id}\nolimits^{\star k}=m^{\left[  k\right]  }\circ
\Delta^{\left[  k\right]  }$ (where $\Delta^{\left[  k\right]  }:H\rightarrow
H^{\otimes k}$ is an iterated comultiplication, and $m^{\left[  k\right]
}:H^{\otimes k}\rightarrow H$ is an iterated multiplication\footnote{These are
commonly known as $\Delta^{\left(  k-1\right)  }$ and $m^{\left(  k-1\right)
}$, but we prefer the superscripts to match the number of tensorands.}). These
operations are natural in $H$ (that is, equivariant with respect to bialgebra
morphisms) and have been studied for years (see, e.g., \cite{Kashin19} and
\cite{AguLau14} for some recent work), as have been several similar operators
(e.g., \cite{Patras94}, \cite[\S 4.5]{Loday98}, \cite{PatReu98}) on Hopf
algebras and graded bialgebras. More generally, for any bialgebra $H$, we can
define the \textquotedblleft twisted Adams operations\textquotedblright%
\ $m^{\left[  k\right]  }\circ\sigma^{-1}\circ\Delta^{\left[  k\right]  }$ for
any permutation $\sigma\in\mathfrak{S}_{k}$ (where $\sigma^{-1}$ acts on
$H^{\otimes k}$ by permuting the tensorands)\footnote{The notation
$\mathfrak{S}_{k}$ means the $k$-th symmetric group. Our use of $\sigma^{-1}$
instead of $\sigma$ is meant to simplify the formula for the action.}. When
$H$ is a graded bialgebra, one can further refine these operations by
injecting a projection map $P_{\alpha}$ between the $m^{\left[  k\right]  }$
and the $\sigma$. To be specific: For any $k$-tuple $\alpha=\left(  \alpha
_{1},\alpha_{2},\ldots,\alpha_{k}\right)  \in\mathbb{N}^{k}$, we let
$P_{\alpha}:H^{\otimes k}\rightarrow H^{\otimes k}$ be the projection onto the
$\alpha$-th multigraded component (i.e., the tensor product of the projections
$H\rightarrow H_{\alpha_{1}}$, $H\rightarrow H_{\alpha_{2}}$, $\ldots$,
$H\rightarrow H_{\alpha_{k}}$).

The resulting \textquotedblleft twisted projecting Adams
operations\textquotedblright\ $p_{\alpha,\sigma}:=m^{\left[  k\right]  }\circ
P_{\alpha}\circ\sigma^{-1}\circ\Delta^{\left[  k\right]  }$ (for
$k\in\mathbb{N}$ and $\sigma\in\mathfrak{S}_{k}$ and $\alpha\in\mathbb{N}^{k}%
$) have an interesting history. When $H$ is cocommutative, they have been
studied under the name of \textquotedblleft op\'{e}rateurs de
descente\textquotedblright\ by Patras in \cite{Patras94} (see \cite[\S 5.1]%
{CarPat21} for a more recent exposition) and used by Reutenauer \cite[\S 9.1]%
{Reuten93} to prove Solomon's Mackey formula for the symmetric group. The dual
case -- when $H$ is commutative -- is essentially equivalent. In both of these
cases, the permutation $\sigma$ is immaterial, since it can be swallowed
either by the $\Delta^{\left[  k\right]  }$ (when $H$ is cocommutative) or by
the $m^{\left[  k\right]  }$ (when $H$ is commutative). Thus, in these cases,
the operators depend only on the $k$-tuple $\alpha$. Moreover, when the graded
bialgebra $H$ is connected, the $k$-tuple $\alpha$ can be compressed by
removing all $0$'s from it, thus becoming a composition (a tuple of positive
integers). One of Patras's key results (\cite[Th\'{e}or\`{e}me II,7]%
{Patras94}, \cite[Theorem 9.2]{Reuten93}, \cite[Theorem 5.1.1]{CarPat21}) is a
formula for the composition $p_{\alpha,\operatorname*{id}}\circ p_{\beta
,\operatorname*{id}}$ of two such operators: When $H$ is cocommutative and
$\alpha=\left(  \alpha_{1},\alpha_{2},\ldots,\alpha_{k}\right)  \in
\mathbb{N}^{k}$ and $\beta=\left(  \beta_{1},\beta_{2},\ldots,\beta_{\ell
}\right)  \in\mathbb{N}^{\ell}$, it claims that%
\begin{equation}
p_{\alpha,\operatorname*{id}}\circ p_{\beta,\operatorname*{id}}=\sum
_{\substack{\gamma_{i,j}\in\mathbb{N}\text{ for all }i\in\left[  k\right]
\text{ and }j\in\left[  \ell\right]  ;\\\gamma_{i,1}+\gamma_{i,2}%
+\cdots+\gamma_{i,\ell}=\alpha_{i}\text{ for all }i\in\left[  k\right]
;\\\gamma_{1,j}+\gamma_{2,j}+\cdots+\gamma_{k,j}=\beta_{j}\text{ for all }%
j\in\left[  \ell\right]  }}p_{\left(  \gamma_{1,1},\gamma_{1,2},\ldots
,\gamma_{k,\ell}\right)  ,\operatorname*{id}} \label{eq.intro.pp=sum.cocomm}%
\end{equation}
(where $\left[  n\right]  :=\left\{  1,2,\ldots,n\right\}  $ for each
$n\in\mathbb{N}$, and where $\left(  \gamma_{1,1},\gamma_{1,2},\ldots
,\gamma_{k,\ell}\right)  $ denotes the list of all $k\ell$ numbers
$\gamma_{i,j}$ listed in lexicographic order of their subscripts)\footnote{To
be precise, all the above-cited sources (\cite[Th\'{e}or\`{e}me II,7]%
{Patras94}, \cite[Theorem 9.2]{Reuten93}, \cite[Theorem 5.1.1]{CarPat21}) are
restricting themselves to the case when $H$ is connected.}. When $H$ is
furthermore connected (i.e., when $H_{0}\cong\mathbf{k}$), we can restrict
ourselves to compositions by removing all $0$'s from our tuples. In that
situation, the formula (\ref{eq.intro.pp=sum.cocomm}) is structurally
identical to the expansion of the internal product of two complete homogeneous
noncommutative symmetric functions in $\operatorname*{NSym}$ (see
\cite[Proposition 5.1]{ncsf1}), which reveals that the operators
$p_{\alpha,\operatorname*{id}}$ are images of the latter functions under an
algebra morphism from $\operatorname*{NSym}$ (under the internal product) to
$\operatorname*{End}H$. Thus, noncommutative symmetric functions govern many
natural operations for cocommutative bialgebras. A dual theory exists for
commutative $H$.

To my knowledge, no analogue of the formula (\ref{eq.intro.pp=sum.cocomm}) has
been proposed for general $H$. This general case is somewhat complicated by
the fact that the operators of the form $p_{\alpha,\operatorname*{id}}$ are no
longer closed under composition; but the ones of the more general form
$p_{\alpha,\sigma}$ still are. The main result of this paper (Theorem
\ref{thm.sol-mac-1}) is a formula that generalizes
(\ref{eq.intro.pp=sum.cocomm}) to this case. The rest of this paper is devoted
to studying the operators $p_{\alpha,\sigma}$ further, particularly under
connectedness assumptions, and to analyzing the formal structure behind their
composition and convolution rules. This structure is encoded in a new
combinatorial Hopf algebra I call $\operatorname*{PNSym}$ (\textquotedblleft
permuted noncommutative symmetric functions\textquotedblright), whose standard
basis is indexed by what I have christened \textquotedblleft
mopiscotions\textquotedblright\ (permuted compositions). Some properties of
this Hopf algebra and its internal multiplication are outlined, and several
questions are left open for further research. A connection to the work of
Aguiar and Mahajan -- in particular, their Janus monoid -- is discussed.
Finally, the results are applied to obtain an algorithm for verifying
functorial identities for connected graded bialgebras.

\subsection{Plan of this paper}

In Section \ref{sec.pas}, I introduce the operators $p_{\alpha,\sigma}$ for an
arbitrary graded $\mathbf{k}$-bialgebra $H$, and prove the main result
(Theorem \ref{thm.sol-mac-1}), which is a formula that expands a composition
$p_{\alpha,\sigma}\circ p_{\beta,\tau}$ of two such operators as a linear
combination of other such operators. This formula generalizes
(\ref{eq.intro.pp=sum.cocomm}). In this section, I also discuss how the
operators $p_{\alpha,\sigma}$ behave under convolution (Proposition
\ref{prop.sol-mac-0}), tensoring (Proposition \ref{prop.pas-on-tensors}) and
bialgebra duality (Proposition \ref{prop.pas-on-dual}), and show that they can
-- for an appropriate choice of $H$ -- be linearly independent (Theorems
\ref{thm.pas-lin-ind} and \ref{thm.pas-lin-ind2}). I suspect that the
operators $p_{\alpha,\sigma}$ and their infinite $\mathbf{k}$-linear
combinations span all possible natural $\mathbf{k}$-linear endomorphisms on
the category of connected graded bialgebras (i.e., all $\mathbf{k}$-linear
maps $H\rightarrow H$ that are defined for every connected graded bialgebra
$H$ and are functorial with respect to graded bialgebra morphisms); while I
have been unable to prove this, this conjecture is easily verified for all
known natural endomorphisms I have found in the literature.

\begin{noncompile}
and sketch how the classical Solomon's Mackey formula can be derived from ours
(or from (\ref{eq.intro.pp=sum.cocomm})).
\end{noncompile}

The remaining two sections are much terser and should be viewed as a
preliminary report on a field in flux. Only the slightest indications of
proofs are given, and several open questions are posed.

In Section \ref{sec.PNSym}, I define a combinatorial Hopf algebra
$\operatorname*{PNSym}$ (the \textquotedblleft permuted noncommutative
symmetric functions\textquotedblright) that governs the \textquotedblleft
twisted projecting descent operations\textquotedblright\ $p_{\alpha,\sigma}$
on an arbitrary connected graded bialgebra $H$ just like $\operatorname*{NSym}%
$ does for the cocommutative case. This Hopf algebra appears to be new, and I
attempt to uncover some of its structure. In particular, its internal
multiplication has much in common with that of $\operatorname*{NSym}$ (which
it lifts: there is a projection $\mathfrak{p}:\operatorname*{PNSym}%
\rightarrow\operatorname*{NSym}$ that respects all structures). Several
questions are left unanswered here.

In the final Section \ref{sec.idcheck}, I briefly discuss an application of
the above results: Namely, they can be used to mechanically verify any
identity between operators of the form $p_{\alpha,\sigma}$ (and their sums,
convolutions and compositions) that hold for arbitrary (connected) graded bialgebras.

\subsection{Notations}

Let $\mathbb{N}=\left\{  0,1,2,\ldots\right\}  $.

We fix a commutative ring $\mathbf{k}$, which shall serve as our base ring
throughout this paper. In particular, all modules, algebras, coalgebras,
bialgebras and Hopf algebras will be over $\mathbf{k}$. Tensor products and
hom spaces are defined over $\mathbf{k}$ as well. Algebras are understood to
be unital and associative unless said otherwise.

We will use standard concepts -- such as iterated (co)multiplications, tensor
products, (co)commutativity, etc. -- related to bialgebras (and occasionally
Hopf algebras). The reader can find them explained, e.g., in \cite[Chapter
1]{GriRei}.

If $H$ is any bialgebra, then the multiplication, the unit, the
comultiplication, and the counit of $H$ (regarded as linear maps) will be
denoted by
\begin{align*}
m_{H}  &  :H\otimes H\rightarrow H,\ \ \ \ \ \ \ \ \ \ u_{H}:\mathbf{k}%
\rightarrow H,\\
\Delta_{H}  &  :H\rightarrow H\otimes H,\ \ \ \ \ \ \ \ \ \ \epsilon
_{H}:H\rightarrow\mathbf{k},
\end{align*}
respectively. If no ambiguity is to be feared, then we will abbreviate them as
$m$, $u$, $\Delta$ and $\epsilon$. We furthermore denote the unity of a ring
$R$ (viewed as an element of $R$) by $1_{R}$.

Graded $\mathbf{k}$-modules are always understood to be $\mathbb{N}$-graded,
i.e., to have direct sum decompositions $V=\bigoplus_{n\in\mathbb{N}}V_{n}$.
The $n$-th graded component of a graded $\mathbf{k}$-module $V$ will be called
$V_{n}$. If $n<0$, then this is the zero submodule $0$. Tensor products of
graded $\mathbf{k}$-modules are equipped with the usual grading:
\[
\left(  A_{\left(  1\right)  }\otimes A_{\left(  2\right)  }\otimes
\cdots\otimes A_{\left(  k\right)  }\right)  _{n}=\sum_{\substack{\left(
i_{1},i_{2},\ldots,i_{k}\right)  \in\mathbb{N}^{k};\\i_{1}+i_{2}+\cdots
+i_{k}=n}}\left(  A_{\left(  1\right)  }\right)  _{i_{1}}\otimes\left(
A_{\left(  2\right)  }\right)  _{i_{2}}\otimes\cdots\otimes\left(  A_{\left(
k\right)  }\right)  _{i_{k}}.
\]

A $\mathbf{k}$-linear map $f:U\rightarrow V$ between two graded $\mathbf{k}%
$-modules $U$ and $V$ is said to be \emph{graded} if every $n\in\mathbb{N}$
satisfies $f\left(  U_{n}\right)  \subseteq V_{n}$.

Graded bialgebras are bialgebras whose operations ($m$, $u$, $\Delta$ and
$\epsilon$) are graded $\mathbf{k}$-linear maps. We do \textbf{not} twist our
tensor products by the grading (i.e., we do \textbf{not} follow the
topologists' sign conventions); in particular, the tensor product $A\otimes B$
of two algebras has its multiplication defined by $\left(  a\otimes b\right)
\left(  a^{\prime}\otimes b^{\prime}\right)  =aa^{\prime}\otimes bb^{\prime}$,
regardless of any possible gradings on $A$ and $B$.

We recall that a non-graded bialgebra can be viewed as a graded bialgebra
concentrated in degree $0$ (that is, a graded bialgebra $H$ with $H=H_{0}$).
Thus, all claims about graded bialgebras that we make below can be specialized
to non-graded bialgebras.

A graded $\mathbf{k}$-bialgebra $H$ is said to be \emph{connected} if
$H_{0}=\mathbf{k}\cdot1_{H}$. A known fact (e.g., \cite[Proposition
1.4.16]{GriRei}) says that any connected graded bialgebra is automatically a
Hopf algebra, i.e., has an antipode. Hopf algebras are not at the center of
our present work, but will occasionally appear in applications.

If $H$ is any $\mathbf{k}$-module, then $\operatorname*{End}H$ shall denote
the $\mathbf{k}$-module of $\mathbf{k}$-linear maps from $H$ to $H$. This
$\mathbf{k}$-module $\operatorname*{End}H$ becomes an algebra under
composition of maps. As usual, we denote this composition operation by $\circ$
(so that $f\circ g$ means the composition of two maps $f$ and $g$, sending
each $x\in H$ to $f\left(  g\left(  x\right)  \right)  $). The neutral element
of this composition is the identity map $\operatorname*{id}\nolimits_{H}%
\in\operatorname*{End}H$.

If $H$ is a bialgebra, then the $\mathbf{k}$-module $\operatorname*{End}H$ has
yet another canonical multiplication, known as \emph{convolution} and denoted
by $\star$; it is defined by\footnote{Actually, convolution is defined in the
same way for $\mathbf{k}$-linear maps from any given coalgebra to any given
algebra.}%
\[
f\star g:=m_{H}\circ\left(  f\otimes g\right)  \circ\Delta_{H}%
\ \ \ \ \ \ \ \ \ \ \text{for all }f,g\in\operatorname*{End}H.
\]
This operation $\star$ is $\mathbf{k}$-bilinear and associative and has the
neutral element $u_{H}\circ\epsilon_{H}$; thus, $\operatorname*{End}H$ becomes
an algebra under this operation. See \cite[Definition 1.4.1]{GriRei} for more
about convolution.

Thus, when $H$ is a bialgebra, $\operatorname*{End}H$ becomes a $\mathbf{k}%
$-algebra in two natural ways: once using the composition $\circ$, and once
using the convolution $\star$.

\section{\label{sec.pas}The maps $p_{\alpha,\sigma}$ for a graded bialgebra
$H$}

\subsection{Definitions}

Let $H$ be a graded $\mathbf{k}$-bialgebra. We fix it for the rest of Section
\ref{sec.pas}.

The set $\operatorname*{End}\nolimits_{\operatorname*{gr}}H$ of all graded
$\mathbf{k}$-module endomorphisms of $H$ is a $\mathbf{k}$-submodule of
$\operatorname*{End}H$, and is preserved under both composition and
convolution (and thus is a $\mathbf{k}$-subalgebra of both the composition
algebra $\operatorname*{End}H$ and the convolution algebra
$\operatorname*{End}H$). Furthermore, we can consider the $\mathbf{k}%
$-submodule $\mathbf{E}\left(  H\right)  $ of $\operatorname*{End}%
\nolimits_{\operatorname*{gr}}H$ that consists only of those $f\in
\operatorname*{End}\nolimits_{\operatorname*{gr}}H$ that annihilate all but
finitely many graded components of $H$ (that is, that satisfy $f\left(
H_{n}\right)  =0$ for all sufficiently high $n$). This submodule
$\mathbf{E}\left(  H\right)  $ is itself graded, with the $n$-th graded
component being canonically isomorphic to $\operatorname*{End}\left(
H_{n}\right)  $. This submodule $\mathbf{E}\left(  H\right)  $, too, is
preserved under both composition and convolution (but usually does not contain
$\operatorname*{id}\nolimits_{H}$, whence it is not a subalgebra of the
composition algebra $\operatorname*{End}H$).

This module $\mathbf{E}\left(  H\right)  $ has been studied, e.g., by
Hazewinkel \cite{Hazewi04}. Unlike him, we shall not consider $\mathbf{E}%
\left(  H\right)  $ for any specific $H$, but we shall instead focus on the
\textquotedblleft generic\textquotedblright\ $\mathbf{E}\left(  H\right)  $.
In other words, we will consider the functorial endomorphisms of $H$ that are
defined for all graded bialgebras $H$ and always belong to $\mathbf{E}\left(
H\right)  $. Such endomorphisms include

\begin{itemize}
\item the projections $p_{0},p_{1},p_{2},\ldots$ from $H$ onto the graded
components $H_{0},H_{1},H_{2},\ldots$ (regarded as endomorphisms of $H$);

\item their convolutions\footnote{As usual, the empty product $p_{i_{1}}\star
p_{i_{2}}\star\cdots\star p_{i_{0}}$ in $\mathbf{E}\left(  H\right)  $ is
understood to be the unity of $\mathbf{E}\left(  H\right)  $, that is, the map
$u\circ\epsilon:H\rightarrow H$.} $p_{\left(  i_{1},i_{2},\ldots,i_{k}\right)
}:=p_{i_{1}}\star p_{i_{2}}\star\cdots\star p_{i_{k}}$ with $i_{1}%
,i_{2},\ldots,i_{k}\in\mathbb{N}$;

\item the (nonempty) compositions $p_{\alpha}\circ p_{\beta}\circ\cdots\circ
p_{\kappa}$ of such convolutions.
\end{itemize}

However, we can define a broader class of such endomorphisms. To do so, we
need some notations:

\begin{definition}
For each $n\in\mathbb{N}$, we let $p_{n}:H\rightarrow H$ denote the canonical
projection of the graded module $H$ onto its $n$-th graded component $H_{n}$.
\end{definition}

\begin{definition}
For each $k\in\mathbb{N}$, we set $\left[  k\right]  :=\left\{  1,2,\ldots
,k\right\}  $.
\end{definition}

\begin{definition}
For each $k\in\mathbb{N}$, we let $\mathfrak{S}_{k}$ denote the $k$-th
symmetric group, i.e., the group of all permutations of $\left[  k\right]  $.
\end{definition}

\begin{definition}
\label{def.Sk-action}For each $k\in\mathbb{N}$, we let the group
$\mathfrak{S}_{k}$ act on $H^{\otimes k}$ from the left by permuting
tensorands, according to the rule%
\begin{align*}
\sigma\cdot\left(  h_{1}\otimes h_{2}\otimes\cdots\otimes h_{k}\right)   &
=h_{\sigma^{-1}\left(  1\right)  }\otimes h_{\sigma^{-1}\left(  2\right)
}\otimes\cdots\otimes h_{\sigma^{-1}\left(  k\right)  }\\
&  \ \ \ \ \ \ \ \ \ \ \text{for all }\sigma\in\mathfrak{S}_{k}\text{ and
}h_{1},h_{2},\ldots,h_{k}\in H.
\end{align*}
This is an action by bialgebra endomorphisms.
\end{definition}

\begin{definition}
\label{def.iterated}For any $k\in\mathbb{N}$, we let $m^{\left[  k\right]
}:H^{\otimes k}\rightarrow H$ and $\Delta^{\left[  k\right]  }:H\rightarrow
H^{\otimes k}$ be the iterated multiplication and iterated comultiplication
maps of the bialgebra $H$ (denoted $m^{\left(  k-1\right)  }$ and
$\Delta^{\left(  k-1\right)  }$ in \cite[Exercises 1.4.19 and 1.4.20]{GriRei},
and denoted $\Pi^{\left[  k\right]  }$ and $\Delta^{\left[  k\right]  }$ in
\cite{Patras94}). Note that $m^{\left[  k\right]  }$ sends each pure tensor
$a_{1}\otimes a_{2}\otimes\cdots\otimes a_{k}\in H^{\otimes k}$ to the product
$a_{1}a_{2}\cdots a_{k}$, whereas $\Delta^{\left[  k\right]  }$ sends each
element $x\in H$ to $\sum_{\left(  x\right)  }x_{\left(  1\right)  }\otimes
x_{\left(  2\right)  }\otimes\cdots\otimes x_{\left(  k\right)  }$ (using
Sweedler notation).
\end{definition}

We recall that these iterated multiplications and comultiplications satisfy
the rules\footnote{Note that we identify $H\otimes\mathbf{k}$ and
$\mathbf{k}\otimes H$ with $H$.}%
\begin{equation}
m^{\left[  u+v\right]  }=m\circ\left(  m^{\left[  u\right]  }\otimes
m^{\left[  v\right]  }\right)  \label{eq.mu+v}%
\end{equation}
and%
\begin{equation}
\Delta^{\left[  u+v\right]  }=\left(  \Delta^{\left[  u\right]  }\otimes
\Delta^{\left[  v\right]  }\right)  \circ\Delta\label{eq.Du+v}%
\end{equation}
for all $u,v\in\mathbb{N}$. (Indeed, these are the claims of \cite[Exercise
1.4.19(a)]{GriRei} and \cite[Exercise 1.4.20(a)]{GriRei}, respectively,
applied to $i=u-1$ and $k=u+v-1$, at least when $u,v\geq1$. The $u=0$ and
$v=0$ cases are easy.) Also, $m^{\left[  1\right]  }=\Delta^{\left[  1\right]
}=\operatorname*{id}\nolimits_{H}$ and $m^{\left[  0\right]  }=u:\mathbf{k}%
\rightarrow H$ and $\Delta^{\left[  0\right]  }=\epsilon:H\rightarrow
\mathbf{k}$.

\begin{definition}
\label{def.wc}\textbf{(a)} A \emph{weak composition} means a finite tuple of
nonnegative integers. \medskip

\textbf{(b)} If $\alpha=\left(  \alpha_{1},\alpha_{2},\ldots,\alpha
_{k}\right)  $ is a weak composition, then its \emph{size} $\left\vert
\alpha\right\vert $ is defined to be the number $\alpha_{1}+\alpha_{2}%
+\cdots+\alpha_{k}\in\mathbb{N}$. \medskip

\textbf{(c)} A \emph{composition} means a finite tuple of positive integers.
\end{definition}

\begin{definition}
\label{def.Palpha}For any weak composition $\alpha=\left(  \alpha_{1}%
,\alpha_{2},\ldots,\alpha_{k}\right)  $, we define the projection map
$P_{\alpha}:H^{\otimes k}\rightarrow H^{\otimes k}$ to be the tensor product
$p_{\alpha_{1}}\otimes p_{\alpha_{2}}\otimes\cdots\otimes p_{\alpha_{k}}$ of
the $\mathbf{k}$-linear maps $p_{\alpha_{1}},p_{\alpha_{2}},\ldots
,p_{\alpha_{k}}$. (Thus, if we regard $H^{\otimes k}$ as an $\mathbb{N}^{k}%
$-graded $\mathbf{k}$-module, then $P_{\alpha}$ is its projection onto its
degree-$\left(  \alpha_{1},\alpha_{2},\ldots,\alpha_{k}\right)  $ component.)
\end{definition}

\begin{definition}
\label{def.pas}For any weak composition $\alpha\in\mathbb{N}^{k}$ and any
permutation $\sigma\in\mathfrak{S}_{k}$, we define a map $p_{\alpha,\sigma
}:H\rightarrow H$ by the formula%
\begin{equation}
p_{\alpha,\sigma}:=m^{\left[  k\right]  }\circ P_{\alpha}\circ\sigma^{-1}%
\circ\Delta^{\left[  k\right]  }, \label{eq.pas.formal-def}%
\end{equation}
where the \textquotedblleft$\sigma^{-1}$\textquotedblright\ really means the
action of $\sigma^{-1}\in\mathfrak{S}_{k}$ on $H^{\otimes k}$ (as in
Definition \ref{def.Sk-action}).
\end{definition}

We can rewrite the definition of $p_{\alpha,\sigma}$ in more concrete terms
using the Sweedler notation:

\begin{remark}
\label{rmk.pas.informal-def}For any weak composition $\alpha=\left(
\alpha_{1},\alpha_{2},\ldots,\alpha_{k}\right)  \in\mathbb{N}^{k}$ and any
permutation $\sigma\in\mathfrak{S}_{k}$, the map $p_{\alpha,\sigma
}:H\rightarrow H$ is given by%
\[
p_{\alpha,\sigma}\left(  x\right)  =\sum_{\left(  x\right)  }p_{\alpha_{1}%
}\left(  x_{\left(  \sigma\left(  1\right)  \right)  }\right)  p_{\alpha_{2}%
}\left(  x_{\left(  \sigma\left(  2\right)  \right)  }\right)  \cdots
p_{\alpha_{k}}\left(  x_{\left(  \sigma\left(  k\right)  \right)  }\right)
\]
for every $x\in H$, where we are using the Sweedler notation $\sum_{\left(
x\right)  }x_{\left(  1\right)  }\otimes x_{\left(  2\right)  }\otimes
\cdots\otimes x_{\left(  k\right)  }$ for the iterated coproduct
$\Delta^{\left[  k\right]  }\left(  x\right)  \in H^{\otimes k}$.
\end{remark}

The following is near-obvious:

\begin{proposition}
\label{prop.pas.graded}Let $\alpha\in\mathbb{N}^{k}$ be a weak composition,
and $\sigma\in\mathfrak{S}_{k}$ a permutation. Then, the map $p_{\alpha
,\sigma}$ is a graded $\mathbf{k}$-module endomorphism of $H$ that sends
$H_{\left\vert \alpha\right\vert }$ to $H_{\left\vert \alpha\right\vert }$ and
sends all other graded components $H_{n}$ to $0$. Thus, $p_{\alpha,\sigma}$
lies in the $\left\vert \alpha\right\vert $-th graded component of
$\mathbf{E}\left(  H\right)  $.
\end{proposition}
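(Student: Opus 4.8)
The plan is to verify the grading property by tracking how each of the four composed maps in the definition (\ref{eq.pas.formal-def}) of $p_{\alpha,\sigma}$ acts on graded components. Recall that $H$ is a graded bialgebra, so $m$, $u$, $\Delta$, $\epsilon$ are all graded; by the rules (\ref{eq.mu+v}) and (\ref{eq.Du+v}) and an easy induction, the iterated maps $m^{\left[ k\right] }:H^{\otimes k}\rightarrow H$ and $\Delta^{\left[ k\right] }:H\rightarrow H^{\otimes k}$ are graded as well (with the usual grading on tensor powers). The permutation action of $\sigma^{-1}\in\mathfrak{S}_{k}$ on $H^{\otimes k}$ is clearly graded, since permuting tensorands of degrees $i_{1},\ldots,i_{k}$ produces tensorands whose degrees are a rearrangement of the same multiset, hence still summing to $i_{1}+\cdots+i_{k}$. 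Finally, $P_{\alpha}=p_{\alpha_{1}}\otimes\cdots\otimes p_{\alpha_{k}}$ is graded, being a tensor product of the graded projections $p_{\alpha_{i}}$. Composing graded maps yields a graded map, so $p_{\alpha,\sigma}$ is graded.

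For the more precise claim, first I would note that $P_{\alpha}$ has image contained in $H_{\alpha_{1}}\otimes H_{\alpha_{2}}\otimes\cdots\otimes H_{\alpha_{k}}$, which lies in the $\left\vert \alpha\right\vert $-th graded component of $H^{\otimes k}$ (where $\left\vert \alpha\right\vert =\alpha_{1}+\alpha_{2}+\cdots+\alpha_{k}$). Hence the image of $P_{\alpha}\circ\sigma^{-1}\circ\Delta^{\left[ k\right] }$ lies in $\left( H^{\otimes k}\right) _{\left\vert \alpha\right\vert }$, and applying the graded map $m^{\left[ k\right] }$ sends it into $H_{\left\vert \alpha\right\vert }$. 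This shows $p_{\alpha,\sigma}\left( H\right) \subseteq H_{\left\vert \alpha\right\vert }$. Combining this with the already-established gradedness of $p_{\alpha,\sigma}$, we get $p_{\alpha,\sigma}\left( H_{n}\right) \subseteq H_{\left\vert \alpha\right\vert }\cap H_{n}$ for each $n\in\mathbb{N}$; this intersection is $H_{\left\vert \alpha\right\vert }$ when $n=\left\vert \alpha\right\vert $ and is $0$ otherwise, which is exactly the assertion. The final sentence of the proposition then follows from the description of the $n$-th graded component of $\mathbf{E}\left( H\right) $ given in the text (the component being canonically $\operatorname*{End}\left( H_{n}\right) $), together with the observation that $p_{\alpha,\sigma}$ annihilates all but finitely many graded components of $H$, so indeed $p_{\alpha,\sigma}\in\mathbf{E}\left( H\right) $.

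I do not expect any serious obstacle here; as the text says, the proposition is "near-obvious." The only point requiring the briefest care is the inductive verification that $m^{\left[ k\right] }$ and $\Delta^{\left[ k\right] }$ are graded — but this is immediate from (\ref{eq.mu+v}), (\ref{eq.Du+v}), the base cases $m^{\left[ 1\right] }=\Delta^{\left[ 1\right] }=\operatorname*{id}\nolimits_{H}$, $m^{\left[ 0\right] }=u$, $\Delta^{\left[ 0\right] }=\epsilon$, and the fact that tensor products and composites of graded maps are graded. One could even bypass the induction and argue directly: $m^{\left[ k\right] }$ sends $H_{i_{1}}\otimes\cdots\otimes H_{i_{k}}$ into $H_{i_{1}}\cdots H_{i_{k}}\subseteq H_{i_{1}+\cdots+i_{k}}$ because $m$ is graded, and dually for $\Delta^{\left[ k\right] }$. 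Either way the argument is short and elementary.
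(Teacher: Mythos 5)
Your proof is correct and takes essentially the same approach as the paper: verify that all four constituent maps in (\ref{eq.pas.formal-def}) are graded, then use the fact that $P_{\alpha}$ restricts the degree to $\left\vert\alpha\right\vert$. The only (immaterial) difference is organizational: the paper fixes $n\neq\left\vert\alpha\right\vert$ and observes that $P_{\alpha}$ annihilates $\left(H^{\otimes k}\right)_{n}$, whereas you first note that $\operatorname{im}P_{\alpha}\subseteq\left(H^{\otimes k}\right)_{\left\vert\alpha\right\vert}$, so $p_{\alpha,\sigma}\left(H\right)\subseteq H_{\left\vert\alpha\right\vert}$, and then intersect with $H_{n}$ using gradedness — the same argument in different order.
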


\begin{proof}
The gradedness of $p_{\alpha,\sigma}$ follows from the fact that all four maps
$m^{\left[  k\right]  }$, $P_{\alpha}$, $\sigma^{-1}$ and $\Delta^{\left[
k\right]  }$ in (\ref{eq.pas.formal-def}) are graded. Thus, the map
$p_{\alpha,\sigma}$ sends $H_{\left\vert \alpha\right\vert }$ to
$H_{\left\vert \alpha\right\vert }$. To see that it sends all other $H_{n}$ to
$0$, we only need to observe that for every $n\neq\left\vert \alpha\right\vert
$, we have $\left(  \sigma^{-1}\circ\Delta^{\left[  k\right]  }\right)
\left(  H_{n}\right)  \subseteq\left(  H^{\otimes k}\right)  _{n}$ (since
$\sigma^{-1}$ and $\Delta^{\left[  k\right]  }$ are graded), and that
$P_{\alpha}$ annihilates $\left(  H^{\otimes k}\right)  _{n}$ (since
$n\neq\left\vert \alpha\right\vert $).
\end{proof}

\subsection{Particular cases}

Let us contrast our maps $p_{\alpha,\sigma}$ to Patras's descent operators
$p_{\alpha}$ (denoted $B_{\alpha}$ in \cite[\S II]{Patras94}). We recall how
the latter are defined (generalizing slightly from compositions to weak compositions):

\begin{definition}
\label{def.pa}For any weak composition $\alpha=\left(  \alpha_{1},\alpha
_{2},\ldots,\alpha_{k}\right)  \in\mathbb{N}^{k}$, we define a map $p_{\alpha
}:H\rightarrow H$ by the formula%
\[
p_{\alpha}:=p_{\alpha_{1}}\star p_{\alpha_{2}}\star\cdots\star p_{\alpha_{k}%
}.
\]
(Recall that $\star$ denotes convolution.)
\end{definition}

Our $p_{\alpha,\sigma}$ recover these for $\sigma=\operatorname*{id}$:

\begin{proposition}
\label{prop.paid}Let $\alpha\in\mathbb{N}^{k}$ be a weak composition. Then,
$p_{\alpha,\operatorname*{id}}=p_{\alpha}$ (where $\operatorname*{id}$ is the
identity permutation in $\mathfrak{S}_{k}$).
\end{proposition}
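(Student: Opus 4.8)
The plan is to unwind both sides of the asserted equality $p_{\alpha,\operatorname*{id}}=p_{\alpha}$ down to a common expression built from iterated (co)multiplications. The right-hand side $p_{\alpha}=p_{\alpha_1}\star p_{\alpha_2}\star\cdots\star p_{\alpha_k}$ is a $k$-fold convolution, so I would first establish a convenient closed form for iterated convolutions: for $\mathbf{k}$-linear maps $f_1,f_2,\ldots,f_k\in\operatorname*{End}H$, one has $f_1\star f_2\star\cdots\star f_k = m^{[k]}\circ(f_1\otimes f_2\otimes\cdots\otimes f_k)\circ\Delta^{[k]}$. This is a standard fact (it follows by induction on $k$ from the definition $f\star g=m\circ(f\otimes g)\circ\Delta$ together with the coassociativity/associativity rules (\ref{eq.mu+v}) and (\ref{eq.Du+v}), and the base cases $m^{[1]}=\Delta^{[1]}=\operatorname{id}_H$); it may already be available in the reference \cite{GriRei}, in which case I would simply cite it.

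Applying this with $f_i=p_{\alpha_i}$ gives $p_{\alpha}=m^{[k]}\circ(p_{\alpha_1}\otimes p_{\alpha_2}\otimes\cdots\otimes p_{\alpha_k})\circ\Delta^{[k]}$. By Definition~\ref{def.Palpha}, the middle factor $p_{\alpha_1}\otimes p_{\alpha_2}\otimes\cdots\otimes p_{\alpha_k}$ is exactly $P_{\alpha}$, so $p_{\alpha}=m^{[k]}\circ P_{\alpha}\circ\Delta^{[k]}$. On the other hand, Definition~\ref{def.pas} gives $p_{\alpha,\operatorname*{id}}=m^{[k]}\circ P_{\alpha}\circ\operatorname{id}^{-1}\circ\Delta^{[k]}$, and since $\operatorname{id}$ is the identity permutation of $[k]$, its action on $H^{\otimes k}$ (Definition~\ref{def.Sk-action}) is the identity map $\operatorname{id}_{H^{\otimes k}}$, so $\operatorname{id}^{-1}=\operatorname{id}_{H^{\otimes k}}$ drops out and $p_{\alpha,\operatorname*{id}}=m^{[k]}\circ P_{\alpha}\circ\Delta^{[k]}$. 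Comparing the two expressions yields $p_{\alpha,\operatorname*{id}}=p_{\alpha}$.

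Alternatively — and perhaps more transparently — I could compare both sides on an arbitrary element $x\in H$ using Sweedler notation. Remark~\ref{rmk.pas.informal-def} with $\sigma=\operatorname{id}$ gives $p_{\alpha,\operatorname*{id}}(x)=\sum_{(x)}p_{\alpha_1}(x_{(1)})\,p_{\alpha_2}(x_{(2)})\cdots p_{\alpha_k}(x_{(k)})$, and expanding the iterated convolution $p_{\alpha_1}\star\cdots\star p_{\alpha_k}$ directly via the coassociativity of $\Delta$ produces exactly the same sum. The only thing to be careful about is matching the bracketing conventions in $m^{[k]}$ and $\Delta^{[k]}$ with the left-to-right reading of the $k$-fold convolution; this is precisely what (\ref{eq.mu+v}) and (\ref{eq.Du+v}) are for.

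The main (and essentially only) obstacle is the bookkeeping in the induction establishing $f_1\star\cdots\star f_k=m^{[k]}\circ(f_1\otimes\cdots\otimes f_k)\circ\Delta^{[k]}$ — namely, verifying that the associativity of $m^{[k]}$ and the coassociativity of $\Delta^{[k]}$ let one peel off one factor at a time consistently. This is routine and, as noted, likely already in \cite{GriRei}; everything else (recognizing $P_{\alpha}$ as the relevant tensor product, and that $\operatorname{id}^{-1}$ acts trivially) is immediate from the definitions.
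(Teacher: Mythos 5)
Your proof is correct and follows essentially the same route as the paper: both reduce the claim to the standard identity $f_{1}\star\cdots\star f_{k}=m^{\left[k\right]}\circ\left(f_{1}\otimes\cdots\otimes f_{k}\right)\circ\Delta^{\left[k\right]}$ (which the paper cites as \cite[Exercise 1.4.23]{GriRei}), recognize the middle tensor factor as $P_{\alpha}$, and observe that $\operatorname*{id}^{-1}$ acts trivially on $H^{\otimes k}$.
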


\begin{proof}
Write $\alpha$ as $\alpha=\left(  \alpha_{1},\alpha_{2},\ldots,\alpha
_{k}\right)  $. Then, (\ref{eq.pas.formal-def}) yields%
\begin{equation}
p_{\alpha,\operatorname*{id}}=m^{\left[  k\right]  }\circ P_{\alpha}%
\circ\underbrace{\operatorname*{id}\nolimits^{-1}}_{=\operatorname*{id}}%
\circ\,\Delta^{\left[  k\right]  }=m^{\left[  k\right]  }\circ P_{\alpha}%
\circ\Delta^{\left[  k\right]  }. \label{pf.prop.paid.1}%
\end{equation}

On the other hand, \cite[Exercise 1.4.23]{GriRei} yields\footnote{Recall that
the maps $m^{\left[  k\right]  }$ and $\Delta^{\left[  k\right]  }$ are called
$m^{\left(  k-1\right)  }$ and $\Delta^{\left(  k-1\right)  }$ in
\cite{GriRei}.}%
\[
p_{\alpha_{1}}\star p_{\alpha_{2}}\star\cdots\star p_{\alpha_{k}}=m^{\left[
k\right]  }\circ\underbrace{\left(  p_{\alpha_{1}}\otimes p_{\alpha_{2}%
}\otimes\cdots\otimes p_{\alpha_{k}}\right)  }_{\substack{=P_{\alpha
}\\\text{(by the definition of }P_{\alpha}\text{)}}}\circ\,\Delta^{\left[
k\right]  }=m^{\left[  k\right]  }\circ P_{\alpha}\circ\Delta^{\left[
k\right]  }.
\]
Now, the definition of $p_{\alpha}$ yields%
\begin{equation}
p_{\alpha}=p_{\alpha_{1}}\star p_{\alpha_{2}}\star\cdots\star p_{\alpha_{k}%
}=m^{\left[  k\right]  }\circ P_{\alpha}\circ\Delta^{\left[  k\right]  }.
\label{pf.prop.paid.2}%
\end{equation}
Comparing this with (\ref{pf.prop.paid.1}), we obtain $p_{\alpha
,\operatorname*{id}}=p_{\alpha}$. This proves Proposition \ref{prop.paid}.
\end{proof}

If the bialgebra $H$ is commutative or cocommutative, then we can bring all
our $p_{\alpha,\sigma}$ to the form $p_{\beta}$ for some weak composition
$\beta$:

\begin{proposition}
\label{prop.pas.comm}Let $\alpha\in\mathbb{N}^{k}$ be a weak composition, and
$\sigma\in\mathfrak{S}_{k}$ a permutation. \medskip

\textbf{(a)} If $H$ is commutative, then%
\[
p_{\alpha,\sigma}=p_{\sigma\cdot\alpha},
\]
where we are using the left action of $\mathfrak{S}_{k}$ on $\mathbb{N}^{k}$
defined by
\[
\sigma\cdot\left(  \alpha_{1},\alpha_{2},\ldots,\alpha_{k}\right)  :=\left(
\alpha_{\sigma^{-1}\left(  1\right)  },\alpha_{\sigma^{-1}\left(  2\right)
},\ldots,\alpha_{\sigma^{-1}\left(  k\right)  }\right)  .
\]

\textbf{(b)} If $H$ is cocommutative, then%
\[
p_{\alpha,\sigma}=p_{\alpha}.
\]

\end{proposition}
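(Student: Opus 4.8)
The plan is to unwind the definition (\ref{eq.pas.formal-def}) of $p_{\alpha,\sigma}$ and to push the permutation $\sigma^{-1}$ through either the iterated multiplication $m^{\left[  k\right]  }$ (in the commutative case) or the iterated comultiplication $\Delta^{\left[  k\right]  }$ (in the cocommutative case), using the relevant symmetry. For part \textbf{(b)}: if $H$ is cocommutative, then $\Delta^{\left[  k\right]  }$ is a cocommutative-type map, meaning it is $\mathfrak{S}_{k}$-equivariant in the sense that $\tau\circ\Delta^{\left[  k\right]  }=\Delta^{\left[  k\right]  }$ for every $\tau\in\mathfrak{S}_{k}$. Applying this with $\tau=\sigma^{-1}$ gives $\sigma^{-1}\circ\Delta^{\left[  k\right]  }=\Delta^{\left[  k\right]  }$, so that $p_{\alpha,\sigma}=m^{\left[  k\right]  }\circ P_{\alpha}\circ\sigma^{-1}\circ\Delta^{\left[  k\right]  }=m^{\left[  k\right]  }\circ P_{\alpha}\circ\Delta^{\left[  k\right]  }=p_{\alpha,\operatorname*{id}}=p_{\alpha}$, where the last step is Proposition \ref{prop.paid}. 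The only thing needing care is justifying the equivariance $\tau\circ\Delta^{\left[  k\right]  }=\Delta^{\left[  k\right]  }$; this is standard and follows by induction on $k$ from cocommutativity ($\tau_{(12)}\circ\Delta=\Delta$) together with the recursion (\ref{eq.Du+v}), since the transpositions of adjacent blocks generate $\mathfrak{S}_{k}$ — alternatively one can simply quote that iterated comultiplications of a cocommutative coalgebra are coassociative-and-cocommutative in all arguments. I would include a one-sentence proof or a citation to \cite{GriRei} for this.

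For part \textbf{(a)}: if $H$ is commutative, then dually $m^{\left[  k\right]  }$ is $\mathfrak{S}_{k}$-equivariant, i.e., $m^{\left[  k\right]  }\circ\tau=m^{\left[  k\right]  }$ for every $\tau\in\mathfrak{S}_{k}$, since the product $a_{1}a_{2}\cdots a_{k}$ is unchanged under permuting the factors. I would then move $\sigma^{-1}$ leftward past $P_{\alpha}$: one has the intertwining relation $P_{\alpha}\circ\sigma^{-1}=\sigma^{-1}\circ P_{\sigma^{-1}\cdot\alpha}$, where $\sigma^{-1}\cdot\alpha$ uses the given action of $\mathfrak{S}_{k}$ on $\mathbb{N}^{k}$. (This is because $P_{\alpha}$ is the projection onto the multigraded component indexed by $\alpha$, and $\sigma^{-1}$ carries the component indexed by $\sigma^{-1}\cdot\alpha$ onto the one indexed by $\alpha$; I should double-check the direction of this action carefully against Definition \ref{def.Sk-action}, which is the one genuinely error-prone spot.) Substituting, $p_{\alpha,\sigma}=m^{\left[  k\right]  }\circ\sigma^{-1}\circ P_{\sigma^{-1}\cdot\alpha}\circ\Delta^{\left[  k\right]  }=m^{\left[  k\right]  }\circ P_{\sigma^{-1}\cdot\alpha}\circ\Delta^{\left[  k\right]  }$ by commutativity, and this equals $p_{\sigma^{-1}\cdot\alpha,\operatorname*{id}}=p_{\sigma^{-1}\cdot\alpha}$ by Proposition \ref{prop.paid}. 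It remains to reconcile $\sigma^{-1}\cdot\alpha$ with the claimed $\sigma\cdot\alpha$; given the definition $\sigma\cdot\left(  \alpha_{1},\ldots,\alpha_{k}\right)  :=\left(  \alpha_{\sigma^{-1}\left(  1\right)  },\ldots,\alpha_{\sigma^{-1}\left(  k\right)  }\right)  $, I expect the intertwining relation actually to read $P_{\alpha}\circ\sigma^{-1}=\sigma^{-1}\circ P_{\sigma\cdot\alpha}$ rather than with $\sigma^{-1}\cdot\alpha$ — this is precisely the bookkeeping I want to get right, and the cleanest way is to test it on a pure tensor $h_{1}\otimes\cdots\otimes h_{k}$ with each $h_{i}$ homogeneous.

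The main obstacle, therefore, is not conceptual but notational: keeping the $\sigma$-versus-$\sigma^{-1}$ and the direction of the $\mathfrak{S}_{k}$-action on $\mathbb{N}^{k}$ consistent with Definition \ref{def.Sk-action} and with the statement of part \textbf{(a)}. I would handle this by computing $\left(  P_{\alpha}\circ\sigma^{-1}\right)  \left(  h_{1}\otimes h_{2}\otimes\cdots\otimes h_{k}\right)$ explicitly for homogeneous $h_{i}$ of degree $d_{i}$: by Definition \ref{def.Sk-action} this is $P_{\alpha}\left(  h_{\sigma\left(  1\right)  }\otimes\cdots\otimes h_{\sigma\left(  k\right)  }\right)$, which is nonzero only when $d_{\sigma\left(  i\right)  }=\alpha_{i}$ for all $i$, i.e., $d_{j}=\alpha_{\sigma^{-1}\left(  j\right)  }$ for all $j$, i.e., $\left(  d_{1},\ldots,d_{k}\right)  =\sigma\cdot\alpha$ in the notation of part \textbf{(a)}. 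This shows $P_{\alpha}\circ\sigma^{-1}=\sigma^{-1}\circ P_{\sigma\cdot\alpha}$, and feeding this into the computation above yields exactly $p_{\alpha,\sigma}=p_{\sigma\cdot\alpha}$ as claimed. Finally I would remark that part \textbf{(b)} can also be obtained from part \textbf{(a)} by the usual dualization (replacing $H$ by $H^{\circ}$ or passing to opposite/co-opposite structures), but giving the direct argument is shorter and self-contained.
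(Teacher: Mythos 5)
Your proof is correct and takes essentially the same approach as the paper: both parts move $\sigma^{-1}$ through $m^{[k]}$ (resp.\ $\Delta^{[k]}$) using (co)commutativity, and part~(a) relies on the same intertwining relation $P_{\alpha}\circ\sigma^{-1}=\sigma^{-1}\circ P_{\sigma\cdot\alpha}$, which the paper extracts from Lemma~\ref{lem.Pa-vers-perm} while you verify it directly on homogeneous pure tensors. Your explicit check on pure tensors correctly resolves the $\sigma$-versus-$\sigma^{-1}$ bookkeeping you flagged, and the final identity $p_{\alpha,\sigma}=p_{\sigma\cdot\alpha}$ comes out exactly as in the paper.
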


To prove this, we need a simple lemma:

\begin{lemma}
\label{lem.Pa-vers-perm}Let $f_{1},f_{2},\ldots,f_{k}$ be $k$ arbitrary
elements of $\operatorname*{End}H$. Let $\sigma\in\mathfrak{S}_{k}$. Then,%
\[
\sigma\circ\left(  f_{1}\otimes f_{2}\otimes\cdots\otimes f_{k}\right)
=\left(  f_{\sigma^{-1}\left(  1\right)  }\otimes f_{\sigma^{-1}\left(
2\right)  }\otimes\cdots\otimes f_{\sigma^{-1}\left(  k\right)  }\right)
\circ\sigma
\]
(where $\sigma$ and $f_{1}\otimes f_{2}\otimes\cdots\otimes f_{k}$ and
$f_{\sigma^{-1}\left(  1\right)  }\otimes f_{\sigma^{-1}\left(  2\right)
}\otimes\cdots\otimes f_{\sigma^{-1}\left(  k\right)  }$ are understood as
endomorphisms of $H^{\otimes k}$).
\end{lemma}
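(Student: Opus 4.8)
The plan is to prove the identity by evaluating both sides on an arbitrary pure tensor $h_{1}\otimes h_{2}\otimes\cdots\otimes h_{k}\in H^{\otimes k}$; this suffices, since both sides are $\mathbf{k}$-linear maps $H^{\otimes k}\rightarrow H^{\otimes k}$ and the pure tensors span $H^{\otimes k}$. Nothing beyond the $\mathbf{k}$-module structure of $H$ and the definition of the $\mathfrak{S}_{k}$-action (Definition \ref{def.Sk-action}) is needed; the multiplication and comultiplication of $H$ play no role here.

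First I would compute the left-hand side. Applying $f_{1}\otimes f_{2}\otimes\cdots\otimes f_{k}$ to $h_{1}\otimes h_{2}\otimes\cdots\otimes h_{k}$ gives $f_{1}\left(h_{1}\right)\otimes f_{2}\left(h_{2}\right)\otimes\cdots\otimes f_{k}\left(h_{k}\right)$, and then applying $\sigma$ to this — invoking the rule of Definition \ref{def.Sk-action} with each $h_{i}$ replaced by $f_{i}\left(h_{i}\right)$ — yields
\[
f_{\sigma^{-1}\left(1\right)}\left(h_{\sigma^{-1}\left(1\right)}\right)\otimes f_{\sigma^{-1}\left(2\right)}\left(h_{\sigma^{-1}\left(2\right)}\right)\otimes\cdots\otimes f_{\sigma^{-1}\left(k\right)}\left(h_{\sigma^{-1}\left(k\right)}\right).
\]
Next I would compute the right-hand side. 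Applying $\sigma$ to $h_{1}\otimes h_{2}\otimes\cdots\otimes h_{k}$ gives $h_{\sigma^{-1}\left(1\right)}\otimes h_{\sigma^{-1}\left(2\right)}\otimes\cdots\otimes h_{\sigma^{-1}\left(k\right)}$; its $i$-th tensorand is $h_{\sigma^{-1}\left(i\right)}$, so applying $f_{\sigma^{-1}\left(1\right)}\otimes f_{\sigma^{-1}\left(2\right)}\otimes\cdots\otimes f_{\sigma^{-1}\left(k\right)}$ (which acts by $f_{\sigma^{-1}\left(i\right)}$ on the $i$-th tensorand) produces exactly the same expression as above. Comparing the two computations proves the lemma.

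The only real obstacle is bookkeeping: one must be disciplined about which index is being substituted where. On the left side, the subtlety is that Definition \ref{def.Sk-action} has to be applied to the tensor whose $i$-th entry is $f_{i}\left(h_{i}\right)$, not $h_{i}$; on the right side, one must first rewrite $\sigma\cdot\left(h_{1}\otimes\cdots\otimes h_{k}\right)$ so that its $i$-th tensorand is visibly $h_{\sigma^{-1}\left(i\right)}$ before applying the tensor product of the $f$'s. Once the indices are aligned, both sides collapse to the same pure tensor. (An alternative proof could reduce to the case where $\sigma$ is a simple transposition $\left(i,\,i+1\right)$ and propagate along a reduced word for $\sigma$, using that the transpositions generate $\mathfrak{S}_{k}$, but this is strictly more work than the direct evaluation and I would not pursue it.)
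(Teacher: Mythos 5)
Your proof is correct and is essentially the paper's own argument, just written out in more detail: both evaluate each side on an arbitrary pure tensor $h_{1}\otimes\cdots\otimes h_{k}$, obtain $f_{\sigma^{-1}(1)}\left(h_{\sigma^{-1}(1)}\right)\otimes\cdots\otimes f_{\sigma^{-1}(k)}\left(h_{\sigma^{-1}(k)}\right)$ on each side, and conclude by linearity and the fact that pure tensors span $H^{\otimes k}$.
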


\begin{proof}
[Proof of Lemma \ref{lem.Pa-vers-perm}.]Both sides send any given pure tensor
$h_{1}\otimes h_{2}\otimes\cdots\otimes h_{k}\in H^{\otimes k}$ to
\[
f_{\sigma^{-1}\left(  1\right)  }\left(  h_{\sigma^{-1}\left(  1\right)
}\right)  \otimes f_{\sigma^{-1}\left(  2\right)  }\left(  h_{\sigma
^{-1}\left(  2\right)  }\right)  \otimes\cdots\otimes f_{\sigma^{-1}\left(
k\right)  }\left(  h_{\sigma^{-1}\left(  k\right)  }\right)  .
\]
Thus, they agree on all pure tensors, and hence are identical.
\end{proof}

\begin{proof}
[Proof of Proposition \ref{prop.pas.comm}.]Write $\alpha$ as $\alpha=\left(
\alpha_{1},\alpha_{2},\ldots,\alpha_{k}\right)  $. \medskip

\textbf{(a)} Assume that $H$ is commutative. Then, $m^{\left[  k\right]
}\circ\tau=m^{\left[  k\right]  }$ for any $\tau\in\mathfrak{S}_{k}$ (by
\cite[Exercise 1.5.10]{GriRei}). Thus, in particular, $m^{\left[  k\right]
}\circ\sigma^{-1}=m^{\left[  k\right]  }$. However, it is straightforward to
see (and actually true for any $\mathbf{k}$-module in place of $H$) that%
\[
\sigma\circ P_{\alpha}=P_{\sigma\cdot\alpha}\circ\sigma
\]
as maps from $H^{\otimes k}$ to $H^{\otimes k}$. (Indeed, this follows from
Lemma \ref{lem.Pa-vers-perm} (applied to $f_{i}=p_{\alpha_{i}}$), since
$P_{\alpha}=p_{\alpha_{1}}\otimes p_{\alpha_{2}}\otimes\cdots\otimes
p_{\alpha_{k}}$ and $P_{\sigma\cdot\alpha}=p_{\alpha_{\sigma^{-1}\left(
1\right)  }}\otimes p_{\alpha_{\sigma^{-1}\left(  2\right)  }}\otimes
\cdots\otimes p_{\alpha_{\sigma^{-1}\left(  k\right)  }}$.)

Now, the definition of $p_{\alpha,\sigma}$ yields%
\begin{align*}
p_{\alpha,\sigma}  &  =m^{\left[  k\right]  }\circ\underbrace{P_{\alpha}%
}_{\substack{=\sigma^{-1}\circ P_{\sigma\cdot\alpha}\circ\sigma\\\text{(since
}\sigma\circ P_{\alpha}=P_{\sigma\cdot\alpha}\circ\sigma\text{)}}%
}\circ\,\sigma^{-1}\circ\Delta^{\left[  k\right]  }\\
&  =\underbrace{m^{\left[  k\right]  }\circ\sigma^{-1}}_{=m^{\left[  k\right]
}}\circ\,P_{\sigma\cdot\alpha}\circ\underbrace{\sigma\circ\sigma^{-1}%
}_{=\operatorname*{id}}\circ\,\Delta^{\left[  k\right]  }=m^{\left[  k\right]
}\circ P_{\sigma\cdot\alpha}\circ\Delta^{\left[  k\right]  }.
\end{align*}
Comparing this with%
\[
p_{\sigma\cdot\alpha}=m^{\left[  k\right]  }\circ P_{\sigma\cdot\alpha}%
\circ\Delta^{\left[  k\right]  }\ \ \ \ \ \ \ \ \ \ \left(  \text{by
(\ref{pf.prop.paid.2}), applied to }\sigma\cdot\alpha\text{ instead of }%
\alpha\right)  ,
\]
we obtain $p_{\alpha,\sigma}=p_{\sigma\cdot\alpha}$. Proposition
\ref{prop.pas.comm} \textbf{(a)} is thus proved. \medskip

\textbf{(b)} Assume that $H$ is cocommutative. Then, $\tau\circ\Delta^{\left[
k\right]  }=\Delta^{\left[  k\right]  }$ for any $\tau\in\mathfrak{S}_{k}$ (by
\cite[Exercise 1.5.10]{GriRei}). Thus, in particular, $\sigma^{-1}\circ
\Delta^{\left[  k\right]  }=\Delta^{\left[  k\right]  }$. Now, the definition
of $p_{\alpha,\sigma}$ yields%
\[
p_{\alpha,\sigma}=m^{\left[  k\right]  }\circ P_{\alpha}\circ
\underbrace{\sigma^{-1}\circ\Delta^{\left[  k\right]  }}_{=\Delta^{\left[
k\right]  }}=m^{\left[  k\right]  }\circ P_{\alpha}\circ\Delta^{\left[
k\right]  }.
\]
Comparing this with (\ref{pf.prop.paid.2}), we find $p_{\alpha,\sigma
}=p_{\alpha}$. This proves Proposition \ref{prop.pas.comm} \textbf{(b)}.
\end{proof}

However, in general, when $H$ is neither commutative nor cocommutative, we
cannot \textquotedblleft simplify\textquotedblright\ $p_{\alpha,\sigma}$. (See
Theorem \ref{thm.pas-lin-ind} for a concretization of this claim.)

If the graded bialgebra $H$ is connected, then each $p_{\alpha,\sigma}$ for a
weak composition $\alpha$ and a permutation $\sigma$ can be rewritten in the
form $p_{\beta,\tau}$ for a composition (not just weak composition) $\beta$
and a permutation $\tau$. (Indeed, since $H$ is connected, we can remove all
$p_{0}\left(  x_{\left(  i\right)  }\right)  $ factors from the product in
Remark \ref{rmk.pas.informal-def}.) See Proposition \ref{prop.mopis.reduce}
below for an explicit statement of this claim.

\subsection{The convolution formula}

It is easy to see that any convolution of two maps of the form $p_{\alpha
,\sigma}$ is again a map of such form:

\begin{proposition}
\label{prop.sol-mac-0}Let $\alpha=\left(  \alpha_{1},\alpha_{2},\ldots
,\alpha_{k}\right)  $ be a weak composition, and let $\sigma\in\mathfrak{S}%
_{k}$ be a permutation.

Let $\beta=\left(  \beta_{1},\beta_{2},\ldots,\beta_{\ell}\right)  $ be a weak
composition, and let $\tau\in\mathfrak{S}_{\ell}$ be a permutation.

Let $\alpha\beta$ be the concatenation of $\alpha$ and $\beta$; this is the
weak composition $\left(  \alpha_{1},\alpha_{2},\ldots,\alpha_{k},\beta
_{1},\beta_{2},\ldots,\beta_{\ell}\right)  $.

Let $\sigma\oplus\tau$ be the image of $\left(  \sigma,\tau\right)  $ under
the obvious map $\mathfrak{S}_{k}\times\mathfrak{S}_{\ell}\rightarrow
\mathfrak{S}_{k+\ell}$. Explicitly, this is the permutation of $\left[
k+\ell\right]  $ that sends each $i\leq k$ to $\sigma\left(  i\right)  $ and
sends each $j>k$ to $k+\tau\left(  j-k\right)  $.

Then,
\begin{equation}
p_{\alpha,\sigma}\star p_{\beta,\tau}=p_{\alpha\beta,\sigma\oplus\tau}.
\label{eq.pas-conv}%
\end{equation}

\end{proposition}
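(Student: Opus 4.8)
The plan is to unravel the definition of the convolution $\star$ in terms of $m^{[k]}$ and $\Delta^{[k]}$, and then to recognize the resulting composite as $p_{\alpha\beta,\sigma\oplus\tau}$ using the multiplicativity rules (\ref{eq.mu+v}) and (\ref{eq.Du+v}) for iterated (co)multiplications. Concretely, by the definition of $\star$ and of $p_{\alpha,\sigma}$ and $p_{\beta,\tau}$,
\[
p_{\alpha,\sigma}\star p_{\beta,\tau}
= m_{H}\circ\left(\left(m^{[k]}\circ P_{\alpha}\circ\sigma^{-1}\circ\Delta^{[k]}\right)\otimes\left(m^{[\ell]}\circ P_{\beta}\circ\tau^{-1}\circ\Delta^{[\ell]}\right)\right)\circ\Delta_{H}.
\]
I would then push the tensor product inside, writing the middle factor as $\left(m^{[k]}\otimes m^{[\ell]}\right)\circ\left(P_{\alpha}\otimes P_{\beta}\right)\circ\left(\sigma^{-1}\otimes\tau^{-1}\right)\circ\left(\Delta^{[k]}\otimes\Delta^{[\ell]}\right)$ (functoriality of $\otimes$), and combine the outer pieces: $m_{H}\circ\left(m^{[k]}\otimes m^{[\ell]}\right)=m^{[k+\ell]}$ by (\ref{eq.mu+v}), and $\left(\Delta^{[k]}\otimes\Delta^{[\ell]}\right)\circ\Delta_{H}=\Delta^{[k+\ell]}$ by (\ref{eq.Du+v}).

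This reduces the claim to three identifications of maps $H^{\otimes(k+\ell)}\to H^{\otimes(k+\ell)}$: that $P_{\alpha}\otimes P_{\beta}=P_{\alpha\beta}$, that $\sigma^{-1}\otimes\tau^{-1}$ equals the action of $(\sigma\oplus\tau)^{-1}$, and that these compose in the right order. The first is immediate from Definition \ref{def.Palpha}, since $P_{\alpha\beta}=p_{\alpha_1}\otimes\cdots\otimes p_{\alpha_k}\otimes p_{\beta_1}\otimes\cdots\otimes p_{\beta_\ell}=P_{\alpha}\otimes P_{\beta}$. The second is the only point requiring a small verification: I would check that the map $\sigma^{-1}\otimes\tau^{-1}$ on $H^{\otimes k}\otimes H^{\otimes\ell}=H^{\otimes(k+\ell)}$ agrees with the permutation action of $(\sigma\oplus\tau)^{-1}$, using the explicit description of $\sigma\oplus\tau$ (which sends $i\le k$ to $\sigma(i)$ and $j>k$ to $k+\tau(j-k)$, so $(\sigma\oplus\tau)^{-1}=\sigma^{-1}\oplus\tau^{-1}$) together with the defining rule in Definition \ref{def.Sk-action} applied to a pure tensor $h_1\otimes\cdots\otimes h_{k+\ell}$: both maps send it to $h_{\sigma^{-1}(1)}\otimes\cdots\otimes h_{\sigma^{-1}(k)}\otimes h_{k+\tau^{-1}(1)}\otimes\cdots\otimes h_{k+\tau^{-1}(\ell)}$.

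Assembling these pieces gives
\[
p_{\alpha,\sigma}\star p_{\beta,\tau}
= m^{[k+\ell]}\circ P_{\alpha\beta}\circ(\sigma\oplus\tau)^{-1}\circ\Delta^{[k+\ell]}
= p_{\alpha\beta,\sigma\oplus\tau},
\]
which is (\ref{eq.pas-conv}). I do not expect a real obstacle here: the argument is a diagram chase, and the only place where care is needed is the bookkeeping for the symmetric-group action — making sure the indices in $\sigma\oplus\tau$ and its inverse line up with the tensor-factor permutation convention of Definition \ref{def.Sk-action}. Tracking a single pure tensor through both sides, as above, is the cleanest way to nail this down.
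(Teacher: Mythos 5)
Your proof follows essentially the same route as the paper's: unravel the convolution, distribute the tensor, collapse $m\circ(m^{[k]}\otimes m^{[\ell]})=m^{[k+\ell]}$ and $(\Delta^{[k]}\otimes\Delta^{[\ell]})\circ\Delta=\Delta^{[k+\ell]}$, and check $\sigma^{-1}\otimes\tau^{-1}=(\sigma\oplus\tau)^{-1}$ on pure tensors. One small slip: with the convention of Definition~\ref{def.Sk-action}, both maps send $h_1\otimes\cdots\otimes h_{k+\ell}$ to $h_{\sigma(1)}\otimes\cdots\otimes h_{\sigma(k)}\otimes h_{k+\tau(1)}\otimes\cdots\otimes h_{k+\tau(\ell)}$, not $h_{\sigma^{-1}(1)}\otimes\cdots$; the conclusion that they agree still stands, so this is a transcription error rather than a gap.
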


\begin{proof}
Let us first observe that
\[
\underbrace{\sigma^{-1}}_{\substack{\text{this means}\\\text{the
action}\\\text{of }\sigma^{-1}\text{ on }H^{\otimes k}}}\otimes
\underbrace{\tau^{-1}}_{\substack{\text{this means}\\\text{the action}%
\\\text{of }\tau^{-1}\text{ on }H^{\otimes\ell}}}=\underbrace{\left(
\sigma\oplus\tau\right)  ^{-1}}_{\substack{\text{this means}\\\text{the
action}\\\text{of }\left(  \sigma\oplus\tau\right)  ^{-1}\text{ on }%
H^{\otimes\left(  k+\ell\right)  }}}.
\]
(This can be shown, e.g., by acting on a pure tensor: Any pure tensor
\[
h_{1}\otimes h_{2}\otimes\cdots\otimes h_{k}\otimes g_{1}\otimes g_{2}%
\otimes\cdots\otimes g_{\ell}\in H^{\otimes\left(  k+\ell\right)  }%
\]
is sent by both $\sigma^{-1}\otimes\tau^{-1}$ and $\left(  \sigma\oplus
\tau\right)  ^{-1}$ to the same image
\[
h_{\sigma\left(  1\right)  }\otimes h_{\sigma\left(  2\right)  }\otimes
\cdots\otimes h_{\sigma\left(  k\right)  }\otimes g_{\tau\left(  1\right)
}\otimes g_{\tau\left(  2\right)  }\otimes\cdots\otimes g_{\tau\left(
\ell\right)  }.
\]
Thus, the two $\mathbf{k}$-linear maps $\sigma^{-1}\otimes\tau^{-1}$ and
$\left(  \sigma\oplus\tau\right)  ^{-1}$ agree on each pure tensor, and thus
are identical.)

Furthermore, we have $P_{\alpha}\otimes P_{\beta}=P_{\alpha\beta}$ (as maps
from $H^{\otimes\left(  k+\ell\right)  }$ to $H^{\otimes\left(  k+\ell\right)
}$). (This follows easily from the definitions of $P_{\alpha}$, $P_{\beta}$
and $P_{\alpha\beta}$.)

The definition of convolution yields $p_{\alpha,\sigma}\star p_{\beta,\tau
}=m\circ\left(  p_{\alpha,\sigma}\otimes p_{\beta,\tau}\right)  \circ\Delta$.
In view of%
\begin{align*}
&  \underbrace{p_{\alpha,\sigma}}_{\substack{=m^{\left[  k\right]  }\circ
P_{\alpha}\circ\sigma^{-1}\circ\Delta^{\left[  k\right]  }\\\text{(by
(\ref{eq.pas.formal-def}))}}}\otimes\underbrace{p_{\beta,\tau}}%
_{\substack{=m^{\left[  \ell\right]  }\circ P_{\beta}\circ\tau^{-1}\circ
\Delta^{\left[  \ell\right]  }\\\text{(by (\ref{eq.pas.formal-def}))}}}\\
&  =\left(  m^{\left[  k\right]  }\circ P_{\alpha}\circ\sigma^{-1}\circ
\Delta^{\left[  k\right]  }\right)  \otimes\left(  m^{\left[  \ell\right]
}\circ P_{\beta}\circ\tau^{-1}\circ\Delta^{\left[  \ell\right]  }\right) \\
&  =\left(  m^{\left[  k\right]  }\otimes m^{\left[  \ell\right]  }\right)
\circ\underbrace{\left(  P_{\alpha}\otimes P_{\beta}\right)  }_{=P_{\alpha
\beta}}\circ\underbrace{\left(  \sigma^{-1}\otimes\tau^{-1}\right)
}_{=\left(  \sigma\oplus\tau\right)  ^{-1}}\circ\left(  \Delta^{\left[
k\right]  }\otimes\Delta^{\left[  \ell\right]  }\right) \\
&  =\left(  m^{\left[  k\right]  }\otimes m^{\left[  \ell\right]  }\right)
\circ P_{\alpha\beta}\circ\left(  \sigma\oplus\tau\right)  ^{-1}\circ\left(
\Delta^{\left[  k\right]  }\otimes\Delta^{\left[  \ell\right]  }\right)  ,
\end{align*}
we can rewrite this as
\begin{align*}
p_{\alpha,\sigma}\star p_{\beta,\tau}  &  =\underbrace{m\circ\left(
m^{\left[  k\right]  }\otimes m^{\left[  \ell\right]  }\right)  }%
_{\substack{=m^{\left[  k+\ell\right]  }\\\text{(by (\ref{eq.mu+v}))}}%
}\circ\,P_{\alpha\beta}\circ\left(  \sigma\oplus\tau\right)  ^{-1}%
\circ\underbrace{\left(  \Delta^{\left[  k\right]  }\otimes\Delta^{\left[
\ell\right]  }\right)  \circ\Delta}_{\substack{=\Delta^{\left[  k+\ell\right]
}\\\text{(by (\ref{eq.Du+v}))}}}\\
&  =m^{\left[  k+\ell\right]  }\circ P_{\alpha\beta}\circ\left(  \sigma
\oplus\tau\right)  ^{-1}\circ\Delta^{\left[  k+\ell\right]  }=p_{\alpha
\beta,\sigma\oplus\tau}%
\end{align*}
(since $p_{\alpha\beta,\sigma\oplus\tau}$ is defined as $m^{\left[
k+\ell\right]  }\circ P_{\alpha\beta}\circ\left(  \sigma\oplus\tau\right)
^{-1}\circ\Delta^{\left[  k+\ell\right]  }$). This proves Proposition
\ref{prop.sol-mac-0}.
\end{proof}

\subsection{The composition formula: statement}

It is more interesting to compute the composition of two maps of the form
$p_{\alpha,\sigma}$. It turns out that this is again a $\mathbf{k}$-linear
combination of maps of such form, and the explicit formula is similar to
Solomon's Mackey formula for the descent algebra (or, even more closely
related, \cite[Theorem 9.2 and \S 9.5.1]{Reuten93}). Before we state the
formula, let us introduce one more operation on permutations:

\begin{definition}
\label{def.tausigma}Let $\sigma\in\mathfrak{S}_{k}$ and $\tau\in
\mathfrak{S}_{\ell}$ be two permutations. Then, $\tau\left[  \sigma\right]
\in\mathfrak{S}_{k\ell}$ shall denote the permutation of $\left[
k\ell\right]  $ that sends each $\ell\left(  i-1\right)  +j$ (with
$i\in\left[  k\right]  $ and $j\in\left[  \ell\right]  $) to $k\left(
\tau\left(  j\right)  -1\right)  +\sigma\left(  i\right)  $. This permutation
is well-defined because of Remark \ref{rmk.tausigma.wd} below.
\end{definition}

\begin{remark}
\label{rmk.tausigma.wd}Let $\sigma\in\mathfrak{S}_{k}$ and $\tau
\in\mathfrak{S}_{\ell}$ be two permutations. Why is the permutation
$\tau\left[  \sigma\right]  \in\mathfrak{S}_{k\ell}$ in Definition
\ref{def.tausigma} well-defined? To see that it is well-defined as a map, it
suffices to observe that each element $p\in\left[  k\ell\right]  $ can be
written uniquely in the form $\ell\left(  i-1\right)  +j$ with $i\in\left[
k\right]  $ and $j\in\left[  \ell\right]  $. (This observation follows from
the fact that the $k+1$ numbers $0\ell,\ 1\ell,\ 2\ell,\ \ldots,\ k\ell$
subdivide the set $\left[  k\ell\right]  $ into $k$ intervals of length $\ell$ each.)

But it remains to show that the map $\tau\left[  \sigma\right]  $ is indeed a
permutation in $\mathfrak{S}_{k\ell}$. The best way to show this is by
factoring it as a composition of three bijections.

Namely, define two maps $\lambda$ and $\rho$ from $\left[  k\right]
\times\left[  \ell\right]  $ to $\left[  k\ell\right]  $ by setting%
\[
\lambda\left(  i,j\right)  :=k\left(  j-1\right)  +i\in\left[  k\ell\right]
\ \ \ \ \ \ \ \ \ \ \text{and}\ \ \ \ \ \ \ \ \ \ \rho\left(  i,j\right)
:=\ell\left(  i-1\right)  +j\in\left[  k\ell\right]
\]
for all $\left(  i,j\right)  \in\left[  k\right]  \times\left[  \ell\right]
$. It is easy to see that both of these maps $\lambda$ and $\rho$ are
well-defined. Moreover, these maps $\lambda$ and $\rho$ are bijections. In
fact, $\rho$ sends each pair $\left(  i,j\right)  \in\left[  k\right]
\times\left[  \ell\right]  $ to the position of $\left(  i,j\right)  $ in the
lexicographically ordered Cartesian product $\left[  k\right]  \times\left[
\ell\right]  $, whereas $\lambda$ sends each pair $\left(  i,j\right)
\in\left[  k\right]  \times\left[  \ell\right]  $ to the position of $\left(
j,i\right)  $ in the lexicographically ordered Cartesian product $\left[
\ell\right]  \times\left[  k\right]  $. Thus, $\rho$ is the order-isomorphism
between the lexicographically ordered Cartesian product $\left[  k\right]
\times\left[  \ell\right]  $ and the chain $\left[  k\ell\right]  $, whereas
$\lambda$ is the order-isomorphism between the lexicographically ordered
Cartesian product $\left[  \ell\right]  \times\left[  k\right]  $ and the
chain $\left[  k\ell\right]  $ composed with the standard swap map $\left[
k\right]  \times\left[  \ell\right]  \rightarrow\left[  \ell\right]
\times\left[  k\right]  ,\ \left(  i,j\right)  \mapsto\left(  j,i\right)  $.

Next, we consider the map%
\begin{align*}
\sigma\times\tau:\left[  k\right]  \times\left[  \ell\right]   &
\rightarrow\left[  k\right]  \times\left[  \ell\right]  ,\\
\left(  i,j\right)   &  \mapsto\left(  \sigma\left(  i\right)  ,\tau\left(
j\right)  \right)  .
\end{align*}
This is the Cartesian product of the two maps $\sigma$ and $\tau$, and thus is
a bijection (since $\sigma$ and $\tau$ are bijections). Clearly, the
composition $\lambda\circ\left(  \sigma\times\tau\right)  \circ\rho^{-1}$ is a
well-defined bijection from $\left[  k\ell\right]  $ to $\left[  k\ell\right]
$ (since it is a composition of bijections and their inverses), i.e., a
permutation in $\mathfrak{S}_{k\ell}$.

Now, we claim that the permutation $\tau\left[  \sigma\right]  $ in Definition
\ref{def.tausigma} is simply $\lambda\circ\left(  \sigma\times\tau\right)
\circ\rho^{-1}$. Indeed, for each $i\in\left[  k\right]  $ and $j\in\left[
\ell\right]  $, we have%
\begin{align*}
\left(  \lambda\circ\left(  \sigma\times\tau\right)  \circ\rho^{-1}\right)
\underbrace{\left(  \ell\left(  i-1\right)  +j\right)  }_{\substack{=\rho
\left(  i,j\right)  \\\text{(by the definition of }\rho\text{)}}}  &  =\left(
\lambda\circ\left(  \sigma\times\tau\right)  \circ\rho^{-1}\right)  \left(
\rho\left(  i,j\right)  \right) \\
&  =\lambda\left(  \underbrace{\left(  \sigma\times\tau\right)  \left(
i,j\right)  }_{=\left(  \sigma\left(  i\right)  ,\tau\left(  j\right)
\right)  }\right)  =\lambda\left(  \sigma\left(  i\right)  ,\tau\left(
j\right)  \right) \\
&  =k\left(  \tau\left(  j\right)  -1\right)  +\sigma\left(  i\right)
\end{align*}
(by the definition of $\lambda$). Thus, the permutation $\lambda\circ\left(
\sigma\times\tau\right)  \circ\rho^{-1}\in\mathfrak{S}_{k\ell}$ sends each
$\ell\left(  i-1\right)  +j$ (with $i\in\left[  k\right]  $ and $j\in\left[
\ell\right]  $) to $k\left(  \tau\left(  j\right)  -1\right)  +\sigma\left(
i\right)  $. In other words, $\lambda\circ\left(  \sigma\times\tau\right)
\circ\rho^{-1}$ satisfies the condition imposed on $\tau\left[  \sigma\right]
$ in Definition \ref{def.tausigma}. Thus, $\tau\left[  \sigma\right]
=\lambda\circ\left(  \sigma\times\tau\right)  \circ\rho^{-1}\in\mathfrak{S}%
_{k\ell}$. In other words, $\tau\left[  \sigma\right]  $ is really a
permutation in $\mathfrak{S}_{k\ell}$. This completes our proof.
\end{remark}

\begin{noncompile}
The most intuitive way to compute the permutation $\tau\left[  \sigma\right]
$ in Definition \ref{def.tausigma} is to arrange the $k\ell$ numbers
$1,2,\ldots,k\ell$ into a $k\times\ell$-matrix row-by-row, from left to right
in each row and from top to bottom:%
\[
\left(
\begin{array}
[c]{cccc}%
1 & 2 & \cdots & \ell\\
\ell+1 & \ell+2 & \cdots & 2\ell\\
\vdots & \vdots & \ddots & \vdots\\
\left(  k-1\right)  \ell+1 & \left(  k-1\right)  \ell+2 & \cdots & k\ell
\end{array}
\right)  .
\]
Then, $\tau\left[  \sigma\right]  $ is the permutation that sends each $i$. Aborted.
\end{noncompile}

\begin{example}
Let $s_{1}\in\mathfrak{S}_{2}$ be the transposition that swaps $1$ with $2$.
Let $\operatorname*{id}\nolimits_{3}\in\mathfrak{S}_{3}$ be the identity
permutation. Then, $s_{1}\left[  \operatorname*{id}\nolimits_{3}\right]
\in\mathfrak{S}_{6}$ is the permutation that sends $1,2,3,4,5,6$ to
$4,1,5,2,6,3$, respectively.
\end{example}

It can be shown that the operation that sends two permutations $\tau
\in\mathfrak{S}_{\ell}$ and $\sigma\in\mathfrak{S}_{k}$ to $\tau\left[
\sigma\right]  $ is associative (see Claim 1 in the Second proof idea for
Theorem \ref{thm.PNSym.alg} below for an outline of a proof), but we will not
use this fact in the present section. Furthermore, the identity permutation
$\operatorname*{id}\nolimits_{\left[  1\right]  }\in\mathfrak{S}_{1}$ is
neutral for this operation (i.e., satisfies $\operatorname*{id}%
\nolimits_{\left[  1\right]  }\left[  \sigma\right]  =\sigma\left[
\operatorname*{id}\nolimits_{\left[  1\right]  }\right]  =\sigma$ for all
$\sigma\in\mathfrak{S}_{k}$). However, if two positive integers $k$ and $\ell$
are both larger than $1$, then $\operatorname*{id}\nolimits_{\left[  k\right]
}\left[  \operatorname*{id}\nolimits_{\left[  \ell\right]  }\right]
\neq\operatorname*{id}\nolimits_{\left[  k\ell\right]  }$. Another description
of $\tau\left[  \sigma\right]  $ can be found in Lemma \ref{lem.sitaze} below.
\medskip

We can now state the explicit formula for composition of $p_{\alpha,\sigma}$'s:

\begin{theorem}
\label{thm.sol-mac-1}Let $\alpha=\left(  \alpha_{1},\alpha_{2},\ldots
,\alpha_{k}\right)  $ be a weak composition, and let $\sigma\in\mathfrak{S}%
_{k}$ be a permutation.

Let $\beta=\left(  \beta_{1},\beta_{2},\ldots,\beta_{\ell}\right)  $ be a weak
composition, and let $\tau\in\mathfrak{S}_{\ell}$ be a permutation.

Then,%
\[
p_{\alpha,\sigma}\circ p_{\beta,\tau}=\sum_{\substack{\gamma_{i,j}%
\in\mathbb{N}\text{ for all }i\in\left[  k\right]  \text{ and }j\in\left[
\ell\right]  ;\\\gamma_{i,1}+\gamma_{i,2}+\cdots+\gamma_{i,\ell}=\alpha
_{i}\text{ for all }i\in\left[  k\right]  ;\\\gamma_{1,j}+\gamma_{2,j}%
+\cdots+\gamma_{k,j}=\beta_{j}\text{ for all }j\in\left[  \ell\right]
}}p_{\left(  \gamma_{1,1},\gamma_{1,2},\ldots,\gamma_{k,\ell}\right)
,\tau\left[  \sigma\right]  }.
\]
Here, $\left(  \gamma_{1,1},\gamma_{1,2},\ldots,\gamma_{k,\ell}\right)  $
denotes the $k\ell$-tuple consisting of all $k\ell$ numbers $\gamma_{i,j}$
(for all $i\in\left[  k\right]  $ and $j\in\left[  \ell\right]  $) listed in
the order of lexicographically increasing pairs $\left(  i,j\right)  $ (i.e.,
the number $\gamma_{i,j}$ comes before $\gamma_{u,v}$ if and only if either
$i<u$ or $\left(  i=u\text{ and }j<v\right)  $).
\end{theorem}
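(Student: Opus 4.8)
The plan is to unfold both $p_{\alpha,\sigma}$ and $p_{\beta,\tau}$ using their definition \eqref{eq.pas.formal-def}, so that
\[
p_{\alpha,\sigma}\circ p_{\beta,\tau}
= m^{\left[k\right]}\circ P_{\alpha}\circ\sigma^{-1}\circ\Delta^{\left[k\right]}
\circ m^{\left[\ell\right]}\circ P_{\beta}\circ\tau^{-1}\circ\Delta^{\left[\ell\right]},
\]
and then to simplify the middle piece $\Delta^{\left[k\right]}\circ m^{\left[\ell\right]}$. The key structural fact is the bialgebra compatibility: $\Delta^{\left[k\right]}\circ m^{\left[\ell\right]}$ equals $\bigl(m^{\left[\ell\right]}\bigr)^{\otimes k}$ composed with a suitable permutation of the $k\ell$ tensorands and the $k\ell$-fold iterated comultiplication $\bigl(\Delta^{\left[k\right]}\bigr)^{\otimes\ell}$ (i.e.\ $\Delta^{\left[k\ell\right]}$ after regrouping). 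Concretely: an element $x\in H$ is first split by $\Delta^{\left[\ell\right]}$ (the one inside $p_{\beta,\tau}$) into $\ell$ pieces; actually it is cleaner to compute $\Delta^{\left[k\right]}\circ m^{\left[\ell\right]}$ directly — one shows by iterating \eqref{eq.mu+v} and \eqref{eq.Du+v} (and the ordinary bialgebra axiom $\Delta\circ m = (m\otimes m)\circ(\mathrm{id}\otimes T\otimes\mathrm{id})\circ(\Delta\otimes\Delta)$) that
\[
\Delta^{\left[k\right]}\circ m^{\left[\ell\right]}
= \bigl(m^{\left[\ell\right]}\otimes m^{\left[\ell\right]}\otimes\cdots\otimes m^{\left[\ell\right]}\bigr)\circ \Xi_{k,\ell}\circ\bigl(\Delta^{\left[k\right]}\otimes\Delta^{\left[k\right]}\otimes\cdots\otimes\Delta^{\left[k\right]}\bigr),
\]
where $\Xi_{k,\ell}\in\mathfrak{S}_{k\ell}$ is the ``transpose'' permutation sending position $(i,j)$ in a $k\times\ell$ grid to position $(j,i)$ in the $\ell\times k$ grid. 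This is the one genuinely computational lemma, and it is where the combinatorics of $\tau[\sigma]$ in Definition~\ref{def.tausigma} will come from.

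Next I would assemble the pieces. After substituting the above identity, the composite becomes an expression of the shape
\[
m^{\left[k\right]}\circ P_{\alpha}\circ\sigma^{-1}\circ\bigl(m^{\left[\ell\right]}\bigr)^{\otimes k}\circ\Xi_{k,\ell}\circ\bigl(\Delta^{\left[k\right]}\bigr)^{\otimes\ell}\circ P_{\beta}\circ\tau^{-1}\circ\Delta^{\left[\ell\right]}.
\]
Using Lemma~\ref{lem.Pa-vers-perm} repeatedly, I would push the permutations $\sigma^{-1}$, $\tau^{-1}$, and $\Xi_{k,\ell}$ past the tensor products of $m^{\left[\ell\right]}$'s, $\Delta^{\left[k\right]}$'s, and the projections $P_\alpha$, $P_\beta$, collecting all permutations of the $k\ell$ tensorands into a single permutation that I expect to be exactly $\tau[\sigma]$ (possibly after inverting — the $\sigma^{-1}$ convention should make this land correctly). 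Simultaneously, $m^{\left[k\right]}$ composed with $\bigl(m^{\left[\ell\right]}\bigr)^{\otimes k}$ collapses to $m^{\left[k\ell\right]}$ by \eqref{eq.mu+v}, and $\bigl(\Delta^{\left[k\right]}\bigr)^{\otimes\ell}\circ\Delta^{\left[\ell\right]}$ collapses to $\Delta^{\left[k\ell\right]}$ by \eqref{eq.Du+v}.

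The remaining ingredient is the projection $P_\alpha$ sitting between the two collections of iterated maps: the composite $\bigl(\Delta^{\left[k\right]}\bigr)^{\otimes\ell}\circ P_\beta$ followed (after permutations) by $P_\alpha$, read on the $k\ell$ tensorands, forces each of the $k\ell$ positions to carry a homogeneous element whose degrees sum to $\alpha_i$ along each ``$\sigma$-block'' and to $\beta_j$ along each ``$\tau$-block.'' This is precisely the content of inserting $\sum_{(\gamma_{i,j})} P_{(\gamma_{1,1},\dots,\gamma_{k,\ell})}$ in place of the single projection, where the sum ranges over the matrices $(\gamma_{i,j})$ with prescribed row sums $\alpha_i$ and column sums $\beta_j$: indeed $P_\alpha$ on the $k$-fold side, when pulled back through the comultiplications, becomes exactly the requirement $\sum_j\gamma_{i,j}=\alpha_i$, and $P_\beta$ on the $\ell$-fold side becomes $\sum_i\gamma_{i,j}=\beta_j$. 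Once this is done, each summand is literally $m^{\left[k\ell\right]}\circ P_{(\gamma_{1,1},\dots,\gamma_{k,\ell})}\circ\tau[\sigma]^{-1}\circ\Delta^{\left[k\ell\right]}=p_{(\gamma_{1,1},\dots,\gamma_{k,\ell}),\,\tau[\sigma]}$, and summing gives the claimed formula.

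I expect the main obstacle to be bookkeeping: getting the permutation that emerges from pushing $\sigma^{-1}$, $\tau^{-1}$, and $\Xi_{k,\ell}$ through the $k\ell$ tensorands to match Definition~\ref{def.tausigma} \emph{on the nose}, including the direction of the inverse. A careful way to handle this cleanly is to verify the identity on a single pure tensor $\Delta^{\left[k\ell\right]}(x)=\sum x_{(1)}\otimes\cdots\otimes x_{(k\ell)}$ using the Sweedler-notation form of $p_{\alpha,\sigma}$ from Remark~\ref{rmk.pas.informal-def}, tracking which Sweedler index ends up in which factor of which product; then the permutation $\tau[\sigma]$ can be read off directly by comparing the index pattern $\ell(i-1)+j\mapsto k(\tau(j)-1)+\sigma(i)$ with the order in which indices are consumed. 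This sidesteps the operadic gymnastics at the cost of a somewhat opaque index chase, but it makes the appearance of $\tau[\sigma]$ transparent. The only other place needing care is confirming that the row/column sum conditions on $(\gamma_{i,j})$ arise symmetrically from $P_\alpha$ and $P_\beta$ respectively; this follows from the fact that $\Delta^{\left[k\right]}$ and $\Delta^{\left[\ell\right]}$ are graded, so a homogeneous input of degree $n$ splits into tensors whose component degrees sum to $n$, and $P_\alpha$, $P_\beta$ simply pin those partial sums.
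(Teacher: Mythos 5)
Your proposal follows the paper's route essentially exactly: unfold both operators, insert the exchange identity $\Delta^{\left[k\right]}\circ m^{\left[\ell\right]}=\left(m^{\left[\ell\right]}\right)^{\otimes k}\circ\zeta\circ\left(\Delta^{\left[k\right]}\right)^{\otimes\ell}$ (your $\Xi_{k,\ell}$ is the paper's Zolotarev shuffle $\zeta$ of Lemma \ref{lem.mDc}), push permutations and projections past tensor powers, collapse the iterated maps to $m^{\left[k\ell\right]}$ and $\Delta^{\left[k\ell\right]}$, and identify the composite permutation as $\tau\left[\sigma\right]$. The bookkeeping you flag as the remaining work is precisely what the paper's Lemmas \ref{lem.pixl}, \ref{lem.pikx}, \ref{lem.Pm}, \ref{lem.DelP}, \ref{lem.PP}, and \ref{lem.sitaze} supply---note in particular that pushing $\sigma^{-1}$ past $\left(m^{\left[\ell\right]}\right)^{\otimes k}$ requires Lemma \ref{lem.pixl} rather than Lemma \ref{lem.Pa-vers-perm} (the latter handles only endomorphisms of $H$), and that the two projection-commutation steps first produce a double sum over $\left(\gamma_{i,j}\right)$ and $\left(\delta_{i,j}\right)$ which is collapsed to the single contingency-table sum by the orthogonality of the $P_{\gamma}$'s (Lemma \ref{lem.PP}).
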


We note that the sum in Theorem \ref{thm.sol-mac-1} can be viewed as a sum
over all $k\times\ell$-matrices $\left(  \gamma_{i,j}\right)  _{i\in\left[
k\right]  ,\ j\in\left[  \ell\right]  }\in\mathbb{N}^{k\times\ell}$ with row
sums $\alpha$ (that is, the sum of all entries in the $i$-th row of the matrix
equals $\alpha_{i}$) and column sums $\beta$. Such matrices are known as
\emph{contingency tables} with marginal distributions $\alpha$ and $\beta$
(see, e.g., \cite[\S 1]{LyuPak20}), and have found ample uses in combinatorics
(e.g., \cite[Corollary 1.3.11]{JamKer81} or \cite[Definition 4.3.4]{GriRei}).

\begin{example}
Let $s_{1}\in\mathfrak{S}_{2}$ be the transposition that swaps $1$ with $2$.
Then, the permutation $s_{1}\left[  s_{1}\right]  \in\mathfrak{S}_{4}$ sends
$1,2,3,4$ to $4,2,3,1$.

Let $\left(  a,b\right)  $ and $\left(  c,d\right)  $ be two weak compositions
in $\mathbb{N}^{2}$. Then, Theorem \ref{thm.sol-mac-1} says that
\[
p_{\left(  a,b\right)  ,s_{1}}\circ p_{\left(  c,d\right)  ,s_{1}}%
=\sum_{\substack{\gamma_{1,1},\gamma_{1,2},\gamma_{2,1},\gamma_{2,2}%
\in\mathbb{N};\\\gamma_{1,1}+\gamma_{1,2}=a;\ \gamma_{2,1}+\gamma
_{2,2}=b;\\\gamma_{1,1}+\gamma_{2,1}=c;\ \gamma_{1,2}+\gamma_{2,2}%
=d}}p_{\left(  \gamma_{1,1},\gamma_{1,2},\gamma_{2,1},\gamma_{2,2}\right)
,s_{1}\left[  s_{1}\right]  }.
\]
This is a sum over all $2\times2$-matrices $\left(  \gamma_{i,j}\right)
_{i\in\left[  2\right]  ,\ j\in\left[  2\right]  }\in\mathbb{N}^{2\times2}$
with row sums $\left(  a,b\right)  $ and column sums $\left(  c,d\right)  $.
How this sum looks like depends on whether $a+b$ equals $c+d$ or not:

\begin{itemize}
\item If $a+b\neq c+d$, then there are no such matrices, and therefore the sum
is empty. Thus,%
\[
p_{\left(  a,b\right)  ,s_{1}}\circ p_{\left(  c,d\right)  ,s_{1}%
}=0\ \ \ \ \ \ \ \ \ \ \text{in this case.}%
\]
This is not surprising, since the image of the map $p_{\left(  c,d\right)
,s_{1}}$ is contained in the graded component $H_{c+d}$ of $H$, whereas the
map $p_{\left(  a,b\right)  ,s_{1}}$ is $0$ on this component.

\item If $a+b=c+d$, then these matrices are precisely the matrices of the form
$\left(
\begin{array}
[c]{cc}%
i & a-i\\
c-i & i-g
\end{array}
\right)  $, where $g=c-b=a-d$ and where $i\in\mathbb{Z}$ satisfies
$\max\left\{  0,g\right\}  \leq i\leq\min\left\{  a,c\right\}  $. Thus, our
above formula becomes%
\[
p_{\left(  a,b\right)  ,s_{1}}\circ p_{\left(  c,d\right)  ,s_{1}}%
=\sum_{i=\max\left\{  0,g\right\}  }^{\min\left\{  a,c\right\}  }p_{\left(
i,\ a-i,\ c-i,\ i-g\right)  ,\ s_{1}\left[  s_{1}\right]  }%
\ \ \ \ \ \ \ \ \ \ \text{where }g=c-b=a-d.
\]
For example, for $a=b=c=d=1$, this simplifies to%
\[
p_{\left(  1,1\right)  ,s_{1}}\circ p_{\left(  1,1\right)  ,s_{1}}=p_{\left(
0,1,1,0\right)  ,s_{1}\left[  s_{1}\right]  }+p_{\left(  1,0,0,1\right)
,s_{1}\left[  s_{1}\right]  }.
\]

\end{itemize}

All this is not hard to check by hand.
\end{example}

\begin{remark}
A bialgebra without a grading can always be interpreted as a graded bialgebra
$H$ concentrated in degree $0$ (that is, satisfying $H_{0}=H$ and $H_{k}=0$
for all $k>0$). Thus, Theorem \ref{thm.sol-mac-1} can be applied to bialgebras
without a grading. If $H$ is such a bialgebra, then $p_{\alpha,\sigma}=0$
whenever the weak composition $\alpha$ has any nonzero entry. Thus, the claim
of Theorem \ref{thm.sol-mac-1} can be greatly simplified in this case. The
resulting claim is%
\[
p_{\mathbf{0},\sigma}\circ p_{\mathbf{0},\tau}=p_{\mathbf{0},\tau\left[
\sigma\right]  },
\]
where $\mathbf{0}$ means the tuple $\left(  0,0,\ldots,0\right)  $ of
appropriate length. This is precisely the equality $\Psi^{\left(
n,\sigma\right)  }\circ\Psi^{\left(  m,\tau\right)  }=\Psi^{\left(
nm,\Phi\left(  \sigma,\tau\right)  \right)  }$ in Pirashvili's \cite[\S 1]%
{Pirash02} and the Proposition in \cite[\S 1.3]{KaSoZh06} by Kashina,
Sommerh\"{a}user and Zhu. (Note that the $\Phi\left(  \sigma,\tau\right)  $ in
\cite[Proposition 5.3]{Pirash02} is exactly our $\sigma\left[  \tau\right]  $.
I do not know what the apparent reversal in the roles of $\sigma$ and $\tau$
is due to, but I have verified my version of the result.)
\end{remark}

\subsection{The composition formula: lemmas on tensors}

As preparation for the proof of Theorem \ref{thm.sol-mac-1}, we shall first
work out how the different kinds of operators composed in
(\ref{eq.pas.formal-def}) (iterated multiplications, comultiplications,
projections, permutations) can be \textquotedblleft commuted\textquotedblright%
\ past each other. We begin with the simplest operators: projections and
permutations. These commutation formulas have nothing to do with bialgebras,
and would equally make sense for any graded module instead of $H$.

Recall that $\mathbb{N}^{k}$ (for any given $k\in\mathbb{N}$) is the set of
all weak compositions of length $k$. The symmetric group $\mathfrak{S}_{k}$
acts from the right on this set $\mathbb{N}^{k}$ by permuting the entries: For
any $\left(  \gamma_{1},\gamma_{2},\ldots,\gamma_{k}\right)  \in\mathbb{N}%
^{k}$ and $\pi\in\mathfrak{S}_{k}$, we have%
\begin{equation}
\left(  \gamma_{1},\gamma_{2},\ldots,\gamma_{k}\right)  \cdot\pi=\left(
\gamma_{\pi\left(  1\right)  },\gamma_{\pi\left(  2\right)  },\ldots
,\gamma_{\pi\left(  k\right)  }\right)  . \label{eq.gammapi}%
\end{equation}
This action has the following property:

\begin{lemma}
\label{lem.Ppi}For any $\pi\in\mathfrak{S}_{k}$ and $\gamma\in\mathbb{N}^{k}$,
we have%
\begin{equation}
P_{\gamma}\circ\pi=\pi\circ P_{\gamma\cdot\pi} \label{pf.thm.sol-mac-1.Ppi}%
\end{equation}
and%
\begin{equation}
\pi\circ P_{\gamma}=P_{\gamma\cdot\pi^{-1}}\circ\pi.
\label{pf.thm.sol-mac-1.piP}%
\end{equation}

\end{lemma}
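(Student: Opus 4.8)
The plan is to deduce both equations from Lemma \ref{lem.Pa-vers-perm}, specialized to the endomorphisms $f_i = p_{\gamma_i}$ of $H$.

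First I would record the general bookkeeping fact that, for any $\pi \in \mathfrak{S}_k$ and any $\gamma = (\gamma_1, \gamma_2, \ldots, \gamma_k) \in \mathbb{N}^k$, the definition of the right action gives $\gamma \cdot \pi = (\gamma_{\pi(1)}, \gamma_{\pi(2)}, \ldots, \gamma_{\pi(k)})$, so that by Definition \ref{def.Palpha}
\[
P_{\gamma \cdot \pi} = p_{\gamma_{\pi(1)}} \otimes p_{\gamma_{\pi(2)}} \otimes \cdots \otimes p_{\gamma_{\pi(k)}} .
\]

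Next I would apply Lemma \ref{lem.Pa-vers-perm} with $f_i = p_{\gamma_i}$ and with the permutation there taken to be $\pi$. Since $P_\gamma = p_{\gamma_1} \otimes p_{\gamma_2} \otimes \cdots \otimes p_{\gamma_k}$, the lemma yields
\[
\pi \circ P_\gamma = \left( p_{\gamma_{\pi^{-1}(1)}} \otimes p_{\gamma_{\pi^{-1}(2)}} \otimes \cdots \otimes p_{\gamma_{\pi^{-1}(k)}} \right) \circ \pi = P_{\gamma \cdot \pi^{-1}} \circ \pi ,
\]
where the last equality is the displayed formula for $P_{\gamma \cdot \pi}$ with $\pi$ replaced by $\pi^{-1}$. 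This is precisely (\ref{pf.thm.sol-mac-1.piP}).

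Finally, (\ref{pf.thm.sol-mac-1.Ppi}) follows by substituting $\gamma \cdot \pi$ for $\gamma$ in (\ref{pf.thm.sol-mac-1.piP}): since $\cdot$ is a right action, $(\gamma \cdot \pi) \cdot \pi^{-1} = \gamma$, so $\pi \circ P_{\gamma \cdot \pi} = P_\gamma \circ \pi$, which rearranges to the claim. (Alternatively, one may rerun the same specialization of Lemma \ref{lem.Pa-vers-perm} directly with $\pi^{-1}$ in place of $\pi$ and relabel.) The argument is entirely formal; the only thing demanding care is keeping the inverse permutations straight and remembering that the action on $\mathbb{N}^k$ appearing in the statement is a \emph{right} action, whereas the action on $H^{\otimes k}$ from Definition \ref{def.Sk-action} (and hence in Lemma \ref{lem.Pa-vers-perm}) is a left action. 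I do not anticipate any substantive obstacle.
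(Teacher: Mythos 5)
Your proof is correct, and it is a genuine (if modest) streamlining of the paper's argument. The paper proves (\ref{pf.thm.sol-mac-1.Ppi}) first, by a direct verification on pure tensors $h_1\otimes\cdots\otimes h_k$ (both sides land on $p_{\gamma_1}(h_{\pi^{-1}(1)})\otimes\cdots\otimes p_{\gamma_k}(h_{\pi^{-1}(k)})$), and then deduces (\ref{pf.thm.sol-mac-1.piP}) by substituting $\gamma\cdot\pi^{-1}$ for $\gamma$. You instead observe that the pure-tensor computation has already been done once and for all in Lemma~\ref{lem.Pa-vers-perm}, specialize it to $f_i=p_{\gamma_i}$ (and recognize $p_{\gamma_{\pi^{-1}(1)}}\otimes\cdots\otimes p_{\gamma_{\pi^{-1}(k)}}=P_{\gamma\cdot\pi^{-1}}$) to obtain (\ref{pf.thm.sol-mac-1.piP}) directly, and then recover (\ref{pf.thm.sol-mac-1.Ppi}) by the same substitution trick in the reverse direction. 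The underlying computation is identical, but your version avoids redoing it and is therefore shorter and a bit less error-prone on the $\pi$ vs.\ $\pi^{-1}$ bookkeeping; the paper's version keeps the lemma self-contained at the cost of some repetition. Either way the dependence on the right-action convention $\gamma\cdot\pi=(\gamma_{\pi(1)},\ldots,\gamma_{\pi(k)})$ is the only place requiring care, and you have handled it correctly.
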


\begin{proof}
Let $\pi\in\mathfrak{S}_{k}$ and $\gamma\in\mathbb{N}^{k}$. Write $\gamma$ as
$\gamma=\left(  \gamma_{1},\gamma_{2},\ldots,\gamma_{k}\right)  $. Then, the
map $P_{\gamma}\circ\pi:H^{\otimes k}\rightarrow H^{\otimes k}$ sends each
pure tensor $h_{1}\otimes h_{2}\otimes\cdots\otimes h_{k}$ to%
\begin{align*}
&  P_{\gamma}\left(  \pi\left(  h_{1}\otimes h_{2}\otimes\cdots\otimes
h_{k}\right)  \right) \\
&  =P_{\gamma}\left(  h_{\pi^{-1}\left(  1\right)  }\otimes h_{\pi^{-1}\left(
2\right)  }\otimes\cdots\otimes h_{\pi^{-1}\left(  k\right)  }\right) \\
&  =p_{\gamma_{1}}\left(  h_{\pi^{-1}\left(  1\right)  }\right)  \otimes
p_{\gamma_{2}}\left(  h_{\pi^{-1}\left(  2\right)  }\right)  \otimes
\cdots\otimes p_{\gamma_{k}}\left(  h_{\pi^{-1}\left(  k\right)  }\right)  ,
\end{align*}
whereas the map $\pi\circ P_{\gamma\cdot\pi}:H^{\otimes k}\rightarrow
H^{\otimes k}$ sends it to%
\begin{align*}
&  \pi\left(  P_{\gamma\cdot\pi}\left(  h_{1}\otimes h_{2}\otimes\cdots\otimes
h_{k}\right)  \right) \\
&  =\pi\left(  p_{\gamma_{\pi\left(  1\right)  }}\left(  h_{1}\right)  \otimes
p_{\gamma_{\pi\left(  2\right)  }}\left(  h_{2}\right)  \otimes\cdots\otimes
p_{\gamma_{\pi\left(  k\right)  }}\left(  h_{k}\right)  \right) \\
&  \ \ \ \ \ \ \ \ \ \ \ \ \ \ \ \ \ \ \ \ \left(
\begin{array}
[c]{c}%
\text{since }\gamma=\left(  \gamma_{1},\gamma_{2},\ldots,\gamma_{k}\right) \\
\text{entails }\gamma\cdot\pi=\left(  \gamma_{\pi\left(  1\right)  }%
,\gamma_{\pi\left(  2\right)  },\ldots,\gamma_{\pi\left(  k\right)  }\right)
\end{array}
\right) \\
&  =p_{\gamma_{\pi\left(  \pi^{-1}\left(  1\right)  \right)  }}\left(
h_{\pi^{-1}\left(  1\right)  }\right)  \otimes p_{\gamma_{\pi\left(  \pi
^{-1}\left(  2\right)  \right)  }}\left(  h_{\pi^{-1}\left(  2\right)
}\right)  \otimes\cdots\otimes p_{\gamma_{\pi\left(  \pi^{-1}\left(  k\right)
\right)  }}\left(  h_{\pi^{-1}\left(  k\right)  }\right) \\
&  =p_{\gamma_{1}}\left(  h_{\pi^{-1}\left(  1\right)  }\right)  \otimes
p_{\gamma_{2}}\left(  h_{\pi^{-1}\left(  2\right)  }\right)  \otimes
\cdots\otimes p_{\gamma_{k}}\left(  h_{\pi^{-1}\left(  k\right)  }\right)  ,
\end{align*}
which is the same result. Thus, the linear maps $P_{\gamma}\circ\pi$ and
$\pi\circ P_{\gamma\cdot\pi}$ agree on each pure tensor, and therefore are
identical. This proves (\ref{pf.thm.sol-mac-1.Ppi}).

Applying (\ref{pf.thm.sol-mac-1.Ppi}) to $\gamma\cdot\pi^{-1}$ instead of
$\gamma$, we obtain%
\[
P_{\gamma\cdot\pi^{-1}}\circ\pi=\pi\circ\underbrace{P_{\gamma\cdot\pi
^{-1}\cdot\pi}}_{=P_{\gamma}}=\pi\circ P_{\gamma}.
\]
This proves (\ref{pf.thm.sol-mac-1.piP}).
\end{proof}

Moreover, the following holds:

\begin{lemma}
\label{lem.PP}Let $\alpha,\beta\in\mathbb{N}^{k}$ be two weak compositions.
Then,
\[
P_{\alpha}\circ P_{\beta}=%
\begin{cases}
P_{\alpha}, & \text{if }\alpha=\beta;\\
0, & \text{if }\alpha\neq\beta.
\end{cases}
\]

\end{lemma}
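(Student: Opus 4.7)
The plan is to reduce this to the analogous statement for the one-variable projections $p_n : H \to H$, exploiting the fact that $P_\alpha$ is by definition a tensor product of such projections.

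First I would recall the basic fact about the projections $p_n : H \to H$ onto the graded components: since $H = \bigoplus_{n \in \mathbb{N}} H_n$, we have $p_n \circ p_m = p_n$ if $n = m$ and $p_n \circ p_m = 0$ if $n \neq m$. This is a standard property of projections onto a direct sum decomposition, and is orthogonal to anything bialgebra-theoretic.

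Next I would invoke the functoriality of the tensor product of $\mathbf{k}$-linear maps, namely that
\[
(f_1 \otimes f_2 \otimes \cdots \otimes f_k) \circ (g_1 \otimes g_2 \otimes \cdots \otimes g_k) = (f_1 \circ g_1) \otimes (f_2 \circ g_2) \otimes \cdots \otimes (f_k \circ g_k)
\]
for arbitrary $\mathbf{k}$-linear maps $f_i, g_i : H \to H$. Writing $\alpha = (\alpha_1, \ldots, \alpha_k)$ and $\beta = (\beta_1, \ldots, \beta_k)$, Definition \ref{def.Palpha} unpacks $P_\alpha$ and $P_\beta$ as tensor products, and the above identity yields
\[
P_\alpha \circ P_\beta = (p_{\alpha_1} \circ p_{\beta_1}) \otimes (p_{\alpha_2} \circ p_{\beta_2}) \otimes \cdots \otimes (p_{\alpha_k} \circ p_{\beta_k}).
\]

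Finally I would split into the two cases. If $\alpha = \beta$, then $\alpha_i = \beta_i$ for every $i$, so each factor equals $p_{\alpha_i}$, and the tensor product is precisely $P_\alpha$. If $\alpha \neq \beta$, there exists some index $i$ with $\alpha_i \neq \beta_i$; then $p_{\alpha_i} \circ p_{\beta_i} = 0$, and the whole tensor product vanishes since the tensor product of $\mathbf{k}$-linear maps is multilinear and one factor is zero. This covers both cases and completes the proof. There is no real obstacle here — the statement is essentially a direct consequence of the definition of $P_\alpha$ combined with the orthogonality of the projections $p_n$.
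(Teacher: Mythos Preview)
Your proof is correct and is precisely the natural unpacking of the statement; the paper's own proof consists of the single word ``Obvious.'' Your argument is exactly the reasoning that makes it obvious, so there is nothing to add.
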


\begin{proof}
This follows easily from the fact that the projections $p_{k}:H\rightarrow H$
satisfy $p_{a}\circ p_{a}=p_{a}$ and $p_{a}\circ p_{b}=0$ for all $a\neq b$.
\end{proof}

The remainder of this section is devoted to lemmas whose purpose is to factor
the permutation $\tau\left[  \sigma\right]  $ (which appears in Theorem
\ref{thm.sol-mac-1}) into simpler permutations (Lemma \ref{lem.sitaze}) and to
establish how these simpler permutations interact with tensors and linear
maps. We begin by introducing one of these simpler permutations, which we call
the \emph{Zolotarev shuffle}:

\begin{lemma}
\label{lem.zol1}Let $k,\ell\in\mathbb{N}$. Let $\zeta\in\mathfrak{S}_{k\ell}$
be the permutation of $\left[  k\ell\right]  $ that sends each $k\left(
j-1\right)  +i$ (with $i\in\left[  k\right]  $ and $j\in\left[  \ell\right]
$) to $\ell\left(  i-1\right)  +j$. This is called the \emph{Zolotarev
shuffle} (and appears, e.g., as $\nu^{-1}\mu$ in \cite{Rousse94}).\footnotemark

Let $\left(  h_{i,j}\right)  _{i\in\left[  \ell\right]  ,\ j\in\left[
k\right]  }\in H^{\ell\times k}$ be any $\ell\times k$-matrix over $H$. Then,
\begin{align}
&  \zeta\left(  h_{1,1}\otimes h_{1,2}\otimes\cdots\otimes h_{1,k}\right.
\nonumber\\
&  \ \ \ \ \ \ \ \ \ \ \left.  \otimes\ h_{2,1}\otimes h_{2,2}\otimes
\cdots\otimes h_{2,k}\right. \nonumber\\
&  \ \ \ \ \ \ \ \ \ \ \left.  \otimes\ \cdots\right. \nonumber\\
&  \ \ \ \ \ \ \ \ \ \ \left.  \otimes\ h_{\ell,1}\otimes h_{\ell,2}%
\otimes\cdots\otimes h_{\ell,k}\right) \nonumber\\
&  =h_{1,1}\otimes h_{2,1}\otimes\cdots\otimes h_{\ell,1}\nonumber\\
&  \ \ \ \ \ \ \ \ \ \ \otimes h_{1,2}\otimes h_{2,2}\otimes\cdots\otimes
h_{\ell,2}\nonumber\\
&  \ \ \ \ \ \ \ \ \ \ \otimes\cdots\nonumber\\
&  \ \ \ \ \ \ \ \ \ \ \otimes h_{1,k}\otimes h_{2,k}\otimes\cdots\otimes
h_{\ell,k}. \label{pf.thm.sol-mac-1.z-prod}%
\end{align}
(Here, the $h_{i,j}$ on the left-hand side appear in the order of
lexicographically increasing pairs $\left(  i,j\right)  $, whereas the
$h_{i,j}$ on the right-hand side appear in the order of lexicographically
increasing pairs $\left(  j,i\right)  $.)
\end{lemma}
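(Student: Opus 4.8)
The plan is to verify the identity (\ref{pf.thm.sol-mac-1.z-prod}) directly on pure tensors, tracking how the permutation $\zeta$ acts on tensor positions. First I would index the tensor factors on the left-hand side by the set $[k\ell]$ in the natural left-to-right order: since the $h_{i,j}$ appear in lexicographically increasing order of pairs $(i,j)$ with $i\in[\ell]$ and $j\in[k]$, the factor $h_{i,j}$ sits in position $k(i-1)+j$. Dually, I would observe that on the right-hand side the $h_{i,j}$ appear in lexicographically increasing order of pairs $(j,i)$, so $h_{i,j}$ sits in position $\ell(j-1)+i$. Thus the claimed equation says precisely that $\zeta$ should move whatever sits in position $k(i-1)+j$ into position $\ell(j-1)+i$.

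Next I would unwind the definition of the action from Definition \ref{def.Sk-action}: for a pure tensor $g_1\otimes g_2\otimes\cdots\otimes g_{k\ell}$, we have $\zeta\cdot(g_1\otimes\cdots\otimes g_{k\ell}) = g_{\zeta^{-1}(1)}\otimes g_{\zeta^{-1}(2)}\otimes\cdots\otimes g_{\zeta^{-1}(k\ell)}$, so the factor originally in position $p$ ends up in position $\zeta(p)$. Applying this with $g_p = h_{i,j}$ where $p = k(i-1)+j$ (renaming indices so that $\zeta$'s defining formula matches: $\zeta$ sends $k(j'-1)+i'$ to $\ell(i'-1)+j'$, so with $i' = j$ and $j' = i$ we get that $\zeta$ sends $k(i-1)+j$ to $\ell(j-1)+i$), the factor $h_{i,j}$ lands in position $\ell(j-1)+i$ on the right, which is exactly where it appears in the claimed right-hand side. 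Since both sides are $\mathbf{k}$-linear in the $h_{i,j}$ and agree on all pure tensors of the stated shape — and every pure tensor in $H^{\otimes(k\ell)}$ is of that shape after relabeling — the two maps agree, proving (\ref{pf.thm.sol-mac-1.z-prod}).

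The only real subtlety, and the step I would be most careful about, is bookkeeping the index conventions: there are two competing lexicographic orders (on $(i,j)$ versus $(j,i)$) and the defining formula for $\zeta$ uses its own letters $i\in[k]$, $j\in[\ell]$, so I would need to match $\zeta$'s $(i,j)$ against the matrix's $(i,j)$ carefully and check whether the action formula uses $\zeta$ or $\zeta^{-1}$. In the writeup I would either spell out both position assignments explicitly before invoking $\zeta$, or sidestep the inverse entirely by noting that Definition \ref{def.Sk-action} says $\sigma\cdot(h_1\otimes\cdots\otimes h_n)$ has $h_{\sigma^{-1}(r)}$ in position $r$, equivalently $h_s$ in position $\sigma(s)$; with the latter phrasing the computation is a single substitution. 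Everything else — linearity, the fact that it suffices to check pure tensors, and the uniqueness of the decomposition $p = k(j-1)+i$ already noted after Definition \ref{def.tausigma} — is routine.
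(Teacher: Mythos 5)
Your proof is correct and takes essentially the same approach as the paper's own proof, which also verifies the identity by tracking positions of tensor factors (via the bijections $\lambda(i,j)=k(j-1)+i$ and $\rho(i,j)=\ell(i-1)+j$ and the relation $\zeta\circ\lambda=\rho$); your computation $\zeta(k(i-1)+j)=\ell(j-1)+i$ is exactly this relation in disguise. The closing remark about $\mathbf{k}$-linearity is harmless but unnecessary, since the lemma asserts an equality of elements (for a fixed matrix of $h_{i,j}$'s) rather than an equality of maps.
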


\footnotetext{To see that this $\zeta$ is well-defined, we can argue as
follows:
\par
\begin{itemize}
\item Each element of $\left[  k\ell\right]  $ can be uniquely written as
$k\left(  j-1\right)  +i$ with $i\in\left[  k\right]  $ and $j\in\left[
\ell\right]  $. Moreover, $\ell\left(  i-1\right)  +j\in\left[  k\ell\right]
$ for any such $i$ and $j$. Thus, $\zeta$ is a well-defined map from $\left[
k\ell\right]  $ to $\left[  k\ell\right]  $.
\par
\item Each element of $\left[  k\ell\right]  $ can be uniquely written as
$\ell\left(  i-1\right)  +j$ with $i\in\left[  k\right]  $ and $j\in\left[
\ell\right]  $. Thus, $\zeta$ is bijective, i.e., a permutation of $\left[
k\ell\right]  $.
\end{itemize}
}We note that the Zolotarev shuffle $\zeta$ in Lemma \ref{lem.zol1} can also
be described as $\left(  \operatorname*{id}\nolimits_{\left[  \ell\right]
}\left[  \operatorname*{id}\nolimits_{\left[  k\right]  }\right]  \right)
^{-1}$ using the notation $\tau\left[  \sigma\right]  $ from Definition
\ref{def.tausigma}. This follows easily from Lemma \ref{lem.sitaze} proved
further below.

\begin{proof}
[Proof of Lemma \ref{lem.zol1}.]Define two maps $\lambda$ and $\rho$ from
$\left[  k\right]  \times\left[  \ell\right]  $ to $\left[  k\ell\right]  $ by
setting%
\[
\lambda\left(  i,j\right)  :=k\left(  j-1\right)  +i\in\left[  k\ell\right]
\ \ \ \ \ \ \ \ \ \ \text{and}\ \ \ \ \ \ \ \ \ \ \rho\left(  i,j\right)
:=\ell\left(  i-1\right)  +j\in\left[  k\ell\right]
\]
for all $\left(  i,j\right)  \in\left[  k\right]  \times\left[  \ell\right]
$. These maps $\lambda$ and $\rho$ are bijections. In fact, $\rho$ sends each
pair $\left(  i,j\right)  $ to the position of $\left(  i,j\right)  $ in the
lexicographically ordered Cartesian product $\left[  k\right]  \times\left[
\ell\right]  $, whereas $\lambda$ sends each pair $\left(  i,j\right)  $ to
the position of $\left(  j,i\right)  $ in the lexicographically ordered
Cartesian product $\left[  \ell\right]  \times\left[  k\right]  $. (We already
observed all this in Remark \ref{rmk.tausigma.wd}.)

Recall that all $i\in\left[  k\right]  $ and $j\in\left[  \ell\right]  $
satisfy $\zeta\left(  k\left(  j-1\right)  +i\right)  =\ell\left(  i-1\right)
+j$ (by the definition of $\zeta$). In other words, all $i\in\left[  k\right]
$ and $j\in\left[  \ell\right]  $ satisfy $\zeta\left(  \lambda\left(
i,j\right)  \right)  =\rho\left(  i,j\right)  $ (since $\lambda\left(
i,j\right)  =k\left(  j-1\right)  +i$ and $\rho\left(  i,j\right)
=\ell\left(  i-1\right)  +j$). In other words,
\begin{equation}
\zeta\circ\lambda=\rho. \label{pf.lem.zol1.zl=r}%
\end{equation}

Set $g_{\left(  j,i\right)  }:=h_{i,j}$ for all $\left(  j,i\right)
\in\left[  k\right]  \times\left[  \ell\right]  $. Recall that the bijection
$\lambda$ sends each pair $\left(  i,j\right)  $ to the position of $\left(
j,i\right)  $ in the lexicographically ordered Cartesian product $\left[
\ell\right]  \times\left[  k\right]  $. Hence, the list $\left(  \lambda
^{-1}\left(  1\right)  ,\lambda^{-1}\left(  2\right)  ,\ldots,\lambda
^{-1}\left(  k\ell\right)  \right)  $ consists of all the $k\ell$ pairs
$\left(  i,j\right)  \in\left[  k\right]  \times\left[  \ell\right]  $ in the
order of (lexicographically) increasing pairs $\left(  j,i\right)  $. In other
words,%
\begin{align*}
\left(  \lambda^{-1}\left(  1\right)  ,\lambda^{-1}\left(  2\right)
,\ldots,\lambda^{-1}\left(  k\ell\right)  \right)   &  =\left(  \left(
1,1\right)  ,\ \left(  2,1\right)  ,\ \ldots,\ \left(  k,1\right)  ,\right. \\
&  \ \ \ \ \ \ \ \ \ \ \left.  \left(  1,2\right)  ,\ \left(  2,2\right)
,\ \ldots,\ \left(  k,2\right)  ,\right. \\
&  \ \ \ \ \ \ \ \ \ \ \left.  \ldots,\right. \\
&  \ \ \ \ \ \ \ \ \ \ \left.  \left(  1,\ell\right)  ,\ \left(
2,\ell\right)  ,\ \ldots,\ \left(  k,\ell\right)  \right)  .
\end{align*}
Thus,%
\begin{align*}
g_{\lambda^{-1}\left(  1\right)  }\otimes g_{\lambda^{-1}\left(  2\right)
}\otimes\cdots\otimes g_{\lambda^{-1}\left(  k\ell\right)  }  &  =g_{\left(
1,1\right)  }\otimes g_{\left(  2,1\right)  }\otimes\cdots\otimes g_{\left(
k,1\right)  }\\
&  \ \ \ \ \ \ \ \ \ \ \otimes g_{\left(  1,2\right)  }\otimes g_{\left(
2,2\right)  }\otimes\cdots\otimes g_{\left(  k,2\right)  }\\
&  \ \ \ \ \ \ \ \ \ \ \otimes\cdots\\
&  \ \ \ \ \ \ \ \ \ \ \otimes g_{\left(  1,\ell\right)  }\otimes g_{\left(
2,\ell\right)  }\otimes\cdots\otimes g_{\left(  k,\ell\right)  }\\
&  =h_{1,1}\otimes h_{1,2}\otimes\cdots\otimes h_{1,k}\\
&  \ \ \ \ \ \ \ \ \ \ \otimes h_{2,1}\otimes h_{2,2}\otimes\cdots\otimes
h_{2,k}\\
&  \ \ \ \ \ \ \ \ \ \ \otimes\cdots\\
&  \ \ \ \ \ \ \ \ \ \ \otimes h_{\ell,1}\otimes h_{\ell,2}\otimes
\cdots\otimes h_{\ell,k}%
\end{align*}
(since $g_{\left(  j,i\right)  }=h_{i,j}$ for all $j$ and $i$). By a similar
argument, we obtain%
\begin{align*}
g_{\rho^{-1}\left(  1\right)  }\otimes g_{\rho^{-1}\left(  2\right)  }%
\otimes\cdots\otimes g_{\rho^{-1}\left(  k\ell\right)  }  &  =h_{1,1}\otimes
h_{2,1}\otimes\cdots\otimes h_{\ell,1}\\
&  \ \ \ \ \ \ \ \ \ \ \otimes h_{1,2}\otimes h_{2,2}\otimes\cdots\otimes
h_{\ell,2}\\
&  \ \ \ \ \ \ \ \ \ \ \otimes\cdots\\
&  \ \ \ \ \ \ \ \ \ \ \otimes h_{1,k}\otimes h_{2,k}\otimes\cdots\otimes
h_{\ell,k}.
\end{align*}
In view of these two equalities, we must show that%
\[
\zeta\left(  g_{\lambda^{-1}\left(  1\right)  }\otimes g_{\lambda^{-1}\left(
2\right)  }\otimes\cdots\otimes g_{\lambda^{-1}\left(  k\ell\right)  }\right)
=g_{\rho^{-1}\left(  1\right)  }\otimes g_{\rho^{-1}\left(  2\right)  }%
\otimes\cdots\otimes g_{\rho^{-1}\left(  k\ell\right)  }.
\]
Since%
\[
\zeta\left(  g_{\lambda^{-1}\left(  1\right)  }\otimes g_{\lambda^{-1}\left(
2\right)  }\otimes\cdots\otimes g_{\lambda^{-1}\left(  k\ell\right)  }\right)
=g_{\lambda^{-1}\left(  \zeta^{-1}\left(  1\right)  \right)  }\otimes
g_{\lambda^{-1}\left(  \zeta^{-1}\left(  2\right)  \right)  }\otimes
\cdots\otimes g_{\lambda^{-1}\left(  \zeta^{-1}\left(  k\ell\right)  \right)
},
\]
this is equivalent to showing that
\[
g_{\lambda^{-1}\left(  \zeta^{-1}\left(  1\right)  \right)  }\otimes
g_{\lambda^{-1}\left(  \zeta^{-1}\left(  2\right)  \right)  }\otimes
\cdots\otimes g_{\lambda^{-1}\left(  \zeta^{-1}\left(  k\ell\right)  \right)
}=g_{\rho^{-1}\left(  1\right)  }\otimes g_{\rho^{-1}\left(  2\right)
}\otimes\cdots\otimes g_{\rho^{-1}\left(  k\ell\right)  }.
\]
Thus, we need to check that $\lambda^{-1}\left(  \zeta^{-1}\left(  q\right)
\right)  =\rho^{-1}\left(  q\right)  $ for each $q\in\left[  k\ell\right]  $.
In other words, we need to check that $\lambda^{-1}\circ\zeta^{-1}=\rho^{-1}$.
But this follows from (\ref{pf.lem.zol1.zl=r}), since $\lambda^{-1}\circ
\zeta^{-1}=\left(  \zeta\circ\lambda\right)  ^{-1}=\rho^{-1}$ (by
(\ref{pf.lem.zol1.zl=r})). Hence, Lemma \ref{lem.zol1} is proven.
\end{proof}

\begin{lemma}
\label{lem.pixl}Let $k,\ell\in\mathbb{N}$. Let $\sigma\in\mathfrak{S}_{k}$.
Let $\sigma^{\times\ell}$ denote the permutation in $\mathfrak{S}_{k\ell}$
that sends each $\ell\left(  i-1\right)  +j$ (with $i\in\left[  k\right]  $
and $j\in\left[  \ell\right]  $) to $\ell\left(  \sigma\left(  i\right)
-1\right)  +j$.

Let $f:H^{\otimes\ell}\rightarrow H$ be any $\mathbf{k}$-linear map. Then,%
\[
\sigma\circ f^{\otimes k}=f^{\otimes k}\circ\sigma^{\times\ell}%
\]
(as maps from $H^{\otimes k\ell}$ to $H^{\otimes k}$).
\end{lemma}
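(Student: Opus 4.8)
The plan is to verify the identity directly on pure tensors, since both sides are $\mathbf{k}$-linear maps $H^{\otimes k\ell}\rightarrow H^{\otimes k}$ and the pure tensors span $H^{\otimes k\ell}$. As a preliminary remark, I would note that $\sigma^{\times\ell}$ is indeed a well-defined permutation of $\left[  k\ell\right]  $, because every element of $\left[  k\ell\right]  $ has a unique representation as $\ell\left(  i-1\right)  +j$ with $i\in\left[  k\right]  $ and $j\in\left[  \ell\right]  $ (this is the same unique-representation fact invoked just after Definition \ref{def.tausigma}).

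First I would fix notation for a pure tensor in $H^{\otimes k\ell}$, writing it so that its tensor factor in position $\ell\left(  i-1\right)  +j$ (for $i\in\left[  k\right]  $ and $j\in\left[  \ell\right]  $) is some element $h_{i,j}\in H$. Grouping the $k\ell$ factors into $k$ consecutive blocks of length $\ell$ and using the canonical identification $H^{\otimes k\ell}\cong\left(  H^{\otimes\ell}\right)  ^{\otimes k}$, the map $f^{\otimes k}$ sends this pure tensor to $\bigotimes_{i=1}^{k}f\left(  h_{i,1}\otimes h_{i,2}\otimes\cdots\otimes h_{i,\ell}\right)  \in H^{\otimes k}$. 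Applying $\sigma$ afterward, and using the action convention of Definition \ref{def.Sk-action} (under which $\sigma$ moves the factor from slot $\sigma^{-1}\left(  m\right)  $ into slot $m$), the left-hand side $\sigma\circ f^{\otimes k}$ sends the pure tensor to $\bigotimes_{i=1}^{k}f\left(  h_{\sigma^{-1}\left(  i\right)  ,1}\otimes h_{\sigma^{-1}\left(  i\right)  ,2}\otimes\cdots\otimes h_{\sigma^{-1}\left(  i\right)  ,\ell}\right)  $.

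For the right-hand side I would first compute $\sigma^{\times\ell}$ applied to the original pure tensor. By definition $\sigma^{\times\ell}$ sends $\ell\left(  i-1\right)  +j$ to $\ell\left(  \sigma\left(  i\right)  -1\right)  +j$, so its inverse sends $\ell\left(  i-1\right)  +j$ to $\ell\left(  \sigma^{-1}\left(  i\right)  -1\right)  +j$; hence, again by Definition \ref{def.Sk-action}, the factor of $\sigma^{\times\ell}$ applied to the pure tensor in position $\ell\left(  i-1\right)  +j$ is $h_{\sigma^{-1}\left(  i\right)  ,j}$. In other words, $\sigma^{\times\ell}$ permutes the $k$ blocks of length $\ell$ according to $\sigma$ while preserving the internal order of each block. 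Applying $f^{\otimes k}$ then gives exactly $\bigotimes_{i=1}^{k}f\left(  h_{\sigma^{-1}\left(  i\right)  ,1}\otimes\cdots\otimes h_{\sigma^{-1}\left(  i\right)  ,\ell}\right)  $, which coincides with the image computed for the left-hand side. Since the two maps agree on every pure tensor, they are equal.

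The only genuine obstacle is keeping the $\sigma$-versus-$\sigma^{-1}$ bookkeeping consistent with the convention of Definition \ref{def.Sk-action}; once the effect of that convention on both $\sigma$ acting on $H^{\otimes k}$ and $\sigma^{\times\ell}$ acting on $H^{\otimes k\ell}$ is pinned down, the verification is purely mechanical index-chasing.
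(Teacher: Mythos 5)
Your proof is correct and follows essentially the same strategy as the paper's: the paper isolates as a separate Claim the fact that $\sigma^{\times\ell}$ acts on the $k$ blocks of length $\ell$ by permuting them according to $\sigma$ while preserving each block internally, whereas you verify that fact inline, but the index-chasing and the final comparison on pure tensors are the same. The $\sigma$-versus-$\sigma^{-1}$ bookkeeping you flag as the only delicate point is handled correctly throughout.
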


\begin{proof}
We begin with an even more basic claim:

\begin{statement}
\textit{Claim 1:} Let $a_{1},a_{2},\ldots,a_{k}\in H^{\otimes\ell}$. Then,%
\[
\sigma^{\times\ell}\left(  a_{1}\otimes a_{2}\otimes\cdots\otimes
a_{k}\right)  =a_{\sigma^{-1}\left(  1\right)  }\otimes a_{\sigma^{-1}\left(
2\right)  }\otimes\cdots\otimes a_{\sigma^{-1}\left(  k\right)  }%
\]
in $H^{\otimes k\ell}$. (Here, we are identifying $\left(  H^{\otimes\ell
}\right)  ^{\otimes k}$ with $H^{\otimes k\ell}$ in the obvious way.)
\end{statement}

\begin{proof}
[Proof of Claim 1.]Let $\omega=\sigma^{\times\ell}$.

The equality we need to prove depends linearly on each of $a_{1},a_{2}%
,\ldots,a_{k}$. Hence, we can WLOG assume that each of $a_{1},a_{2}%
,\ldots,a_{k}$ is a pure tensor. Assume this. Thus,%
\begin{align}
a_{1}  &  =g_{1}\otimes g_{2}\otimes\cdots\otimes g_{\ell}%
,\label{pf.lem.pixl.a1=}\\
a_{2}  &  =g_{\ell+1}\otimes g_{\ell+2}\otimes\cdots\otimes g_{2\ell
},\label{pf.lem.pixl.a2=}\\
&  \ldots,\nonumber\\
a_{k}  &  =g_{\left(  k-1\right)  \ell+1}\otimes g_{\left(  k-1\right)
\ell+2}\otimes\cdots\otimes g_{k\ell} \label{pf.lem.pixl.ak=}%
\end{align}
for some $g_{1},g_{2},\ldots,g_{k\ell}\in H$. Consider these $g_{1}%
,g_{2},\ldots,g_{k\ell}$. Thus,%
\[
a_{1}\otimes a_{2}\otimes\cdots\otimes a_{k}=g_{1}\otimes g_{2}\otimes
\cdots\otimes g_{k\ell}%
\]
(as we can see by tensoring together the equalities (\ref{pf.lem.pixl.a1=}),
(\ref{pf.lem.pixl.a2=}), $\ldots$, (\ref{pf.lem.pixl.ak=})). Applying the
permutation $\sigma^{\times\ell}\in\mathfrak{S}_{k\ell}$ to both sides of this
equality, we obtain%
\begin{align}
&  \sigma^{\times\ell}\left(  a_{1}\otimes a_{2}\otimes\cdots\otimes
a_{k}\right) \nonumber\\
&  =\underbrace{\sigma^{\times\ell}}_{=\omega}\left(  g_{1}\otimes
g_{2}\otimes\cdots\otimes g_{k\ell}\right) \nonumber\\
&  =\omega\left(  g_{1}\otimes g_{2}\otimes\cdots\otimes g_{k\ell}\right)
\nonumber\\
&  =g_{\omega^{-1}\left(  1\right)  }\otimes g_{\omega^{-1}\left(  2\right)
}\otimes\cdots\otimes g_{\omega^{-1}\left(  k\ell\right)  }.
\label{pf.lem.pixl.1}%
\end{align}

We shall now prove that
\begin{align}
&  a_{\sigma^{-1}\left(  1\right)  }\otimes a_{\sigma^{-1}\left(  2\right)
}\otimes\cdots\otimes a_{\sigma^{-1}\left(  k\right)  }\nonumber\\
&  =g_{\omega^{-1}\left(  1\right)  }\otimes g_{\omega^{-1}\left(  2\right)
}\otimes\cdots\otimes g_{\omega^{-1}\left(  k\ell\right)  }.
\label{pf.lem.pixl.goal2}%
\end{align}
Indeed, in order to prove this identity, it clearly suffices to show that%
\begin{equation}
a_{\sigma^{-1}\left(  i\right)  }=g_{\omega^{-1}\left(  \ell\left(
i-1\right)  +1\right)  }\otimes g_{\omega^{-1}\left(  \ell\left(  i-1\right)
+2\right)  }\otimes\cdots\otimes g_{\omega^{-1}\left(  \ell i\right)  }
\label{pf.lem.pixl.goal2.pf.1}%
\end{equation}
for each $i\in\left[  k\right]  $ (because then, tensoring the equalities
(\ref{pf.lem.pixl.goal2.pf.1}) together for all $i\in\left[  k\right]  $ will
yield (\ref{pf.lem.pixl.goal2})). But this is easy: Let $i\in\left[  k\right]
$. Then, the definition of $a_{\sigma^{-1}\left(  i\right)  }$ yields%
\begin{equation}
a_{\sigma^{-1}\left(  i\right)  }=g_{\ell\left(  \sigma^{-1}\left(  i\right)
-1\right)  +1}\otimes g_{\ell\left(  \sigma^{-1}\left(  i\right)  -1\right)
+2}\otimes\cdots\otimes g_{\ell\left(  \sigma^{-1}\left(  i\right)  \right)
}. \label{pf.lem.pixl.goal2.pf.2}%
\end{equation}
But each $j\in\left[  \ell\right]  $ satisfies%
\[
\ell\left(  \sigma^{-1}\left(  i\right)  -1\right)  +j=\omega^{-1}\left(
\ell\left(  i-1\right)  +j\right)
\]
(since $\omega=\sigma^{\times\ell}$ was defined to send $\ell\left(
\sigma^{-1}\left(  i\right)  -1\right)  +j$ to $\ell\left(  \underbrace{\sigma
\left(  \sigma^{-1}\left(  i\right)  \right)  }_{=i}-\,1\right)
+j=\ell\left(  i-1\right)  +j$) and thus $g_{\ell\left(  \sigma^{-1}\left(
i\right)  -1\right)  +j}=g_{\omega^{-1}\left(  \ell\left(  i-1\right)
+j\right)  }$. Hence, the right-hand side of (\ref{pf.lem.pixl.goal2.pf.2})
equals the right-hand side of (\ref{pf.lem.pixl.goal2.pf.1}). Therefore,
(\ref{pf.lem.pixl.goal2.pf.1}) follows from (\ref{pf.lem.pixl.goal2.pf.2})
(since these two equalities have the same left-hand side).

Forget that we fixed $i$. We thus have proved (\ref{pf.lem.pixl.goal2.pf.1})
for each $i\in\left[  k\right]  $. As explained, this proves
(\ref{pf.lem.pixl.goal2}).

Comparing (\ref{pf.lem.pixl.1}) with (\ref{pf.lem.pixl.goal2}), we find%
\[
\sigma^{\times\ell}\left(  a_{1}\otimes a_{2}\otimes\cdots\otimes
a_{k}\right)  =a_{\sigma^{-1}\left(  1\right)  }\otimes a_{\sigma^{-1}\left(
2\right)  }\otimes\cdots\otimes a_{\sigma^{-1}\left(  k\right)  }.
\]
This proves Claim 1.
\end{proof}

The rest is easy: Let $\mathbf{a}=a_{1}\otimes a_{2}\otimes\cdots\otimes
a_{k}$ be a pure tensor in $\left(  H^{\otimes\ell}\right)  ^{\otimes k}$
(with $a_{1},a_{2},\ldots,a_{k}\in H^{\otimes\ell}$). Then,%
\begin{align*}
\left(  \sigma\circ f^{\otimes k}\right)  \left(  \mathbf{a}\right)   &
=\left(  \sigma\circ f^{\otimes k}\right)  \left(  a_{1}\otimes a_{2}%
\otimes\cdots\otimes a_{k}\right) \\
&  =\sigma\left(  \underbrace{f^{\otimes k}\left(  a_{1}\otimes a_{2}%
\otimes\cdots\otimes a_{k}\right)  }_{=f\left(  a_{1}\right)  \otimes f\left(
a_{2}\right)  \otimes\cdots\otimes f\left(  a_{k}\right)  }\right) \\
&  =\sigma\left(  f\left(  a_{1}\right)  \otimes f\left(  a_{2}\right)
\otimes\cdots\otimes f\left(  a_{k}\right)  \right) \\
&  =f\left(  a_{\sigma^{-1}\left(  1\right)  }\right)  \otimes f\left(
a_{\sigma^{-1}\left(  2\right)  }\right)  \otimes\cdots\otimes f\left(
a_{\sigma^{-1}\left(  k\right)  }\right)
\end{align*}
and%
\begin{align*}
\left(  f^{\otimes k}\circ\sigma^{\times\ell}\right)  \left(  \mathbf{a}%
\right)   &  =\left(  f^{\otimes k}\circ\sigma^{\times\ell}\right)  \left(
a_{1}\otimes a_{2}\otimes\cdots\otimes a_{k}\right) \\
&  =f^{\otimes k}\left(  \underbrace{\sigma^{\times\ell}\left(  a_{1}\otimes
a_{2}\otimes\cdots\otimes a_{k}\right)  }_{\substack{=a_{\sigma^{-1}\left(
1\right)  }\otimes a_{\sigma^{-1}\left(  2\right)  }\otimes\cdots\otimes
a_{\sigma^{-1}\left(  k\right)  }\\\text{(by Claim 1)}}}\right) \\
&  =f^{\otimes k}\left(  a_{\sigma^{-1}\left(  1\right)  }\otimes
a_{\sigma^{-1}\left(  2\right)  }\otimes\cdots\otimes a_{\sigma^{-1}\left(
k\right)  }\right) \\
&  =f\left(  a_{\sigma^{-1}\left(  1\right)  }\right)  \otimes f\left(
a_{\sigma^{-1}\left(  2\right)  }\right)  \otimes\cdots\otimes f\left(
a_{\sigma^{-1}\left(  k\right)  }\right)  .
\end{align*}
Comparing these two equalities, we find $\left(  \sigma\circ f^{\otimes
k}\right)  \left(  \mathbf{a}\right)  =\left(  f^{\otimes k}\circ
\sigma^{\times\ell}\right)  \left(  \mathbf{a}\right)  $.

Forget that we fixed $\mathbf{a}$. We thus have proved that $\left(
\sigma\circ f^{\otimes k}\right)  \left(  \mathbf{a}\right)  =\left(
f^{\otimes k}\circ\sigma^{\times\ell}\right)  \left(  \mathbf{a}\right)  $ for
each pure tensor $\mathbf{a}$ in $\left(  H^{\otimes\ell}\right)  ^{\otimes
k}$. In other words, the two maps $\sigma\circ f^{\otimes k}$ and $f^{\otimes
k}\circ\sigma^{\times\ell}$ agree on all pure tensors in $\left(
H^{\otimes\ell}\right)  ^{\otimes k}$. Since these two maps are $\mathbf{k}%
$-linear (and since the pure tensors span $\left(  H^{\otimes\ell}\right)
^{\otimes k}$), this entails that they must be identical. In other words,
$\sigma\circ f^{\otimes k}=f^{\otimes k}\circ\sigma^{\times\ell}$. This proves
Lemma \ref{lem.pixl}.
\end{proof}

\begin{lemma}
\label{lem.pikx}Let $k,\ell\in\mathbb{N}$. Let $\tau\in\mathfrak{S}_{\ell}$.
Let $\tau^{k\times}$ denote the permutation in $\mathfrak{S}_{k\ell}$ that
sends each $k\left(  j-1\right)  +i$ (with $i\in\left[  k\right]  $ and
$j\in\left[  \ell\right]  $) to $k\left(  \tau\left(  j\right)  -1\right)  +i$.

Let $f:H\rightarrow H^{\otimes k}$ be any $\mathbf{k}$-linear map. Then,%
\[
f^{\otimes\ell}\circ\tau=\tau^{k\times}\circ f^{\otimes\ell}%
\]
(as maps from $H^{\otimes\ell}$ to $H^{\otimes k\ell}$).
\end{lemma}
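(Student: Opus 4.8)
The plan is to imitate the proof of Lemma \ref{lem.pixl}, but with the roles of "blocks" and "slots" interchanged. Both sides of the claimed identity are $\mathbf{k}$-linear maps $H^{\otimes\ell}\to H^{\otimes k\ell}$, so it suffices to check that they agree on an arbitrary pure tensor $h_1\otimes h_2\otimes\cdots\otimes h_\ell\in H^{\otimes\ell}$ (with $h_1,h_2,\ldots,h_\ell\in H$).

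First I would establish the following analogue of Claim 1 in the proof of Lemma \ref{lem.pixl}: for any $b_1,b_2,\ldots,b_\ell\in H^{\otimes k}$,
\[
\tau^{k\times}\left(b_1\otimes b_2\otimes\cdots\otimes b_\ell\right)=b_{\tau^{-1}\left(1\right)}\otimes b_{\tau^{-1}\left(2\right)}\otimes\cdots\otimes b_{\tau^{-1}\left(\ell\right)}
\]
in $H^{\otimes k\ell}$, where $\left(H^{\otimes k}\right)^{\otimes\ell}$ is identified with $H^{\otimes k\ell}$ in the obvious way. By linearity we may assume each $b_j$ is a pure tensor $b_j=g_{k(j-1)+1}\otimes g_{k(j-1)+2}\otimes\cdots\otimes g_{kj}$, so that $b_1\otimes\cdots\otimes b_\ell=g_1\otimes g_2\otimes\cdots\otimes g_{k\ell}$; then by Definition \ref{def.Sk-action} we get $\tau^{k\times}\left(g_1\otimes\cdots\otimes g_{k\ell}\right)=g_{\left(\tau^{k\times}\right)^{-1}(1)}\otimes\cdots\otimes g_{\left(\tau^{k\times}\right)^{-1}(k\ell)}$. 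Since $\tau^{k\times}$ sends $k(j-1)+i$ to $k(\tau(j)-1)+i$, its inverse sends $k(j-1)+i$ to $k(\tau^{-1}(j)-1)+i$; hence the entry of the right-hand side in position $k(j-1)+i$ (i.e.\ the $i$-th slot of its $j$-th size-$k$ block) equals $g_{k(\tau^{-1}(j)-1)+i}$, which is exactly the $i$-th slot of $b_{\tau^{-1}(j)}$. Letting $i$ range over $[k]$ and $j$ over $[\ell]$ yields the Claim.

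With the Claim in hand, the lemma follows at once. On one side, Definition \ref{def.Sk-action} gives
\[
\left(f^{\otimes\ell}\circ\tau\right)\left(h_1\otimes\cdots\otimes h_\ell\right)=f^{\otimes\ell}\left(h_{\tau^{-1}(1)}\otimes\cdots\otimes h_{\tau^{-1}(\ell)}\right)=f\left(h_{\tau^{-1}(1)}\right)\otimes\cdots\otimes f\left(h_{\tau^{-1}(\ell)}\right).
\]
On the other side, applying the Claim to $b_j:=f(h_j)\in H^{\otimes k}$,
\[
\left(\tau^{k\times}\circ f^{\otimes\ell}\right)\left(h_1\otimes\cdots\otimes h_\ell\right)=\tau^{k\times}\left(f(h_1)\otimes\cdots\otimes f(h_\ell)\right)=f\left(h_{\tau^{-1}(1)}\right)\otimes\cdots\otimes f\left(h_{\tau^{-1}(\ell)}\right).
\]
These agree, so $f^{\otimes\ell}\circ\tau$ and $\tau^{k\times}\circ f^{\otimes\ell}$ coincide on all pure tensors and hence are equal, proving Lemma \ref{lem.pikx}.

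I do not expect a genuine obstacle here: the content is index bookkeeping, and the only error-prone point is inverting $\tau^{k\times}$ and combining this with the action convention $\sigma\cdot(x_1\otimes\cdots)=x_{\sigma^{-1}(1)}\otimes\cdots$, so that the $\ell$ size-$k$ blocks end up permuted by $\tau$ rather than by $\tau^{-1}$ — exactly the subtlety handled in the proof of Lemma \ref{lem.pixl}. (One could instead try to derive the statement from Lemma \ref{lem.pixl} by a transpose/duality argument, or relate $\tau^{k\times}$ to its "slot-permuting" cousin via the Zolotarev shuffle of Lemma \ref{lem.zol1}, but keeping the proof self-contained and parallel to Lemma \ref{lem.pixl} seems cleanest.)
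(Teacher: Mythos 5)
Your proposal is correct and follows essentially the same route as the paper: establish the same Claim (that $\tau^{k\times}$ acting on $(H^{\otimes k})^{\otimes\ell}$ permutes the $\ell$ size-$k$ blocks by $\tau^{-1}$), then evaluate both sides on a pure tensor $a_1\otimes\cdots\otimes a_\ell$ and compare. The paper merely asserts that its Claim 1 is "analogous" to the one proved in Lemma \ref{lem.pixl}; your index computation of $\left(\tau^{k\times}\right)^{-1}$ supplies that verification, and it is correct.
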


\begin{proof}
We begin with an even more basic claim:

\begin{statement}
\textit{Claim 1:} Let $a_{1},a_{2},\ldots,a_{\ell}\in H^{\otimes k}$. Then,%
\[
\tau^{k\times}\left(  a_{1}\otimes a_{2}\otimes\cdots\otimes a_{\ell}\right)
=a_{\tau^{-1}\left(  1\right)  }\otimes a_{\tau^{-1}\left(  2\right)  }%
\otimes\cdots\otimes a_{\tau^{-1}\left(  \ell\right)  }%
\]
in $H^{\otimes k\ell}$. (Here, we are identifying $\left(  H^{\otimes
k}\right)  ^{\otimes\ell}$ with $H^{\otimes k\ell}$ in the obvious way.)
\end{statement}

\begin{proof}
[Proof of Claim 1.]Analogous to the Claim 1 in our above proof of Lemma
\ref{lem.pixl}.
\end{proof}

Now, let $\mathbf{a}=a_{1}\otimes a_{2}\otimes\cdots\otimes a_{\ell}$ be a
pure tensor in $H^{\otimes\ell}$ (with $a_{1},a_{2},\ldots,a_{\ell}\in H$).
Then,%
\begin{align*}
\left(  f^{\otimes\ell}\circ\tau\right)  \left(  \mathbf{a}\right)   &
=\left(  f^{\otimes\ell}\circ\tau\right)  \left(  a_{1}\otimes a_{2}%
\otimes\cdots\otimes a_{\ell}\right) \\
&  =f^{\otimes\ell}\left(  \underbrace{\tau\left(  a_{1}\otimes a_{2}%
\otimes\cdots\otimes a_{\ell}\right)  }_{=a_{\tau^{-1}\left(  1\right)
}\otimes a_{\tau^{-1}\left(  2\right)  }\otimes\cdots\otimes a_{\tau
^{-1}\left(  \ell\right)  }}\right) \\
&  =f^{\otimes\ell}\left(  a_{\tau^{-1}\left(  1\right)  }\otimes a_{\tau
^{-1}\left(  2\right)  }\otimes\cdots\otimes a_{\tau^{-1}\left(  \ell\right)
}\right) \\
&  =f\left(  a_{\tau^{-1}\left(  1\right)  }\right)  \otimes f\left(
a_{\tau^{-1}\left(  2\right)  }\right)  \otimes\cdots\otimes f\left(
a_{\tau^{-1}\left(  \ell\right)  }\right)
\end{align*}
and%
\begin{align*}
\left(  \tau^{k\times}\circ f^{\otimes\ell}\right)  \left(  \mathbf{a}\right)
&  =\left(  \tau^{k\times}\circ f^{\otimes\ell}\right)  \left(  a_{1}\otimes
a_{2}\otimes\cdots\otimes a_{\ell}\right) \\
&  =\tau^{k\times}\left(  \underbrace{f^{\otimes\ell}\left(  a_{1}\otimes
a_{2}\otimes\cdots\otimes a_{\ell}\right)  }_{=f\left(  a_{1}\right)  \otimes
f\left(  a_{2}\right)  \otimes\cdots\otimes f\left(  a_{\ell}\right)  }\right)
\\
&  =\tau^{k\times}\left(  f\left(  a_{1}\right)  \otimes f\left(
a_{2}\right)  \otimes\cdots\otimes f\left(  a_{\ell}\right)  \right) \\
&  =f\left(  a_{\tau^{-1}\left(  1\right)  }\right)  \otimes f\left(
a_{\tau^{-1}\left(  2\right)  }\right)  \otimes\cdots\otimes f\left(
a_{\tau^{-1}\left(  \ell\right)  }\right)
\end{align*}
(by Claim 1, applied to $f\left(  a_{i}\right)  $ instead of $a_{i}$).
Comparing these two equalities, we find $\left(  f^{\otimes\ell}\circ
\tau\right)  \left(  \mathbf{a}\right)  =\left(  \tau^{k\times}\circ
f^{\otimes\ell}\right)  \left(  \mathbf{a}\right)  $.

Forget that we fixed $\mathbf{a}$. We thus have proved that $\left(
f^{\otimes\ell}\circ\tau\right)  \left(  \mathbf{a}\right)  =\left(
\tau^{k\times}\circ f^{\otimes\ell}\right)  \left(  \mathbf{a}\right)  $ for
each pure tensor $\mathbf{a}$ in $H^{\otimes\ell}$. In other words, the two
maps $f^{\otimes\ell}\circ\tau$ and $\tau^{k\times}\circ f^{\otimes\ell}$
agree on all pure tensors in $H^{\otimes\ell}$. Since these two maps are
$\mathbf{k}$-linear (and since the pure tensors span $H^{\otimes\ell}$), this
entails that they must be identical. In other words, $f^{\otimes\ell}\circ
\tau=\tau^{k\times}\circ f^{\otimes\ell}$. This proves Lemma \ref{lem.pikx}.
\end{proof}

\begin{lemma}
\label{lem.sitaze}Let $k,\ell\in\mathbb{N}$. Let $\sigma\in\mathfrak{S}_{k}$
and $\tau\in\mathfrak{S}_{\ell}$. Define a permutation $\sigma^{\times\ell}%
\in\mathfrak{S}_{k\ell}$ as in Lemma \ref{lem.pixl}, and define a permutation
$\tau^{k\times}\in\mathfrak{S}_{k\ell}$ as in Lemma \ref{lem.pikx}. Define a
permutation $\zeta\in\mathfrak{S}_{k\ell}$ as in Lemma \ref{lem.zol1}. Recall
also the permutation $\tau\left[  \sigma\right]  $ defined in Definition
\ref{def.tausigma}. Then,%
\begin{equation}
\tau^{k\times}\circ\zeta^{-1}\circ\sigma^{\times\ell}=\tau\left[
\sigma\right]  \label{eq.lem.sitaze.eq.1}%
\end{equation}
and%
\begin{equation}
\left(  \sigma^{\times\ell}\right)  ^{-1}\circ\zeta\circ\left(  \tau^{k\times
}\right)  ^{-1}=\left(  \tau\left[  \sigma\right]  \right)  ^{-1}.
\label{eq.lem.sitaze.eq.2}%
\end{equation}

\end{lemma}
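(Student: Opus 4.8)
The plan is to prove \eqref{eq.lem.sitaze.eq.1} by direct computation on the standard
index set $\left[k\ell\right]$, writing each element in the form $\ell\left(i-1\right)+j$
with $i\in\left[k\right]$ and $j\in\left[\ell\right]$, and then deducing
\eqref{eq.lem.sitaze.eq.2} by taking inverses. The key observation is that all four
permutations involved ($\sigma^{\times\ell}$, $\zeta$, $\tau^{k\times}$, and
$\tau\left[\sigma\right]$) have been defined by their action on elements written in one
of the two canonical ``grid'' forms: either $\ell\left(i-1\right)+j$ (``row-major'', used by
$\sigma^{\times\ell}$ and $\tau\left[\sigma\right]$) or $k\left(j-1\right)+i$ (``column-major'',
used by $\zeta$ and $\tau^{k\times}$). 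The Zolotarev shuffle $\zeta$ is precisely the
bijection converting column-major indices to row-major indices, so $\zeta^{-1}$ converts
row-major to column-major.

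Concretely, I would fix $i\in\left[k\right]$ and $j\in\left[\ell\right]$ and track the element
$\ell\left(i-1\right)+j$ through the composition $\tau^{k\times}\circ\zeta^{-1}\circ\sigma^{\times\ell}$,
applied right-to-left. First, $\sigma^{\times\ell}$ sends $\ell\left(i-1\right)+j$ to
$\ell\left(\sigma\left(i\right)-1\right)+j$ by definition. Next, since $\zeta$ sends
$k\left(j-1\right)+i$ to $\ell\left(i-1\right)+j$, its inverse $\zeta^{-1}$ sends
$\ell\left(\sigma\left(i\right)-1\right)+j$ (which is the row-major code of the pair
$\left(\sigma\left(i\right),j\right)$) to $k\left(j-1\right)+\sigma\left(i\right)$ (the column-major
code of the same pair). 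Finally, $\tau^{k\times}$ sends $k\left(j-1\right)+\sigma\left(i\right)$ —
note here the role of $i$ in the definition of $\tau^{k\times}$ is played by $\sigma\left(i\right)\in\left[k\right]$,
and $\tau^{k\times}$ acts only on the $j$-slot — to $k\left(\tau\left(j\right)-1\right)+\sigma\left(i\right)$.
On the other hand, $\tau\left[\sigma\right]$ by Definition \ref{def.tausigma} sends
$\ell\left(i-1\right)+j$ to $k\left(\tau\left(j\right)-1\right)+\sigma\left(i\right)$. These agree, so
\eqref{eq.lem.sitaze.eq.1} holds. Then \eqref{eq.lem.sitaze.eq.2} follows immediately by
inverting both sides of \eqref{eq.lem.sitaze.eq.1}: $\left(\tau^{k\times}\circ\zeta^{-1}\circ\sigma^{\times\ell}\right)^{-1}
=\left(\sigma^{\times\ell}\right)^{-1}\circ\zeta\circ\left(\tau^{k\times}\right)^{-1}$.

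The only mildly delicate point — and the one I would be most careful about — is the
bookkeeping in the application of $\tau^{k\times}$: one must check that when we write the
argument $k\left(j-1\right)+\sigma\left(i\right)$ in the form ``$k\left(j'-1\right)+i'$ with $i'\in\left[k\right]$,
$j'\in\left[\ell\right]$'' required by the definition of $\tau^{k\times}$, we indeed get $j'=j$ and
$i'=\sigma\left(i\right)$; this is exactly the uniqueness of the division-with-remainder
decomposition of an element of $\left[k\ell\right]$ (the same fact invoked right after
Definition \ref{def.tausigma}), using that $\sigma\left(i\right)\in\left[k\right]$. Everything else is a
routine chase through definitions, and there is no real obstacle beyond keeping the two
coordinate conventions (row-major versus column-major) straight.
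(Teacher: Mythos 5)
Your proposal is correct, and it is essentially the same argument as the paper's: write a generic element of $\left[k\ell\right]$ as $\ell\left(i-1\right)+j$, push it through $\sigma^{\times\ell}$, then $\zeta^{-1}$, then $\tau^{k\times}$, and match the result with the definition of $\tau\left[\sigma\right]$; then invert to get \eqref{eq.lem.sitaze.eq.2}. The bookkeeping point you flag (that $k\left(j-1\right)+\sigma\left(i\right)$ decomposes with $j'=j$, $i'=\sigma\left(i\right)$ because $\sigma\left(i\right)\in\left[k\right]$) is exactly the observation implicitly used in the paper, so nothing is missing.
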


\begin{proof}
Let us first prove (\ref{eq.lem.sitaze.eq.1}). Let $n\in\left[  k\ell\right]
$. Write $n$ in the form $n=\ell\left(  i-1\right)  +j$ for some $i\in\left[
k\right]  $ and $j\in\left[  \ell\right]  $. (This is clearly possible.) Thus,%
\[
\sigma^{\times\ell}\left(  n\right)  =\sigma^{\times\ell}\left(  \ell\left(
i-1\right)  +j\right)  =\ell\left(  \sigma\left(  i\right)  -1\right)  +j
\]
(by the definition of $\sigma^{\times\ell}$). Hence,%
\[
\zeta^{-1}\left(  \sigma^{\times\ell}\left(  n\right)  \right)  =\zeta
^{-1}\left(  \ell\left(  \sigma\left(  i\right)  -1\right)  +j\right)
=k\left(  j-1\right)  +\sigma\left(  i\right)
\]
(since the definition of $\zeta$ yields $\zeta\left(  k\left(  j-1\right)
+\sigma\left(  i\right)  \right)  =\ell\left(  \sigma\left(  i\right)
-1\right)  +j$). Hence,%
\[
\tau^{k\times}\left(  \zeta^{-1}\left(  \sigma^{\times\ell}\left(  n\right)
\right)  \right)  =\tau^{k\times}\left(  k\left(  j-1\right)  +\sigma\left(
i\right)  \right)  =k\left(  \tau\left(  j\right)  -1\right)  +\sigma\left(
i\right)
\]
(by the definition of $\tau^{k\times}$). Comparing this with%
\begin{align*}
\left(  \tau\left[  \sigma\right]  \right)  \left(  n\right)   &  =\left(
\tau\left[  \sigma\right]  \right)  \left(  \ell\left(  i-1\right)  +j\right)
\ \ \ \ \ \ \ \ \ \ \left(  \text{since }n=\ell\left(  i-1\right)  +j\right)
\\
&  =k\left(  \tau\left(  j\right)  -1\right)  +\sigma\left(  i\right)
\ \ \ \ \ \ \ \ \ \ \left(  \text{by the definition of }\tau\left[
\sigma\right]  \right)  ,
\end{align*}
we obtain $\tau^{k\times}\left(  \zeta^{-1}\left(  \sigma^{\times\ell}\left(
n\right)  \right)  \right)  =\left(  \tau\left[  \sigma\right]  \right)
\left(  n\right)  $.

Forget that we fixed $n$. We thus have shown that $\tau^{k\times}\left(
\zeta^{-1}\left(  \sigma^{\times\ell}\left(  n\right)  \right)  \right)
=\left(  \tau\left[  \sigma\right]  \right)  \left(  n\right)  $ for each
$n\in\left[  k\ell\right]  $. In other words, $\tau^{k\times}\circ\zeta
^{-1}\circ\sigma^{\times\ell}=\tau\left[  \sigma\right]  $. This proves
(\ref{eq.lem.sitaze.eq.1}).

Now, taking inverses on both sides of (\ref{eq.lem.sitaze.eq.1}), we obtain
$\left(  \tau^{k\times}\circ\zeta^{-1}\circ\sigma^{\times\ell}\right)
^{-1}=\left(  \tau\left[  \sigma\right]  \right)  ^{-1}$. In other words,
$\left(  \sigma^{\times\ell}\right)  ^{-1}\circ\zeta\circ\left(  \tau
^{k\times}\right)  ^{-1}=\left(  \tau\left[  \sigma\right]  \right)  ^{-1}$
(since $\left(  \tau^{k\times}\circ\zeta^{-1}\circ\sigma^{\times\ell}\right)
^{-1}=\left(  \sigma^{\times\ell}\right)  ^{-1}\circ\zeta\circ\left(
\tau^{k\times}\right)  ^{-1}$). This proves (\ref{eq.lem.sitaze.eq.2}).
\end{proof}

Recall again that each symmetric group $\mathfrak{S}_{k}$ acts on the
corresponding set $\mathbb{N}^{k}$ of $k$-tuples from the right. For this
action, we have two further elementary combinatorial properties:

\begin{lemma}
\label{lem.tup-zeta}Let $k,\ell\in\mathbb{N}$. Define a permutation $\zeta
\in\mathfrak{S}_{k\ell}$ as in Lemma \ref{lem.zol1}.

Let $\theta_{i,j}\in\mathbb{N}$ be a nonnegative integer for each $i\in\left[
k\right]  $ and $j\in\left[  \ell\right]  $. Then,%
\[
\left(  \theta_{1,1},\theta_{1,2},\ldots,\theta_{k,\ell}\right)  \cdot
\zeta=\left(  \theta_{1,1},\theta_{2,1},\ldots,\theta_{k,\ell}\right)  .
\]
(Here, $\left(  \theta_{1,1},\theta_{1,2},\ldots,\theta_{k,\ell}\right)  $
denotes the list of all $k\ell$ numbers $\theta_{i,j}$ in the order of
lexicographically increasing pairs $\left(  i,j\right)  $, whereas $\left(
\theta_{1,1},\theta_{2,1},\ldots,\theta_{k,\ell}\right)  $ denotes the list of
all $k\ell$ numbers $\theta_{i,j}$ in the order of lexicographically
increasing pairs $\left(  j,i\right)  $.)
\end{lemma}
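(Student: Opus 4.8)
The plan is to verify the identity entrywise, matching the two relisted tuples through the definition of the Zolotarev shuffle $\zeta$. First I would fix convenient notation for the two tuples appearing in the claim: write $\mathbf{c}=(c_1,c_2,\ldots,c_{k\ell})$ for the tuple $(\delta_{1,1},\delta_{1,2},\ldots,\delta_{k,\ell})$, so that $c_{\ell(i-1)+j}=\delta_{i,j}$ for all $i\in[k]$ and $j\in[\ell]$ — this is exactly the statement that the entries are listed in lexicographically increasing order of the pairs $(i,j)$, since $(i,j)$ occupies position $\ell(i-1)+j$ in that order. Likewise write $\mathbf{d}=(d_1,d_2,\ldots,d_{k\ell})$ for the tuple $(\delta_{1,1},\delta_{2,1},\ldots,\delta_{k,\ell})$, so that $d_{k(j-1)+i}=\delta_{i,j}$ for all $i\in[k]$ and $j\in[\ell]$, because $(j,i)$ occupies position $k(j-1)+i$ in the lexicographic order of the pairs $(j,i)$. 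With this notation, the lemma asserts $\mathbf{c}\cdot\zeta=\mathbf{d}$, which by the definition of the right action of $\mathfrak{S}_{k\ell}$ on $\mathbb{N}^{k\ell}$ is equivalent to $c_{\zeta(n)}=d_n$ for every $n\in[k\ell]$.

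Next I would fix $n\in[k\ell]$ and write it uniquely in the form $n=k(j-1)+i$ with $i\in[k]$ and $j\in[\ell]$ (this is possible by the same interval-subdivision observation used just before Definition \ref{def.tausigma}, applied with the roles of $k$ and $\ell$ swapped). On the one hand, $d_n=d_{k(j-1)+i}=\delta_{i,j}$ by the defining property of $\mathbf{d}$. On the other hand, the definition of $\zeta$ gives $\zeta(n)=\zeta(k(j-1)+i)=\ell(i-1)+j$, hence $c_{\zeta(n)}=c_{\ell(i-1)+j}=\delta_{i,j}$ by the defining property of $\mathbf{c}$. Putting these together,
\[
c_{\zeta(n)}=c_{\ell(i-1)+j}=\delta_{i,j}=d_{k(j-1)+i}=d_n.
\]
Since $n\in[k\ell]$ was arbitrary, this shows $\mathbf{c}\cdot\zeta=\mathbf{d}$, which is the claim of Lemma \ref{lem.tup-zeta}.

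I do not expect any real obstacle: the entire content is the bookkeeping fact that $\delta_{i,j}$ sits in position $\ell(i-1)+j$ of the first tuple and in position $k(j-1)+i$ of the second, and that $\zeta$ is precisely the bijection sending the latter position to the former. In fact this lemma is the same computation as in the proof of Lemma \ref{lem.zol1}, merely with scalars $\delta_{i,j}$ in place of tensorands $h_{i,j}$, so one could alternatively deduce it from Lemma \ref{lem.zol1} by evaluating a suitable multilinear functional; but the direct entrywise check above is shorter and self-contained.
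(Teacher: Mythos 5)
Your proof is correct and takes essentially the same approach as the paper's: both reduce to an entrywise check using the defining relation $\zeta(k(j-1)+i)=\ell(i-1)+j$, the paper merely packaging the positional bookkeeping via the bijections $\lambda$ and $\rho$ from its proof of Lemma \ref{lem.zol1}, which you spell out directly.
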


\begin{proof}
Let us write $\theta_{\left(  i,j\right)  }$ for $\theta_{i,j}$. Thus,
$\theta_{p}$ is defined for any pair $p\in\left[  k\right]  \times\left[
\ell\right]  $.

Define the two bijections $\lambda$ and $\rho$ as in the proof of Lemma
\ref{lem.zol1}. Then,%
\begin{align*}
\left(  \theta_{1,1},\theta_{1,2},\ldots,\theta_{k,\ell}\right)   &  =\left(
\theta_{\rho^{-1}\left(  1\right)  },\theta_{\rho^{-1}\left(  2\right)
},\ldots,\theta_{\rho^{-1}\left(  k\ell\right)  }\right)
\ \ \ \ \ \ \ \ \ \ \text{and}\\
\left(  \theta_{1,1},\theta_{2,1},\ldots,\theta_{k,\ell}\right)   &  =\left(
\theta_{\lambda^{-1}\left(  1\right)  },\theta_{\lambda^{-1}\left(  2\right)
},\ldots,\theta_{\lambda^{-1}\left(  k\ell\right)  }\right)  .
\end{align*}
Thus,%
\begin{align*}
\underbrace{\left(  \theta_{1,1},\theta_{1,2},\ldots,\theta_{k,\ell}\right)
}_{=\left(  \theta_{\rho^{-1}\left(  1\right)  },\theta_{\rho^{-1}\left(
2\right)  },\ldots,\theta_{\rho^{-1}\left(  k\ell\right)  }\right)  }%
\cdot\,\zeta &  =\left(  \theta_{\rho^{-1}\left(  1\right)  },\theta
_{\rho^{-1}\left(  2\right)  },\ldots,\theta_{\rho^{-1}\left(  k\ell\right)
}\right)  \cdot\zeta\\
&  =\left(  \theta_{\rho^{-1}\left(  \zeta\left(  1\right)  \right)  }%
,\theta_{\rho^{-1}\left(  \zeta\left(  2\right)  \right)  },\ldots
,\theta_{\rho^{-1}\left(  \zeta\left(  k\ell\right)  \right)  }\right) \\
&  =\left(  \theta_{\lambda^{-1}\left(  1\right)  },\theta_{\lambda
^{-1}\left(  2\right)  },\ldots,\theta_{\lambda^{-1}\left(  k\ell\right)
}\right) \\
&  \ \ \ \ \ \ \ \ \ \ \ \ \ \ \ \ \ \ \ \ \left(
\begin{array}
[c]{c}%
\text{since each }i\in\left[  k\ell\right] \\
\text{satisfies }\rho^{-1}\left(  \zeta\left(  i\right)  \right)
=\lambda^{-1}\left(  i\right) \\
\text{(since (\ref{pf.lem.zol1.zl=r}) yields }\rho^{-1}\circ\zeta=\lambda
^{-1}\text{)}\\
\text{and thus }\theta_{\rho^{-1}\left(  \zeta\left(  i\right)  \right)
}=\theta_{\lambda^{-1}\left(  i\right)  }%
\end{array}
\right) \\
&  =\left(  \theta_{1,1},\theta_{2,1},\ldots,\theta_{k,\ell}\right)  ,
\end{align*}
and this proves Lemma \ref{lem.tup-zeta}.
\end{proof}

\begin{lemma}
\label{lem.tup-sigma}Let $k,\ell\in\mathbb{N}$. Let $\sigma\in\mathfrak{S}%
_{k}$ be any permutation. Define a permutation $\sigma^{\times\ell}%
\in\mathfrak{S}_{k\ell}$ as in Lemma \ref{lem.pixl}.

Let $\theta_{i,j}\in\mathbb{N}$ be a nonnegative integer for each $i\in\left[
k\right]  $ and $j\in\left[  \ell\right]  $. Then,%
\[
\left(  \theta_{1,1},\theta_{1,2},\ldots,\theta_{k,\ell}\right)  \cdot
\sigma^{\times\ell}=\left(  \theta_{\sigma\left(  1\right)  ,1},\theta
_{\sigma\left(  1\right)  ,2},\ldots,\theta_{\sigma\left(  k\right)  ,\ell
}\right)  .
\]
(Here, $\left(  \theta_{1,1},\theta_{1,2},\ldots,\theta_{k,\ell}\right)  $
denotes the list of all $k\ell$ numbers $\theta_{i,j}$ in the order of
lexicographically increasing pairs $\left(  i,j\right)  $, whereas $\left(
\theta_{\sigma\left(  1\right)  ,1},\theta_{\sigma\left(  1\right)  ,2}%
,\ldots,\theta_{\sigma\left(  k\right)  ,\ell}\right)  $ denotes the list of
all $k\ell$ numbers $\theta_{\sigma\left(  i\right)  ,j}$ in the same order.)
\end{lemma}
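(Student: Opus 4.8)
The plan is to mimic the proof of Lemma \ref{lem.tup-zeta} almost verbatim, since both lemmas describe the effect of a known permutation on a tuple of $k\ell$ numbers indexed by pairs, written out in a specified linear order. First I would introduce the abbreviation $\delta_{\left(  i,j\right)  }:=\delta_{i,j}$, so that $\delta_p$ makes sense for any pair $p\in\left[  k\right]  \times\left[  \ell\right]$, and recall the bijection $\lambda:\left[  k\right]  \times\left[  \ell\right]  \rightarrow\left[  k\ell\right]$, $\left(  i,j\right)  \mapsto\ell\left(  i-1\right)  +j$, from the proof of Lemma \ref{lem.zol1}. Since $\lambda$ sends $\left(  i,j\right)$ to the position of $\left(  i,j\right)$ in the lexicographic order on $\left[  k\right]  \times\left[  \ell\right]$, we have
\[
\left(  \delta_{1,1},\delta_{1,2},\ldots,\delta_{k,\ell}\right)  =\left(  \delta_{\lambda^{-1}\left(  1\right)  },\delta_{\lambda^{-1}\left(  2\right)  },\ldots,\delta_{\lambda^{-1}\left(  k\ell\right)  }\right)  ,
\]
and likewise the right-hand side $\left(  \delta_{\sigma\left(  1\right)  ,1},\delta_{\sigma\left(  1\right)  ,2},\ldots,\delta_{\sigma\left(  k\right)  ,\ell}\right)$, being the list of the numbers $\delta_{\sigma\left(  i\right)  ,j}$ in lexicographic order of $\left(  i,j\right)$, equals $\left(  \delta_{\mu^{-1}\left(  1\right)  },\delta_{\mu^{-1}\left(  2\right)  },\ldots,\delta_{\mu^{-1}\left(  k\ell\right)  }\right)$ where $\mu:\left[  k\right]  \times\left[  \ell\right]  \rightarrow\left[  k\ell\right]$ is the bijection $\left(  i,j\right)  \mapsto\lambda\left(  \sigma\left(  i\right)  ,j\right)  =\ell\left(  \sigma\left(  i\right)  -1\right)  +j$.

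Next, applying $\sigma^{\times\ell}$ to the tuple acts by permuting positions: by the general rule for the right action, $\left(  \delta_{\lambda^{-1}\left(  1\right)  },\ldots,\delta_{\lambda^{-1}\left(  k\ell\right)  }\right)  \cdot\sigma^{\times\ell}=\left(  \delta_{\lambda^{-1}\left(  \sigma^{\times\ell}\left(  1\right)  \right)  },\ldots,\delta_{\lambda^{-1}\left(  \sigma^{\times\ell}\left(  k\ell\right)  \right)  }\right)$. So it remains to check the identity of index functions $\lambda^{-1}\circ\sigma^{\times\ell}=\mu^{-1}$, i.e.\ $\sigma^{\times\ell}\circ\mu=\lambda$, i.e.\ $\sigma^{\times\ell}\left(  \ell\left(  \sigma\left(  i\right)  -1\right)  +j\right)  =\ell\left(  i-1\right)  +j$ for all $i\in\left[  k\right]$, $j\in\left[  \ell\right]$. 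This is immediate from the definition of $\sigma^{\times\ell}$ in Lemma \ref{lem.pixl} (which sends $\ell\left(  i'-1\right)  +j$ to $\ell\left(  \sigma\left(  i'\right)  -1\right)  +j$): take $i'$ with $\sigma\left(  i'\right)  =\sigma\left(  i\right)$, so $i'=i$, and we get $\ell\left(  \sigma\left(  i\right)  -1\right)  +j\mapsto\ell\left(  i-1\right)  +j$ — wait, that is the statement for $\sigma^{-1}$; more carefully, $\sigma^{\times\ell}$ sends $\ell\left(  i-1\right)  +j$ to $\ell\left(  \sigma\left(  i\right)  -1\right)  +j$, so its inverse sends $\ell\left(  \sigma\left(  i\right)  -1\right)  +j$ to $\ell\left(  i-1\right)  +j$, which is exactly $\left(  \sigma^{\times\ell}\right)  ^{-1}\circ\mu=\lambda$, hence $\lambda^{-1}\circ\sigma^{\times\ell}\cdot\text{(nothing)}$—so the cleanest route is to verify $\mu^{-1}\circ\sigma^{\times\ell}=\lambda^{-1}$ by chasing a single element through, exactly as Lemma \ref{lem.tup-zeta} does with $\rho^{-1}\circ\zeta=\lambda^{-1}$.

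Concretely, then, the proof runs: fix $n\in\left[  k\ell\right]$, write $n=\ell\left(  i-1\right)  +j$ with $i\in\left[  k\right]$, $j\in\left[  \ell\right]$; compute $\sigma^{\times\ell}\left(  n\right)  =\ell\left(  \sigma\left(  i\right)  -1\right)  +j$ by definition of $\sigma^{\times\ell}$; hence $\lambda^{-1}\left(  \sigma^{\times\ell}\left(  n\right)  \right)  =\left(  \sigma\left(  i\right)  ,j\right)$, so $\delta_{\lambda^{-1}\left(  \sigma^{\times\ell}\left(  n\right)  \right)  }=\delta_{\sigma\left(  i\right)  ,j}$; and this is precisely the $n$-th entry of the claimed right-hand side $\left(  \delta_{\sigma\left(  1\right)  ,1},\delta_{\sigma\left(  1\right)  ,2},\ldots,\delta_{\sigma\left(  k\right)  ,\ell}\right)$ since $n=\ell\left(  i-1\right)  +j$ is the position of $\left(  i,j\right)$ in lexicographic order. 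Comparing entrywise with $\left(  \delta_{1,1},\ldots,\delta_{k,\ell}\right)  \cdot\sigma^{\times\ell}=\left(  \delta_{\lambda^{-1}\left(  \sigma^{\times\ell}\left(  1\right)  \right)  },\ldots,\delta_{\lambda^{-1}\left(  \sigma^{\times\ell}\left(  k\ell\right)  \right)  }\right)$ finishes the proof. There is essentially no obstacle here: it is a bookkeeping lemma, and the only thing to be careful about is keeping the direction of the right action and the placement of $\sigma$ versus $\sigma^{-1}$ straight — the definition of $\sigma^{\times\ell}$ in Lemma \ref{lem.pixl} is phrased with $\sigma$ (not $\sigma^{-1}$) acting on the block index, which is exactly what makes the bare $\sigma$ appear on the right-hand side, so tracking that index map carefully is the whole content.
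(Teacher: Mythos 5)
Your proof is correct, and the final ``concretely'' paragraph is in substance identical to the paper's own argument: write $n=\ell\left(i-1\right)+j$, apply the definition of $\sigma^{\times\ell}$ to get $\sigma^{\times\ell}\left(n\right)=\ell\left(\sigma\left(i\right)-1\right)+j$, and check that the $n$-th entries of both sides are $\delta_{\sigma\left(i\right),j}$; the paper does exactly this, just writing the two tuples as $\left(\alpha_1,\ldots,\alpha_{k\ell}\right)$ and $\left(\beta_1,\ldots,\beta_{k\ell}\right)$ and showing $\alpha_{\omega\left(n\right)}=\beta_n$ rather than packaging the subscript bookkeeping in the bijection $\lambda$ from the Lemma \ref{lem.zol1} proof.

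One caution about the middle paragraph, which you partially flag yourself: your auxiliary bijection $\mu$ is defined with the wrong sign. For the right-hand side tuple to be $\left(\delta_{\mu^{-1}\left(1\right)},\ldots,\delta_{\mu^{-1}\left(k\ell\right)}\right)$, you need $\mu^{-1}\left(\ell\left(i-1\right)+j\right)=\left(\sigma\left(i\right),j\right)$, which forces $\mu\left(i,j\right)=\ell\left(\sigma^{-1}\left(i\right)-1\right)+j$, not $\ell\left(\sigma\left(i\right)-1\right)+j$ as you wrote. With your (incorrect) $\mu$, the identity $\lambda^{-1}\circ\sigma^{\times\ell}=\mu^{-1}$ that you try to verify would actually fail (it would instead give $\sigma^2$ in place of the identity). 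Fortunately your closing element-chase never uses $\mu$ and is self-contained, so the proof stands; but if you keep the $\mu$ formulation, fix that sign so the digression doesn't mislead the reader.
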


\begin{proof}
Let us denote the $k\ell$-tuples $\left(  \theta_{1,1},\theta_{1,2}%
,\ldots,\theta_{k,\ell}\right)  $ and $\left(  \theta_{\sigma\left(  1\right)
,1},\theta_{\sigma\left(  1\right)  ,2},\ldots,\theta_{\sigma\left(  k\right)
,\ell}\right)  $ as $\left(  \alpha_{1},\alpha_{2},\ldots,\alpha_{k\ell
}\right)  $ and $\left(  \beta_{1},\beta_{2},\ldots,\beta_{k\ell}\right)  $,
respectively. Then, each $i\in\left[  k\right]  $ and $j\in\left[
\ell\right]  $ satisfy%
\begin{equation}
\alpha_{\ell\left(  i-1\right)  +j}=\theta_{i,j} \label{pf.lem.tup-sigma.=a}%
\end{equation}
(since $\left(  \alpha_{1},\alpha_{2},\ldots,\alpha_{k\ell}\right)  =\left(
\theta_{1,1},\theta_{1,2},\ldots,\theta_{k,\ell}\right)  $) and%
\begin{equation}
\beta_{\ell\left(  i-1\right)  +j}=\theta_{\sigma\left(  i\right)  ,j}
\label{pf.lem.tup-sigma.=b}%
\end{equation}
(since $\left(  \beta_{1},\beta_{2},\ldots,\beta_{k\ell}\right)  =\left(
\theta_{\sigma\left(  1\right)  ,1},\theta_{\sigma\left(  1\right)  ,2}%
,\ldots,\theta_{\sigma\left(  k\right)  ,\ell}\right)  $).

Let $\omega$ be the permutation $\sigma^{\times\ell}$. Then, for each
$i\in\left[  k\right]  $ and $j\in\left[  \ell\right]  $, we have%
\begin{align}
\omega\left(  \ell\left(  i-1\right)  +j\right)   &  =\sigma^{\times\ell
}\left(  \ell\left(  i-1\right)  +j\right) \nonumber\\
&  =\ell\left(  \sigma\left(  i\right)  -1\right)  +j
\label{pf.lem.tup-sigma.2}%
\end{align}
(by the definition of $\sigma^{\times\ell}$).

Now, let $n\in\left[  k\ell\right]  $. Write $n$ in the form $n=\ell\left(
i-1\right)  +j$ for some $i\in\left[  k\right]  $ and $j\in\left[
\ell\right]  $. (This is clearly possible.) Thus, $\omega\left(  n\right)
=\omega\left(  \ell\left(  i-1\right)  +j\right)  =\ell\left(  \sigma\left(
i\right)  -1\right)  +j$ (by (\ref{pf.lem.tup-sigma.2})). This yields%
\begin{align*}
\alpha_{\omega\left(  n\right)  }  &  =\alpha_{\ell\left(  \sigma\left(
i\right)  -1\right)  +j}\\
&  =\theta_{\sigma\left(  i\right)  ,j}\ \ \ \ \ \ \ \ \ \ \left(  \text{by
(\ref{pf.lem.tup-sigma.=a}), applied to }\sigma\left(  i\right)  \text{
instead of }i\right) \\
&  =\beta_{\ell\left(  i-1\right)  +j}\ \ \ \ \ \ \ \ \ \ \left(  \text{by
(\ref{pf.lem.tup-sigma.=b})}\right)  .\\
&  =\beta_{n}\ \ \ \ \ \ \ \ \ \ \left(  \text{since }\ell\left(  i-1\right)
+j=n\right)  .
\end{align*}

Forget that we fixed $n$. We thus have shown that $\alpha_{\omega\left(
n\right)  }=\beta_{n}$ for each $n\in\left[  k\ell\right]  $.

Now,
\begin{align*}
&  \underbrace{\left(  \theta_{1,1},\theta_{1,2},\ldots,\theta_{k,\ell
}\right)  }_{=\left(  \alpha_{1},\alpha_{2},\ldots,\alpha_{k\ell}\right)
}\cdot\underbrace{\sigma^{\times\ell}}_{=\omega}\\
&  =\left(  \alpha_{1},\alpha_{2},\ldots,\alpha_{k\ell}\right)  \cdot\omega\\
&  =\left(  \alpha_{\omega\left(  1\right)  },\alpha_{\omega\left(  2\right)
},\ldots,\alpha_{\omega\left(  k\ell\right)  }\right) \\
&  =\left(  \beta_{1},\beta_{2},\ldots,\beta_{k\ell}\right)
\ \ \ \ \ \ \ \ \ \ \left(  \text{since }\alpha_{\omega\left(  n\right)
}=\beta_{n}\text{ for each }n\in\left[  k\ell\right]  \right) \\
&  =\left(  \theta_{\sigma\left(  1\right)  ,1},\theta_{\sigma\left(
1\right)  ,2},\ldots,\theta_{\sigma\left(  k\right)  ,\ell}\right)  .
\end{align*}
This proves Lemma \ref{lem.tup-sigma}.
\end{proof}

\subsection{The composition formula: lemmas on bialgebras}

Now, we step to some lemmas that rely on the bialgebra structure on $H$.

\begin{lemma}
\label{lem.mmm}Let $k,\ell\in\mathbb{N}$. Then, $m^{\left[  k\right]  }%
\circ\left(  m^{\left[  \ell\right]  }\right)  ^{\otimes k}=m^{\left[
k\ell\right]  }$.
\end{lemma}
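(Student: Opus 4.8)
The plan is to prove this by induction on $k$, using the associativity rule (\ref{eq.mu+v}) to peel off one copy of $m^{\left[  \ell\right]  }$ at each step. Throughout, I shall use the canonical associativity isomorphisms $H^{\otimes a}\otimes H^{\otimes b}\cong H^{\otimes\left(  a+b\right)  }$ and the resulting compatibility $g^{\otimes\left(  k+1\right)  }=g^{\otimes k}\otimes g$ of iterated tensor powers with them --- exactly the kind of identification already made tacitly elsewhere in the paper (e.g.\ in the proof of Proposition \ref{prop.sol-mac-0}).

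\textit{Base case} ($k=0$): Here $\left(  m^{\left[  \ell\right]  }\right)  ^{\otimes0}=\operatorname{id}_{\mathbf{k}}$ is the empty tensor power (a map $\mathbf{k}\rightarrow\mathbf{k}$), and $k\ell=0$, so both sides of the claimed identity equal $m^{\left[  0\right]  }=u$.

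\textit{Induction step}: Assume that $m^{\left[  k\right]  }\circ\left(  m^{\left[  \ell\right]  }\right)  ^{\otimes k}=m^{\left[  k\ell\right]  }$ for some $k\in\mathbb{N}$. Applying (\ref{eq.mu+v}) with $u=k$ and $v=1$, and recalling $m^{\left[  1\right]  }=\operatorname{id}_H$, we get $m^{\left[  k+1\right]  }=m\circ\left(  m^{\left[  k\right]  }\otimes\operatorname{id}_H\right)  $. Hence, under the evident identifications $H^{\otimes\left(  k+1\right)  \ell}\cong H^{\otimes k\ell}\otimes H^{\otimes\ell}$ and $H^{\otimes\left(  k+1\right)  }\cong H^{\otimes k}\otimes H$ (which turn $\left(  m^{\left[  \ell\right]  }\right)  ^{\otimes\left(  k+1\right)  }$ into $\left(  m^{\left[  \ell\right]  }\right)  ^{\otimes k}\otimes m^{\left[  \ell\right]  }$), we compute
\begin{align*}
m^{\left[  k+1\right]  }\circ\left(  m^{\left[  \ell\right]  }\right)  ^{\otimes\left(  k+1\right)  }
&=m\circ\left(  m^{\left[  k\right]  }\otimes\operatorname{id}_H\right)  \circ\left(  \left(  m^{\left[  \ell\right]  }\right)  ^{\otimes k}\otimes m^{\left[  \ell\right]  }\right)  \\
&=m\circ\left(  \left(  m^{\left[  k\right]  }\circ\left(  m^{\left[  \ell\right]  }\right)  ^{\otimes k}\right)  \otimes m^{\left[  \ell\right]  }\right)  \\
&=m\circ\left(  m^{\left[  k\ell\right]  }\otimes m^{\left[  \ell\right]  }\right)  ,
\end{align*}
where the last step uses the induction hypothesis. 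By (\ref{eq.mu+v}) (this time with $u=k\ell$ and $v=\ell$), the right-hand side equals $m^{\left[  k\ell+\ell\right]  }=m^{\left[  \left(  k+1\right)  \ell\right]  }$, which completes the induction.

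The argument is entirely formal, so there is no genuine obstacle; the only point requiring a little care is the tensor-factor bookkeeping in the identity $\left(  m^{\left[  \ell\right]  }\right)  ^{\otimes\left(  k+1\right)  }=\left(  m^{\left[  \ell\right]  }\right)  ^{\otimes k}\otimes m^{\left[  \ell\right]  }$ under the associativity isomorphism. An alternative, slightly more robust route is to first prove the more general identity $m^{\left[  a_{1}+a_{2}+\cdots+a_{k}\right]  }=m^{\left[  k\right]  }\circ\left(  m^{\left[  a_{1}\right]  }\otimes m^{\left[  a_{2}\right]  }\otimes\cdots\otimes m^{\left[  a_{k}\right]  }\right)  $ for all $a_{1},a_{2},\ldots,a_{k}\in\mathbb{N}$ by the same induction on $k$, and then specialize to $a_{1}=a_{2}=\cdots=a_{k}=\ell$.
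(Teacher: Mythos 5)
Your proof is correct and is essentially the same as the paper's: both argue by induction on $k$, with the same base case and two applications of (\ref{eq.mu+v}) (one to peel off a factor, one to recombine $k\ell+\ell$). The only difference is cosmetic --- you transform the left-hand side into the right, whereas the paper starts from $m^{\left[  \left(  k+1\right)  \ell\right]  }$ and works backward.
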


\begin{proof}
We induct on $k$. The \textit{base case} ($k=0$) is trivial (since $\left(
m^{\left[  \ell\right]  }\right)  ^{\otimes0}=\operatorname*{id}%
\nolimits_{\mathbf{k}}$ and $0=0\ell$). For the \textit{induction step}, we
fix a $k\in\mathbb{N}$, and we assume (as the induction hypothesis) that
$m^{\left[  k\right]  }\circ\left(  m^{\left[  \ell\right]  }\right)
^{\otimes k}=m^{\left[  k\ell\right]  }$. We must now prove that $m^{\left[
k+1\right]  }\circ\left(  m^{\left[  \ell\right]  }\right)  ^{\otimes\left(
k+1\right)  }=m^{\left[  \left(  k+1\right)  \ell\right]  }$. But $\left(
k+1\right)  \ell=k\ell+\ell$, and thus%
\begin{align*}
m^{\left[  \left(  k+1\right)  \ell\right]  }  &  =m^{\left[  k\ell
+\ell\right]  }=m\circ\left(  \underbrace{m^{\left[  k\ell\right]  }%
}_{\substack{=m^{\left[  k\right]  }\circ\left(  m^{\left[  \ell\right]
}\right)  ^{\otimes k}\\\text{(by the induction hypothesis)}}}\otimes
\underbrace{m^{\left[  \ell\right]  }}_{=\operatorname*{id}\circ m^{\left[
\ell\right]  }}\right)  \ \ \ \ \ \ \ \ \ \ \left(  \text{by (\ref{eq.mu+v}%
)}\right) \\
&  =m\circ\underbrace{\left(  \left(  m^{\left[  k\right]  }\circ\left(
m^{\left[  \ell\right]  }\right)  ^{\otimes k}\right)  \otimes\left(
\operatorname*{id}\circ m^{\left[  \ell\right]  }\right)  \right)  }_{=\left(
m^{\left[  k\right]  }\otimes\operatorname*{id}\right)  \circ\left(  \left(
m^{\left[  \ell\right]  }\right)  ^{\otimes k}\otimes m^{\left[  \ell\right]
}\right)  }\\
&  =m\circ\left(  m^{\left[  k\right]  }\otimes\underbrace{\operatorname*{id}%
}_{=m^{\left[  1\right]  }}\right)  \circ\underbrace{\left(  \left(
m^{\left[  \ell\right]  }\right)  ^{\otimes k}\otimes m^{\left[  \ell\right]
}\right)  }_{=\left(  m^{\left[  \ell\right]  }\right)  ^{\otimes\left(
k+1\right)  }}\\
&  =\underbrace{m\circ\left(  m^{\left[  k\right]  }\otimes m^{\left[
1\right]  }\right)  }_{\substack{=m^{\left[  k+1\right]  }\\\text{(by
(\ref{eq.mu+v}))}}}\circ\left(  m^{\left[  \ell\right]  }\right)
^{\otimes\left(  k+1\right)  }=m^{\left[  k+1\right]  }\circ\left(  m^{\left[
\ell\right]  }\right)  ^{\otimes\left(  k+1\right)  }.
\end{align*}
Thus, $m^{\left[  k+1\right]  }\circ\left(  m^{\left[  \ell\right]  }\right)
^{\otimes\left(  k+1\right)  }=m^{\left[  \left(  k+1\right)  \ell\right]  }$
is proved, so that the induction is complete. This proves Lemma \ref{lem.mmm}.
\end{proof}

\begin{lemma}
\label{lem.DDD}Let $k,\ell\in\mathbb{N}$. Then, $\left(  \Delta^{\left[
k\right]  }\right)  ^{\otimes\ell}\circ\Delta^{\left[  \ell\right]  }%
=\Delta^{\left[  k\ell\right]  }$.
\end{lemma}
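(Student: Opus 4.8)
The plan is to prove Lemma \ref{lem.DDD} by a straightforward induction on $\ell$, mirroring the structure of the proof of Lemma \ref{lem.mmm} but with all arrows reversed (using \eqref{eq.Du+v} in place of \eqref{eq.mu+v}). This is the natural approach since the statement $\left(\Delta^{\left[k\right]}\right)^{\otimes\ell}\circ\Delta^{\left[\ell\right]}=\Delta^{\left[k\ell\right]}$ is exactly the coalgebra-side dual of Lemma \ref{lem.mmm}, and the defining recursions \eqref{eq.mu+v} and \eqref{eq.Du+v} are themselves dual to one another.

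First I would dispose of the base case $\ell=0$: here $\left(\Delta^{\left[k\right]}\right)^{\otimes 0}=\operatorname*{id}\nolimits_{\mathbf{k}}$ and $\Delta^{\left[0\right]}=\epsilon$, while $\Delta^{\left[k\cdot 0\right]}=\Delta^{\left[0\right]}=\epsilon$, so both sides equal $\epsilon$. For the induction step, I would fix $\ell\in\mathbb{N}$, assume $\left(\Delta^{\left[k\right]}\right)^{\otimes\ell}\circ\Delta^{\left[\ell\right]}=\Delta^{\left[k\ell\right]}$, and compute $\Delta^{\left[(k+1)\ell\right]}$... wait, more precisely I should induct so as to go from $\ell$ to $\ell+1$, and the target is $\left(\Delta^{\left[k\right]}\right)^{\otimes(\ell+1)}\circ\Delta^{\left[\ell+1\right]}=\Delta^{\left[k(\ell+1)\right]}$. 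Since $k(\ell+1)=k\ell+k$, I would use \eqref{eq.Du+v} with $u=k\ell$ and $v=k$ to write $\Delta^{\left[k\ell+k\right]}=\left(\Delta^{\left[k\ell\right]}\otimes\Delta^{\left[k\right]}\right)\circ\Delta$, then substitute the induction hypothesis $\Delta^{\left[k\ell\right]}=\left(\Delta^{\left[k\right]}\right)^{\otimes\ell}\circ\Delta^{\left[\ell\right]}$ and $\Delta^{\left[k\right]}=\Delta^{\left[k\right]}\circ\operatorname*{id}=\Delta^{\left[k\right]}\circ m^{\left[1\right]}$... no, on the coalgebra side it is $\Delta^{\left[k\right]}\circ\Delta^{\left[1\right]}$ with $\Delta^{\left[1\right]}=\operatorname*{id}$. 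Then I would factor $\left(\left(\Delta^{\left[k\right]}\right)^{\otimes\ell}\otimes\Delta^{\left[k\right]}\right)\circ\left(\Delta^{\left[\ell\right]}\otimes\Delta^{\left[1\right]}\right)\circ\Delta$, recognize $\left(\Delta^{\left[\ell\right]}\otimes\Delta^{\left[1\right]}\right)\circ\Delta=\Delta^{\left[\ell+1\right]}$ by \eqref{eq.Du+v} again (with $u=\ell$, $v=1$), and recognize $\left(\Delta^{\left[k\right]}\right)^{\otimes\ell}\otimes\Delta^{\left[k\right]}=\left(\Delta^{\left[k\right]}\right)^{\otimes(\ell+1)}$, completing the step.

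There is no real obstacle here; the only thing requiring minor care is the bookkeeping of tensor-factor identifications — namely that composing $\left(\Delta^{\left[k\right]}\right)^{\otimes\ell}$ on the first $\ell$ slots and $\Delta^{\left[k\right]}$ on the last slot of an $(\ell+1)$-fold tensor product assembles correctly into $\left(\Delta^{\left[k\right]}\right)^{\otimes(\ell+1)}$ after identifying $H^{\otimes k\ell}\otimes H^{\otimes k}$ with $H^{\otimes k(\ell+1)}$, and similarly that $H^{\otimes\ell}\otimes H$ is identified with $H^{\otimes(\ell+1)}$ compatibly. These identifications are the standard ones already used implicitly throughout the paper, so I would simply invoke them via an interchange-law step $\left(A\otimes B\right)\circ\left(C\otimes D\right)=\left(A\circ C\right)\otimes\left(B\circ D\right)$, exactly as in the proof of Lemma \ref{lem.mmm}. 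One could alternatively derive Lemma \ref{lem.DDD} purely formally from Lemma \ref{lem.mmm} by a duality argument, but writing out the three-line induction is shorter and self-contained, so that is the route I would take.
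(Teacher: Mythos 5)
Your proof is correct and is precisely the argument the paper intends: the paper dismisses Lemma \ref{lem.DDD} as "dual to Lemma \ref{lem.mmm}, same proof with all arrows reversed," and your induction on $\ell$ using \eqref{eq.Du+v} is exactly that arrow-reversed proof written out in full. The base case, the use of $\Delta^{[1]} = \operatorname*{id}$, and the interchange-law step all check out.
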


\begin{proof}
Upon swapping $k$ and $\ell$, this becomes the dual claim to Lemma
\ref{lem.mmm}, so the same proof can be used (with all arrows reversed).
\end{proof}

\begin{lemma}
\label{lem.mDc}Let $k,\ell\in\mathbb{N}$. Define a permutation $\zeta
\in\mathfrak{S}_{k\ell}$ as in Lemma \ref{lem.zol1}. Then,
\[
\left(  m^{\left[  \ell\right]  }\right)  ^{\otimes k}\circ\zeta\circ\left(
\Delta^{\left[  k\right]  }\right)  ^{\otimes\ell}=\Delta^{\left[  k\right]
}\circ m^{\left[  \ell\right]  }.
\]

\end{lemma}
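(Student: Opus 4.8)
The statement $\left( m^{\left[ \ell\right] }\right) ^{\otimes k}\circ\zeta\circ\left( \Delta^{\left[ k\right] }\right) ^{\otimes\ell}=\Delta^{\left[ k\right] }\circ m^{\left[ \ell\right] }$ is a compatibility between iterated multiplication, iterated comultiplication and the Zolotarev shuffle $\zeta$, so I expect the cleanest argument to be a direct computation on a pure tensor, using Sweedler notation and invoking the combinatorial identity (\ref{pf.thm.sol-mac-1.z-prod}) from Lemma \ref{lem.zol1}. The plan is: let $a_{1}\otimes a_{2}\otimes\cdots\otimes a_{\ell}\in H^{\otimes\ell}$ be a pure tensor, and chase it through both sides.

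First I would compute the right-hand side. Applying $m^{\left[ \ell\right] }$ gives the product $a_{1}a_{2}\cdots a_{\ell}$, and then $\Delta^{\left[ k\right] }$ produces $\sum \left( a_{1}a_{2}\cdots a_{\ell}\right)_{(1)}\otimes\cdots\otimes\left( a_{1}a_{2}\cdots a_{\ell}\right)_{(k)}$. Using the fact that $\Delta^{\left[ k\right] }$ is an algebra morphism $H\to H^{\otimes k}$ (where $H^{\otimes k}$ carries the componentwise product), one has $\Delta^{\left[ k\right] }(a_{1}a_{2}\cdots a_{\ell}) = \Delta^{\left[ k\right] }(a_{1})\,\Delta^{\left[ k\right] }(a_{2})\cdots\Delta^{\left[ k\right] }(a_{\ell})$, which in Sweedler notation becomes $\sum \left( (a_{1})_{(1)}(a_{2})_{(1)}\cdots(a_{\ell})_{(1)}\right)\otimes\cdots\otimes\left( (a_{1})_{(k)}(a_{2})_{(k)}\cdots(a_{\ell})_{(k)}\right)$. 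I would want to cite the standard fact that $\Delta^{[k]}$ is an algebra morphism (it follows from $\Delta$ being one together with (\ref{eq.Du+v}), or from \cite[Chapter 1]{GriRei}); alternatively I can establish it inductively in one line, but citing is cleaner.

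Next I would compute the left-hand side. Applying $\left( \Delta^{\left[ k\right] }\right) ^{\otimes\ell}$ to $a_{1}\otimes\cdots\otimes a_{\ell}$ gives $\sum \left( (a_{1})_{(1)}\otimes\cdots\otimes(a_{1})_{(k)}\right)\otimes\cdots\otimes\left( (a_{\ell})_{(1)}\otimes\cdots\otimes(a_{\ell})_{(k)}\right)$, viewed in $H^{\otimes k\ell}$ with the $\ell k$ tensorands indexed by pairs in the order of lexicographically increasing $(i,j)$ where $(a_{i})_{(j)}$ sits in position $\ell(i-1)+j$. Now Lemma \ref{lem.zol1} (with $h_{i,j}$ there taken to be $(a_{j})_{(i)}$, matching the $\ell\times k$ vs $k\times\ell$ bookkeeping in its statement) tells me $\zeta$ reorders these tensorands into the order of lexicographically increasing $(j,i)$, i.e. groups together all $(a_{i})_{(j)}$ with fixed $j$. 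Then $\left( m^{\left[ \ell\right] }\right) ^{\otimes k}$ multiplies within each of the $k$ blocks, yielding $\sum \left( (a_{1})_{(1)}(a_{2})_{(1)}\cdots(a_{\ell})_{(1)}\right)\otimes\cdots\otimes\left( (a_{1})_{(k)}(a_{2})_{(k)}\cdots(a_{\ell})_{(k)}\right)$ — exactly the right-hand side. Since both maps agree on every pure tensor and are $\mathbf{k}$-linear, they are equal.

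The only real obstacle is bookkeeping: making sure the index conventions in Lemma \ref{lem.zol1} (which uses an $\ell\times k$ matrix $(h_{i,j})_{i\in[\ell],j\in[k]}$ and the Zolotarev shuffle sending $k(j-1)+i$ to $\ell(i-1)+j$) are matched correctly to the situation here, where the $\ell$ "outer" tensorands from $\left(\Delta^{[k]}\right)^{\otimes\ell}$ each split into $k$ "inner" Sweedler factors. Concretely, I set $h_{i,j}:=(a_{j})_{(i)}$ for $i\in[\ell]$, $j\in[k]$; then the left side of (\ref{pf.thm.sol-mac-1.z-prod}) is precisely $\zeta$ applied to the output of $\left(\Delta^{[k]}\right)^{\otimes\ell}$ (summed over Sweedler indices), and its right side is precisely the input to $\left(m^{[\ell]}\right)^{\otimes k}$ that yields the right-hand side of the lemma. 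With that dictionary fixed, everything else is routine. I would double-check the edge cases $k=0$ or $\ell=0$ separately (they are immediate: e.g. for $\ell=0$ both sides reduce to $u\circ\varepsilon^{\otimes k}$-type maps, or for $k=0$ to $\varepsilon\circ m^{[\ell]}=\varepsilon^{\otimes\ell}$), though these are subsumed by the general pure-tensor computation once one is careful about empty products and tensor powers.
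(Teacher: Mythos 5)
Your plan is correct and takes essentially the same route as the paper: the paper likewise combines the algebra-morphism identity $m_{H^{\otimes k}}^{[\ell]}\circ(\Delta^{[k]})^{\otimes\ell}=\Delta^{[k]}\circ m^{[\ell]}$ (cited from \cite[Exercise 1.4.22(c)]{GriRei}) with Lemma \ref{lem.zol1} via a pure-tensor check, only isolating the sub-identity $m_{H^{\otimes k}}^{[\ell]}=(m^{[\ell]})^{\otimes k}\circ\zeta$ as a named intermediate step. Do fix two subscript slips: after applying $(\Delta^{[k]})^{\otimes\ell}$, the tensorand $(a_i)_{(j)}$ sits in position $k(i-1)+j$ (not $\ell(i-1)+j$), and the dictionary with Lemma \ref{lem.zol1} should read $h_{i,j}:=(a_i)_{(j)}$ for $i\in[\ell]$, $j\in[k]$, not $h_{i,j}:=(a_j)_{(i)}$ (otherwise the index ranges don't even match).
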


\begin{proof}
From \cite[Exercise 1.4.22(c)]{GriRei} (applied to $k-1$ and $\ell-1$ instead
of $k$ and $\ell$), we obtain\footnote{Strictly speaking, this use of
\cite[Exercise 1.4.22(c)]{GriRei} requires $k,\ell\geq1$. But the cases $k=0$
and $\ell=0$ are easily checked by hand (since in these cases, we have
$\zeta=\operatorname*{id}$ and $m^{\left[  0\right]  }=u$ and $\Delta^{\left[
0\right]  }=\epsilon$). To be specific: In the case $k=0$, the claim of Lemma
\ref{lem.mDc} boils down to $\epsilon^{\otimes\ell}=\epsilon\circ m^{\left[
\ell\right]  }$; whereas in the case $\ell=0$, it instead boils down to
$u^{\otimes k}=\Delta^{\left[  k\right]  }\circ u$. Both of these equalities
are easily proved (if needed, by induction on $\ell$ and $k$, respectively).}%
\begin{equation}
m_{H^{\otimes k}}^{\left[  \ell\right]  }\circ\left(  \Delta^{\left[
k\right]  }\right)  ^{\otimes\ell}=\Delta^{\left[  k\right]  }\circ m^{\left[
\ell\right]  }, \label{pf.thm.sol-mac-1.mD1}%
\end{equation}
where $m_{H^{\otimes k}}^{\left[  \ell\right]  }$ is the map defined just as
$m^{\left[  \ell\right]  }$ but for the algebra $H^{\otimes k}$ instead of
$H$. However, it is easy to see (and should be known) that
\begin{equation}
m_{H^{\otimes k}}^{\left[  \ell\right]  }=\left(  m^{\left[  \ell\right]
}\right)  ^{\otimes k}\circ\zeta. \label{pf.thm.sol-mac-1.mz}%
\end{equation}
Indeed, this can be checked on pure tensors, using Lemma \ref{lem.zol1}%
.\footnote{\textit{Proof.} If $\left(  h_{i,j}\right)  _{i\in\left[
\ell\right]  ,\ j\in\left[  k\right]  }\in H^{\ell\times k}$ is any
$\ell\times k$-matrix over $H$, then
\par%
\begin{align*}
&  \left(  \left(  m^{\left[  \ell\right]  }\right)  ^{\otimes k}\circ
\zeta\right)  \left(  h_{1,1}\otimes h_{1,2}\otimes\cdots\otimes
h_{1,k}\right. \\
&  \ \ \ \ \ \ \ \ \ \ \left.  \otimes\ h_{2,1}\otimes h_{2,2}\otimes
\cdots\otimes h_{2,k}\right. \\
&  \ \ \ \ \ \ \ \ \ \ \left.  \otimes\ \cdots\right. \\
&  \ \ \ \ \ \ \ \ \ \ \left.  \otimes\ h_{\ell,1}\otimes h_{\ell,2}%
\otimes\cdots\otimes h_{\ell,k}\right) \\
&  =\left(  m^{\left[  \ell\right]  }\right)  ^{\otimes k}\left(
h_{1,1}\otimes h_{2,1}\otimes\cdots\otimes h_{\ell,1}\right. \\
&  \ \ \ \ \ \ \ \ \ \ \left.  \otimes\ h_{1,2}\otimes h_{2,2}\otimes
\cdots\otimes h_{\ell,2}\right. \\
&  \ \ \ \ \ \ \ \ \ \ \left.  \otimes\ \cdots\right. \\
&  \ \ \ \ \ \ \ \ \ \ \left.  \otimes\ h_{1,k}\otimes h_{2,k}\otimes
\cdots\otimes h_{\ell,k}\right) \\
&  \ \ \ \ \ \ \ \ \ \ \ \ \ \ \ \ \ \ \ \ \left(
\begin{array}
[c]{c}%
\text{here, we have applied the map }\left(  m^{\left[  \ell\right]  }\right)
^{\otimes k}\\
\text{to both sides of the equality (\ref{pf.thm.sol-mac-1.z-prod})}%
\end{array}
\right) \\
&  =\underbrace{m^{\left[  \ell\right]  }\left(  h_{1,1}\otimes h_{2,1}%
\otimes\cdots\otimes h_{\ell,1}\right)  }_{=h_{1,1}h_{2,1}\cdots h_{\ell,1}}\\
&  \ \ \ \ \ \ \ \ \ \ \otimes\underbrace{m^{\left[  \ell\right]  }\left(
h_{1,2}\otimes h_{2,2}\otimes\cdots\otimes h_{\ell,2}\right)  }_{=h_{1,2}%
h_{2,2}\cdots h_{\ell,2}}\\
&  \ \ \ \ \ \ \ \ \ \ \otimes\cdots\\
&  \ \ \ \ \ \ \ \ \ \ \otimes\underbrace{m^{\left[  \ell\right]  }\left(
h_{1,k}\otimes h_{2,k}\otimes\cdots\otimes h_{\ell,k}\right)  }_{=h_{1,k}%
h_{2,k}\cdots h_{\ell,k}}\\
&  =h_{1,1}h_{2,1}\cdots h_{\ell,1}\otimes h_{1,2}h_{2,2}\cdots h_{\ell
,2}\otimes\cdots\otimes h_{1,k}h_{2,k}\cdots h_{\ell,k}\\
&  =\left(  h_{1,1}\otimes h_{1,2}\otimes\cdots\otimes h_{1,k}\right)
\cdot\left(  h_{2,1}\otimes h_{2,2}\otimes\cdots\otimes h_{2,k}\right)
\cdot\cdots\cdot\left(  h_{\ell,1}\otimes h_{\ell,2}\otimes\cdots\otimes
h_{\ell,k}\right) \\
&  =m_{H^{\otimes k}}^{\left[  \ell\right]  }\left(  h_{1,1}\otimes
h_{1,2}\otimes\cdots\otimes h_{1,k}\right. \\
&  \ \ \ \ \ \ \ \ \ \ \left.  \otimes\ h_{2,1}\otimes h_{2,2}\otimes
\cdots\otimes h_{2,k}\right. \\
&  \ \ \ \ \ \ \ \ \ \ \left.  \otimes\ \cdots\right. \\
&  \ \ \ \ \ \ \ \ \ \ \left.  \otimes\ h_{\ell,1}\otimes h_{\ell,2}%
\otimes\cdots\otimes h_{\ell,k}\right)  .
\end{align*}
In other words, the two maps $\left(  m^{\left[  \ell\right]  }\right)
^{\otimes k}\circ\zeta$ and $m_{H^{\otimes k}}^{\left[  \ell\right]  }$ agree
on each pure tensor. Since these two maps are $\mathbf{k}$-linear, they must
therefore be identical (since the pure tensors span $H^{\otimes k\ell}$). In
other words, $\left(  m^{\left[  \ell\right]  }\right)  ^{\otimes k}\circ
\zeta=m_{H^{\otimes k}}^{\left[  \ell\right]  }$. This proves
(\ref{pf.thm.sol-mac-1.mz}).}

Using (\ref{pf.thm.sol-mac-1.mz}), we can rewrite (\ref{pf.thm.sol-mac-1.mD1})
as%
\[
\left(  m^{\left[  \ell\right]  }\right)  ^{\otimes k}\circ\zeta\circ\left(
\Delta^{\left[  k\right]  }\right)  ^{\otimes\ell}=\Delta^{\left[  k\right]
}\circ m^{\left[  \ell\right]  }.
\]
This proves Lemma \ref{lem.mDc}.
\end{proof}

The next two lemmas combine the algebra and coalgebra structures with the grading:

\begin{lemma}
\label{lem.Pm}Let $k,\ell\in\mathbb{N}$. Let $\gamma=\left(  \gamma_{1}%
,\gamma_{2},\ldots,\gamma_{k}\right)  \in\mathbb{N}^{k}$. Then,%
\[
P_{\gamma}\circ\left(  m^{\left[  \ell\right]  }\right)  ^{\otimes k}%
=\sum_{\substack{\gamma_{i,j}\in\mathbb{N}\text{ for all }i\in\left[
k\right]  \text{ and }j\in\left[  \ell\right]  ;\\\gamma_{i,1}+\gamma
_{i,2}+\cdots+\gamma_{i,\ell}=\gamma_{i}\text{ for all }i\in\left[  k\right]
}}\left(  m^{\left[  \ell\right]  }\right)  ^{\otimes k}\circ P_{\left(
\gamma_{1,1},\gamma_{1,2},\ldots,\gamma_{k,\ell}\right)  }.
\]
Here, $\left(  \gamma_{1,1},\gamma_{1,2},\ldots,\gamma_{k,\ell}\right)  $
denotes the $k\ell$-tuple consisting of all $k\ell$ numbers $\gamma_{i,j}$
(for all $i\in\left[  k\right]  $ and $j\in\left[  \ell\right]  $) listed in
the order of lexicographically increasing pairs $\left(  i,j\right)  $.
\end{lemma}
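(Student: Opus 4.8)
The plan is to work on pure tensors. A general element of $H^{\otimes k\ell}$, written via the identification $\left(H^{\otimes\ell}\right)^{\otimes k}=H^{\otimes k\ell}$, is a $\mathbf{k}$-linear combination of pure tensors $\mathbf{a}=a_1\otimes a_2\otimes\cdots\otimes a_k$ with each $a_i\in H^{\otimes\ell}$, and by linearity we may further take each $a_i$ to be a pure tensor $a_i=h_{i,1}\otimes h_{i,2}\otimes\cdots\otimes h_{i,\ell}$ with $h_{i,j}\in H$. First I would apply $\left(m^{\left[\ell\right]}\right)^{\otimes k}$ to such an $\mathbf{a}$, obtaining the pure tensor $\left(h_{1,1}h_{1,2}\cdots h_{1,\ell}\right)\otimes\left(h_{2,1}h_{2,2}\cdots h_{2,\ell}\right)\otimes\cdots\otimes\left(h_{k,1}h_{k,2}\cdots h_{k,\ell}\right)$ in $H^{\otimes k}$, and then apply $P_\gamma=p_{\gamma_1}\otimes p_{\gamma_2}\otimes\cdots\otimes p_{\gamma_k}$ to it.

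The key step is then the single-factor identity: for each $i$, I would use the fact that $m^{\left[\ell\right]}$ is a graded map together with the decomposition of the grading on $H^{\otimes\ell}$ (recalled in the Notations section) to write
\[
p_{\gamma_i}\left(h_{i,1}h_{i,2}\cdots h_{i,\ell}\right)=\sum_{\substack{\gamma_{i,1},\gamma_{i,2},\ldots,\gamma_{i,\ell}\in\mathbb{N};\\\gamma_{i,1}+\gamma_{i,2}+\cdots+\gamma_{i,\ell}=\gamma_i}}m^{\left[\ell\right]}\left(p_{\gamma_{i,1}}\left(h_{i,1}\right)\otimes p_{\gamma_{i,2}}\left(h_{i,2}\right)\otimes\cdots\otimes p_{\gamma_{i,\ell}}\left(h_{i,\ell}\right)\right),
\]
which is just the statement that the degree-$\gamma_i$ component of a product $h_{i,1}h_{i,2}\cdots h_{i,\ell}$ is the sum, over all ways of distributing the degree $\gamma_i$ among the $\ell$ factors, of the corresponding products of homogeneous components. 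Tensoring these $k$ identities together (one for each $i\in\left[k\right]$) and expanding the product of sums into a single sum over all families $\left(\gamma_{i,j}\right)_{i\in\left[k\right],\,j\in\left[\ell\right]}$ with $\gamma_{i,1}+\cdots+\gamma_{i,\ell}=\gamma_i$ for every $i$, I recognize the inner pure tensor $p_{\gamma_{1,1}}\left(h_{1,1}\right)\otimes\cdots\otimes p_{\gamma_{k,\ell}}\left(h_{k,\ell}\right)$ as $P_{\left(\gamma_{1,1},\gamma_{1,2},\ldots,\gamma_{k,\ell}\right)}$ applied to $\mathbf{a}$ (here the lexicographic ordering of the subscripts matches the order of the tensor factors in $\left(H^{\otimes\ell}\right)^{\otimes k}$), and the outer $m^{\left[\ell\right]}$ on each block of $\ell$ factors assembles into $\left(m^{\left[\ell\right]}\right)^{\otimes k}$. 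This gives exactly $\left(\left(m^{\left[\ell\right]}\right)^{\otimes k}\circ P_{\left(\gamma_{1,1},\gamma_{1,2},\ldots,\gamma_{k,\ell}\right)}\right)\left(\mathbf{a}\right)$ summed over the index set in the claim.

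Since both sides of the asserted identity are $\mathbf{k}$-linear maps $H^{\otimes k\ell}\to H^{\otimes k}$ that agree on every pure tensor $\mathbf{a}$, and pure tensors span $H^{\otimes k\ell}$, they are equal, which proves the lemma. The only mildly delicate point — the "main obstacle," such as it is — is the bookkeeping that the lexicographically-indexed tuple $\left(\gamma_{1,1},\gamma_{1,2},\ldots,\gamma_{k,\ell}\right)$ is precisely the right tuple to make $P_{\left(\gamma_{1,1},\ldots,\gamma_{k,\ell}\right)}$ act on $\mathbf{a}\in\left(H^{\otimes\ell}\right)^{\otimes k}$ factor-by-factor in the intended way; this is immediate once one fixes the convention that the $\left(\ell(i-1)+j\right)$-th tensor factor of $H^{\otimes k\ell}$ is the $j$-th factor of the $i$-th block. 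Everything else is a routine expansion of a product of finite sums, and no bialgebra axiom beyond gradedness of $m$ is needed.
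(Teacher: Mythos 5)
Your proposal is correct and takes essentially the same approach as the paper: both rest on the single-factor identity $p_{\gamma_i}\circ m^{\left[\ell\right]}=\sum_{\gamma_{i,1}+\cdots+\gamma_{i,\ell}=\gamma_i}m^{\left[\ell\right]}\circ\left(p_{\gamma_{i,1}}\otimes\cdots\otimes p_{\gamma_{i,\ell}}\right)$ coming from gradedness of $m^{\left[\ell\right]}$ and the decomposition of the grading on $H^{\otimes\ell}$, followed by tensoring over $i\in\left[k\right]$ and distributing the product of sums. The only cosmetic difference is that you carry out the computation on pure tensors while the paper stays at the operator level, but the mathematical content is identical.
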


\begin{proof}
Let $i\in\mathbb{N}$. Recall that $p_{i}:H\rightarrow H$ denotes the
projection of the graded $\mathbf{k}$-module $H$ onto its $i$-th graded
component. Likewise, let $p_{i}^{\prime}:H^{\otimes\ell}\rightarrow
H^{\otimes\ell}$ denote the projection of the graded $\mathbf{k}$-module
$H^{\otimes\ell}$ onto its $i$-th graded component. The map $m^{\left[
\ell\right]  }:H^{\otimes\ell}\rightarrow H$ is graded\footnote{This follows
by a straightforward induction on $\ell$ from the gradedness of the map
$m:H\otimes H\rightarrow H$.}. Thus, it commutes with the projection onto the
$i$-th graded component. In other words,%
\begin{equation}
p_{i}\circ m^{\left[  \ell\right]  }=m^{\left[  \ell\right]  }\circ
p_{i}^{\prime}. \label{pf.lem.Pm.1}%
\end{equation}

However, the definition of the grading on $H^{\otimes\ell}$ yields that the
$i$-th graded component of $H^{\otimes\ell}$ is $\bigoplus_{\substack{\left(
i_{1},i_{2},\ldots,i_{\ell}\right)  \in\mathbb{N}^{\ell};\\i_{1}+i_{2}%
+\cdots+i_{\ell}=i}}H_{i_{1}}\otimes H_{i_{2}}\otimes\cdots\otimes H_{i_{\ell
}}$. Hence, it is easy to see that the projection $p_{i}^{\prime}$ onto this
component is%
\[
\sum_{\substack{\left(  i_{1},i_{2},\ldots,i_{\ell}\right)  \in\mathbb{N}%
^{\ell};\\i_{1}+i_{2}+\cdots+i_{\ell}=i}}p_{i_{1}}\otimes p_{i_{2}}%
\otimes\cdots\otimes p_{i_{\ell}}.
\]
In view of this, we can rewrite (\ref{pf.lem.Pm.1}) as%
\begin{align}
p_{i}\circ m^{\left[  \ell\right]  }  &  =m^{\left[  \ell\right]  }%
\circ\left(  \sum_{\substack{\left(  i_{1},i_{2},\ldots,i_{\ell}\right)
\in\mathbb{N}^{\ell};\\i_{1}+i_{2}+\cdots+i_{\ell}=i}}p_{i_{1}}\otimes
p_{i_{2}}\otimes\cdots\otimes p_{i_{\ell}}\right) \nonumber\\
&  =\sum_{\substack{\left(  i_{1},i_{2},\ldots,i_{\ell}\right)  \in
\mathbb{N}^{\ell};\\i_{1}+i_{2}+\cdots+i_{\ell}=i}}m^{\left[  \ell\right]
}\circ\left(  p_{i_{1}}\otimes p_{i_{2}}\otimes\cdots\otimes p_{i_{\ell}%
}\right)  \label{pf.lem.Pm.3}%
\end{align}
(here, we have distributed the summation sign to the beginning of the product,
since all the maps involved are linear).

Forget that we fixed $i$. We have thus proved the equality (\ref{pf.lem.Pm.3})
for each $i\in\mathbb{N}$.

Now, from $\gamma=\left(  \gamma_{1},\gamma_{2},\ldots,\gamma_{k}\right)  $,
we obtain $P_{\gamma}=p_{\gamma_{1}}\otimes p_{\gamma_{2}}\otimes\cdots\otimes
p_{\gamma_{k}}$ (by the definition of $P_{\gamma}$). Hence,%
\begin{align}
&  \underbrace{P_{\gamma}}_{=p_{\gamma_{1}}\otimes p_{\gamma_{2}}\otimes
\cdots\otimes p_{\gamma_{k}}}\circ\underbrace{\left(  m^{\left[  \ell\right]
}\right)  ^{\otimes k}}_{=m^{\left[  \ell\right]  }\otimes m^{\left[
\ell\right]  }\otimes\cdots\otimes m^{\left[  \ell\right]  }}\nonumber\\
&  =\left(  p_{\gamma_{1}}\otimes p_{\gamma_{2}}\otimes\cdots\otimes
p_{\gamma_{k}}\right)  \circ\left(  m^{\left[  \ell\right]  }\otimes
m^{\left[  \ell\right]  }\otimes\cdots\otimes m^{\left[  \ell\right]  }\right)
\nonumber\\
&  =\left(  p_{\gamma_{1}}\circ m^{\left[  \ell\right]  }\right)
\otimes\left(  p_{\gamma_{2}}\circ m^{\left[  \ell\right]  }\right)
\otimes\cdots\otimes\left(  p_{\gamma_{k}}\circ m^{\left[  \ell\right]
}\right) \nonumber\\
&  =\bigotimes_{s=1}^{k}\left(  p_{\gamma_{s}}\circ m^{\left[  \ell\right]
}\right)  \ \ \ \ \ \ \ \ \ \ \left(  \text{here, we use the symbol
}\bigotimes_{s=1}^{k}f_{s}\text{ for }f_{1}\otimes f_{2}\otimes\cdots\otimes
f_{k}\right) \nonumber\\
&  =\bigotimes_{s=1}^{k}\left(  \sum_{\substack{\left(  i_{1},i_{2}%
,\ldots,i_{\ell}\right)  \in\mathbb{N}^{\ell};\\i_{1}+i_{2}+\cdots+i_{\ell
}=\gamma_{s}}}m^{\left[  \ell\right]  }\circ\left(  p_{i_{1}}\otimes p_{i_{2}%
}\otimes\cdots\otimes p_{i_{\ell}}\right)  \right) \nonumber\\
&  \ \ \ \ \ \ \ \ \ \ \ \ \ \ \ \ \ \ \ \ \left(  \text{here, we have applied
(\ref{pf.lem.Pm.3}) to }i=\gamma_{s}\text{ for each }s\in\left[  k\right]
\right) \nonumber\\
&  =\bigotimes_{s=1}^{k}\left(  \sum_{\substack{\left(  \gamma_{s,1}%
,\gamma_{s,2},\ldots,\gamma_{s,\ell}\right)  \in\mathbb{N}^{\ell}%
;\\\gamma_{s,1}+\gamma_{s,2}+\cdots+\gamma_{s,\ell}=\gamma_{s}}}m^{\left[
\ell\right]  }\circ\left(  p_{\gamma_{s,1}}\otimes p_{\gamma_{s,2}}%
\otimes\cdots\otimes p_{\gamma_{s,\ell}}\right)  \right) \nonumber\\
&  \ \ \ \ \ \ \ \ \ \ \ \ \ \ \ \ \ \ \ \ \left(
\begin{array}
[c]{c}%
\text{here, we have renamed the index }\left(  i_{1},i_{2},\ldots,i_{\ell
}\right) \\
\text{as }\left(  \gamma_{s,1},\gamma_{s,2},\ldots,\gamma_{s,\ell}\right)
\text{ in the sum}%
\end{array}
\right) \nonumber\\
&  =\sum_{\substack{\gamma_{s,j}\in\mathbb{N}\text{ for all }s\in\left[
k\right]  \text{ and }j\in\left[  \ell\right]  ;\\\gamma_{s,1}+\gamma
_{s,2}+\cdots+\gamma_{s,\ell}=\gamma_{s}\text{ for all }s\in\left[  k\right]
}}\ \ \bigotimes_{s=1}^{k}\left(  m^{\left[  \ell\right]  }\circ\left(
p_{\gamma_{s,1}}\otimes p_{\gamma_{s,2}}\otimes\cdots\otimes p_{\gamma
_{s,\ell}}\right)  \right)  \label{pf.lem.Pm.2}%
\end{align}
(by the product rule for tensor products, which says
\[
\bigotimes\limits_{s=1}^{k}\ \ \sum_{a\in A_{s}}f_{s,a}=\sum_{\left(
a_{1},a_{2},\ldots,a_{k}\right)  \in A_{1}\times A_{2}\times\cdots\times
A_{k}}\ \ \bigotimes\limits_{s=1}^{k}f_{s,a_{s}},
\]
and which we here have applied to \newline$A_{s}=\left\{  \left(  \gamma
_{s,1},\gamma_{s,2},\ldots,\gamma_{s,\ell}\right)  \in\mathbb{N}^{\ell}%
\ \mid\ \gamma_{s,1}+\gamma_{s,2}+\cdots+\gamma_{s,\ell}=\gamma_{s}\right\}  $
and $f_{s,\left(  \gamma_{s,1},\gamma_{s,2},\ldots,\gamma_{s,\ell}\right)
}=m^{\left[  \ell\right]  }\circ\left(  p_{\gamma_{s,1}}\otimes p_{\gamma
_{s,2}}\otimes\cdots\otimes p_{\gamma_{s,\ell}}\right)  $). However, if
$\gamma_{s,j}$ is a nonnegative integer for all $s\in\left[  k\right]  $ and
all $j\in\left[  \ell\right]  $, then%
\begin{align*}
&  \bigotimes_{s=1}^{k}\left(  m^{\left[  \ell\right]  }\circ\left(
p_{\gamma_{s,1}}\otimes p_{\gamma_{s,2}}\otimes\cdots\otimes p_{\gamma
_{s,\ell}}\right)  \right) \\
&  =\underbrace{\left(  \bigotimes_{s=1}^{k}m^{\left[  \ell\right]  }\right)
}_{=\left(  m^{\left[  \ell\right]  }\right)  ^{\otimes k}}\circ
\underbrace{\left(  \bigotimes_{s=1}^{k}\left(  p_{\gamma_{s,1}}\otimes
p_{\gamma_{s,2}}\otimes\cdots\otimes p_{\gamma_{s,\ell}}\right)  \right)
}_{\substack{=P_{\left(  \gamma_{1,1},\gamma_{1,2},\ldots,\gamma_{k,\ell
}\right)  }\\\text{(by the definition of }P_{\left(  \gamma_{1,1},\gamma
_{1,2},\ldots,\gamma_{k,\ell}\right)  }\text{)}}}\\
&  =\left(  m^{\left[  \ell\right]  }\right)  ^{\otimes k}\circ P_{\left(
\gamma_{1,1},\gamma_{1,2},\ldots,\gamma_{k,\ell}\right)  }.
\end{align*}
Thus, (\ref{pf.lem.Pm.2}) can be rewritten as%
\begin{align*}
P_{\gamma}\circ\left(  m^{\left[  \ell\right]  }\right)  ^{\otimes k}  &
=\sum_{\substack{\gamma_{s,j}\in\mathbb{N}\text{ for all }s\in\left[
k\right]  \text{ and }j\in\left[  \ell\right]  ;\\\gamma_{s,1}+\gamma
_{s,2}+\cdots+\gamma_{s,\ell}=\gamma_{s}\text{ for all }s\in\left[  k\right]
}}\left(  m^{\left[  \ell\right]  }\right)  ^{\otimes k}\circ P_{\left(
\gamma_{1,1},\gamma_{1,2},\ldots,\gamma_{k,\ell}\right)  }\\
&  =\sum_{\substack{\gamma_{i,j}\in\mathbb{N}\text{ for all }i\in\left[
k\right]  \text{ and }j\in\left[  \ell\right]  ;\\\gamma_{i,1}+\gamma
_{i,2}+\cdots+\gamma_{i,\ell}=\gamma_{i}\text{ for all }i\in\left[  k\right]
}}\left(  m^{\left[  \ell\right]  }\right)  ^{\otimes k}\circ P_{\left(
\gamma_{1,1},\gamma_{1,2},\ldots,\gamma_{k,\ell}\right)  }.
\end{align*}
This proves Lemma \ref{lem.Pm}.
\end{proof}

\begin{lemma}
\label{lem.DelP}Let $k,\ell\in\mathbb{N}$. Let $\gamma=\left(  \gamma
_{1},\gamma_{2},\ldots,\gamma_{k}\right)  \in\mathbb{N}^{k}$. Then,%
\begin{equation}
\left(  \Delta^{\left[  \ell\right]  }\right)  ^{\otimes k}\circ P_{\gamma
}=\sum_{\substack{\gamma_{i,j}\in\mathbb{N}\text{ for all }i\in\left[
k\right]  \text{ and }j\in\left[  \ell\right]  ;\\\gamma_{i,1}+\gamma
_{i,2}+\cdots+\gamma_{i,\ell}=\gamma_{i}\text{ for all }i\in\left[  k\right]
}}P_{\left(  \gamma_{1,1},\gamma_{1,2},\ldots,\gamma_{k,\ell}\right)  }%
\circ\left(  \Delta^{\left[  \ell\right]  }\right)  ^{\otimes k}.\nonumber
\end{equation}
Here, $\left(  \gamma_{1,1},\gamma_{1,2},\ldots,\gamma_{k,\ell}\right)  $
denotes the $k\ell$-tuple consisting of all $k\ell$ numbers $\gamma_{i,j}$
(for all $i\in\left[  k\right]  $ and $j\in\left[  \ell\right]  $) listed in
the order of lexicographically increasing pairs $\left(  i,j\right)  $.
\end{lemma}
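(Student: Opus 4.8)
The plan is to mirror the proof of Lemma~\ref{lem.Pm}, reversing all arrows; the statement is simply its coalgebra dual.

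First I would record the coalgebra analogue of the commutation step used there. Since the iterated comultiplication $\Delta^{\left[ \ell\right] }:H\rightarrow H^{\otimes\ell}$ is a graded map (by a straightforward induction on $\ell$ from the gradedness of $\Delta$), it intertwines the projections onto the $i$-th graded components. Writing $p_i^{\prime}:H^{\otimes\ell}\rightarrow H^{\otimes\ell}$ for the projection of the graded $\mathbf{k}$-module $H^{\otimes\ell}$ onto its $i$-th graded component, this reads
\[
\Delta^{\left[ \ell\right] }\circ p_i=p_i^{\prime}\circ\Delta^{\left[ \ell\right] }\qquad\text{for all }i\in\mathbb{N}.
\]
As in the proof of Lemma~\ref{lem.Pm}, the description of the grading on $H^{\otimes\ell}$ gives
\[
p_i^{\prime}=\sum_{\substack{\left( i_1,i_2,\ldots,i_{\ell}\right) \in\mathbb{N}^{\ell};\\i_1+i_2+\cdots+i_{\ell}=i}}p_{i_1}\otimes p_{i_2}\otimes\cdots\otimes p_{i_{\ell}},
\]
so that
\[
\Delta^{\left[ \ell\right] }\circ p_i=\left( \sum_{\substack{\left( i_1,\ldots,i_{\ell}\right) \in\mathbb{N}^{\ell};\\i_1+\cdots+i_{\ell}=i}}p_{i_1}\otimes\cdots\otimes p_{i_{\ell}}\right) \circ\Delta^{\left[ \ell\right] }.
\]

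Then I would expand the left-hand side of the claim. From $\gamma=\left( \gamma_1,\ldots,\gamma_k\right)$ we obtain $P_{\gamma}=p_{\gamma_1}\otimes\cdots\otimes p_{\gamma_k}$, hence $\left( \Delta^{\left[ \ell\right] }\right) ^{\otimes k}\circ P_{\gamma}=\bigotimes_{s=1}^{k}\left( \Delta^{\left[ \ell\right] }\circ p_{\gamma_s}\right)$. Substituting the displayed formula for each factor $\Delta^{\left[ \ell\right] }\circ p_{\gamma_s}$ (applied with $i=\gamma_s$) and distributing the $k$ resulting sums through the tensor product via the product rule for tensor products (legitimate since all maps involved are $\mathbf{k}$-linear), one obtains a single sum over all families $\left( \gamma_{s,j}\right) _{s\in\left[ k\right] ,\ j\in\left[ \ell\right] }$ of nonnegative integers with $\gamma_{s,1}+\cdots+\gamma_{s,\ell}=\gamma_s$ for all $s\in\left[ k\right]$, whose summand is
\[
\bigotimes_{s=1}^{k}\left( \left( p_{\gamma_{s,1}}\otimes\cdots\otimes p_{\gamma_{s,\ell}}\right) \circ\Delta^{\left[ \ell\right] }\right) =\left( \bigotimes_{s=1}^{k}\left( p_{\gamma_{s,1}}\otimes\cdots\otimes p_{\gamma_{s,\ell}}\right) \right) \circ\left( \bigotimes_{s=1}^{k}\Delta^{\left[ \ell\right] }\right) .
\]
By the definition of $P_{\left( \gamma_{1,1},\ldots,\gamma_{k,\ell}\right) }$ and the definition of $\left( \Delta^{\left[ \ell\right] }\right) ^{\otimes k}$, the right-hand side equals $P_{\left( \gamma_{1,1},\ldots,\gamma_{k,\ell}\right) }\circ\left( \Delta^{\left[ \ell\right] }\right) ^{\otimes k}$, which is precisely the summand in the asserted formula. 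Summing over all admissible families yields the claim.

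I do not expect a genuine obstacle here: the argument is a line-by-line dualization of Lemma~\ref{lem.Pm}, the only points requiring (minor) care being the direction of the composition in $\Delta^{\left[ \ell\right] }\circ p_i=p_i^{\prime}\circ\Delta^{\left[ \ell\right] }$ and the index bookkeeping when distributing the sums past the tensor product. Alternatively, one could deduce the statement formally from Lemma~\ref{lem.Pm} by passing to graded duals (where $m$ and $\Delta$ swap while the $p_i$ are self-dual), exactly as Lemma~\ref{lem.DDD} was obtained from Lemma~\ref{lem.mmm}; but spelling out the mirrored computation is at least as short and avoids having to set up the duality formalism.
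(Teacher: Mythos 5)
Your proof is correct and takes exactly the approach the paper itself suggests: the paper's proof of Lemma \ref{lem.DelP} consists of the single remark that it is "the dual statement to Lemma \ref{lem.Pm}, and is proved by the same argument with all arrows reversed and with all $m$'s replaced by $\Delta$'s," which is precisely the line-by-line dualization you spell out. Your version is just the explicit form of that terse instruction, and it is accurate throughout (the key point being that $\Delta^{\left[\ell\right]}$ is graded, so $\Delta^{\left[\ell\right]}\circ p_i=p_i^{\prime}\circ\Delta^{\left[\ell\right]}$, mirroring $p_i\circ m^{\left[\ell\right]}=m^{\left[\ell\right]}\circ p_i^{\prime}$ in the original).
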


\begin{proof}
This is the dual statement to Lemma \ref{lem.Pm}, and is proved by the same
argument \textquotedblleft with all arrows reversed\textquotedblright%
\ (meaning that any composition $f_{1}\circ f_{2}\circ\cdots\circ f_{\ell}$ of
several maps should be reversed to become $f_{\ell}\circ f_{\ell-1}\circ
\cdots\circ f_{1}$) and with all $m$'s replaced by $\Delta$'s.
\end{proof}

\subsection{The composition formula: proof}

We are finally able to prove Theorem \ref{thm.sol-mac-1}. The proof follows
the same paradigm as Patras's proof of the commutative and cocommutative
particular cases (\cite[preuve de Th\'{e}or\`{e}me II,7]{Patras94} and
\cite[proof of Theorem 5.11]{CarPat21}), but involves more complexity due to
the presence of permutations.

\begin{proof}
[Proof of Theorem \ref{thm.sol-mac-1}.]The formula (\ref{eq.pas.formal-def})
yields%
\begin{align}
p_{\alpha,\sigma}  &  =m^{\left[  k\right]  }\circ P_{\alpha}\circ\sigma
^{-1}\circ\Delta^{\left[  k\right]  }\ \ \ \ \ \ \ \ \ \ \text{and}%
\label{pf.thm.sol-mac-1.pas}\\
p_{\beta,\tau}  &  =m^{\left[  \ell\right]  }\circ P_{\beta}\circ\tau
^{-1}\circ\Delta^{\left[  \ell\right]  }. \label{pf.thm.sol-mac-1.pbt}%
\end{align}
Multiplying these two equalities (using the operation $\circ$), we obtain%
\[
p_{\alpha,\sigma}\circ p_{\beta,\tau}=m^{\left[  k\right]  }\circ P_{\alpha
}\circ\sigma^{-1}\circ\Delta^{\left[  k\right]  }\circ m^{\left[  \ell\right]
}\circ P_{\beta}\circ\tau^{-1}\circ\Delta^{\left[  \ell\right]  }.
\]
Our main task now is to \textquotedblleft commute\textquotedblright\ the
operators in this equality past each other, moving both $m$ factors to the
left, both $\Delta$ factors to the right, and ideally obtaining some
expression of the form $m^{\left[  k\ell\right]  }\circ P_{\gamma}\circ
\rho^{-1}\circ\Delta^{\left[  k\ell\right]  }$ (more precisely, it will be a
sum of such expressions). For this purpose, we shall derive some
\textquotedblleft commutation-like\textquotedblright\ formulas.

Lemma \ref{lem.Pm} (applied to $\alpha$ and $\alpha_{i}$ instead of $\gamma$
and $\gamma_{i}$) yields%
\begin{align}
&  P_{\alpha}\circ\left(  m^{\left[  \ell\right]  }\right)  ^{\otimes
k}\nonumber\\
&  =\sum_{\substack{\gamma_{i,j}\in\mathbb{N}\text{ for all }i\in\left[
k\right]  \text{ and }j\in\left[  \ell\right]  ;\\\gamma_{i,1}+\gamma
_{i,2}+\cdots+\gamma_{i,\ell}=\alpha_{i}\text{ for all }i\in\left[  k\right]
}}\left(  m^{\left[  \ell\right]  }\right)  ^{\otimes k}\circ P_{\left(
\gamma_{1,1},\gamma_{1,2},\ldots,\gamma_{k,\ell}\right)  }.
\label{pf.thm.sol-mac-1.Pm1}%
\end{align}

Lemma \ref{lem.DelP} (applied to $k$, $\ell$, $\beta$ and $\beta_{i}$ instead
of $\ell$, $k$, $\gamma$ and $\gamma_{i}$) yields%
\begin{align}
&  \left(  \Delta^{\left[  k\right]  }\right)  ^{\otimes\ell}\circ P_{\beta
}\nonumber\\
&  =\sum_{\substack{\gamma_{i,j}\in\mathbb{N}\text{ for all }i\in\left[
\ell\right]  \text{ and }j\in\left[  k\right]  ;\\\gamma_{i,1}+\gamma
_{i,2}+\cdots+\gamma_{i,k}=\beta_{i}\text{ for all }i\in\left[  \ell\right]
}}P_{\left(  \gamma_{1,1},\gamma_{1,2},\ldots,\gamma_{\ell,k}\right)  }%
\circ\left(  \Delta^{\left[  k\right]  }\right)  ^{\otimes\ell}\nonumber\\
&  =\sum_{\substack{\theta_{j,i}\in\mathbb{N}\text{ for all }i\in\left[
\ell\right]  \text{ and }j\in\left[  k\right]  ;\\\theta_{1,i}+\theta
_{2,i}+\cdots+\theta_{k,i}=\beta_{i}\text{ for all }i\in\left[  \ell\right]
}}P_{\left(  \theta_{1,1},\theta_{2,1},\ldots,\theta_{k,\ell}\right)  }%
\circ\left(  \Delta^{\left[  k\right]  }\right)  ^{\otimes\ell}\nonumber\\
&  \ \ \ \ \ \ \ \ \ \ \ \ \ \ \ \ \ \ \ \ \left(  \text{here, we have renamed
the }\gamma_{i,j}\text{ as }\theta_{j,i}\right) \nonumber\\
&  =\sum_{\substack{\theta_{i,j}\in\mathbb{N}\text{ for all }j\in\left[
\ell\right]  \text{ and }i\in\left[  k\right]  ;\\\theta_{1,j}+\theta
_{2,j}+\cdots+\theta_{k,j}=\beta_{j}\text{ for all }j\in\left[  \ell\right]
}}P_{\left(  \theta_{1,1},\theta_{2,1},\ldots,\theta_{k,\ell}\right)  }%
\circ\left(  \Delta^{\left[  k\right]  }\right)  ^{\otimes\ell}\nonumber\\
&  \ \ \ \ \ \ \ \ \ \ \ \ \ \ \ \ \ \ \ \ \left(  \text{here, we have renamed
the indices }i\text{ and }j\text{ as }j\text{ and }i\right) \nonumber\\
&  =\sum_{\substack{\theta_{i,j}\in\mathbb{N}\text{ for all }i\in\left[
k\right]  \text{ and }j\in\left[  \ell\right]  ;\\\theta_{1,j}+\theta
_{2,j}+\cdots+\theta_{k,j}=\beta_{j}\text{ for all }j\in\left[  \ell\right]
}}P_{\left(  \theta_{1,1},\theta_{2,1},\ldots,\theta_{k,\ell}\right)  }%
\circ\left(  \Delta^{\left[  k\right]  }\right)  ^{\otimes\ell}
\label{pf.thm.sol-mac-1.dP2}%
\end{align}
(here, we have just rewritten the \textquotedblleft$\theta_{i,j}\in\mathbb{N}$
for all $j\in\left[  \ell\right]  $ and $i\in\left[  k\right]  $%
\textquotedblright\ under the summation sign as \textquotedblleft$\theta
_{i,j}\in\mathbb{N}$ for all $i\in\left[  k\right]  $ and $j\in\left[
\ell\right]  $\textquotedblright).

Next, we shall use a nearly trivial fact: If $f,g,u,v$ are four maps such that
the compositions $f\circ u$ and $g\circ v$ are well-defined (i.e., the target
of $u$ is the domain of $f$, and the target of $v$ is the domain of $g$) and
satisfy $f\circ u=g\circ v$, and if the maps $f$ and $v$ are invertible, then%
\begin{equation}
u\circ v^{-1}=f^{-1}\circ g \label{pf.thm.sol-mac-1.fu}%
\end{equation}
(since $\underbrace{f\circ u}_{=g\circ v}\circ\,v^{-1}=g\circ
\underbrace{v\circ v^{-1}}_{=\operatorname*{id}}=g$ and thus $g=f\circ u\circ
v^{-1}$, so that $f^{-1}\circ g=\underbrace{f^{-1}\circ f}%
_{=\operatorname*{id}}\circ\,u\circ v^{-1}=u\circ v^{-1}$).

Define a permutation $\sigma^{\times\ell}\in\mathfrak{S}_{k\ell}$ as in Lemma
\ref{lem.pixl}. Define a permutation $\tau^{k\times}\in\mathfrak{S}_{k\ell}$
as in Lemma \ref{lem.pikx}. Define a permutation $\zeta\in\mathfrak{S}_{k\ell
}$ as in Lemma \ref{lem.zol1}.

Lemma \ref{lem.pixl} (applied to $f=m^{\left[  \ell\right]  }$) yields
$\sigma\circ\left(  m^{\left[  \ell\right]  }\right)  ^{\otimes k}=\left(
m^{\left[  \ell\right]  }\right)  ^{\otimes k}\circ\sigma^{\times\ell}$.
Hence, (\ref{pf.thm.sol-mac-1.fu}) (applied to $f=\sigma$ and $u=\left(
m^{\left[  \ell\right]  }\right)  ^{\otimes k}$ and $g=\left(  m^{\left[
\ell\right]  }\right)  ^{\otimes k}$ and $v=\sigma^{\times\ell}$) yields%
\begin{equation}
\left(  m^{\left[  \ell\right]  }\right)  ^{\otimes k}\circ\left(
\sigma^{\times\ell}\right)  ^{-1}=\sigma^{-1}\circ\left(  m^{\left[
\ell\right]  }\right)  ^{\otimes k}. \label{pf.thm.sol-mac-1.msig}%
\end{equation}

Lemma \ref{lem.pikx} (applied to $f=\Delta^{\left[  k\right]  }$) yields
$\left(  \Delta^{\left[  k\right]  }\right)  ^{\otimes\ell}\circ\tau
=\tau^{k\times}\circ\left(  \Delta^{\left[  k\right]  }\right)  ^{\otimes\ell
}$. In other words, $\tau^{k\times}\circ\left(  \Delta^{\left[  k\right]
}\right)  ^{\otimes\ell}=\left(  \Delta^{\left[  k\right]  }\right)
^{\otimes\ell}\circ\tau$. Hence, (\ref{pf.thm.sol-mac-1.fu}) (applied to
$f=\tau^{k\times}$ and $u=\left(  \Delta^{\left[  k\right]  }\right)
^{\otimes\ell}$ and $g=\left(  \Delta^{\left[  k\right]  }\right)
^{\otimes\ell}$ and $v=\tau$) yields%
\begin{equation}
\left(  \Delta^{\left[  k\right]  }\right)  ^{\otimes\ell}\circ\tau
^{-1}=\left(  \tau^{k\times}\right)  ^{-1}\circ\left(  \Delta^{\left[
k\right]  }\right)  ^{\otimes\ell}. \label{pf.thm.sol-mac-1.tauDel}%
\end{equation}

Furthermore, if $\theta_{i,j}\in\mathbb{N}$ is a nonnegative integer for each
$i\in\left[  k\right]  $ and $j\in\left[  \ell\right]  $, then we have%
\begin{align}
&  P_{\left(  \theta_{1,1},\theta_{1,2},\ldots,\theta_{k,\ell}\right)  }%
\circ\zeta\nonumber\\
&  =\zeta\circ P_{\left(  \theta_{1,1},\theta_{1,2},\ldots,\theta_{k,\ell
}\right)  \cdot\zeta}\nonumber\\
&  \ \ \ \ \ \ \ \ \ \ \ \ \ \ \ \ \ \ \ \ \left(  \text{by
(\ref{pf.thm.sol-mac-1.Ppi}), applied to }\pi=\zeta\text{ and }\gamma=\left(
\theta_{1,1},\theta_{1,2},\ldots,\theta_{k,\ell}\right)  \right) \nonumber\\
&  =\zeta\circ P_{\left(  \theta_{1,1},\theta_{2,1},\ldots,\theta_{k,\ell
}\right)  } \label{pf.thm.sol-mac-1.Pz}%
\end{align}
(since Lemma \ref{lem.tup-zeta} yields $\left(  \theta_{1,1},\theta
_{1,2},\ldots,\theta_{k,\ell}\right)  \cdot\zeta=\left(  \theta_{1,1}%
,\theta_{2,1},\ldots,\theta_{k,\ell}\right)  $) and%
\begin{align*}
&  P_{\left(  \theta_{1,1},\theta_{1,2},\ldots,\theta_{k,\ell}\right)  }%
\circ\sigma^{\times\ell}\\
&  =\sigma^{\times\ell}\circ P_{\left(  \theta_{1,1},\theta_{1,2}%
,\ldots,\theta_{k,\ell}\right)  \cdot\sigma^{\times\ell}}\\
&  \ \ \ \ \ \ \ \ \ \ \ \ \ \ \ \ \ \ \ \ \left(  \text{by
(\ref{pf.thm.sol-mac-1.Ppi}), applied to }\pi=\sigma^{\times\ell}\text{ and
}\gamma=\left(  \theta_{1,1},\theta_{1,2},\ldots,\theta_{k,\ell}\right)
\right) \\
&  =\sigma^{\times\ell}\circ P_{\left(  \theta_{\sigma\left(  1\right)
,1},\theta_{\sigma\left(  1\right)  ,2},\ldots,\theta_{\sigma\left(  k\right)
,\ell}\right)  }%
\end{align*}
(since Lemma \ref{lem.tup-sigma} yields $\left(  \theta_{1,1},\theta
_{1,2},\ldots,\theta_{k,\ell}\right)  \cdot\sigma^{\times\ell}=\left(
\theta_{\sigma\left(  1\right)  ,1},\theta_{\sigma\left(  1\right)  ,2}%
,\ldots,\theta_{\sigma\left(  k\right)  ,\ell}\right)  $) and therefore
$\sigma^{\times\ell}\circ P_{\left(  \theta_{\sigma\left(  1\right)
,1},\theta_{\sigma\left(  1\right)  ,2},\ldots,\theta_{\sigma\left(  k\right)
,\ell}\right)  }=P_{\left(  \theta_{1,1},\theta_{1,2},\ldots,\theta_{k,\ell
}\right)  }\circ\sigma^{\times\ell}$, so that%
\begin{equation}
P_{\left(  \theta_{\sigma\left(  1\right)  ,1},\theta_{\sigma\left(  1\right)
,2},\ldots,\theta_{\sigma\left(  k\right)  ,\ell}\right)  }\circ\left(
\sigma^{\times\ell}\right)  ^{-1}=\left(  \sigma^{\times\ell}\right)
^{-1}\circ P_{\left(  \theta_{1,1},\theta_{1,2},\ldots,\theta_{k,\ell}\right)
} \label{pf.thm.sol-mac-1.Ps}%
\end{equation}
(by (\ref{pf.thm.sol-mac-1.fu}), applied to $f=\sigma^{\times\ell}$ and
$u=P_{\left(  \theta_{\sigma\left(  1\right)  ,1},\theta_{\sigma\left(
1\right)  ,2},\ldots,\theta_{\sigma\left(  k\right)  ,\ell}\right)  }$ and
$g=P_{\left(  \theta_{1,1},\theta_{1,2},\ldots,\theta_{k,\ell}\right)  }$ and
$v=\sigma^{\times\ell}$).

Now, (\ref{pf.thm.sol-mac-1.pas}) and (\ref{pf.thm.sol-mac-1.pbt}) yield%
\begin{align}
&  p_{\alpha,\sigma}\circ p_{\beta,\tau}\nonumber\\
&  =m^{\left[  k\right]  }\circ P_{\alpha}\circ\sigma^{-1}\circ
\underbrace{\Delta^{\left[  k\right]  }\circ m^{\left[  \ell\right]  }%
}_{\substack{=\left(  m^{\left[  \ell\right]  }\right)  ^{\otimes k}\circ
\zeta\circ\left(  \Delta^{\left[  k\right]  }\right)  ^{\otimes\ell
}\\\text{(by Lemma \ref{lem.mDc})}}}\circ\,P_{\beta}\circ\tau^{-1}\circ
\Delta^{\left[  \ell\right]  }\nonumber\\
&  =m^{\left[  k\right]  }\circ P_{\alpha}\circ\underbrace{\sigma^{-1}%
\circ\left(  m^{\left[  \ell\right]  }\right)  ^{\otimes k}}%
_{\substack{=\left(  m^{\left[  \ell\right]  }\right)  ^{\otimes k}%
\circ\left(  \sigma^{\times\ell}\right)  ^{-1}\\\text{(by
(\ref{pf.thm.sol-mac-1.msig}))}}}\circ\,\zeta\nonumber\\
&  \ \ \ \ \ \ \ \ \ \ \circ\underbrace{\left(  \Delta^{\left[  k\right]
}\right)  ^{\otimes\ell}\circ P_{\beta}}_{\substack{=\sum_{\substack{\theta
_{i,j}\in\mathbb{N}\text{ for all }i\in\left[  k\right]  \text{ and }%
j\in\left[  \ell\right]  ;\\\theta_{1,j}+\theta_{2,j}+\cdots+\theta
_{k,j}=\beta_{j}\text{ for all }j\in\left[  \ell\right]  }}P_{\left(
\theta_{1,1},\theta_{2,1},\ldots,\theta_{k,\ell}\right)  }\circ\left(
\Delta^{\left[  k\right]  }\right)  ^{\otimes\ell}\\\text{(by
(\ref{pf.thm.sol-mac-1.dP2}))}}}\circ\,\tau^{-1}\circ\Delta^{\left[
\ell\right]  }\nonumber\\
&  =\sum_{\substack{\theta_{i,j}\in\mathbb{N}\text{ for all }i\in\left[
k\right]  \text{ and }j\in\left[  \ell\right]  ;\\\theta_{1,j}+\theta
_{2,j}+\cdots+\theta_{k,j}=\beta_{j}\text{ for all }j\in\left[  \ell\right]
}}m^{\left[  k\right]  }\circ\underbrace{P_{\alpha}\circ\left(  m^{\left[
\ell\right]  }\right)  ^{\otimes k}}_{\substack{=\sum_{\substack{\gamma
_{i,j}\in\mathbb{N}\text{ for all }i\in\left[  k\right]  \text{ and }%
j\in\left[  \ell\right]  ;\\\gamma_{i,1}+\gamma_{i,2}+\cdots+\gamma_{i,\ell
}=\alpha_{i}\text{ for all }i\in\left[  k\right]  }}\left(  m^{\left[
\ell\right]  }\right)  ^{\otimes k}\circ P_{\left(  \gamma_{1,1},\gamma
_{1,2},\ldots,\gamma_{k,\ell}\right)  }\\\text{(by (\ref{pf.thm.sol-mac-1.Pm1}%
))}}}\nonumber\\
&  \ \ \ \ \ \ \ \ \ \ \circ\left(  \sigma^{\times\ell}\right)  ^{-1}%
\circ\zeta\circ P_{\left(  \theta_{1,1},\theta_{2,1},\ldots,\theta_{k,\ell
}\right)  }\circ\underbrace{\left(  \Delta^{\left[  k\right]  }\right)
^{\otimes\ell}\circ\tau^{-1}}_{\substack{=\left(  \tau^{k\times}\right)
^{-1}\circ\left(  \Delta^{\left[  k\right]  }\right)  ^{\otimes\ell
}\\\text{(by (\ref{pf.thm.sol-mac-1.tauDel}))}}}\circ\,\Delta^{\left[
\ell\right]  }\nonumber\\
&  \ \ \ \ \ \ \ \ \ \ \ \ \ \ \ \ \ \ \ \ \left(
\begin{array}
[c]{c}%
\text{here, we have distributed the summation sign to the}\\
\text{beginning of the product, since all our maps are }\mathbf{k}%
\text{-linear}%
\end{array}
\right) \nonumber\\
&  =\sum_{\substack{\gamma_{i,j}\in\mathbb{N}\text{ for all }i\in\left[
k\right]  \text{ and }j\in\left[  \ell\right]  ;\\\gamma_{i,1}+\gamma
_{i,2}+\cdots+\gamma_{i,\ell}=\alpha_{i}\text{ for all }i\in\left[  k\right]
}}\ \ \sum_{\substack{\theta_{i,j}\in\mathbb{N}\text{ for all }i\in\left[
k\right]  \text{ and }j\in\left[  \ell\right]  ;\\\theta_{1,j}+\theta
_{2,j}+\cdots+\theta_{k,j}=\beta_{j}\text{ for all }j\in\left[  \ell\right]
}}\nonumber\\
&  \ \ \ \ \ \ \ \ \ \ m^{\left[  k\right]  }\circ\left(  m^{\left[
\ell\right]  }\right)  ^{\otimes k}\circ P_{\left(  \gamma_{1,1},\gamma
_{1,2},\ldots,\gamma_{k,\ell}\right)  }\circ\left(  \sigma^{\times\ell
}\right)  ^{-1}\nonumber\\
&  \ \ \ \ \ \ \ \ \ \ \circ\zeta\circ P_{\left(  \theta_{1,1},\theta
_{2,1},\ldots,\theta_{k,\ell}\right)  }\circ\left(  \tau^{k\times}\right)
^{-1}\circ\left(  \Delta^{\left[  k\right]  }\right)  ^{\otimes\ell}%
\circ\Delta^{\left[  \ell\right]  }\label{pf.thm.sol-mac-1.5}\\
&  \ \ \ \ \ \ \ \ \ \ \ \ \ \ \ \ \ \ \ \ \left(
\begin{array}
[c]{c}%
\text{here, again, we have distributed the summation sign to the}\\
\text{beginning of the product, since all our maps are }\mathbf{k}%
\text{-linear}%
\end{array}
\right)  .\nonumber
\end{align}

However, if $\gamma_{i,j}$ and $\theta_{i,j}$ are nonnegative integers for all
$i\in\left[  k\right]  $ and all $j\in\left[  \ell\right]  $, then%
\begin{align*}
&  \underbrace{m^{\left[  k\right]  }\circ\left(  m^{\left[  \ell\right]
}\right)  ^{\otimes k}}_{\substack{=m^{\left[  k\ell\right]  }\\\text{(by
Lemma \ref{lem.mmm})}}}\circ\,P_{\left(  \gamma_{1,1},\gamma_{1,2}%
,\ldots,\gamma_{k,\ell}\right)  }\circ\left(  \sigma^{\times\ell}\right)
^{-1}\\
&  \ \ \ \ \ \ \ \ \ \ \circ\underbrace{\zeta\circ P_{\left(  \theta
_{1,1},\theta_{2,1},\ldots,\theta_{k,\ell}\right)  }}_{\substack{=P_{\left(
\theta_{1,1},\theta_{1,2},\ldots,\theta_{k,\ell}\right)  }\circ\zeta
\\\text{(by (\ref{pf.thm.sol-mac-1.Pz}))}}}\circ\left(  \tau^{k\times}\right)
^{-1}\circ\underbrace{\left(  \Delta^{\left[  k\right]  }\right)
^{\otimes\ell}\circ\Delta^{\left[  \ell\right]  }}_{\substack{=\Delta^{\left[
k\ell\right]  }\\\text{(by Lemma \ref{lem.DDD})}}}\\
&  =m^{\left[  k\ell\right]  }\circ P_{\left(  \gamma_{1,1},\gamma
_{1,2},\ldots,\gamma_{k,\ell}\right)  }\circ\underbrace{\left(  \sigma
^{\times\ell}\right)  ^{-1}\circ P_{\left(  \theta_{1,1},\theta_{1,2}%
,\ldots,\theta_{k,\ell}\right)  }}_{\substack{=P_{\left(  \theta
_{\sigma\left(  1\right)  ,1},\theta_{\sigma\left(  1\right)  ,2}%
,\ldots,\theta_{\sigma\left(  k\right)  ,\ell}\right)  }\circ\left(
\sigma^{\times\ell}\right)  ^{-1}\\\text{(by (\ref{pf.thm.sol-mac-1.Ps}))}%
}}\circ\,\zeta\circ\left(  \tau^{k\times}\right)  ^{-1}\circ\Delta^{\left[
k\ell\right]  }\\
&  =m^{\left[  k\ell\right]  }\circ\underbrace{P_{\left(  \gamma_{1,1}%
,\gamma_{1,2},\ldots,\gamma_{k,\ell}\right)  }\circ P_{\left(  \theta
_{\sigma\left(  1\right)  ,1},\theta_{\sigma\left(  1\right)  ,2}%
,\ldots,\theta_{\sigma\left(  k\right)  ,\ell}\right)  }}_{\substack{=%
\begin{cases}
P_{\left(  \gamma_{1,1},\gamma_{1,2},\ldots,\gamma_{k,\ell}\right)  }, &
\text{if }\left(  \gamma_{1,1},\gamma_{1,2},\ldots,\gamma_{k,\ell}\right)
=\left(  \theta_{\sigma\left(  1\right)  ,1},\theta_{\sigma\left(  1\right)
,2},\ldots,\theta_{\sigma\left(  k\right)  ,\ell}\right)  ;\\
0, & \text{otherwise}%
\end{cases}
\\\text{(by Lemma \ref{lem.PP})}}}\\
&  \ \ \ \ \ \ \ \ \ \ \circ\underbrace{\left(  \sigma^{\times\ell}\right)
^{-1}\circ\zeta\circ\left(  \tau^{k\times}\right)  ^{-1}}_{\substack{=\left(
\tau\left[  \sigma\right]  \right)  ^{-1}\\\text{(by (\ref{eq.lem.sitaze.eq.2}%
))}}}\circ\,\Delta^{\left[  k\ell\right]  }\\
&  =m^{\left[  k\ell\right]  }\circ%
\begin{cases}
P_{\left(  \gamma_{1,1},\gamma_{1,2},\ldots,\gamma_{k,\ell}\right)  }, &
\text{if }\left(  \gamma_{1,1},\gamma_{1,2},\ldots,\gamma_{k,\ell}\right)
=\left(  \theta_{\sigma\left(  1\right)  ,1},\theta_{\sigma\left(  1\right)
,2},\ldots,\theta_{\sigma\left(  k\right)  ,\ell}\right)  ;\\
0, & \text{otherwise}%
\end{cases}
\\
&  \ \ \ \ \ \ \ \ \ \ \circ\left(  \tau\left[  \sigma\right]  \right)
^{-1}\circ\Delta^{\left[  k\ell\right]  }\\
&  =m^{\left[  k\ell\right]  }\circ%
\begin{cases}
P_{\left(  \gamma_{1,1},\gamma_{1,2},\ldots,\gamma_{k,\ell}\right)  }, &
\text{if }\gamma_{i,j}=\theta_{\sigma\left(  i\right)  ,j}\text{ for all
}i,j;\\
0, & \text{otherwise}%
\end{cases}
\\
&  \ \ \ \ \ \ \ \ \ \ \circ\left(  \tau\left[  \sigma\right]  \right)
^{-1}\circ\Delta^{\left[  k\ell\right]  }%
\end{align*}
(since the condition \textquotedblleft$\left(  \gamma_{1,1},\gamma
_{1,2},\ldots,\gamma_{k,\ell}\right)  =\left(  \theta_{\sigma\left(  1\right)
,1},\theta_{\sigma\left(  1\right)  ,2},\ldots,\theta_{\sigma\left(  k\right)
,\ell}\right)  $\textquotedblright\ is equivalent to \textquotedblleft%
$\gamma_{i,j}=\theta_{\sigma\left(  i\right)  ,j}$ for all $i,j$%
\textquotedblright). Thus, we can rewrite (\ref{pf.thm.sol-mac-1.5}) as%
\begin{align*}
p_{\alpha,\sigma}\circ p_{\beta,\tau}  &  =\sum_{\substack{\gamma_{i,j}%
\in\mathbb{N}\text{ for all }i\in\left[  k\right]  \text{ and }j\in\left[
\ell\right]  ;\\\gamma_{i,1}+\gamma_{i,2}+\cdots+\gamma_{i,\ell}=\alpha
_{i}\text{ for all }i\in\left[  k\right]  }}\ \ \sum_{\substack{\theta
_{i,j}\in\mathbb{N}\text{ for all }i\in\left[  k\right]  \text{ and }%
j\in\left[  \ell\right]  ;\\\theta_{1,j}+\theta_{2,j}+\cdots+\theta
_{k,j}=\beta_{j}\text{ for all }j\in\left[  \ell\right]  }}\\
&  \ \ \ \ \ \ \ \ \ \ m^{\left[  k\ell\right]  }\circ%
\begin{cases}
P_{\left(  \gamma_{1,1},\gamma_{1,2},\ldots,\gamma_{k,\ell}\right)  }, &
\text{if }\gamma_{i,j}=\theta_{\sigma\left(  i\right)  ,j}\text{ for all
}i,j;\\
0, & \text{otherwise}%
\end{cases}
\\
&  \ \ \ \ \ \ \ \ \ \ \circ\left(  \tau\left[  \sigma\right]  \right)
^{-1}\circ\Delta^{\left[  k\ell\right]  }\\
&  =\sum_{\substack{\gamma_{i,j}\in\mathbb{N}\text{ for all }i\in\left[
k\right]  \text{ and }j\in\left[  \ell\right]  ;\\\gamma_{i,1}+\gamma
_{i,2}+\cdots+\gamma_{i,\ell}=\alpha_{i}\text{ for all }i\in\left[  k\right]
}}\ \ \sum_{\substack{\theta_{i,j}\in\mathbb{N}\text{ for all }i\in\left[
k\right]  \text{ and }j\in\left[  \ell\right]  ;\\\theta_{1,j}+\theta
_{2,j}+\cdots+\theta_{k,j}=\beta_{j}\text{ for all }j\in\left[  \ell\right]
;\\\gamma_{i,j}=\theta_{\sigma\left(  i\right)  ,j}\text{ for all }i,j}}\\
&  \ \ \ \ \ \ \ \ \ \ m^{\left[  k\ell\right]  }\circ P_{\left(  \gamma
_{1,1},\gamma_{1,2},\ldots,\gamma_{k,\ell}\right)  }\circ\left(  \tau\left[
\sigma\right]  \right)  ^{-1}\circ\Delta^{\left[  k\ell\right]  }%
\end{align*}
(here, we have discarded all the addends that do \textbf{not} satisfy $\left(
\gamma_{i,j}=\theta_{\sigma\left(  i\right)  ,j}\text{ for all }i,j\right)  $,
because these addends are $0$). Thus,%
\begin{align}
p_{\alpha,\sigma}\circ p_{\beta,\tau}  &  =\sum_{\substack{\gamma_{i,j}%
\in\mathbb{N}\text{ for all }i\in\left[  k\right]  \text{ and }j\in\left[
\ell\right]  ;\\\gamma_{i,1}+\gamma_{i,2}+\cdots+\gamma_{i,\ell}=\alpha
_{i}\text{ for all }i\in\left[  k\right]  }}\ \ \sum_{\substack{\theta
_{i,j}\in\mathbb{N}\text{ for all }i\in\left[  k\right]  \text{ and }%
j\in\left[  \ell\right]  ;\\\theta_{1,j}+\theta_{2,j}+\cdots+\theta
_{k,j}=\beta_{j}\text{ for all }j\in\left[  \ell\right]  ;\\\gamma
_{i,j}=\theta_{\sigma\left(  i\right)  ,j}\text{ for all }i,j}}\nonumber\\
&  \ \ \ \ \ \ \ \ \ \ \underbrace{m^{\left[  k\ell\right]  }\circ P_{\left(
\gamma_{1,1},\gamma_{1,2},\ldots,\gamma_{k,\ell}\right)  }\circ\left(
\tau\left[  \sigma\right]  \right)  ^{-1}\circ\Delta^{\left[  k\ell\right]  }%
}_{\substack{=p_{\left(  \gamma_{1,1},\gamma_{1,2},\ldots,\gamma_{k,\ell
}\right)  ,\tau\left[  \sigma\right]  }\\\text{(by (\ref{eq.pas.formal-def}%
))}}}\nonumber\\
&  =\sum_{\substack{\gamma_{i,j}\in\mathbb{N}\text{ for all }i\in\left[
k\right]  \text{ and }j\in\left[  \ell\right]  ;\\\gamma_{i,1}+\gamma
_{i,2}+\cdots+\gamma_{i,\ell}=\alpha_{i}\text{ for all }i\in\left[  k\right]
}}\ \ \sum_{\substack{\theta_{i,j}\in\mathbb{N}\text{ for all }i\in\left[
k\right]  \text{ and }j\in\left[  \ell\right]  ;\\\theta_{1,j}+\theta
_{2,j}+\cdots+\theta_{k,j}=\beta_{j}\text{ for all }j\in\left[  \ell\right]
;\\\gamma_{i,j}=\theta_{\sigma\left(  i\right)  ,j}\text{ for all }%
i,j}}p_{\left(  \gamma_{1,1},\gamma_{1,2},\ldots,\gamma_{k,\ell}\right)
,\tau\left[  \sigma\right]  }. \label{pf.thm.sol-mac-1.7}%
\end{align}

Now, let us simplify the inner sum in (\ref{pf.thm.sol-mac-1.7}). If
$\gamma_{i,j}$ and $\theta_{i,j}$ are nonnegative integers for all
$i\in\left[  k\right]  $ and $j\in\left[  \ell\right]  $ that satisfy $\left(
\gamma_{i,j}=\theta_{\sigma\left(  i\right)  ,j}\text{ for all }i,j\right)  $,
then each $j\in\left[  \ell\right]  $ satisfies%
\begin{align*}
\theta_{1,j}+\theta_{2,j}+\cdots+\theta_{k,j}  &  =\theta_{\sigma\left(
1\right)  ,j}+\theta_{\sigma\left(  2\right)  ,j}+\cdots+\theta_{\sigma\left(
k\right)  ,j}\\
&  \ \ \ \ \ \ \ \ \ \ \ \ \ \ \ \ \ \ \ \ \left(  \text{since }\sigma
\in\mathfrak{S}_{k}\text{ is a bijection from }\left[  k\right]  \text{ to
}\left[  k\right]  \right) \\
&  =\gamma_{1,j}+\gamma_{2,j}+\cdots+\gamma_{k,j}%
\end{align*}
(since the condition $\left(  \gamma_{i,j}=\theta_{\sigma\left(  i\right)
,j}\text{ for all }i,j\right)  $ allows us to rewrite each $\theta
_{\sigma\left(  i\right)  ,j}$ as $\gamma_{i,j}$). Thus, the \textquotedblleft%
$\theta_{1,j}+\theta_{2,j}+\cdots+\theta_{k,j}$\textquotedblright\ under the
second summation sign on the right-hand side of (\ref{pf.thm.sol-mac-1.7}) can
be rewritten as \textquotedblleft$\gamma_{1,j}+\gamma_{2,j}+\cdots
+\gamma_{k,j}$\textquotedblright. As a consequence, we can rewrite
(\ref{pf.thm.sol-mac-1.7}) as%
\begin{align*}
&  p_{\alpha,\sigma}\circ p_{\beta,\tau}\\
&  =\sum_{\substack{\gamma_{i,j}\in\mathbb{N}\text{ for all }i\in\left[
k\right]  \text{ and }j\in\left[  \ell\right]  ;\\\gamma_{i,1}+\gamma
_{i,2}+\cdots+\gamma_{i,\ell}=\alpha_{i}\text{ for all }i\in\left[  k\right]
}}\ \ \sum_{\substack{\theta_{i,j}\in\mathbb{N}\text{ for all }i\in\left[
k\right]  \text{ and }j\in\left[  \ell\right]  ;\\\gamma_{1,j}+\gamma
_{2,j}+\cdots+\gamma_{k,j}=\beta_{j}\text{ for all }j\in\left[  \ell\right]
;\\\gamma_{i,j}=\theta_{\sigma\left(  i\right)  ,j}\text{ for all }%
i,j}}p_{\left(  \gamma_{1,1},\gamma_{1,2},\ldots,\gamma_{k,\ell}\right)
,\tau\left[  \sigma\right]  }\\
&  =\sum_{\substack{\gamma_{i,j}\in\mathbb{N}\text{ for all }i\in\left[
k\right]  \text{ and }j\in\left[  \ell\right]  ;\\\gamma_{i,1}+\gamma
_{i,2}+\cdots+\gamma_{i,\ell}=\alpha_{i}\text{ for all }i\in\left[  k\right]
;\\\gamma_{1,j}+\gamma_{2,j}+\cdots+\gamma_{k,j}=\beta_{j}\text{ for all }%
j\in\left[  \ell\right]  ;}}\ \ \underbrace{\sum_{\substack{\theta_{i,j}%
\in\mathbb{N}\text{ for all }i\in\left[  k\right]  \text{ and }j\in\left[
\ell\right]  ;\\\gamma_{i,j}=\theta_{\sigma\left(  i\right)  ,j}\text{ for all
}i,j}}p_{\left(  \gamma_{1,1},\gamma_{1,2},\ldots,\gamma_{k,\ell}\right)
,\tau\left[  \sigma\right]  }}_{\substack{=p_{\left(  \gamma_{1,1}%
,\gamma_{1,2},\ldots,\gamma_{k,\ell}\right)  ,\tau\left[  \sigma\right]
}\\\text{(since the condition \textquotedblleft}\gamma_{i,j}=\theta
_{\sigma\left(  i\right)  ,j}\text{ for all }i,j\text{\textquotedblright%
}\\\text{ensures that }\theta_{i,j}=\gamma_{\sigma^{-1}\left(  i\right)
,j}\text{ for all }i,j\text{,}\\\text{and thus uniquely determines the }%
\theta_{i,j}\text{;}\\\text{hence, the sum has exactly one addend)}}}\\
&  =\sum_{\substack{\gamma_{i,j}\in\mathbb{N}\text{ for all }i\in\left[
k\right]  \text{ and }j\in\left[  \ell\right]  ;\\\gamma_{i,1}+\gamma
_{i,2}+\cdots+\gamma_{i,\ell}=\alpha_{i}\text{ for all }i\in\left[  k\right]
;\\\gamma_{1,j}+\gamma_{2,j}+\cdots+\gamma_{k,j}=\beta_{j}\text{ for all }%
j\in\left[  \ell\right]  }}p_{\left(  \gamma_{1,1},\gamma_{1,2},\ldots
,\gamma_{k,\ell}\right)  ,\tau\left[  \sigma\right]  }.
\end{align*}
This proves Theorem \ref{thm.sol-mac-1}.
\end{proof}

As we mentioned above, (\ref{eq.intro.pp=sum.cocomm}) is a particular case of
Theorem \ref{thm.sol-mac-1} using Proposition \ref{prop.pas.comm}
\textbf{(b)}. Likewise, the analogue of (\ref{eq.intro.pp=sum.cocomm}) for
commutative bialgebras can be recovered from Theorem \ref{thm.sol-mac-1} using
Proposition \ref{prop.pas.comm} \textbf{(a)}.

\begin{question}
In \cite[Proposition 12]{Pang21}, Pang generalized
(\ref{eq.intro.pp=sum.cocomm}) to Hopf algebras with involutions (as long as
they are commutative or cocommutative). Is a similar generalization possible
for Theorem \ref{thm.sol-mac-1}?
\end{question}

\subsection{Linear independence of $p_{\alpha,\sigma}$'s}

Both Theorem \ref{thm.sol-mac-1} and Proposition \ref{prop.sol-mac-0} hold for
arbitrary graded (not necessarily connected) bialgebras. Let us now focus our
view on the connected graded bialgebras (which, as we recall, are
automatically Hopf algebras).

Using Theorem \ref{thm.sol-mac-1} and (\ref{eq.pas-conv}), we can expand any
nested convolution-and-composition of $p_{\alpha,\sigma}$'s as a $\mathbf{k}%
$-linear combination of single $p_{\alpha,\sigma}$'s. Using Proposition
\ref{prop.mopis.reduce} further below, we can furthermore transform the
$\alpha$'s in the resulting combination into actual compositions, not just
weak compositions. This allows us to mechanically prove equalities for
$p_{\alpha,\sigma}$'s that involve only convolution, composition and
$\mathbf{k}$-linear combination and that are supposed to be valid for any
connected graded Hopf algebra $H$. The reason why this works is the following
\textquotedblleft generic linear independence\textquotedblright\ theorem:

\begin{theorem}
\label{thm.pas-lin-ind}\textbf{(a)} There is a connected graded Hopf algebra
$H$ such that the family%
\[
\left(  p_{\alpha,\sigma}\right)  _{\substack{k\in\mathbb{N}\text{;}%
\\\alpha\text{ is a composition of length }k\text{;}\\\sigma\in\mathfrak{S}%
_{k}}}
\]
(of endomorphisms of $H$) is $\mathbf{k}$-linearly independent, and such that
each $H_{n}$ is a free $\mathbf{k}$-module. \medskip

\textbf{(b)} Let $n\in\mathbb{N}$. Then, there is a connected graded Hopf
algebra $H$ such that the family%
\[
\left(  p_{\alpha,\sigma}\right)  _{\substack{k\in\mathbb{N}\text{;}%
\\\alpha\text{ is a composition of length }k\text{ and size }<n\text{;}%
\\\sigma\in\mathfrak{S}_{k}}}
\]
(of endomorphisms of $H$) is $\mathbf{k}$-linearly independent, and such that
each $H_{m}$ is a free $\mathbf{k}$-module with a finite basis.
\end{theorem}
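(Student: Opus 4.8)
The plan is to write down one explicit Hopf algebra and to evaluate the operators $p_{\alpha,\sigma}$ on a single cleverly chosen element. Fix an alphabet $A$ -- countably infinite for part (a), of cardinality $n-1$ for part (b) -- let $U$ be the free $\mathbf{k}$-module on $A$, placed in degree $1$, and let $\bar{C}:=\bigoplus_{m\geq1}U^{\otimes m}$, graded by word length, carry the deconcatenation comultiplication $\bar{\Delta}$ (sending a word $a_1\cdots a_m$ to $\sum_{j=1}^{m-1}(a_1\cdots a_j)\otimes(a_{j+1}\cdots a_m)$), which is coassociative. Now take $H:=T(\bar{C})$, the tensor algebra on the $\mathbf{k}$-module $\bar{C}$ (concatenation as product), with comultiplication $\Delta_H$ defined as the unique algebra morphism $H\to H\otimes H$ satisfying $\Delta_H(c)=1\otimes c+c\otimes1+\bar{\Delta}(c)$ for $c\in\bar{C}$. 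Concretely, a $\mathbf{k}$-basis of $H$ consists of the ``bracket sequences'' $(w_1,w_2,\ldots,w_s)$ of nonempty words over $A$ (for $s\geq0$), with degree $|w_1|+\cdots+|w_s|$; the product concatenates bracket sequences, and $\Delta_H$ on a single bracket is $\Delta_H(a_1\cdots a_m)=\sum_{j=0}^{m}(a_1\cdots a_j)\otimes(a_{j+1}\cdots a_m)$ (empty word $=$ the unit $1$). A routine check on generators, using only coassociativity of $\bar{\Delta}$, shows $\Delta_H$ is coassociative and counital, so $H$ is a connected graded bialgebra, hence a connected graded Hopf algebra; each $H_m$ is a free $\mathbf{k}$-module, of finite rank when $A$ is finite.

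Next I would compute $p_{\alpha,\sigma}(\xi_n)$ for the test element $\xi_n:=(u_1u_2\cdots u_n)$, a single bracket spelling out $n$ distinct letters of $A$, where $\alpha=(\alpha_1,\ldots,\alpha_k)$ is a composition of $n$ and $\sigma\in\mathfrak{S}_k$. Iterating $\Delta_H$ gives $\Delta_H^{[k]}(\xi_n)=\sum(u_{j_0+1}\cdots u_{j_1})\otimes\cdots\otimes(u_{j_{k-1}+1}\cdots u_{j_k})$, the sum running over all $0=j_0\leq j_1\leq\cdots\leq j_k=n$ -- i.e.\ over all ways of cutting $\xi_n$ into $k$ ordered, consecutive, possibly empty blocks, the $m$-th tensor factor having degree $j_m-j_{m-1}$. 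Feeding this into the Sweedler formula of Remark~\ref{rmk.pas.informal-def}, the projections $p_{\alpha_m}$ annihilate every summand except the unique one with $j_{\sigma(m)}-j_{\sigma(m)-1}=\alpha_m$ for all $m$, i.e.\ the one whose block-size vector equals $\sigma\cdot\alpha$; on that summand each $p_{\alpha_m}$ acts as the identity. Hence $p_{\alpha,\sigma}(\xi_n)$ is a single basis vector of $H_n$: writing $B_1,\ldots,B_k$ for the consecutive blocks of $u_1\cdots u_n$ of lengths $\alpha_{\sigma^{-1}(1)},\ldots,\alpha_{\sigma^{-1}(k)}$, one gets $p_{\alpha,\sigma}(\xi_n)=(B_{\sigma(1)},B_{\sigma(2)},\ldots,B_{\sigma(k)})$.

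I would then verify that $(\alpha,\sigma)\mapsto p_{\alpha,\sigma}(\xi_n)$ is injective, so these basis vectors are pairwise distinct, hence linearly independent, hence the operators $p_{\alpha,\sigma}$ are linearly independent on $H_n$. From the bracket sequence $(B_{\sigma(1)},\ldots,B_{\sigma(k)})$ one reads off $k$, then $\alpha_m=|B_{\sigma(m)}|$; and $\sigma$ is recovered because the $B_r$ are precisely the cells of the partition of $\{u_1,\ldots,u_n\}$ into consecutive intervals induced by the output brackets, so sorting the output brackets by smallest letter reproduces $B_1,B_2,\ldots,B_k$ in order and thereby identifies each $\sigma(m)$. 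Finally, since $p_{\alpha,\sigma}$ lies in the $|\alpha|$-th graded component of $\mathbf{E}(H)$ (Proposition~\ref{prop.pas.graded}), operators attached to compositions of different sizes are supported on complementary summands of $\mathbf{E}(H)$; running the above argument with $\xi_s$ for $s=0,1,\ldots,n-1$ (which needs only $n-1$ letters) then proves part (b), and running it for every $n$ over an infinite alphabet proves part (a).

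The only real work, beyond the choice of objects, is (i) the verification that $H$ is a Hopf algebra -- the coassociativity of the multiplicatively extended $\Delta_H$, which is the standard ``free algebra on a cofree coalgebra'' construction but must be spelled out -- and (ii) the combinatorial bookkeeping pinning down the surviving summand of $\Delta_H^{[k]}(\xi_n)$ and recovering $(\alpha,\sigma)$ from the output, the subtlety being that the block decomposition itself depends on $\sigma$, which is why one sorts by smallest letter. I expect no genuine difficulty past these: the content of the argument is entirely in the design of $H$ (a free algebra, so products of homogeneous pieces never collapse; a cofree coalgebra, so $\Delta$ is as far from cocommutative as possible) and of $\xi_n$ (a generic single word, so that $p_{\alpha,\sigma}(\xi_n)$ is a lone basis vector depending transparently on $\alpha$ and $\sigma$).
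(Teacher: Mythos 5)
Your proposal is correct and takes essentially the same route as the paper: both choose $H$ to be a free algebra with a deconcatenation-type coproduct, evaluate $p_{\alpha,\sigma}$ on a single generic degree-$n$ generator, and observe that the result is a lone monomial from which $(\alpha,\sigma)$ can be read off, so distinct mopiscotions yield distinct basis vectors. The only difference is the concrete $H$ chosen: the paper takes the free algebra on generators $x_{i,j}$ (for $1\leq i<j\leq n$) with $\Delta(x_{i,j})=\sum_{k=i}^{j}x_{i,k}\otimes x_{k,j}$, which is precisely the sub-bialgebra of your $T(\bar{C})$ generated by the contiguous words $x_{i,j}\mapsto(a_{i}a_{i+1}\cdots a_{j-1})$, with $x_{1,n}$ playing the role of your $\xi_{n-1}$.
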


\begin{proof}
[Proof of Theorem \ref{thm.pas-lin-ind} (sketched).]\textbf{(b)} For any weak
composition $\alpha$, we denote by $\ell\left(  \alpha\right)  $ the length of
$\alpha$ (that is, the unique $k\in\mathbb{N}$ satisfying $\alpha\in
\mathbb{N}^{k}$).

Let $H$ be the free $\mathbf{k}$-algebra with generators
\[
x_{i,j}\ \ \ \ \ \ \ \ \ \ \text{with }i,j\in\mathbb{Z}\text{ satisfying
}1\leq i<j\leq n.
\]
We also set%
\[
x_{k,k}:=1_{H}\ \ \ \ \ \ \ \ \ \ \text{for each }k\in\left\{  1,2,\ldots
,n\right\}  .
\]
We make the $\mathbf{k}$-algebra $H$ graded by declaring each $x_{i,j}$ to be
homogeneous of degree $j-i$. We define a comultiplication $\Delta:H\rightarrow
H\otimes H$ on $H$ to be the $\mathbf{k}$-algebra homomorphism that satisfies%
\[
\Delta\left(  x_{i,j}\right)  =\sum_{k=i}^{j}x_{i,k}\otimes x_{k,j}%
\ \ \ \ \ \ \ \ \ \ \text{for each }i,j\in\mathbb{Z}\text{ satisfying }1\leq
i<j\leq n.
\]
We define a counit $\epsilon:H\rightarrow\mathbf{k}$ in the obvious way to
preserve the grading (so that $\epsilon\left(  x_{i,j}\right)  =0$ whenever
$i<j$). It is easy to see that $H$ is a connected graded $\mathbf{k}%
$-bialgebra, thus a connected graded Hopf algebra.\footnote{This $H$ resembles
certain bialgebras that appear in the literature. In particular, it can be
regarded either as a noncommutative version of a reduced incidence algebra of
the chain poset with $n$ elements, or as a noncommutative variant of the
subalgebra of the Schur algebra corresponding to the upper-triangular matrices
with equal numbers on the diagonal. But these connections are tangential for
our purpose.} Moreover, each $H_{m}$ is a free $\mathbf{k}$-module with a
finite basis (the basis consists of all noncommutative monomials of degree $m$).

It is easy to see that%
\[
\Delta^{\left[  k\right]  }\left(  x_{i,j}\right)  =\sum_{i=u_{0}\leq
u_{1}\leq\cdots\leq u_{k}=j}x_{u_{0},u_{1}}\otimes x_{u_{1},u_{2}}%
\otimes\cdots\otimes x_{u_{k-1},u_{k}}%
\]
for any $1\leq i\leq j\leq n$ and any $k\in\mathbb{N}$. Thus, for any $i\leq
j$ and any $k\in\mathbb{N}$ and any permutation $\sigma\in\mathfrak{S}_{k}$,
we have%
\begin{align*}
\left(  \sigma^{-1}\circ\Delta^{\left[  k\right]  }\right)  \left(
x_{i,j}\right)   &  =\sigma^{-1}\cdot\sum_{i=u_{0}\leq u_{1}\leq\cdots\leq
u_{k}=j}x_{u_{0},u_{1}}\otimes x_{u_{1},u_{2}}\otimes\cdots\otimes
x_{u_{k-1},u_{k}}\\
&  =\sum_{i=u_{0}\leq u_{1}\leq\cdots\leq u_{k}=j}x_{u_{\sigma\left(
1\right)  -1},u_{\sigma\left(  1\right)  }}\otimes x_{u_{\sigma\left(
2\right)  -1},u_{\sigma\left(  2\right)  }}\otimes\cdots\otimes x_{u_{\sigma
\left(  k\right)  -1},u_{\sigma\left(  k\right)  }}%
\end{align*}
and therefore (by applying the projection $P_{\alpha}$ to both sides)%
\begin{align*}
&  \left(  P_{\alpha}\circ\sigma^{-1}\circ\Delta^{\left[  k\right]  }\right)
\left(  x_{i,j}\right) \\
&  =\sum_{\substack{i=u_{0}\leq u_{1}\leq\cdots\leq u_{k}=j;\\u_{\sigma\left(
i\right)  }-u_{\sigma\left(  i\right)  -1}=\alpha_{i}\text{ for each }%
i\in\left[  k\right]  }}x_{u_{\sigma\left(  1\right)  -1},u_{\sigma\left(
1\right)  }}\otimes x_{u_{\sigma\left(  2\right)  -1},u_{\sigma\left(
2\right)  }}\otimes\cdots\otimes x_{u_{\sigma\left(  k\right)  -1}%
,u_{\sigma\left(  k\right)  }}.
\end{align*}
Hence, for any composition $\alpha=\left(  \alpha_{1},\alpha_{2},\ldots
,\alpha_{k}\right)  \in\mathbb{N}^{k}$ and any $\sigma\in\mathfrak{S}_{k}$ and
any $1\leq i\leq j\leq n$ satisfying $j-i=\left\vert \alpha\right\vert $, we
have
\[
p_{\alpha,\sigma}\left(  x_{i,j}\right)  =x_{u_{\sigma\left(  1\right)
-1},u_{\sigma\left(  1\right)  }}x_{u_{\sigma\left(  2\right)  -1}%
,u_{\sigma\left(  2\right)  }}\cdots x_{u_{\sigma\left(  k\right)
-1},u_{\sigma\left(  k\right)  }},
\]
where $\left(  u_{0}<u_{1}<\cdots<u_{k}\right)  $ is the unique strictly
increasing sequence of integers satisfying $u_{0}=i$ and $u_{k}=j$ and
$u_{\sigma\left(  m\right)  }-u_{\sigma\left(  m\right)  -1}=\alpha_{m}$ for
all $m\in\left[  k\right]  $\ \ \ \ \footnote{Why is there a unique sequence
with these properties? The easiest way to see this is as follows: Rewrite the
condition \textquotedblleft$u_{\sigma\left(  m\right)  }-u_{\sigma\left(
m\right)  -1}=\alpha_{m}$ for all $m\in\left[  k\right]  $\textquotedblright%
\ as \textquotedblleft$u_{j}-u_{j-1}=\alpha_{\sigma^{-1}\left(  j\right)  }$
for all $j\in\left[  k\right]  $\textquotedblright\ (here we just substituted
$\sigma^{-1}\left(  j\right)  $ for $m$). In the latter form, the condition
simply dictates the differences between consecutive entries of the sequence
$\left(  u_{0},u_{1},\ldots,u_{k}\right)  $. Thus, clearly, there exists a
unique sequence $\left(  u_{0},u_{1},\ldots,u_{k}\right)  $ of integers
satisfying this condition and having starting entry $u_{0}=i$. Moreover, this
sequence will be strictly increasing (since the differences $u_{j}%
-u_{j-1}=\alpha_{\sigma^{-1}\left(  j\right)  }$ are positive) and will
satisfy%
\begin{align*}
u_{k}-u_{0}  &  =\sum_{j=1}^{k}\underbrace{\left(  u_{j}-u_{j-1}\right)
}_{=\alpha_{\sigma^{-1}\left(  j\right)  }}\ \ \ \ \ \ \ \ \ \ \left(
\text{by the telescope principle}\right) \\
&  =\sum_{j=1}^{k}\alpha_{\sigma^{-1}\left(  j\right)  }=\alpha_{\sigma
^{-1}\left(  1\right)  }+\alpha_{\sigma^{-1}\left(  2\right)  }+\cdots
+\alpha_{\sigma^{-1}\left(  k\right)  }=\alpha_{1}+\alpha_{2}+\cdots
+\alpha_{k}=\left\vert \alpha\right\vert =j-i
\end{align*}
and therefore $u_{k}=j-i+\underbrace{u_{0}}_{=i}=j-i+i=j$. Hence, it will be a
strictly increasing sequence $\left(  u_{0}<u_{1}<\cdots<u_{k}\right)  $ of
integers satisfying $u_{0}=i$ and $u_{k}=j$ and $u_{\sigma\left(  m\right)
}-u_{\sigma\left(  m\right)  -1}=\alpha_{m}$ for all $m\in\left[  k\right]
$.}. Hence, for any choice of $1\leq i\leq j\leq n$, the images $p_{\alpha
,\sigma}\left(  x_{i,j}\right)  $ as $\alpha$ runs over all compositions of
$j-i$ and $\sigma$ runs over all permutations of $\left[  \ell\left(
\alpha\right)  \right]  $ are distinct monomials\footnote{Why are they
distinct? Because the noncommutative monomial%
\[
p_{\alpha,\sigma}\left(  x_{i,j}\right)  =x_{u_{\sigma\left(  1\right)
-1},u_{\sigma\left(  1\right)  }}x_{u_{\sigma\left(  2\right)  -1}%
,u_{\sigma\left(  2\right)  }}\cdots x_{u_{\sigma\left(  k\right)
-1},u_{\sigma\left(  k\right)  }}%
\]
allows us to uniquely reconstruct $\alpha$ and $\sigma$ as follows: Recall
that all the indeterminates $x_{p,q}$ with $p<q$ are distinct. The
noncommutative monomial $p_{\alpha,\sigma}\left(  x_{i,j}\right)
=x_{u_{\sigma\left(  1\right)  -1},u_{\sigma\left(  1\right)  }}%
x_{u_{\sigma\left(  2\right)  -1},u_{\sigma\left(  2\right)  }}\cdots
x_{u_{\sigma\left(  k\right)  -1},u_{\sigma\left(  k\right)  }}$ is a product
of $k$ such indeterminates (since $u_{0}<u_{1}<\cdots<u_{k}$, so that
$u_{\sigma\left(  m\right)  -1}<u_{\sigma\left(  m\right)  }$ for each $m$).
Hence, from this product, we can recover the pairs $\left(  u_{\sigma\left(
1\right)  -1},u_{\sigma\left(  1\right)  }\right)  ,\ \left(  u_{\sigma\left(
2\right)  -1},u_{\sigma\left(  2\right)  }\right)  ,\ \ldots,\ \left(
u_{\sigma\left(  k\right)  -1},u_{\sigma\left(  k\right)  }\right)  $, and
thus reconstruct the values $u_{\sigma\left(  1\right)  },u_{\sigma\left(
2\right)  },\ldots,u_{\sigma\left(  k\right)  }$. These, in turn, allow us to
reconstruct the permutation $\sigma$ (since $u_{0}<u_{1}<\cdots<u_{k}$).
Recalling the condition $u_{\sigma\left(  m\right)  }-u_{\sigma\left(
m\right)  -1}=\alpha_{m}$ once again, we then recover the $\alpha_{m}$ and
thus the composition $\alpha$.} and therefore are $\mathbf{k}$-linearly
independent. This yields the $\mathbf{k}$-linear independence of the family%
\[
\left(  p_{\alpha,\sigma}\right)  _{\substack{k\in\mathbb{N}\text{;}%
\\\alpha\text{ is a composition of length }k\text{ and size }s\text{;}%
\\\sigma\in\mathfrak{S}_{k}}}
\]
for any given $s\in\left\{  0,1,\ldots,n-1\right\}  $. Since each
$p_{\alpha,\sigma}$ lies in $\operatorname*{End}\nolimits_{\mathbf{k}}\left(
H_{\left\vert \alpha\right\vert }\right)  $, we thus obtain the $\mathbf{k}%
$-linear independence of the entire family%
\[
\left(  p_{\alpha,\sigma}\right)  _{\substack{k\in\mathbb{N}\text{;}%
\\\alpha\text{ is a composition of length }k\text{ and size }<n\text{;}%
\\\sigma\in\mathfrak{S}_{k}}}
\]
(since the sum $\sum_{s=0}^{n-1}\operatorname*{End}\nolimits_{\mathbf{k}%
}\left(  H_{s}\right)  $ is a direct sum). This proves Theorem
\ref{thm.pas-lin-ind} \textbf{(b)}. \medskip

\textbf{(a)} As for part \textbf{(b)}, but remove \textquotedblleft$\leq
n$\textquotedblright\ everywhere.
\end{proof}

Theorem \ref{thm.pas-lin-ind} also has an analogue for graded bialgebras and
Hopf algebras without the connectedness requirement:

\begin{theorem}
\label{thm.pas-lin-ind2}There is a graded Hopf algebra $H$ such that the
family%
\[
\left(  p_{\alpha,\sigma}\right)  _{\substack{k\in\mathbb{N}\text{;}%
\\\alpha\text{ is a weak composition of length }k\text{;}\\\sigma
\in\mathfrak{S}_{k}}}
\]
(of endomorphisms of $H$) is $\mathbf{k}$-linearly independent, and such that
each $H_{n}$ is a free $\mathbf{k}$-module.
\end{theorem}

\begin{proof}
[Proof idea.]This is similar to Theorem \ref{thm.pas-lin-ind} \textbf{(b)},
but there are more indices involved. We shall only give a rough outline of the proof.

\begin{noncompile}
Equip the set $\left[  n\right]  ^{2}$ with the componentwise partial order:
i.e., the partial order given by%
\[
\left(  i,p\right)  \leq\left(  j,q\right)  \ \ \ \ \ \ \ \ \ \ \text{if and
only if}\ \ \ \ \ \ \ \ \ \ \left(  i\leq j\text{ and }p\leq q\right)  .
\]

\end{noncompile}

Let $H$ be the free $\mathbf{k}$-algebra with generators
\[
x_{i,j}^{p,q}\ \ \ \ \ \ \ \ \ \ \text{with }\left(  i,p\right)  ,\left(
j,q\right)  \in\mathbb{Z}^{2}\text{ satisfying }i\leq j\text{ and }p\leq q.
\]
(The \textquotedblleft$p,q$\textquotedblright\ superscript is not an
exponent!) Note that we don't set any of the $x_{i,j}^{p,q}$ equal to $1$,
unlike in the previous proof.

We make the $\mathbf{k}$-algebra $H$ graded by declaring each $x_{i,j}^{p,q}$
to be homogeneous of degree $j-i$. We define a comultiplication $\Delta
:H\rightarrow H\otimes H$ on $H$ to be the $\mathbf{k}$-algebra homomorphism
that satisfies%
\[
\Delta\left(  x_{i,j}^{p,q}\right)  =\sum_{k=i}^{j}\ \ \sum_{r=p}^{q}%
x_{i,k}^{p,r}\otimes x_{k,j}^{r,q}\ \ \ \ \ \ \ \ \ \ \text{for all generators
}x_{i,j}^{p,q}.
\]
We define a counit $\epsilon:H\rightarrow\mathbf{k}$ by%
\[
\epsilon\left(  x_{i,j}^{p,q}\right)  =%
\begin{cases}
1, & \text{if }\left(  i,p\right)  =\left(  j,q\right)  ;\\
0, & \text{if }\left(  i,p\right)  \neq\left(  j,q\right)  .
\end{cases}
\]
It is easy to see that $H$ is a graded $\mathbf{k}$-bialgebra. It is not
connected (with our grading) and not a Hopf algebra (since its grouplike
elements $x_{i,i}^{p,p}$ have no inverses); we will tweak it to become a Hopf
algebra later.

For any generator $x_{i,j}^{p,q}$ of $H$ and any $k\in\mathbb{N}$, we can show
that%
\[
\Delta^{\left[  k\right]  }\left(  x_{i,j}^{p,q}\right)  =\sum
_{\substack{i=u_{0}\leq u_{1}\leq\cdots\leq u_{k}=j;\\p=v_{0}\leq v_{1}%
\leq\cdots\leq v_{k}=q}}x_{u_{0},u_{1}}^{v_{0},v_{1}}\otimes x_{u_{1},u_{2}%
}^{v_{1},v_{2}}\otimes\cdots\otimes x_{u_{k-1},u_{k}}^{v_{k-1},v_{k}}.
\]

Now, fix $n\in\mathbb{N}$, and fix $i\leq j$ in $\mathbb{Z}$. As we just
showed, we have%
\[
\Delta^{\left[  k\right]  }\left(  x_{i,j}^{0,n}\right)  =\sum
_{\substack{i=u_{0}\leq u_{1}\leq\cdots\leq u_{k}=j;\\0=v_{0}\leq v_{1}%
\leq\cdots\leq v_{k}=n}}x_{u_{0},u_{1}}^{v_{0},v_{1}}\otimes x_{u_{1},u_{2}%
}^{v_{1},v_{2}}\otimes\cdots\otimes x_{u_{k-1},u_{k}}^{v_{k-1},v_{k}}%
\]
for any $k\in\mathbb{N}$. Thus, by a similar argument as in the proof of
Theorem \ref{thm.pas-lin-ind} \textbf{(b)}, we can show that for any weak
composition $\alpha=\left(  \alpha_{1},\alpha_{2},\ldots,\alpha_{k}\right)
\in\mathbb{N}^{k}$ and any $\sigma\in\mathfrak{S}_{k}$ and any integers $i\leq
j$ satisfying $j-i=\left\vert \alpha\right\vert $, we have
\[
p_{\alpha,\sigma}\left(  x_{i,j}^{0,n}\right)  =\sum_{\substack{i=u_{0}\leq
u_{1}\leq\cdots\leq u_{k}=j;\\0=v_{0}\leq v_{1}\leq\cdots\leq v_{k}%
=n;\\u_{\sigma\left(  r\right)  }-u_{\sigma\left(  r\right)  -1}=\alpha
_{r}\text{ for each }r\in\left[  k\right]  }}\left(  x_{u_{0},u_{1}}%
^{v_{0},v_{1}}x_{u_{1},u_{2}}^{v_{1},v_{2}}\cdots x_{u_{k-1},u_{k}}%
^{v_{k-1},v_{k}}\right)  \leftharpoonup\sigma,
\]
where the \textquotedblleft$\leftharpoonup$\textquotedblright\ arrow denotes
the action of a permutation on the positions in a monomial (i.e., we set
$\left(  y_{1}y_{2}\cdots y_{k}\right)  \leftharpoonup\sigma:=y_{\sigma\left(
1\right)  }y_{\sigma\left(  2\right)  }\cdots y_{\sigma\left(  k\right)  }$
whenever $\sigma\in\mathfrak{S}_{k}$ and whenever $y_{1},y_{2},\ldots,y_{k}$
are some of the generators of $H$).

We now claim that these images $p_{\alpha,\sigma}\left(  x_{i,j}^{0,n}\right)
$ -- where $k$ ranges over $\left\{  1,2,\ldots,n-1\right\}  $, where $\alpha$
ranges over the weak compositions of length $k$ and size $j-i$, and where
$\sigma$ ranges over $\mathfrak{S}_{k}$ -- are $\mathbf{k}$-linearly
independent. Unfortunately, they no longer are single monomials, but hope is
not lost: Each of them contains a monomial (with coefficient $1$) that is not
contained in any of the others (which ensures that any nontrivial $\mathbf{k}%
$-linear combination of these images will have nonzero coefficients in front
of some of these monomials). To wit, $p_{\alpha,\sigma}\left(  x_{i,j}%
^{0,n}\right)  $ contains (with coefficient $1$) the monomial%
\[
\left(  x_{u_{0},u_{1}}^{0,1}x_{u_{1},u_{2}}^{1,2}\cdots x_{u_{k-2},u_{k-1}%
}^{k-2,k-1}x_{u_{k-1},u_{k}}^{k-1,n}\right)  \leftharpoonup\sigma
\]
for the unique weakly increasing sequence $\left(  u_{0}\leq u_{1}\leq
\cdots\leq u_{k}\right)  $ of integers satisfying $u_{0}=i$ and $u_{k}=j$ and
$u_{\sigma\left(  r\right)  }-u_{\sigma\left(  r\right)  -1}=\alpha_{r}$ for
each $r\in\left[  k\right]  $ (this monomial is obtained by taking $v_{r}=r$
for each $r\in\left\{  0,1,\ldots,k-1\right\}  $ and $v_{k}=n$), and this
monomial is not contained in any other $p_{\beta,\tau}\left(  x_{i,j}%
^{0,n}\right)  $ with $\left(  \beta,\tau\right)  \neq\left(  \alpha
,\sigma\right)  $ (because reading its superscripts in order reveals the
permutation $\sigma$, and then undoing the $\sigma$-action and reading the
subscripts in order reveals the $u_{r}$'s and thus the weak composition
$\alpha$). Thus, the images $p_{\alpha,\sigma}\left(  x_{i,j}^{0,n}\right)  $
are $\mathbf{k}$-linearly independent. Hence, so are the maps $p_{\alpha
,\sigma}$ themselves, at least for $k$ ranging over $\left\{  1,2,\ldots
,n-1\right\}  $. Moreover, we can replace \textquotedblleft$k$ ranging over
$\left\{  1,2,\ldots,n-1\right\}  $\textquotedblright\ by \textquotedblleft$k$
ranging over $\left\{  0,1,\ldots,n-1\right\}  $\textquotedblright\ in the
preceding sentence, because allowing $k=0$ only introduces a single new map
$p_{\alpha,\sigma}$, namely $p_{\left(  {}\right)  ,\operatorname*{id}%
\nolimits_{\left[  {}\right]  }}$, and this map is still linearly independent
from the other $p_{\alpha,\sigma}$'s because it annihilates $x_{i,j}^{0,n}$
while sending $1$ to $1$.

Thus, we have shown that the maps $p_{\alpha,\sigma}$ -- where $k$ ranges over
$\left\{  0,1,\ldots,n-1\right\}  $, where $\alpha$ ranges over the weak
compositions of length $k$ and size $<n$, and where $\sigma$ ranges over
$\mathfrak{S}_{k}$ -- are $\mathbf{k}$-linearly independent (indeed,
compositions $\alpha$ of different sizes lead to maps $p_{\alpha,\sigma}$ that
live in different graded components, and thus cannot be linearly dependent).

By letting $n\rightarrow\infty$, we thus conclude that \textbf{all} the maps
$p_{\alpha,\sigma}$ -- where $k$ ranges over all nonnegative integers, where
$\alpha$ ranges over the weak compositions of length $k$, and where $\sigma$
ranges over $\mathfrak{S}_{k}$ -- are $\mathbf{k}$-linearly independent (since
any $\mathbf{k}$-linear relation between these maps $p_{\alpha,\sigma}$ can
only use finitely many of them, and thus only uses $k\in\left\{
0,1,\ldots,n-1\right\}  $ and size $<n$ for a certain $n$).

From here, we can finish the proof of Theorem \ref{thm.pas-lin-ind2} just as
in the proof of Theorem \ref{thm.pas-lin-ind}, except that our $H$ is just a
bialgebra, not a Hopf algebra.

Now how do we make $H$ into a Hopf algebra? The simplest way is to adjoin an
inverse to each of the grouplike elements $x_{i,i}^{p,p}$. That is, we take
the noncommutative localization $H^{\prime}$ of $H$ at the multiplicative set
generated by the $x_{i,i}^{p,p}$ for all $i,p\in\mathbb{Z}$ (that is, we
adjoin new elements $y_{i,p}$ for all $i,p\in\mathbb{Z}$ to $H$ which are to
satisfy $y_{i,p}x_{i,i}^{p,p}=x_{i,i}^{p,p}y_{i,p}=1$). Since the
$x_{i,i}^{p,p}$ are grouplike, this localization $H^{\prime}$ is still a
bialgebra\footnote{This is easiest to see by viewing the inverses of the
$x_{i,i}^{p,p}$ as extra generators $y_{i,p}$, and extending $\Delta$ and
$\epsilon$ to these extra generators by setting $\Delta\left(  y_{i,p}\right)
=y_{i,p}\otimes y_{i,p}$ and $\epsilon\left(  y_{i,p}\right)  =1$. Once again,
all relevant axioms are easy to check.}, and it contains $H$ as a subbialgebra
(i.e., the canonical $\mathbf{k}$-algebra morphism $H\rightarrow H^{\prime}$
is injective\footnote{Indeed, as $\mathbf{k}$-algebras, both $H$ and
$H^{\prime}$ are monoid algebras: Namely, $H$ is the monoid algebra of the
free monoid on the generators $x_{i,j}^{p,q}$, whereas $H^{\prime}$ is the
monoid algebra of the same free monoid amalgamated with the free group on the
generators $x_{i,i}^{p,p}$. It is easy to see that the former monoid embeds
into the latter (since the latter has a normal form consisting of reduced
words); thus, the algebra $H$ embeds into $H^{\prime}$.}). Moreover, it is
still graded, since the original grading $H$ can be extended to it in an
obvious way (indeed, the elements $x_{i,i}^{p,p}$ whose inverses we have
adjoined were of degree $0$, so the inverses will have degree $0$ as well).

Furthermore, the $\mathbf{k}$-bialgebra $H^{\prime}$ can be assigned with a
\textbf{new} grading, in which each generator $x_{i,j}^{p,q}$ is homogeneous
of degree $\left(  j-i\right)  +\left(  q-p\right)  $ (not $j-i$ any more)
while the new extra generators $\left(  x_{i,i}^{p,p}\right)  ^{-1}$ are
homogeneous of degree $0$. In this new grading, the $0$-th graded component is
generated by the grouplike elements $x_{i,i}^{p,p}$ and their inverses
$\left(  x_{i,i}^{p,p}\right)  ^{-1}$ (in fact, it is the group algebra of a
free group), so it is a Hopf algebra. Hence, by a famous result of Takeuchi
(\cite[Exercise 1.4.34 (b)]{GriRei}), the whole $H^{\prime}$ is a Hopf algebra
as well. Equipped with the original grading, $H^{\prime}$ is thus a graded
Hopf algebra (since any graded bialgebra with an antipode is a graded Hopf
algebra -- i.e., its antipode is a graded map\footnote{This is a folklore
fact; see \cite[Theorem 23.2]{logid} for a detailed proof.}). Of course, the
$p_{\alpha,\sigma}$'s for this graded Hopf algebra $H^{\prime}$ (with the
original grading!) are still $\mathbf{k}$-linearly independent, since they
were $\mathbf{k}$-linearly independent for $H$ (and $H$ embeds into
$H^{\prime}$). Moreover, each $H_{n}^{\prime}$ is a free $\mathbf{k}$-module,
as can be easily shown by constructing its basis as reduced words. Thus,
Theorem \ref{thm.pas-lin-ind2} is proved.
\end{proof}

\subsection{The tensor product formula}

We shall now connect the $p_{\alpha,\sigma}$ operators on different graded
bialgebras. For this, we need a piece of notation:

\begin{convention}
\label{conv.pas-for-H}The endomorphism $p_{\alpha,\sigma}:H\rightarrow H$
defined for a given graded $\mathbf{k}$-bialgebra $H$ shall be denoted by
\textquotedblleft$p_{\alpha,\sigma}$ for $H$\textquotedblright\ when we want
to stress its dependence on $H$. Thus, for instance, if $K$ is another graded
$\mathbf{k}$-bialgebra, then the analogous endomorphism $p_{\alpha,\sigma
}:K\rightarrow K$ will be denoted by \textquotedblleft$p_{\alpha,\sigma}$ for
$K$\textquotedblright.
\end{convention}

Using this convention, we can now describe the endomorphism $p_{\alpha,\sigma
}$ for a tensor product of two $\mathbf{k}$-bialgebras:

\begin{proposition}
\label{prop.pas-on-tensors}Let $H$ and $G$ be two graded bialgebras. Let
$\alpha\in\mathbb{N}^{k}$ be a weak composition of length $k$, and let
$\sigma\in\mathfrak{S}_{k}$ be a permutation. Then,%
\[
\left(  p_{\alpha,\sigma}\text{ for }H\otimes G\right)  =\sum_{\substack{\beta
,\gamma\in\mathbb{N}^{k}\text{;}\\\beta+\gamma=\alpha}}\left(  p_{\beta
,\sigma}\text{ for }H\right)  \otimes\left(  p_{\gamma,\sigma}\text{ for
}G\right)
\]
as endomorphisms of $H\otimes G$. Here, $\beta+\gamma$ denotes the entrywise
sum of $\beta$ and $\gamma$ (that is, if $\beta=\left(  \beta_{1},\beta
_{2},\ldots,\beta_{k}\right)  $ and $\gamma=\left(  \gamma_{1},\gamma
_{2},\ldots,\gamma_{k}\right)  $, then $\beta+\gamma:=\left(  \beta_{1}%
+\gamma_{1},\ \beta_{2}+\gamma_{2},\ \ldots,\ \beta_{k}+\gamma_{k}\right)  $).
\end{proposition}

\begin{proof}
[Proof sketch.]We shall use the Sweedler notation again.

The definition of the grading on $H\otimes G$ shows that $\left(  H\otimes
G\right)  _{a}=\bigoplus_{\substack{b,c\in\mathbb{N};\\b+c=a}}H_{b}\otimes
G_{c}$ for any $a\in\mathbb{N}$. Hence,%
\begin{equation}
p_{a}\left(  u\otimes v\right)  =\sum_{\substack{b,c\in\mathbb{N}%
;\\b+c=a}}p_{b}\left(  u\right)  \otimes p_{c}\left(  v\right)
\label{pf.prop.pas-on-tensors.grading}%
\end{equation}
for any $u\in H$, any $v\in G$ and any $a\in\mathbb{N}$.

Write the weak composition $\alpha\in\mathbb{N}^{k}$ as $\alpha=\left(
\alpha_{1},\alpha_{2},\ldots,\alpha_{k}\right)  $. Consider a pure tensor
$h\otimes g\in H\otimes G$. Then, Remark \ref{rmk.pas.informal-def} yields%
\begin{align*}
&  p_{\alpha,\sigma}\left(  h\otimes g\right) \\
&  =\sum_{\left(  h\otimes g\right)  }p_{\alpha_{1}}\left(  \left(  h\otimes
g\right)  _{\left(  \sigma\left(  1\right)  \right)  }\right)  p_{\alpha_{2}%
}\left(  \left(  h\otimes g\right)  _{\left(  \sigma\left(  2\right)  \right)
}\right)  \cdots p_{\alpha_{k}}\left(  \left(  h\otimes g\right)  _{\left(
\sigma\left(  k\right)  \right)  }\right) \\
&  =\sum_{\left(  h\otimes g\right)  }\ \ \prod_{i=1}^{k}p_{\alpha_{i}}\left(
\underbrace{\left(  h\otimes g\right)  _{\left(  \sigma\left(  i\right)
\right)  }}_{=h_{\left(  \sigma\left(  i\right)  \right)  }\otimes g_{\left(
\sigma\left(  i\right)  \right)  }}\right) \\
&  =\sum_{\left(  h\right)  ,\ \left(  g\right)  }\ \ \prod_{i=1}%
^{k}\underbrace{p_{\alpha_{i}}\left(  h_{\left(  \sigma\left(  i\right)
\right)  }\otimes g_{\left(  \sigma\left(  i\right)  \right)  }\right)
}_{\substack{=\sum_{\substack{b,c\in\mathbb{N};\\b+c=\alpha_{i}}}p_{b}\left(
h_{\left(  \sigma\left(  i\right)  \right)  }\right)  \otimes p_{c}\left(
g_{\left(  \sigma\left(  i\right)  \right)  }\right)  \\\text{(by
(\ref{pf.prop.pas-on-tensors.grading}))}}}\\
&  =\sum_{\left(  h\right)  ,\ \left(  g\right)  }\ \ \prod_{i=1}^{k}%
\ \ \sum_{\substack{b,c\in\mathbb{N};\\b+c=\alpha_{i}}}p_{b}\left(  h_{\left(
\sigma\left(  i\right)  \right)  }\right)  \otimes p_{c}\left(  g_{\left(
\sigma\left(  i\right)  \right)  }\right) \\
&  =\sum_{\left(  h\right)  ,\ \left(  g\right)  }\ \ \sum_{\substack{\beta
_{i},\gamma_{i}\in\mathbb{N}\text{ for all }i\in\left[  k\right]  ;\\\beta
_{i}+\gamma_{i}=\alpha_{i}\text{ for all }i\in\left[  k\right]  }%
}\ \ \prod_{i=1}^{k}\left(  p_{\beta_{i}}\left(  h_{\left(  \sigma\left(
i\right)  \right)  }\right)  \otimes p_{\gamma_{i}}\left(  g_{\left(
\sigma\left(  i\right)  \right)  }\right)  \right)
\ \ \ \ \ \ \ \ \ \ \left(  \text{by the product rule}\right) \\
&  =\underbrace{\sum_{\substack{\beta_{i},\gamma_{i}\in\mathbb{N}\text{ for
all }i\in\left[  k\right]  ;\\\beta_{i}+\gamma_{i}=\alpha_{i}\text{ for all
}i\in\left[  k\right]  }}}_{=\sum_{\substack{\beta=\left(  \beta_{1},\beta
_{2},\ldots,\beta_{k}\right)  \in\mathbb{N}^{k}\text{ and}\\\gamma=\left(
\gamma_{1},\gamma_{2},\ldots,\gamma_{k}\right)  \in\mathbb{N}^{k}%
;\\\beta+\gamma=\alpha}}}\ \ \sum_{\left(  h\right)  ,\ \left(  g\right)
}\ \ \underbrace{\prod_{i=1}^{k}\left(  p_{\beta_{i}}\left(  h_{\left(
\sigma\left(  i\right)  \right)  }\right)  \otimes p_{\gamma_{i}}\left(
g_{\left(  \sigma\left(  i\right)  \right)  }\right)  \right)  }_{=\left(
\prod_{i=1}^{k}p_{\beta_{i}}\left(  h_{\left(  \sigma\left(  i\right)
\right)  }\right)  \right)  \otimes\left(  \prod_{i=1}^{k}p_{\gamma_{i}%
}\left(  g_{\left(  \sigma\left(  i\right)  \right)  }\right)  \right)  }\\
&  =\sum_{\substack{\beta=\left(  \beta_{1},\beta_{2},\ldots,\beta_{k}\right)
\in\mathbb{N}^{k}\text{ and}\\\gamma=\left(  \gamma_{1},\gamma_{2}%
,\ldots,\gamma_{k}\right)  \in\mathbb{N}^{k};\\\beta+\gamma=\alpha}%
}\ \ \sum_{\left(  h\right)  ,\ \left(  g\right)  }\left(  \prod_{i=1}%
^{k}p_{\beta_{i}}\left(  h_{\left(  \sigma\left(  i\right)  \right)  }\right)
\right)  \otimes\left(  \prod_{i=1}^{k}p_{\gamma_{i}}\left(  g_{\left(
\sigma\left(  i\right)  \right)  }\right)  \right) \\
&  =\sum_{\substack{\beta=\left(  \beta_{1},\beta_{2},\ldots,\beta_{k}\right)
\in\mathbb{N}^{k}\text{ and}\\\gamma=\left(  \gamma_{1},\gamma_{2}%
,\ldots,\gamma_{k}\right)  \in\mathbb{N}^{k};\\\beta+\gamma=\alpha
}}\ \ \underbrace{\left(  \sum_{\left(  h\right)  }\ \ \prod_{i=1}^{k}%
p_{\beta_{i}}\left(  h_{\left(  \sigma\left(  i\right)  \right)  }\right)
\right)  }_{\substack{=p_{\beta,\sigma}\left(  h\right)  \\\text{(by Remark
\ref{rmk.pas.informal-def})}}}\otimes\underbrace{\left(  \sum_{\left(
g\right)  }\ \ \prod_{i=1}^{k}p_{\gamma_{i}}\left(  g_{\left(  \sigma\left(
i\right)  \right)  }\right)  \right)  }_{\substack{=p_{\gamma,\sigma}\left(
g\right)  \\\text{(by Remark \ref{rmk.pas.informal-def})}}}
\end{align*}%
\begin{align*}
&  =\sum_{\substack{\beta=\left(  \beta_{1},\beta_{2},\ldots,\beta_{k}\right)
\in\mathbb{N}^{k}\text{ and}\\\gamma=\left(  \gamma_{1},\gamma_{2}%
,\ldots,\gamma_{k}\right)  \in\mathbb{N}^{k};\\\beta+\gamma=\alpha}%
}p_{\beta,\sigma}\left(  h\right)  \otimes p_{\gamma,\sigma}\left(  g\right)
=\sum_{\substack{\beta,\gamma\in\mathbb{N}^{k};\\\beta+\gamma=\alpha}%
}p_{\beta,\sigma}\left(  h\right)  \otimes p_{\gamma,\sigma}\left(  g\right)
\\
&  =\left(  \sum_{\substack{\beta,\gamma\in\mathbb{N}^{k};\\\beta
+\gamma=\alpha}}\left(  p_{\beta,\sigma}\text{ for }H\right)  \otimes\left(
p_{\gamma,\sigma}\text{ for }G\right)  \right)  \left(  h\otimes g\right)  .
\end{align*}
Hence, the two $\mathbf{k}$-linear maps%
\[
\left(  p_{\alpha,\sigma}\text{ for }H\otimes G\right)  \text{ and }%
\sum_{\substack{\beta,\gamma\in\mathbb{N}^{k};\\\beta+\gamma=\alpha}}\left(
p_{\beta,\sigma}\text{ for }H\right)  \otimes\left(  p_{\gamma,\sigma}\text{
for }G\right)
\]
agree on each pure tensor. Therefore, they are identical. This proves
Proposition \ref{prop.pas-on-tensors}.
\end{proof}

\subsection{The dual formula}

Graded bialgebras often (not always) have duals. Indeed, if $H=\bigoplus
\limits_{n\in\mathbb{N}}H_{n}$ is a graded $\mathbf{k}$-bialgebra whose all
graded components $H_{n}$ are finite free\footnote{\textquotedblleft Finite
free\textquotedblright\ means \textquotedblleft free of finite
rank\textquotedblright, i.e., \textquotedblleft has a finite
basis\textquotedblright.} $\mathbf{k}$-modules, then its graded dual
$H^{o}:=\bigoplus\limits_{n\in\mathbb{N}}\left(  H_{n}\right)  ^{\ast}$ itself
becomes a graded $\mathbf{k}$-bialgebra (see \cite[\S 1.6]{GriRei}). Now we
claim (using Convention \ref{conv.pas-for-H} again):

\begin{proposition}
\label{prop.pas-on-dual}Let $H$ be a graded $\mathbf{k}$-bialgebra whose all
graded components are finite free $\mathbf{k}$-modules. Consider its graded
dual $H^{o}$. Let $\alpha=\left(  \alpha_{1},\alpha_{2},\ldots,\alpha
_{k}\right)  $ be a weak composition of length $k$, and let $\sigma
\in\mathfrak{S}_{k}$ be a permutation. Then,%
\[
\left(  p_{\alpha,\sigma}\text{ for }H^{o}\right)  =\underbrace{\left(
p_{\alpha\cdot\sigma^{-1},\ \sigma^{-1}}\text{ for }H\right)  ^{\ast}%
}_{\text{restricted to }H^{o}}%
\]
as endomorphisms of $H^{o}$. Here, \textquotedblleft$\alpha\cdot\sigma^{-1}%
$\textquotedblright\ is understood using the right action of $\mathfrak{S}%
_{k}$ on $\mathbb{N}^{k}$ defined in (\ref{eq.gammapi}).
\end{proposition}

\begin{proof}
[Proof sketch.]By definition, $\left(  p_{\alpha\cdot\sigma^{-1},\ \sigma
^{-1}}\text{ for }H\right)  =m_{H}^{\left[  k\right]  }\circ P_{\alpha
\cdot\sigma^{-1}}\circ\underbrace{\left(  \sigma^{-1}\right)  ^{-1}}_{=\sigma
}\circ\,\Delta_{H}^{\left[  k\right]  }=m_{H}^{\left[  k\right]  }\circ
P_{\alpha\cdot\sigma^{-1}}\circ\sigma\circ\Delta_{H}^{\left[  k\right]  }$, so
that%
\begin{align}
\left(  p_{\alpha\cdot\sigma^{-1},\ \sigma^{-1}}\text{ for }H\right)  ^{\ast}
&  =\left(  m_{H}^{\left[  k\right]  }\circ P_{\alpha\cdot\sigma^{-1}}%
\circ\sigma\circ\Delta_{H}^{\left[  k\right]  }\right)  ^{\ast}\nonumber\\
&  =\left(  \Delta_{H}^{\left[  k\right]  }\right)  ^{\ast}\circ\sigma^{\ast
}\circ\left(  P_{\alpha\cdot\sigma^{-1}}\right)  ^{\ast}\circ\left(
m_{H}^{\left[  k\right]  }\right)  ^{\ast}. \label{pf.prop.pas-on-dual.1}%
\end{align}
However, the definition of the bialgebra structure on the graded dual $H^{o}$
shows that $\left(  \Delta_{H}^{\left[  k\right]  }\right)  ^{\ast}=m_{H^{o}%
}^{\left[  k\right]  }$ and $\left(  m_{H}^{\left[  k\right]  }\right)
^{\ast}=\Delta_{H^{o}}^{\left[  k\right]  }$. Moreover, basic linear algebra
(really just combinatorics of indices in tensor products) shows that
$\sigma^{\ast}=\sigma^{-1}$ and that $\left(  P_{\alpha\cdot\sigma^{-1}%
}\right)  ^{\ast}=P_{\alpha\cdot\sigma^{-1}}$ (since $\left(  P_{\beta
}\right)  ^{\ast}=P_{\beta}$ for any weak composition $\beta$). Using these
four equalities, we can rewrite (\ref{pf.prop.pas-on-dual.1}) as%
\begin{align*}
\left(  p_{\alpha\cdot\sigma^{-1},\ \sigma^{-1}}\text{ for }H\right)  ^{\ast}
&  =m_{H^{o}}^{\left[  k\right]  }\circ\underbrace{\sigma^{-1}\circ
P_{\alpha\cdot\sigma^{-1}}}_{\substack{=P_{\alpha\cdot\sigma^{-1}\cdot\left(
\sigma^{-1}\right)  ^{-1}}\circ\sigma^{-1}\\\text{(by
(\ref{pf.thm.sol-mac-1.piP}))}}}\circ\,\Delta_{H^{o}}^{\left[  k\right]  }\\
&  =m_{H^{o}}^{\left[  k\right]  }\circ\underbrace{P_{\alpha\cdot\sigma
^{-1}\cdot\left(  \sigma^{-1}\right)  ^{-1}}}_{=P_{\alpha}}\circ\,\sigma
^{-1}\circ\Delta_{H^{o}}^{\left[  k\right]  }\\
&  =m_{H^{o}}^{\left[  k\right]  }\circ P_{\alpha}\circ\sigma^{-1}\circ
\Delta_{H^{o}}^{\left[  k\right]  }=\left(  p_{\alpha,\sigma}\text{ for }%
H^{o}\right)
\end{align*}
(again by the definition of $p_{\alpha,\sigma}$). This proves Proposition
\ref{prop.pas-on-dual}.
\end{proof}

\subsection{Have we found them all?}

Now let $H$ be a connected graded bialgebra. As we already mentioned, each
$p_{\alpha,\sigma}$ belongs to the $\mathbf{k}$-module $\mathbf{E}\left(
H\right)  $ of all graded $\mathbf{k}$-module endomorphisms of $H$ that
annihilate all but finitely many degrees of $H$. Thus, the same holds for any
$\mathbf{k}$-linear combination of the $p_{\alpha,\sigma}$. The fact that each
$p_{\alpha,\sigma}$ annihilates all but the $\left\vert \alpha\right\vert $-th
graded component of $H$ allows us to form infinite $\mathbf{k}$-linear
combinations $\sum_{k\in\mathbb{N}}\ \ \sum_{\alpha\in\left\{  1,2,3,\ldots
\right\}  ^{k}}\ \ \sum_{\sigma\in\mathfrak{S}_{k}}\lambda_{\alpha,\sigma
}p_{\alpha,\sigma}$ of these $p_{\alpha,\sigma}$ as well. Each such
combination belongs to $\operatorname*{End}\nolimits_{\operatorname*{gr}}H$
(although not to $\mathbf{E}\left(  H\right)  $ any more) and (since it is
natural in $H$) is therefore a natural graded $\mathbf{k}$-module endomorphism
of $H$ defined for any connected graded bialgebra $H$.

\begin{question}
\label{quest.natural}Are these combinations the only natural graded
$\mathbf{k}$-module endomorphisms of $H$ defined for any connected graded
bialgebra $H$ ?

In other words: Let $g$ be a natural graded $\mathbf{k}$-module endomorphism
on the category of connected graded $\mathbf{k}$-Hopf algebras. (That is, for
each connected graded Hopf algebra $H$, we have a graded $\mathbf{k}$-module
endomorphism $g_{H}$, and each graded Hopf algebra morphism $\varphi
:H\rightarrow H^{\prime}$ gives a commutative diagram.) Is it true that $g$ is
an infinite $\mathbf{k}$-linear combination of $p_{\alpha,\sigma}$'s?
\end{question}

I have so far been unable to answer this question, for lack of convenient free
objects in the relevant category. Nevertheless, I suspect that the answer is
positive (i.e., every natural endomorphism is an infinite $\mathbf{k}$-linear
combination of $p_{\alpha,\sigma}$'s), at least when $\mathbf{k}$ is a field
of characteristic $0$.

Question \ref{quest.natural} can also be asked without the first
\textquotedblleft graded\textquotedblright. That is, we can ask about natural
$\mathbf{k}$-module endomorphisms of $H$ that are not necessarily graded.

A similar question can be asked for general (as opposed to connected) graded
bialgebras. However, it requires some more care in its formulation, as not
every infinite $\mathbf{k}$-linear combination $\sum_{k\in\mathbb{N}}%
\ \ \sum_{\alpha\in\mathbb{N}^{k}}\ \ \sum_{\sigma\in\mathfrak{S}_{k}}%
\lambda_{\alpha,\sigma}p_{\alpha,\sigma}$ is well-defined (and we cannot
restrict the second sum to the $\alpha\in\left\{  1,2,3,\ldots\right\}  ^{k}$ only).

\section{\label{sec.PNSym}The combinatorial Hopf algebra
$\operatorname*{PNSym}$}

Let us now recall the connected graded Hopf algebra $\operatorname*{NSym}$ of
noncommutative symmetric functions (introduced in \cite{ncsf1}, recently
exposed in \cite[\S 5.4]{GriRei} and \cite[Definition 6.1]{Meliot}%
\footnote{These three references give slightly different definitions of this
Hopf algebra $\operatorname*{NSym}$, but all these definitions are easily seen
to be equivalent (e.g., using \cite[Note 3.5, Proposition 3.8, Proposition
3.9]{ncsf1} and \cite[Proposition 6.2, Proposition 6.3]{Meliot}). They also
denote it by different symbols: It is called $\mathbf{Sym}$ in \cite{ncsf1},
called $\operatorname*{NSym}$ in \cite[\S 5.4]{GriRei}, and called
$\operatorname*{NCSym}$ in \cite[Definition 6.1]{Meliot} (a name that means a
different Hopf algebra in most of the literature).}). As a $\mathbf{k}%
$-algebra, it is free with countably many generators $\mathbf{H}%
_{1},\mathbf{H}_{2},\mathbf{H}_{3},\ldots$, and its comultiplication is given
by $\Delta\left(  \mathbf{H}_{m}\right)  =\sum_{i=0}^{m}\mathbf{H}_{i}%
\otimes\mathbf{H}_{m-i}$, where $\mathbf{H}_{0}:=1$. (We are using the
notation $\mathbf{H}_{k}$ for the $k$-th complete homogeneous noncommutative
symmetric function in $\operatorname*{NSym}$, which is denoted $H_{k}$ in
\cite[Theorem 5.4.2]{GriRei}, and denoted $S_{k}$ in \cite[(22)]{ncsf1} and
\cite{Meliot}.)

For any weak composition $\left(  \alpha_{1},\alpha_{2},\ldots,\alpha
_{n}\right)  \in\mathbb{N}^{n}$, we set%
\[
\mathbf{H}_{\left(  \alpha_{1},\alpha_{2},\ldots,\alpha_{n}\right)
}:=\mathbf{H}_{\alpha_{1}}\mathbf{H}_{\alpha_{2}}\cdots\mathbf{H}_{\alpha_{n}%
}\in\operatorname*{NSym}.
\]
Thus, the family $\left(  \mathbf{H}_{\alpha}\right)  _{\alpha\text{ is a
composition}}$ is a basis of the $\mathbf{k}$-module $\operatorname*{NSym}$.
(Note that $\mathbf{H}_{\alpha}$ is called $S^{\alpha}$ in \cite{ncsf1}.)

An \emph{internal product} $\ast$ is defined on $\operatorname*{NSym}$ in
\cite[\S 5.1]{ncsf1}. It is explicitly given by the formula%
\[
\mathbf{H}_{\alpha}\ast\mathbf{H}_{\beta}=\sum_{\substack{\gamma_{i,j}%
\in\mathbb{N}\text{ for all }i\in\left[  k\right]  \text{ and }j\in\left[
\ell\right]  ;\\\gamma_{i,1}+\gamma_{i,2}+\cdots+\gamma_{i,\ell}=\alpha
_{i}\text{ for all }i\in\left[  k\right]  ;\\\gamma_{1,j}+\gamma_{2,j}%
+\cdots+\gamma_{k,j}=\beta_{j}\text{ for all }j\in\left[  \ell\right]
}}\mathbf{H}_{\left(  \gamma_{1,1},\gamma_{1,2},\ldots,\gamma_{k,\ell}\right)
}%
\]
for all compositions $\alpha$ and $\beta$ (see \cite[Proposition 5.1]{ncsf1}).
As mentioned in the introduction, Patras's composition formula
(\ref{eq.intro.pp=sum.cocomm}) is structurally identical to this expression.
This shows that any cocommutative connected graded bialgebra $H$ is a module
over the non-unital algebra $\operatorname*{NSym}\nolimits^{\left(  2\right)
}$ of noncommutative symmetric functions equipped with its internal product.
(Each complete noncommutative symmetric function $\mathbf{H}_{\alpha}$ acts as
the operator $p_{\alpha,\operatorname*{id}}$ on $H$.)

It is natural to ask whether a similar construction can be made for any
connected graded bialgebra $H$ using Theorem \ref{thm.sol-mac-1}. Thus, we are
looking for a non-unital algebra that contains elements $F_{\alpha,\sigma}$
for all compositions $\alpha\in\mathbb{N}^{k}$ and all permutations $\sigma
\in\mathfrak{S}_{k}$, and which acts on any connected graded bialgebra $H$ by
having each $F_{\alpha,\sigma}$ act as the operator $p_{\alpha,\sigma}$.

In this section, we shall construct such an algebra -- which I name
$\operatorname*{PNSym}\nolimits^{\left(  2\right)  }$ for \textquotedblleft
permuted noncommutative symmetric functions\textquotedblright. Besides having
an \textquotedblleft internal product\textquotedblright\ $\ast$, it has an
\textquotedblleft external product\textquotedblright\ $\cdot$ (corresponding
to the convolution of operators on $H$) and a coproduct $\Delta$
(corresponding to acting on a tensor product of bialgebras).

\subsection{Mopiscotions and weak mopiscotions}

To construct this algebra, we need to make some implicit things explicit and
introduce some more notation:

\begin{definition}
\label{def.mopis}A \emph{mopiscotion} (short for \textquotedblleft permuted
composition\textquotedblright) is a pair $\left(  \alpha,\sigma\right)  $,
where $\alpha$ is a composition of length $k$ (for some $k\in\mathbb{N}$) and
$\sigma$ is a permutation in $\mathfrak{S}_{k}$.
\end{definition}

\begin{definition}
\label{def.wmopis}A \emph{weak mopiscotion} is a pair $\left(  \alpha
,\sigma\right)  $, where $\alpha$ is a weak composition of length $k$ (for
some $k\in\mathbb{N}$) and $\sigma$ is a permutation in $\mathfrak{S}_{k}$.
\end{definition}

Clearly, any mopiscotion is a weak mopiscotion. In the reverse direction, we
can transform any weak mopiscotion $\left(  \alpha,\sigma\right)  $ into a
mopiscotion $\left(  \beta,\tau\right)  $ (non-injectively) by removing the
zeroes from the composition $\alpha$ while appropriately reducing the
permutation $\sigma$ as well. To give a precise definition, we need the
concept of \emph{standardization} (\cite[Definition 5.3.3]{GriRei}):

\begin{definition}
\label{def.std}Let $w=\left(  w_{1},w_{2},\ldots,w_{h}\right)  $ be any finite
list of integers (or of elements of any totally ordered set). Then, the
\emph{standardization} of $w$ is defined as the unique permutation $\sigma
\in\mathfrak{S}_{h}$ with the property that for every two elements $a$ and $b$
of $\left[  h\right]  $ satisfying $a<b$, we have the equivalence $\left(
\sigma\left(  a\right)  <\sigma\left(  b\right)  \right)  \Longleftrightarrow
\left(  w_{a}\leq w_{b}\right)  $.
\end{definition}

Roughly speaking, the standardization of the list $w$ is the permutation whose
values are in the same relative order as the entries of $w$; when $w$ has
equal entries, we count entries lying further left as being smaller. For
example, the two lists $\left(  5,7,1,8,2\right)  $ and $\left(
2,2,0,2,1\right)  $ have the same standardization, namely the permutation
$\sigma\in\mathfrak{S}_{5}$ with one-line notation $\left[  3,4,1,5,2\right]
$. (In this paper, we will only use standardizations of lists that consist of
distinct entries. Thus, any subtleties related to equal entries can be
ignored, and we can simply compute the standardization of a list by replacing
its smallest entry by $1$, its second-smallest entry by $2$, and so on, and
finally read the resulting list as the one-line notation of a permutation.)

\begin{definition}
\label{def.red-wmopis}Let $\left(  \alpha,\sigma\right)  $ be a weak
mopiscotion, with $\alpha=\left(  \alpha_{1},\alpha_{2},\ldots,\alpha
_{k}\right)  $ and $\sigma\in\mathfrak{S}_{k}$. Let $\left(  j_{1}%
<j_{2}<\cdots<j_{h}\right)  $ be the list of all elements $i$ of $\left[
k\right]  $ satisfying $\alpha_{i}\neq0$, in increasing order. Let $\tau
\in\mathfrak{S}_{h}$ be the standardization of the list $\left(  \sigma\left(
j_{1}\right)  ,\sigma\left(  j_{2}\right)  ,\ldots,\sigma\left(  j_{h}\right)
\right)  $. Let $\operatorname*{red}\alpha$ denote the composition $\left(
\alpha_{j_{1}},\alpha_{j_{2}},\ldots,\alpha_{j_{h}}\right)  $ (which consists
of all nonzero entries of $\alpha$). Then, we define $\operatorname*{red}%
\left(  \alpha,\sigma\right)  $ to be the mopiscotion $\left(
\operatorname*{red}\alpha,\tau\right)  $. We call $\operatorname*{red}\left(
\alpha,\sigma\right)  $ the \emph{reduction} of $\left(  \alpha,\sigma\right)
$.
\end{definition}

For example,%
\[
\operatorname*{red}\left(  \left(  3,0,1,2,0\right)  ,\ \left[
4,5,1,3,2\right]  \right)  =\left(  \left(  3,1,2\right)  ,\ \left[
3,1,2\right]  \right)  ,
\]
where the square brackets indicate a permutation written in one-line notation.
For another example,%
\[
\operatorname*{red}\left(  \left(  3,0,1,2,0\right)  ,\ \left[
4,1,3,2,5\right]  \right)  =\left(  \left(  3,1,2\right)  ,\ \left[
3,2,1\right]  \right)  .
\]

Clearly, if $\left(  \alpha,\sigma\right)  $ is a mopiscotion (i.e., if all
entries of $\alpha$ are nonzero), then $\operatorname*{red}\left(
\alpha,\sigma\right)  =\left(  \alpha,\sigma\right)  $. \medskip

Mopiscotions have already appeared under the guise of \textquotedblleft
weighted permutations\textquotedblright\ in work by Foissy and Patras
\cite{FoiPat13}. Indeed, the set $\mathcal{S}$ defined in \cite[\S 3]%
{FoiPat13} can be identified with the set of all mopiscotions, as long as we
translate each pair $\left(  \sigma,d\right)  \in\mathfrak{S}_{k}%
\times\operatorname*{Hom}\left(  \left[  k\right]  ,\mathbb{N}^{\ast}\right)
\subseteq\mathcal{S}$ (the notations used here are those of \cite{FoiPat13})
into the mopiscotion $\left(  \delta,\sigma\right)  $, where $\delta:=\left(
d\left(  1\right)  ,d\left(  2\right)  ,\ldots,d\left(  k\right)  \right)  $.
These weighted permutations $\left(  \sigma,d\right)  \in\mathcal{S}$ are used
in \cite{FoiPat13} to define certain linear endomorphisms of a shuffle Hopf
algebra, and might be generalizable to arbitrary graded bialgebras with a
dendriform structure, but do not agree with our maps $p_{\alpha,\sigma}$.

Let us now return to our maps $p_{\alpha,\sigma}$. If $H$ is connected, we can
dispense with the ones in which $\alpha$ contains zeroes, since there is a way
to reduce all such $p_{\alpha,\sigma}$ to the ones where $\alpha$ is a proper
composition (i.e., contains no zeroes):

\begin{proposition}
\label{prop.mopis.reduce}Let $H$ be a connected graded bialgebra. Let $\left(
\alpha,\sigma\right)  $ be a weak mopiscotion, and let $\left(  \beta
,\tau\right)  =\operatorname*{red}\left(  \alpha,\sigma\right)  $. Then,%
\[
p_{\alpha,\sigma}=p_{\beta,\tau}.
\]

\end{proposition}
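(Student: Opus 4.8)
The plan is to compute $p_{\alpha,\sigma}\left(x\right)$ directly from the Sweedler description in Remark \ref{rmk.pas.informal-def}, exploiting the fact that connectedness forces the degree-$0$ projection to factor through the counit. First I would record that consequence of connectedness: since $\epsilon_H$ is graded and $\mathbf{k}$ is concentrated in degree $0$, we have $\epsilon_H\left(H_n\right)=0$ for all $n>0$, and together with $H_0=\mathbf{k}\cdot 1_H$ and $\epsilon_H\left(1_H\right)=1$ this gives $p_0=u_H\circ\epsilon_H$. Writing $\alpha=\left(\alpha_1,\alpha_2,\ldots,\alpha_k\right)$ and letting $\left(j_1<j_2<\cdots<j_h\right)$ be the list of all $i\in\left[k\right]$ with $\alpha_i\neq 0$ (so $\beta=\alpha^{\operatorname*{red}}=\left(\alpha_{j_1},\ldots,\alpha_{j_h}\right)$ and $\tau$ is the standardization of $\left(\sigma\left(j_1\right),\ldots,\sigma\left(j_h\right)\right)$), Remark \ref{rmk.pas.informal-def} gives, for $x\in H$,
\[
p_{\alpha,\sigma}\left(x\right)=\sum_{\left(x\right)}p_{\alpha_1}\left(x_{\left(\sigma\left(1\right)\right)}\right)p_{\alpha_2}\left(x_{\left(\sigma\left(2\right)\right)}\right)\cdots p_{\alpha_k}\left(x_{\left(\sigma\left(k\right)\right)}\right),
\]
and for each $i\notin\left\{j_1,\ldots,j_h\right\}$ the corresponding factor is $p_0\left(x_{\left(\sigma\left(i\right)\right)}\right)=\epsilon_H\left(x_{\left(\sigma\left(i\right)\right)}\right)1_H$, a central scalar multiple of the unit. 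Pulling these scalars out of the (possibly noncommutative) product leaves
\[
p_{\alpha,\sigma}\left(x\right)=\sum_{\left(x\right)}\left(\prod_{r=1}^{h}p_{\alpha_{j_r}}\left(x_{\left(\sigma\left(j_r\right)\right)}\right)\right)\prod_{i\notin\left\{j_1,\ldots,j_h\right\}}\epsilon_H\left(x_{\left(\sigma\left(i\right)\right)}\right),
\]
where the $r$-product is taken in increasing order of $r$ (hence of $j_r$), which is the correct left-to-right order.

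Next I would contract the counit factors. Set $U:=\left\{\sigma\left(j_1\right),\ldots,\sigma\left(j_h\right)\right\}$ and $T:=\left[k\right]\setminus U$. Applying $\epsilon_H$ to the tensor factors of $\Delta^{\left[k\right]}\left(x\right)$ indexed by $T$ collapses it to $\Delta^{\left[h\right]}\left(x\right)$, read off the surviving factors in increasing order of their index; this is a routine induction on $\left|T\right|$ from the counit axioms (cf. \cite[Chapter 1]{GriRei}). The only bookkeeping point is that the surviving factor occupying the $r$-th slot of the product above is $x_{\left(\sigma\left(j_r\right)\right)}$, and $\sigma\left(j_r\right)$ is the $\tau\left(r\right)$-th smallest element of $U$ — which is precisely the defining property of the standardization $\tau$. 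Hence, writing $\Delta^{\left[h\right]}\left(x\right)=\sum_{\left(x\right)}x_{\left(1\right)}\otimes\cdots\otimes x_{\left(h\right)}$ afresh, we get
\[
p_{\alpha,\sigma}\left(x\right)=\sum_{\left(x\right)}p_{\alpha_{j_1}}\left(x_{\left(\tau\left(1\right)\right)}\right)p_{\alpha_{j_2}}\left(x_{\left(\tau\left(2\right)\right)}\right)\cdots p_{\alpha_{j_h}}\left(x_{\left(\tau\left(h\right)\right)}\right),
\]
and since $\beta=\left(\alpha_{j_1},\ldots,\alpha_{j_h}\right)$, Remark \ref{rmk.pas.informal-def} identifies the right-hand side with $p_{\beta,\tau}\left(x\right)$, which is the claim.

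I expect the only genuine work to be the indexing in the contraction step, namely verifying that the permutation relating $\left(\sigma\left(j_1\right),\ldots,\sigma\left(j_h\right)\right)$ to its increasing rearrangement is exactly the $\tau$ from the definition of $\left(\alpha,\sigma\right)^{\operatorname*{red}}$; the use of connectedness, the pulling-out of central scalars, and the counit-contraction identity are all routine. An essentially equivalent, notation-free variant would instead factor $m^{\left[k\right]}\circ P_\alpha=m^{\left[h\right]}\circ P_{\beta}\circ E$, where $E\colon H^{\otimes k}\to H^{\otimes h}$ applies $\epsilon_H$ to the zero-positions of $\alpha$, and then check $E\circ\sigma^{-1}\circ\Delta^{\left[k\right]}=\tau^{-1}\circ\Delta^{\left[h\right]}$ using Lemma \ref{lem.Ppi} together with the same contraction identity; composing these then yields $p_{\alpha,\sigma}=m^{\left[h\right]}\circ P_{\beta}\circ\tau^{-1}\circ\Delta^{\left[h\right]}=p_{\beta,\tau}$.
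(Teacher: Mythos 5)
Your proof is correct and follows essentially the same route as the paper's sketch: invoke Remark \ref{rmk.pas.informal-def}, use connectedness to write $p_{0}=u_{H}\circ\epsilon_{H}$, pull the resulting scalar factors out of the (noncommutative) product, and contract them via the counit axiom while tracking how removing zero positions re-indexes the permutation. Your write-up is in fact a bit more careful than the paper's ``proof idea,'' since you explicitly verify that the re-indexing yields exactly the standardization $\tau$, a step the paper leaves as a remark.
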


\begin{proof}
[Proof idea.]This is easy if one does not try to be rigorous. Here is a
handwavy proof using Sweedler notation:

The connectedness of $H$ yields $p_{0}\left(  h\right)  =\epsilon\left(
h\right)  \cdot1_{H}$ for each $h\in H$. Hence, in the formula
\[
p_{\alpha,\sigma}\left(  x\right)  =\sum_{\left(  x\right)  }p_{\alpha_{1}%
}\left(  x_{\left(  \sigma\left(  1\right)  \right)  }\right)  p_{\alpha_{2}%
}\left(  x_{\left(  \sigma\left(  2\right)  \right)  }\right)  \cdots
p_{\alpha_{k}}\left(  x_{\left(  \sigma\left(  k\right)  \right)  }\right)
\]
(from Remark \ref{rmk.pas.informal-def}), all the factors $p_{\alpha_{i}%
}\left(  x_{\left(  \sigma\left(  i\right)  \right)  }\right)  $ with
$\alpha_{i}=0$ can be rewritten as $\epsilon\left(  x_{\left(  \sigma\left(
i\right)  \right)  }\right)  $ (the $1_{H}$ gets swallowed by the product) and
thus can be removed completely (using the $\sum_{\left(  h\right)  }%
\epsilon\left(  h_{\left(  1\right)  }\right)  h_{\left(  2\right)  }%
=\sum_{\left(  h\right)  }h_{\left(  1\right)  }\epsilon\left(  h_{\left(
2\right)  }\right)  =h$ axiom of a coalgebra), as long as we remember to
adjust the permutation $\sigma$ accordingly (removing its value $\sigma\left(
i\right)  $ and decreasing all values larger than $\sigma\left(  i\right)  $
by $1$). At the end of this process, we end up with $p_{\beta,\tau}\left(
x\right)  $. This shows that $p_{\alpha,\sigma}\left(  x\right)
=p_{\beta,\tau}\left(  x\right)  $ for each $x\in H$. Proposition
\ref{prop.mopis.reduce} follows.

A more rigorous version of this proof can be found in the Appendix (Section
\ref{sec.pas-proof}).
\end{proof}

\subsection{$\operatorname*{PNSym}$}

We can now define the combinatorial Hopf algebra that will occupy us for the
rest of this work:

\begin{definition}
\label{def.PNSym.ops}Let $\operatorname*{PNSym}$ be the free $\mathbf{k}%
$-module with basis $\left(  F_{\alpha,\sigma}\right)  _{\left(  \alpha
,\sigma\right)  \text{ is a mopiscotion}}$.

For any weak mopiscotion $\left(  \alpha,\sigma\right)  $, we set%
\[
F_{\alpha,\sigma}:=F_{\beta,\tau},
\]
where $\left(  \beta,\tau\right)  =\operatorname*{red}\left(  \alpha
,\sigma\right)  $.

Define two multiplications on $\operatorname*{PNSym}$: one \textquotedblleft
external multiplication\textquotedblright\ (which mirrors convolution of
$p_{\alpha,\sigma}$'s as expressed in Proposition \ref{prop.sol-mac-0}) given
by%
\[
F_{\alpha,\sigma}\cdot F_{\beta,\tau}=F_{\alpha\beta,\sigma\oplus\tau};
\]
and another \textquotedblleft internal multiplication\textquotedblright%
\ (which mirrors composition of $p_{\alpha,\sigma}$'s as expressed in Theorem
\ref{thm.sol-mac-1}) given by%
\[
F_{\alpha,\sigma}\ast F_{\beta,\tau}=\sum_{\substack{\gamma_{i,j}\in
\mathbb{N}\text{ for all }i\in\left[  k\right]  \text{ and }j\in\left[
\ell\right]  ;\\\gamma_{i,1}+\gamma_{i,2}+\cdots+\gamma_{i,\ell}=\alpha
_{i}\text{ for all }i\in\left[  k\right]  ;\\\gamma_{1,j}+\gamma_{2,j}%
+\cdots+\gamma_{k,j}=\beta_{j}\text{ for all }j\in\left[  \ell\right]
}}F_{\left(  \gamma_{1,1},\gamma_{1,2},\ldots,\gamma_{k,\ell}\right)
,\tau\left[  \sigma\right]  }%
\]
(where $\alpha\in\mathbb{N}^{k}$ and $\beta\in\mathbb{N}^{\ell}$). Also, we
define a comultiplication $\Delta:\operatorname*{PNSym}\rightarrow
\operatorname*{PNSym}\otimes\operatorname*{PNSym}$ on $\operatorname*{PNSym}$
by%
\[
\Delta\left(  F_{\alpha,\sigma}\right)  =\sum_{\substack{\beta,\gamma\text{
weak compositions;}\\\text{entrywise sum }\beta+\gamma=\alpha}}F_{\beta
,\sigma}\otimes F_{\gamma,\sigma}%
\]
(mirroring the formula from Proposition \ref{prop.pas-on-tensors}).

We also equip the $\mathbf{k}$-module $\operatorname*{PNSym}$ with a grading
by letting each $F_{\alpha,\sigma}$ be homogeneous of degree $\left\vert
\alpha\right\vert $.
\end{definition}

These operations on $\operatorname*{PNSym}$ behave as nicely as the analogous
operations on $\operatorname*{NSym}$:

\begin{theorem}
\label{thm.PNSym.alg}The $\mathbf{k}$-module $\operatorname*{PNSym}$ becomes a
connected graded cocommutative Hopf algebra when equipped with the external
multiplication $\cdot$, and a (non-graded) non-unital bialgebra when equipped
with the internal multiplication $\ast$. In particular, both multiplications
are associative.
\end{theorem}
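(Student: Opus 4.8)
The plan is to prove Theorem \ref{thm.PNSym.alg} by a \emph{transfer of structure} argument: reinterpret the operations $\cdot$, $\ast$, $\Delta$ on $\operatorname*{PNSym}$ as the "generic" shadows of convolution, composition, and tensoring of the operators $p_{\alpha,\sigma}$, and then inherit associativity, coassociativity, the Hopf-algebra axioms, and cocommutativity from the fact that these hold on a \emph{sufficiently generic} connected graded Hopf algebra $H$. Concretely, fix the free object $H$ constructed in the proof of Theorem \ref{thm.pas-lin-ind}(a) (or a suitable variant), so that the family $\left(p_{\alpha,\sigma}\right)_{(\alpha,\sigma)\text{ a mopiscotion}}$ is $\mathbf{k}$-linearly independent in $\operatorname*{End}H$. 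Define a $\mathbf{k}$-linear map $\Phi:\operatorname*{PNSym}\to\operatorname*{End}\nolimits_{\operatorname*{gr}}H$ by $\Phi(F_{\alpha,\sigma})=p_{\alpha,\sigma}$; by linear independence it is injective, so $\operatorname*{PNSym}$ may be identified with its image $V:=\Phi(\operatorname*{PNSym})$, a $\mathbf{k}$-submodule of $\operatorname*{End}\nolimits_{\operatorname*{gr}}H$ (in fact of $\mathbf{E}(H)$). Proposition \ref{prop.mopis.reduce} guarantees that the extension of the notation $F_{\alpha,\sigma}$ to weak mopiscotions is consistent with $\Phi$, i.e. $\Phi(F_{\alpha,\sigma})=p_{\alpha,\sigma}$ still holds for weak mopiscotions.

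The key steps, in order, are as follows. First, verify that $V$ is closed under the convolution $\star$ and composition $\circ$ of $\operatorname*{End}H$, and that under the identification $\Phi$ these two operations become precisely the external multiplication $\cdot$ and the internal multiplication $\ast$ of Definition \ref{def.PNSym.ops}: this is immediate from Proposition \ref{prop.sol-mac-0} (which gives $p_{\alpha,\sigma}\star p_{\beta,\tau}=p_{\alpha\beta,\sigma\oplus\tau}$) and Theorem \ref{thm.sol-mac-1} (which gives the composition formula matching the $\ast$-formula verbatim). Hence associativity of $\cdot$ and of $\ast$ is inherited from associativity of $\star$ and of $\circ$ on $\operatorname*{End}H$. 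Second, observe that the unit $u_H\circ\epsilon_H$ of the convolution algebra equals $p_{(),\operatorname*{id}}$ (the empty mopiscotion), so $F_{(),\operatorname*{id}}$ is a two-sided unit for $\cdot$; this is the counit-unit statement. Third, handle the coproduct: here the cleanest route is to use Proposition \ref{prop.pas-on-tensors}, which says that on $H\otimes G$ the operator $p_{\alpha,\sigma}$ decomposes exactly as $\Delta$ prescribes. Taking $G=H$ and noting that the diagonal-like structure map works functorially, one shows that $\Delta$ on $\operatorname*{PNSym}$ is coassociative and is an algebra morphism for $\cdot$ (because tensoring of bialgebras is associative and compatible with tensoring of convolution operators — i.e. $(f\star f')\otimes(g\star g')=(f\otimes g)\star(f'\otimes g')$ on $H\otimes G$); cocommutativity of $\Delta$ follows because $\beta+\gamma=\alpha$ is symmetric in $\beta,\gamma$. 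Fourth, define the counit $\operatorname*{PNSym}\to\mathbf{k}$ by picking off the coefficient of $F_{(),\operatorname*{id}}$ (equivalently, the degree-$0$ component), check the counit axioms, and note connectedness: the degree-$0$ part of $\operatorname*{PNSym}$ is spanned by $F_{(),\operatorname*{id}}$ alone, hence is $\mathbf{k}\cdot 1$. Fifth, invoke \cite[Proposition 1.4.16]{GriRei}: a connected graded bialgebra automatically has an antipode, so $(\operatorname*{PNSym},\cdot,\Delta)$ is a Hopf algebra. Finally, for the internal product $\ast$ one checks it is graded-compatible only in the weak sense stated (it lands in the same degree, giving $0$ across degrees), and that its unit — the formal infinite sum $\sum_{n}F_{(n),\operatorname*{id}}$, mirroring $\operatorname*{id}_H$ — is not in $\operatorname*{PNSym}$, which is exactly why $(\operatorname*{PNSym},\ast)$ is only \emph{non-unital}; this matches the $\operatorname*{NSym}$ picture and requires no further argument beyond noting the analogue of $\operatorname*{NSym}^{(2)}$.

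The main obstacle I expect is \emph{not} the algebraic bookkeeping but making the transfer-of-structure argument airtight, specifically: (i) one must exhibit a single connected graded Hopf algebra $H$ for which the \emph{full} family over all mopiscotions is linearly independent — Theorem \ref{thm.pas-lin-ind}(a) provides this, but one should double-check that the basis of each $H_n$ being free is enough to conclude injectivity of $\Phi$ over the ground ring $\mathbf{k}$ (it is, since linear independence of $p_{\alpha,\sigma}(x_{i,j})$ as distinct monomials holds over any $\mathbf{k}$); and (ii) the coproduct identity genuinely lives on $\operatorname*{End}(H\otimes H)$ rather than $\operatorname*{End}(H)$, so one must be careful that the map $\operatorname*{PNSym}\hookrightarrow\operatorname*{End}(H)$ and its "doubled" version $\operatorname*{PNSym}\otimes\operatorname*{PNSym}\hookrightarrow\operatorname*{End}(H)\otimes\operatorname*{End}(H)\hookrightarrow\operatorname*{End}(H\otimes H)$ are compatible, and that the last inclusion is injective — which again follows because $H\otimes H$ has free graded components (tensor products of free modules) and Proposition \ref{prop.pas-on-tensors} pins down the images on pure tensors. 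Once these injectivity points are settled, every identity to be proved (associativity, coassociativity, bialgebra compatibility, counit, cocommutativity) reduces to the corresponding identity among honest linear maps on $H$ or $H\otimes H$, where it is automatic. A cleaner but essentially equivalent alternative, which I would mention, is to verify associativity of $\ast$ directly from Theorem \ref{thm.sol-mac-1} by the bijection between three-fold contingency-table decompositions (this is the same combinatorial identity underlying associativity of the internal product on $\operatorname*{NSym}$, with the permutation parameter carried along via the identity $\rho[\tau[\sigma]] = (\rho[\tau])[\sigma]$ for the $\tau[\sigma]$ operation), and to verify coassociativity and the bialgebra axiom by a direct but routine computation with weak compositions; I would present the transfer argument as the conceptual proof and relegate the direct verification to a remark.
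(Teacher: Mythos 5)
Your proposal follows essentially the same route as the paper's first (and preferred) proof: identify $\operatorname*{PNSym}$ with a subspace of $\operatorname*{End}H$ via the evaluation map for the generic $H$ of Theorem \ref{thm.pas-lin-ind}(a), transfer associativity of $\ast$ and $\cdot$ from $\circ$ and $\star$, and handle $\Delta$ by embedding $\operatorname*{PNSym}\otimes\operatorname*{PNSym}$ into $\operatorname*{End}(H\otimes H)$ using Proposition \ref{prop.pas-on-tensors}; your closing remark (direct combinatorial verification via $\rho[\tau[\sigma]]=(\rho[\tau])[\sigma]$) is exactly the paper's second proof idea. One small omission: to conclude that $(\operatorname*{PNSym},\ast,\Delta)$ is a non-unital bialgebra you must also check $\Delta(f\ast g)=\Delta(f)\ast\Delta(g)$, not only compatibility of $\Delta$ with $\cdot$; this follows by the same transfer argument (using $(f\circ f')\otimes(g\circ g')=(f\otimes g)\circ(f'\otimes g')$ on $H\otimes H$), so it is an easy fix, but it should be stated.
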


There are two ways to prove this. I shall very briefly outline both:

\begin{proof}
[First proof idea for Theorem \ref{thm.PNSym.alg}.]Most claims can be derived
from properties of the operators $p_{\alpha,\sigma}$, using the $H$ from
Theorem \ref{thm.pas-lin-ind} \textbf{(a)} as a faithful representation.

For an example, let us prove that the internal multiplication $\ast$ on
$\operatorname*{PNSym}$ is associative.

Let $H$ be any connected graded $\mathbf{k}$-bialgebra. Let
$\operatorname*{ev}\nolimits_{H}:\operatorname*{PNSym}\rightarrow
\operatorname*{End}H$ be the $\mathbf{k}$-linear map that sends any
$F_{\alpha,\sigma}$ to the operator $p_{\alpha,\sigma}\in\operatorname*{End}H$
for any mopiscotion $\left(  \alpha,\sigma\right)  $. Note that
\begin{equation}
\operatorname*{ev}\nolimits_{H}\left(  F_{\alpha,\sigma}\right)
=p_{\alpha,\sigma} \label{pf.thm.PNSym.alg.1st.evF=p}%
\end{equation}
is true not only for all mopiscotions $\left(  \alpha,\sigma\right)  $, but
also for all weak mopiscotions $\left(  \alpha,\sigma\right)  $ (because if
$\left(  \alpha,\sigma\right)  $ is any weak mopiscotion, and if $\left(
\beta,\tau\right)  =\operatorname*{red}\left(  \alpha,\sigma\right)  $, then
$p_{\alpha,\sigma}=p_{\beta,\tau}$ and $F_{\alpha,\sigma}=F_{\beta,\tau}$).

Now, let $H$ be the connected graded Hopf algebra $H$ from Theorem
\ref{thm.pas-lin-ind} \textbf{(a)}. Then, Theorem \ref{thm.pas-lin-ind}
\textbf{(a)} says that the family $\left(  p_{\alpha,\sigma}\right)  _{\left(
\alpha,\sigma\right)  \text{ is a mopiscotion}}$ is $\mathbf{k}$-linearly
independent. Hence, the linear map $\operatorname*{ev}\nolimits_{H}$ is injective.

The formula for $F_{\alpha,\sigma}\ast F_{\beta,\tau}$ that we used to define
the internal multiplication $\ast$ is very similar to the formula for
$p_{\alpha,\sigma}\circ p_{\beta,\tau}$ in Theorem \ref{thm.sol-mac-1}. In
view of (\ref{pf.thm.PNSym.alg.1st.evF=p}), this entails that%
\[
\operatorname*{ev}\nolimits_{H}\left(  F_{\alpha,\sigma}\ast F_{\beta,\tau
}\right)  =p_{\alpha,\sigma}\circ p_{\beta,\tau}=\operatorname*{ev}%
\nolimits_{H}\left(  F_{\alpha,\sigma}\right)  \circ\operatorname*{ev}%
\nolimits_{H}\left(  F_{\beta,\tau}\right)
\]
for any two mopiscotions $\left(  \alpha,\sigma\right)  $ and $\left(
\beta,\tau\right)  $. By bilinearity, this entails that%
\[
\operatorname*{ev}\nolimits_{H}\left(  f\ast g\right)  =\left(
\operatorname*{ev}\nolimits_{H}f\right)  \circ\left(  \operatorname*{ev}%
\nolimits_{H}g\right)
\]
for any $f,g\in\operatorname*{PNSym}$. Thus, the injective $\mathbf{k}$-linear
map $\operatorname*{ev}\nolimits_{H}:\operatorname*{PNSym}\rightarrow
\operatorname*{End}H$ embeds the $\mathbf{k}$-module $\operatorname*{PNSym}$
with its binary operation $\ast$ into the algebra $\operatorname*{End}H$ with
its binary operation $\circ$. Since the latter operation $\circ$ is
associative, it thus follows that the former operation $\ast$ is associative
as well.

Similarly, we can show that the operation $\cdot$ on $\operatorname*{PNSym}$
is associative and unital with the unity $1=F_{\varnothing,\varnothing}$
(although this is pretty obvious).

It is very easy to see that the cooperation $\Delta$ is coassociative,
counital and cocommutative. It is also clear that both the multiplication
$\cdot$ and the comultiplication $\Delta$ on $\operatorname*{PNSym}$ are graded.

The next difficulty is to prove that $\Delta$ is a $\mathbf{k}$-algebra
homomorphism, i.e., that $\Delta\left(  fg\right)  =\Delta\left(  f\right)
\cdot\Delta\left(  g\right)  $ for all $f,g\in\operatorname*{PNSym}$ (where
the \textquotedblleft$\cdot$\textquotedblright\ on the right-hand side is the
extension of the external multiplication $\cdot$ to $\operatorname*{PNSym}%
\otimes\operatorname*{PNSym}$). Here, we can argue as above, using the fact (a
consequence of Proposition \ref{prop.pas-on-tensors}) that%
\begin{align*}
\operatorname*{ev}\nolimits_{H\otimes H}\left(  f\right)   &  =\left(
\operatorname*{ev}\nolimits_{H}\otimes\operatorname*{ev}\nolimits_{H}\right)
\left(  \Delta\left(  f\right)  \right)  \in\operatorname*{End}\left(
H\otimes H\right) \\
&  \ \ \ \ \ \ \ \ \ \ \text{for every }f\in\operatorname*{PNSym}%
\end{align*}
to make sense of $\Delta$), and using the fact that the map
\[
\operatorname*{ev}\nolimits_{H}\otimes\operatorname*{ev}\nolimits_{H}%
:\operatorname*{PNSym}\otimes\operatorname*{PNSym}\rightarrow
\operatorname*{End}H\otimes\operatorname*{End}H\rightarrow\operatorname*{End}%
\left(  H\otimes H\right)
\]
is injective (this is not hard to show using the argument used in the proof of
Theorem \ref{thm.pas-lin-ind}).

What we have shown so far yields that $\operatorname*{PNSym}$ (equipped with
$\cdot$ and $\Delta$) is a connected graded $\mathbf{k}$-bialgebra. Thus,
$\operatorname*{PNSym}$ is a Hopf algebra (since any connected graded
$\mathbf{k}$-bialgebra is a Hopf algebra).

It remains to show that $\operatorname*{PNSym}$ (equipped with $\ast$ and
$\Delta$) is a non-unital $\mathbf{k}$-bialgebra. Having already verified that
$\ast$ is associative, we only need to show that $\Delta\left(  f\ast
g\right)  =\Delta\left(  f\right)  \ast\Delta\left(  g\right)  $ for all
$f,g\in\operatorname*{PNSym}$. But this is similar to the proof of
$\Delta\left(  fg\right)  =\Delta\left(  f\right)  \cdot\Delta\left(
g\right)  $ above. Thus, the proof of Theorem \ref{thm.PNSym.alg} is complete.
\end{proof}

\begin{proof}
[Second proof idea for Theorem \ref{thm.PNSym.alg}.]There is also a more
direct combinatorial approach to this theorem. First, we shall define two
simpler bialgebras $\operatorname*{WNSym}$ and $\operatorname*{Perm}$, and
then present $\operatorname*{PNSym}$ as a subquotient of their tensor product
$\operatorname*{WNSym}\otimes\operatorname*{Perm}$.

Here are some details:

We define $\operatorname*{WNSym}$ to be the free $\mathbf{k}$-module with
basis $\left(  C_{\alpha}\right)  _{\alpha\text{ is a weak composition}}$. We
equip this $\mathbf{k}$-module $\operatorname*{WNSym}$ with an
\textquotedblleft external multiplication\textquotedblright\ defined by%
\[
C_{\alpha}\cdot C_{\beta}=C_{\alpha\beta}%
\]
(where $\alpha\beta$ is the concatenation of $\alpha$ and $\beta$), and an
\textquotedblleft internal multiplication\textquotedblright\ defined by%
\[
C_{\alpha}\ast C_{\beta}=\sum_{\substack{\gamma_{i,j}\in\mathbb{N}\text{ for
all }i\in\left[  k\right]  \text{ and }j\in\left[  \ell\right]  ;\\\gamma
_{i,1}+\gamma_{i,2}+\cdots+\gamma_{i,\ell}=\alpha_{i}\text{ for all }%
i\in\left[  k\right]  ;\\\gamma_{1,j}+\gamma_{2,j}+\cdots+\gamma_{k,j}%
=\beta_{j}\text{ for all }j\in\left[  \ell\right]  }}C_{\left(  \gamma
_{1,1},\gamma_{1,2},\ldots,\gamma_{k,\ell}\right)  }%
\]
(where $\alpha\in\mathbb{N}^{k}$ and $\beta\in\mathbb{N}^{\ell}$), and a
comultiplication $\Delta:\operatorname*{WNSym}\rightarrow\operatorname*{WNSym}%
\otimes\operatorname*{WNSym}$ defined by%
\[
\Delta\left(  C_{\alpha}\right)  =\sum_{\substack{\beta,\gamma\in
\mathbb{N}^{k}\text{;}\\\text{entrywise sum }\beta+\gamma=\alpha}}C_{\beta
}\otimes C_{\gamma}\ \ \ \ \ \ \ \ \ \ \text{for any }\alpha\in\mathbb{N}%
^{k}.
\]
It is not too hard to show that $\operatorname*{WNSym}$ thus becomes a graded
(but not connected!) cocommutative $\mathbf{k}$-bialgebra when equipped with
the external multiplication $\cdot$, and a (non-graded) non-unital bialgebra
when equipped with the internal multiplication $\ast$. (Indeed, this
$\operatorname*{WNSym}$ is a mild variation on the Hopf algebra
$\operatorname*{NSym}$ of noncommutative symmetric functions, which is studied
(e.g.) in \cite{ncsf1} or \cite[\S 5.4]{GriRei}; the only difference is that
compositions have been replaced by weak compositions. The first letter
\textquotedblleft W\textquotedblright\ in $\operatorname*{WNSym}$ refers to
this weakness.)

We define $\mathfrak{S}$ to be the disjoint union $\bigsqcup\limits_{k\in
\mathbb{N}}\mathfrak{S}_{k}$ of all symmetric groups $\mathfrak{S}_{k}$ for
all $k\in\mathbb{N}$. We define $\operatorname*{Perm}$ to be the free
$\mathbf{k}$-module with basis $\left(  P_{\sigma}\right)  _{\sigma
\in\mathfrak{S}}$. We equip this $\mathbf{k}$-module $\operatorname*{Perm}$
with an \textquotedblleft external multiplication\textquotedblright\ $\cdot$
defined by%
\[
P_{\sigma}\cdot P_{\tau}=P_{\sigma\oplus\tau},
\]
and an \textquotedblleft internal multiplication\textquotedblright\ $\ast$
defined by%
\[
P_{\sigma}\ast P_{\tau}=P_{\tau\left[  \sigma\right]  },
\]
and a comultiplication $\Delta:\operatorname*{Perm}\rightarrow
\operatorname*{Perm}\otimes\operatorname*{Perm}$ defined by%
\[
\Delta\left(  P_{\sigma}\right)  =P_{\sigma}\otimes P_{\sigma}.
\]
It is not too hard to show that $\operatorname*{Perm}$ becomes a (non-graded)
cocommutative $\mathbf{k}$-bialgebra when equipped with either of the two
multiplications. Indeed, in both cases, it becomes the monoid algebra of an
appropriate monoid on the set $\mathfrak{S}$. The hardest part of the proof is
to check the associativity of the internal multiplication; this can be done as follows:

\begin{statement}
\textit{Claim 1:} We have $\tau\left[  \sigma\left[  \rho\right]  \right]
=\left(  \tau\left[  \sigma\right]  \right)  \left[  \rho\right]  $ for any
three permutations $\tau,\sigma,\rho\in\mathfrak{S}$.
\end{statement}

\begin{proof}
[Proof of Claim 1.]We can regard the set $\mathfrak{S}$ as a skeletal groupoid
with objects $0,1,2,\ldots$ and morphism sets $\mathfrak{S}\left(  k,k\right)
=\mathfrak{S}_{k}$ and $\mathfrak{S}\left(  k,\ell\right)  =\varnothing$ for
all $k\neq\ell$. However, the definition of $\tau\left[  \sigma\right]  $
becomes cleaner if we \textquotedblleft de-skeletize\textquotedblright%
\ $\mathfrak{S}$ to a larger category. Namely, we define a \emph{tormutation}
to be a bijection (not necessarily order-preserving) between two finite
totally ordered sets. Clearly, each permutation $\sigma\in\mathfrak{S}_{k}$ is
a tormutation $\left[  k\right]  \rightarrow\left[  k\right]  $. Conversely,
any tormutation $\phi:A\rightarrow B$ induces a canonical permutation
$\overline{\phi}\in\mathfrak{S}_{\left\vert A\right\vert }$ by the rule%
\[
\overline{\phi}:=\operatorname*{inc}\nolimits_{B\rightarrow\left[  \left\vert
B\right\vert \right]  }\circ\,\phi\circ\operatorname*{inc}\nolimits_{\left[
\left\vert A\right\vert \right]  \rightarrow A}:\left[  \left\vert
A\right\vert \right]  \rightarrow\left[  \left\vert B\right\vert \right]  ,
\]
where $\operatorname*{inc}\nolimits_{X\rightarrow Y}$ denotes the unique order
isomorphism between two given finite totally ordered sets $X$ and $Y$. We may
call $\overline{\phi}$ the \emph{standardization} of $\phi$. Thus, the
skeletal groupoid $\mathfrak{S}$ is a skeleton of the groupoid
$\widetilde{\mathfrak{S}}$ whose objects are the finite totally ordered sets
and whose morphisms are the tormutations.

Given two tormutations $\phi:A\rightarrow B$ and $\phi^{\prime}:A^{\prime
}\rightarrow B^{\prime}$, we now define a tormutation $\phi^{\prime
}\left\langle \phi\right\rangle :A\times A^{\prime}\rightarrow B^{\prime
}\times B$ by
\[
\left(  \phi^{\prime}\left\langle \phi\right\rangle \right)  \left(
a,a^{\prime}\right)  =\left(  \phi^{\prime}\left(  a^{\prime}\right)
,\phi\left(  a\right)  \right)  \ \ \ \ \ \ \ \ \ \ \text{for all }\left(
a,a^{\prime}\right)  \in A\times A^{\prime}.
\]
In other words, $\phi^{\prime}\left\langle \phi\right\rangle $ applies $\phi$
and $\phi^{\prime}$ to the respective entries of the input, then swaps the outputs.

It is now easy to see that $\overline{\phi^{\prime}\left\langle \phi
\right\rangle }=\overline{\phi^{\prime}}\left[  \overline{\phi}\right]  $ for
any two tormutations $\phi$ and $\phi^{\prime}$. Thus, in order to prove that
$\tau\left[  \sigma\left[  \rho\right]  \right]  =\left(  \tau\left[
\sigma\right]  \right)  \left[  \rho\right]  $ for any three permutations
$\tau,\sigma,\rho\in\mathfrak{S}$, it suffices to show that $\overline
{\phi^{\prime\prime}\left\langle \phi^{\prime}\left\langle \phi\right\rangle
\right\rangle }=\overline{\left(  \phi^{\prime\prime}\left\langle \phi
^{\prime}\right\rangle \right)  \left\langle \phi\right\rangle }$ for any
three tormutations $\phi,\phi^{\prime},\phi^{\prime\prime}$. But the latter is
easy: The map $\phi^{\prime\prime}\left\langle \phi^{\prime}\left\langle
\phi\right\rangle \right\rangle $ sends each $\left(  \left(  a,a^{\prime
}\right)  ,a^{\prime\prime}\right)  $ to $\left(  \phi^{\prime\prime}\left(
a^{\prime\prime}\right)  ,\left(  \phi^{\prime}\left(  a^{\prime}\right)
,\phi\left(  a\right)  \right)  \right)  $, whereas the map $\left(
\phi^{\prime\prime}\left\langle \phi^{\prime}\right\rangle \right)
\left\langle \phi\right\rangle $ sends each $\left(  a,\left(  a^{\prime
},a^{\prime\prime}\right)  \right)  $ to $\left(  \left(  \phi^{\prime\prime
}\left(  a^{\prime\prime}\right)  ,\phi^{\prime}\left(  a^{\prime}\right)
\right)  ,\phi\left(  a\right)  \right)  $. Because of the canonical order
isomorphism $A\times\left(  B\times C\right)  \cong\left(  A\times B\right)
\times C$ for any three totally ordered sets $A,B,C$, these two maps are
therefore equivalent, i.e., have the same canonical permutation. This proves
Claim 1.
\end{proof}

We now know that $\operatorname*{Perm}$ becomes a (non-graded) cocommutative
$\mathbf{k}$-bialgebra when equipped with either of the two multiplications.

Now, let $\operatorname*{WPNSym}$ be the tensor product $\operatorname*{WNSym}%
\otimes\operatorname*{Perm}$. We equip this tensor product
$\operatorname*{WPNSym}$ with an \textquotedblleft external
multiplication\textquotedblright\ (obtained by tensoring the external
multiplications of $\operatorname*{WNSym}$ and of $\operatorname*{Perm}$), an
\textquotedblleft internal multiplication\textquotedblright\ (obtained
similarly) and a comultiplication (also obtained similarly). Thus,
$\operatorname*{WPNSym}$ becomes a graded (but not connected) cocommutative
$\mathbf{k}$-bialgebra when equipped with $\cdot$, and a (non-graded)
non-unital $\mathbf{k}$-bialgebra when equipped with $\ast$. (Here we use the
facts that the tensor product of two cocommutative bialgebras is a
cocommutative bialgebra, and that the tensor product of two non-unital
bialgebras is a non-unital bialgebra.)

We furthermore set%
\begin{align*}
\widehat{F}_{\alpha,\sigma}  &  :=C_{\alpha}\otimes P_{\sigma}\in
\operatorname*{WPNSym}\ \ \ \ \ \ \ \ \ \ \text{for any pair }\left(
\alpha,\sigma\right)  \text{ of}\\
&  \ \ \ \ \ \ \ \ \ \ \text{a weak composition }\alpha\text{ and a
permutation }\sigma\in\mathfrak{S}.
\end{align*}
Then, $\left(  \widehat{F}_{\alpha,\sigma}\right)  _{\left(  \alpha
,\sigma\right)  \text{ is a weak mopiscotion}}$ is a basis of a certain
$\mathbf{k}$-submodule $\operatorname*{WPNSym}\nolimits^{\prime}$ of the
$\mathbf{k}$-module $\operatorname*{WPNSym}$, and our operations $\cdot$,
$\ast$ and $\Delta$ on $\operatorname*{WPNSym}$ preserve this submodule
$\operatorname*{WPNSym}\nolimits^{\prime}$ and satisfy the same relations for
this basis as the analogous operations on $\operatorname*{PNSym}$ do for the
basis $\left(  F_{\alpha,\sigma}\right)  _{\left(  \alpha,\sigma\right)
\text{ is a mopiscotion}}$ (we just have to replace each \textquotedblleft%
$F$\textquotedblright\ by \textquotedblleft$\widehat{F}$\textquotedblright).
Thus, $\operatorname*{WPNSym}\nolimits^{\prime}$ is a graded (but not
connected) cocommutative $\mathbf{k}$-subbialgebra of $\operatorname*{WPNSym}$
when equipped with $\cdot$, and a (non-graded) non-unital $\mathbf{k}%
$-subbialgebra of $\operatorname*{WPNSym}$ when equipped with $\ast$.

As we said, $\operatorname*{WPNSym}\nolimits^{\prime}$ is almost the
$\operatorname*{PNSym}$ that we care about. But $\operatorname*{WPNSym}%
\nolimits^{\prime}$ does not satisfy the rule%
\[
F_{\alpha,\sigma}=F_{\beta,\tau}\ \ \ \ \ \ \ \ \ \ \text{for }\left(
\beta,\tau\right)  =\operatorname*{red}\left(  \alpha,\sigma\right)
\]
that is fundamental to the definition of $\operatorname*{PNSym}$. Hence,
$\operatorname*{PNSym}$ is not quite $\operatorname*{WPNSym}\nolimits^{\prime
}$ but rather a quotient of $\operatorname*{WPNSym}\nolimits^{\prime}$. To be
specific, we define a $\mathbf{k}$-submodule $I_{\operatorname*{red}}$ of
$\operatorname*{WPNSym}\nolimits^{\prime}$ by
\begin{align*}
I_{\operatorname*{red}}:=  &  \ \operatorname*{span}\nolimits_{\mathbf{k}%
}\left(  \widehat{F}_{\alpha,\sigma}-\widehat{F}_{\beta,\tau}\ \mid\ \left(
\beta,\tau\right)  =\operatorname*{red}\left(  \alpha,\sigma\right)  \right)
\\
=  &  \ \operatorname*{span}\nolimits_{\mathbf{k}}\left(  \widehat{F}%
_{\alpha,\sigma}-\widehat{F}_{\beta,\tau}\ \mid\ \operatorname*{red}\left(
\beta,\tau\right)  =\operatorname*{red}\left(  \alpha,\sigma\right)  \right)
.
\end{align*}
It is not too hard to show that this $I_{\operatorname*{red}}$ is an ideal of
$\operatorname*{WPNSym}\nolimits^{\prime}$ with respect to both $\cdot$ and
$\ast$ and a coideal with respect to $\Delta$. Hence, the quotient
$\operatorname*{WPNSym}\nolimits^{\prime}/I_{\operatorname*{red}}$ inherits
all operations of $\operatorname*{WPNSym}\nolimits^{\prime}$, thus becoming a
graded bialgebra under $\cdot$ and $\Delta$ and a non-unital bialgebra under
$\ast$ and $\Delta$. Moreover, the graded bialgebra $\operatorname*{WPNSym}%
\nolimits^{\prime}/I_{\operatorname*{red}}$ is connected (since
$\operatorname*{red}\left(  \alpha,\sigma\right)  =\left(  \varnothing
,\varnothing\right)  $ whenever $\left\vert \alpha\right\vert =0$), and thus
is a Hopf algebra. As we recall, this means that $\operatorname*{PNSym}$ is a
well-defined connected graded Hopf algebra (since $\operatorname*{PNSym}%
\cong\operatorname*{WPNSym}\nolimits^{\prime}/I_{\operatorname*{red}}$). This
completes the proof of Theorem \ref{thm.PNSym.alg} again.
\end{proof}

\begin{proposition}
\label{prop.PNSym.rank}Let $n\in\mathbb{N}$. Then, the $n$-th graded component
$\operatorname*{PNSym}\nolimits_{n}$ of $\operatorname*{PNSym}$ is a free
$\mathbf{k}$-module of rank
\[
\sum_{k=0}^{n}\dbinom{n-1}{n-k}k!.
\]

\end{proposition}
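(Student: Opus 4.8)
The plan is to reduce the proposition to a purely enumerative count of basis elements. By Definition \ref{def.PNSym.ops}, the $\mathbf{k}$-module $\operatorname*{PNSym}$ is \emph{free} with basis $\left( F_{\alpha,\sigma} \right)_{\left( \alpha,\sigma \right) \text{ a mopiscotion}}$, and each $F_{\alpha,\sigma}$ is homogeneous of degree $\left\vert \alpha \right\vert$. Hence its $n$-th graded component $\operatorname*{PNSym}_n$ is free with basis $\left( F_{\alpha,\sigma} \right)$ ranging over all mopiscotions $\left( \alpha,\sigma \right)$ with $\left\vert \alpha \right\vert = n$. Thus the rank of $\operatorname*{PNSym}_n$ equals the number of such mopiscotions, and everything comes down to counting them.

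First I would stratify these mopiscotions by the length $k$ of the underlying composition $\alpha$. For a fixed $k$, a mopiscotion $\left( \alpha,\sigma \right)$ of size $n$ and length $k$ consists of a composition $\alpha$ of $n$ into exactly $k$ positive parts, together with an arbitrary permutation $\sigma \in \mathfrak{S}_k$, and these two choices are independent. The number of compositions of $n$ into exactly $k$ positive parts is the classical stars-and-bars count $\binom{n-1}{k-1}$ (insert $k-1$ dividers into the $n-1$ gaps between $n$ unit stars), and there are $k!$ permutations $\sigma$. Therefore the stratum of length $k$ contributes $\binom{n-1}{k-1}\, k!$ basis elements of degree $n$.

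Next I would sum over $k$. When $n \geq 1$, a composition of $n$ has length between $1$ and $n$, so the rank is $\sum_{k=1}^{n} \binom{n-1}{k-1}\, k!$; rewriting $\binom{n-1}{k-1} = \binom{n-1}{(n-1)-(k-1)} = \binom{n-1}{n-k}$ turns this into $\sum_{k=1}^{n} \binom{n-1}{n-k}\, k!$. To match the statement's uniform expression $\sum_{k=0}^{n} \binom{n-1}{n-k}\, k!$, I would observe that the extra terms are harmless: for $n \geq 1$ the $k=0$ summand is $\binom{n-1}{n}\cdot 0! = 0$, while for $n = 0$ the only surviving summand is $k = 0$, namely $\binom{-1}{0}\cdot 0! = 1$, which correctly gives $\operatorname{rank}\left( \operatorname*{PNSym}_0 \right) = 1$ since $\operatorname*{PNSym}_0 = \mathbf{k}\, F_{\varnothing,\varnothing}$ (here using the usual convention $\binom{-1}{0} = 1$). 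This completes the computation.

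There is no substantive obstacle: the argument is just "a free module has rank equal to the cardinality of any basis, applied in each degree." The only places demanding a moment's care are the edge case $n = 0$ together with the binomial-coefficient conventions, and the elementary identity $\binom{n-1}{k-1} = \binom{n-1}{n-k}$ which converts the natural "number of parts" index into the form printed in the proposition.
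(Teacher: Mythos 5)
Your proof is correct and takes essentially the same approach as the paper: identify the rank with the number of mopiscotions of size $n$, stratify by the length $k$ of the composition, and multiply the number of compositions of $n$ into $k$ parts by $k!$. You simply elaborate on the count $\binom{n-1}{n-k}$ via stars-and-bars and are more explicit about the $n=0$ and $k=0$ edge cases, which the paper leaves implicit.
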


\begin{proof}
[Proof idea.]Clearly, $\operatorname*{PNSym}\nolimits_{n}$ is a free
$\mathbf{k}$-module with a basis consisting of all $F_{\alpha,\sigma}$ where
$\left(  \alpha,\sigma\right)  $ ranges over all mopiscotions satisfying
$\left\vert \alpha\right\vert =n$. It remains to show that the number of such
mopiscotions is $\sum_{k=0}^{n}\dbinom{n-1}{n-k}k!$. But this is easy: For any
$k$, the number of such mopiscotions in which $\alpha$ has length $k$ is
$\dbinom{n-1}{n-k}k!$ (since there are $\dbinom{n-1}{n-k}$ compositions of $n$
into $k$ parts, and $k!$ permutations $\sigma\in\mathfrak{S}_{k}$).
\end{proof}

We note that the addend $\dbinom{n-1}{n-k}k!$ in Proposition
\ref{prop.PNSym.rank} can also be rewritten as $k\cdot\left(  n-1\right)
\left(  n-2\right)  \cdots\left(  n-k+1\right)  $ when $n$ is positive (but
not when $n=0$).

Here are the ranks of the free $\mathbf{k}$-modules $\operatorname*{PNSym}%
\nolimits_{n}$ for the first few values of $n$:%
\[%
\begin{tabular}
[c]{|c||c|c|c|c|c|c|c|c|}\hline
$n$ & $0$ & $1$ & $2$ & $3$ & $4$ & $5$ & $6$ & $7$\\\hline
$\operatorname*{rank}\left(  \operatorname*{PNSym}\nolimits_{n}\right)  $ &
$1$ & $1$ & $3$ & $11$ & $49$ & $261$ & $1631$ & $11743$\\\hline
\end{tabular}
\ \ \ .
\]
Starting at $n=1$, this sequence of ranks is known to the OEIS as
\href{https://oeis.org/A001339}{Sequence A001339}, and has appeared in the
theory of combinatorial Hopf algebras before (\cite[\S 3.8.3]{HiNoTh06}),
although the Hopf algebra considered there appears to be different (perhaps
dual to ours?).\footnote{Some words about the connection to \cite[\S 3.8.3]%
{HiNoTh06} are in order. One of the many concepts studied in \cite{HiNoTh06}
is the \emph{stalactic equivalence}, an equivalence relation on the words over
a given alphabet $A$. It is the equivalence relation on $A^{\ast}$ defined by
the single axiom $uawav\equiv uaawv$ for any words $u,v,w\in A^{\ast}$ and any
letter $a\in A$. It is easy to see that two words $u,v\in A^{\ast}$ are
equivalent if and only if they contain each letter the same number of times,
and the order in which the letters \textbf{first} appear in each word is the
same for both words. Thus, each stalactic equivalence class is uniquely
determined by the multiplicities of its letters and by the order in which they
first appear. For \emph{initial} words (i.e., for words over the alphabet
$\left\{  1,2,3,\ldots\right\}  $ whose set of letters is $\left[  k\right]  $
for some $k\in\mathbb{N}$), this data is equivalent to a mopiscotion $\left(
\alpha,\sigma\right)  $ (where $\alpha$ determines the multiplicities of
letters, and $\sigma$ determines their order of first appearance). The number
of stalactic equivalence classes of initial words of length $n$ thus equals
the number of mopiscotions $\left(  \alpha,\sigma\right)  $ with $\left\vert
\alpha\right\vert =n$.}

\begin{theorem}
\label{thm.PNSym.mods}Let $\operatorname*{PNSym}\nolimits^{\left(  2\right)
}$ be the non-unital algebra $\operatorname*{PNSym}$ with multiplication
$\ast$. Then, every connected graded bialgebra $H$ becomes a
$\operatorname*{PNSym}\nolimits^{\left(  2\right)  }$-module, with
$F_{\alpha,\sigma}$ acting as $p_{\alpha,\sigma}$. Moreover, the action of an
external product $uv$ of two elements $u,v\in\operatorname*{PNSym}$ on $H$ is
the convolution of the action of $u$ with the action of $v$.
\end{theorem}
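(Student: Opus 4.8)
The plan is to route the entire statement through a single evaluation map. Fix a connected graded bialgebra $H$, and let $\operatorname*{ev}\nolimits_H \colon \operatorname*{PNSym} \to \operatorname*{End} H$ be the $\mathbf{k}$-linear map sending each basis element $F_{\alpha,\sigma}$ (for a mopiscotion $(\alpha,\sigma)$) to $p_{\alpha,\sigma} \in \operatorname*{End} H$. What has to be checked is: (i) that $\operatorname*{ev}\nolimits_H(F_{\alpha,\sigma}) = p_{\alpha,\sigma}$ stays true for \emph{weak} mopiscotions $(\alpha,\sigma)$, not just mopiscotions; (ii) that $\operatorname*{ev}\nolimits_H$ is a morphism of non-unital algebras from $(\operatorname*{PNSym},\ast)$ to $(\operatorname*{End} H,\circ)$; and (iii) that it carries the external product $\cdot$ to the convolution $\star$. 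Granting (i)--(iii), the theorem is immediate: a morphism of non-unital algebras $A \to \operatorname*{End} H$ is exactly the datum of a left $A$-module structure on $H$ via $f \cdot x := (\operatorname*{ev}\nolimits_H f)(x)$, so (ii) (together with the associativity of $\ast$ from Theorem \ref{thm.PNSym.alg}) makes $H$ a $\operatorname*{PNSym}\nolimits^{(2)}$-module with $F_{\alpha,\sigma}$ acting as $p_{\alpha,\sigma}$, while (iii) is precisely the ``moreover'' clause.

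For (i): if $(\alpha,\sigma)$ is a weak mopiscotion and $(\beta,\tau) = (\alpha,\sigma)^{\operatorname*{red}}$, then $F_{\alpha,\sigma} = F_{\beta,\tau}$ by the very definition of the symbol $F_{\alpha,\sigma}$ for weak mopiscotions (Definition \ref{def.PNSym.ops}), whereas $p_{\alpha,\sigma} = p_{\beta,\tau}$ by Proposition \ref{prop.mopis.reduce}. Hence $\operatorname*{ev}\nolimits_H(F_{\alpha,\sigma}) = \operatorname*{ev}\nolimits_H(F_{\beta,\tau}) = p_{\beta,\tau} = p_{\alpha,\sigma}$. This is the only place where connectedness of $H$ is used, and also the only point in the whole argument needing any care at all.

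For (ii): on the basis it suffices to compare the defining formula for $F_{\alpha,\sigma} \ast F_{\beta,\tau}$ in Definition \ref{def.PNSym.ops} with the composition formula of Theorem \ref{thm.sol-mac-1}. Both are sums indexed by the same set of contingency tables $(\gamma_{i,j})_{i\in[k],\,j\in[\ell]}$ with row sums $\alpha$ and column sums $\beta$, with $\gamma$-term $F_{(\gamma_{1,1},\ldots,\gamma_{k,\ell}),\,\tau[\sigma]}$ in one and $p_{(\gamma_{1,1},\ldots,\gamma_{k,\ell}),\,\tau[\sigma]}$ in the other. Applying $\operatorname*{ev}\nolimits_H$ term by term (invoking (i) to absorb any weak mopiscotions that appear) therefore gives $\operatorname*{ev}\nolimits_H(F_{\alpha,\sigma} \ast F_{\beta,\tau}) = p_{\alpha,\sigma} \circ p_{\beta,\tau} = \operatorname*{ev}\nolimits_H(F_{\alpha,\sigma}) \circ \operatorname*{ev}\nolimits_H(F_{\beta,\tau})$, and bilinearity extends this to arbitrary elements of $\operatorname*{PNSym}$.

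For (iii): on the basis, $\operatorname*{ev}\nolimits_H(F_{\alpha,\sigma} \cdot F_{\beta,\tau}) = \operatorname*{ev}\nolimits_H(F_{\alpha\beta,\,\sigma\oplus\tau}) = p_{\alpha\beta,\,\sigma\oplus\tau} = p_{\alpha,\sigma} \star p_{\beta,\tau}$, the last equality being Proposition \ref{prop.sol-mac-0}; bilinearity once more extends this, yielding $\operatorname*{ev}\nolimits_H(uv) = \operatorname*{ev}\nolimits_H(u) \star \operatorname*{ev}\nolimits_H(v)$, i.e. that the action of $uv$ is the convolution of the actions of $u$ and $v$. In short, there is no genuine obstacle: the substance of the theorem is fully supplied by Theorem \ref{thm.sol-mac-1} and Propositions \ref{prop.sol-mac-0} and \ref{prop.mopis.reduce}, and the remaining work is just the bookkeeping of transporting those identities along $\operatorname*{ev}\nolimits_H$.
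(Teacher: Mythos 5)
Your proof is correct and takes essentially the same approach as the paper: the paper's proof of Theorem~\ref{thm.PNSym.mods} is a one-line pointer to Theorem~\ref{thm.sol-mac-1} and Proposition~\ref{prop.sol-mac-0}, and you have simply spelled that out in full, routing the argument through the evaluation map $\operatorname*{ev}\nolimits_{H}$ that the paper itself introduces in its first proof sketch of Theorem~\ref{thm.PNSym.alg} (where it also records your key step~(i), that $\operatorname*{ev}\nolimits_{H}(F_{\alpha,\sigma})=p_{\alpha,\sigma}$ persists for weak mopiscotions via Proposition~\ref{prop.mopis.reduce}).
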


\begin{proof}
[Proof idea.]The first claim follows from Theorem \ref{thm.sol-mac-1}, the
second from Proposition \ref{prop.sol-mac-0}.
\end{proof}

\begin{remark}
\label{rmk.splitting}It is tempting to look for an analogue of the
\textquotedblleft splitting formula\textquotedblright\ (\cite[Proposition
5.2]{ncsf1}) for $\operatorname*{PNSym}$, connecting the two products
(internal and external) with the comultiplication. However, the most
natural-looking version is false: It is \textbf{not} true that all
$f,g,h\in\operatorname*{PNSym}$ satisfy $\left(  fg\right)  \ast
h=\sum_{\left(  h\right)  }\left(  f\ast h_{\left(  1\right)  }\right)
\left(  g\ast h_{\left(  2\right)  }\right)  $, where we write $\Delta\left(
h\right)  $ as $\Delta\left(  h\right)  =\sum_{\left(  h\right)  }h_{\left(
1\right)  }\otimes h_{\left(  2\right)  }$ (using Sweedler notation).
\end{remark}

\begin{remark}
\label{rmk.PNSym.p-i}The $\mathbf{k}$-linear map%
\begin{align*}
\mathfrak{p}:\operatorname*{PNSym}  &  \rightarrow\operatorname*{NSym},\\
F_{\alpha,\sigma}  &  \mapsto\mathbf{H}_{\alpha}\ \ \ \ \ \ \ \ \ \ \text{for
any mopiscotion }\left(  \alpha,\sigma\right)
\end{align*}
is a surjection that respects all structures (external and internal
multiplication, comultiplication and grading).

This surjection is furthermore split: The $\mathbf{k}$-linear map%
\begin{align*}
\mathfrak{i}:\operatorname*{NSym}  &  \rightarrow\operatorname*{PNSym},\\
\mathbf{H}_{\alpha}  &  \mapsto F_{\alpha,\operatorname*{id}}%
\ \ \ \ \ \ \ \ \ \ \text{for any composition }\alpha
\end{align*}
(where $\operatorname*{id}$ denotes the identity permutation in $\mathfrak{S}%
_{k}$ where $\alpha$ has length $k$) is an injection that is right-inverse to
$\mathfrak{p}$ and respects external multiplication, comultiplication and
grading (but does not respect internal multiplication).
\end{remark}

\begin{proposition}
The $\mathbf{k}$-algebra $\operatorname*{PNSym}$ (equipped with the external
multiplication) is free.
\end{proposition}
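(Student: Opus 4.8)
The goal is to show that $\operatorname*{PNSym}$, equipped only with the external multiplication $\cdot$, is a free $\mathbf{k}$-algebra. The natural strategy is to exhibit an explicit free generating set and verify that the induced $\mathbf{k}$-algebra map from the corresponding free algebra is an isomorphism. I would first recall that, by the definition of $\cdot$, we have $F_{\alpha,\sigma}\cdot F_{\beta,\tau}=F_{\alpha\beta,\sigma\oplus\tau}$, so that the external product is a purely "concatenation-plus-direct-sum" operation on mopiscotions. This means that every basis element $F_{\alpha,\sigma}$ factors uniquely as a product of basis elements $F_{\alpha^{(i)},\sigma^{(i)}}$ in which each factor is \emph{indecomposable} in an appropriate sense — here "indecomposable" should mean that the permutation $\sigma$ is not of the form $\rho\oplus\rho'$ for any nonempty $\rho,\rho'$ (equivalently, $\sigma$ does not stabilize any proper nonempty prefix $\{1,2,\ldots,j\}$ of its domain), i.e. $\sigma$ is a \emph{connected} (or "indecomposable") permutation. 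The point is that $\sigma\oplus\tau$ records exactly the decomposition of a permutation into its connected blocks, and the composition $\alpha\beta$ records the matching partition of the composition $\alpha$; so the factorization of a mopiscotion into indecomposable mopiscotions is unique.

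Concretely, I would: (1) Define a mopiscotion $(\alpha,\sigma)$ to be \emph{connected} if $\sigma\in\mathfrak{S}_k$ is a connected permutation (no proper nonempty prefix is $\sigma$-stable); by convention the empty mopiscotion is not connected, and the single-element one is. (2) Observe that every permutation $\sigma\in\mathfrak{S}_k$ has a unique finest decomposition $\sigma=\sigma_1\oplus\sigma_2\oplus\cdots\oplus\sigma_r$ into connected permutations $\sigma_i\in\mathfrak{S}_{k_i}$ with $k_1+\cdots+k_r=k$ — this is a standard fact about permutations — and, splitting $\alpha$ accordingly into $\alpha=\alpha^{(1)}\alpha^{(2)}\cdots\alpha^{(r)}$ with $\alpha^{(i)}$ of length $k_i$, we get $(\alpha,\sigma)=(\alpha^{(1)},\sigma_1)\cdot(\alpha^{(2)},\sigma_2)\cdots(\alpha^{(r)},\sigma_r)$, and this expression is unique as a product of connected mopiscotions. (3) Let $\mathcal{G}$ be the set of connected mopiscotions and let $T(\mathcal{G})$ denote the free $\mathbf{k}$-algebra (tensor algebra) on the free module with basis $\mathcal{G}$. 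The $\mathbf{k}$-algebra homomorphism $\Phi:T(\mathcal{G})\to\operatorname*{PNSym}$ sending each generator $(\gamma,\rho)\in\mathcal{G}$ to $F_{\gamma,\rho}$ then sends the word $(\gamma^{(1)},\rho_1)(\gamma^{(2)},\rho_2)\cdots(\gamma^{(r)},\rho_r)$ to $F_{\gamma^{(1)}\cdots\gamma^{(r)},\,\rho_1\oplus\cdots\oplus\rho_r}$. By step (2), this is a bijection from the set of words in $\mathcal{G}$ (which is a $\mathbf{k}$-basis of $T(\mathcal{G})$) onto the set of mopiscotions (which is a $\mathbf{k}$-basis of $\operatorname*{PNSym}$). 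Hence $\Phi$ is an isomorphism of $\mathbf{k}$-algebras, and $\operatorname*{PNSym}$ is free on $\mathcal{G}$.

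The main obstacle — and really the only nontrivial point — is step (2): the unique factorization of a permutation into connected blocks, together with the corresponding uniqueness of the compatible splitting of $\alpha$. The permutation part is classical (it is the monoid-theoretic fact underlying the "connected permutations" generating function), but I would want to state it carefully in terms of the $\oplus$ operation as defined in Proposition~\ref{prop.sol-mac-0}: a permutation $\sigma\in\mathfrak{S}_k$ is connected iff there is no $j\in\{1,\ldots,k-1\}$ with $\sigma(\{1,\ldots,j\})=\{1,\ldots,j\}$, and the cut points of $\sigma$ (the such $j$, together with $0$ and $k$) determine the unique decomposition. Once this is in hand, the splitting of $\alpha$ is forced because the lengths $k_i$ of the connected blocks of $\sigma$ are determined, and $\alpha$ must be cut at the partial sums $k_1,k_1+k_2,\ldots$. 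Everything else — that $\Phi$ is a well-defined algebra homomorphism, that words in $\mathcal{G}$ form a basis of $T(\mathcal{G})$, that mopiscotions form a basis of $\operatorname*{PNSym}$ (by Definition~\ref{def.PNSym.ops}) — is immediate. I would also remark, as a sanity check, that this recovers the fact that the subalgebra $\mathfrak{i}(\operatorname*{NSym})$ (spanned by the $F_{\alpha,\operatorname*{id}}$) is the free subalgebra on the single-part mopiscotions $(\,(n),\operatorname*{id}_1)$, consistent with $\operatorname*{NSym}$ itself being free on $\mathbf{H}_1,\mathbf{H}_2,\ldots$.
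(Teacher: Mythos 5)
Your proof is correct and takes essentially the same approach as the paper: both identify $\operatorname*{PNSym}$ (under $\cdot$) as the monoid algebra of the monoid of mopiscotions, and both take the free generating set to be the mopiscotions $(\alpha,\sigma)$ with $\sigma$ a connected permutation, relying on the unique factorization of a permutation into its $\oplus$-indecomposable blocks (and the induced cut of $\alpha$ at the corresponding partial sums). The paper's proof states this with ``as can be easily verified''; your step (2) supplies exactly that verification.
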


\begin{proof}
[Proof idea.]This algebra is just the monoid algebra of the monoid of
mopiscotions under the operation $\left(  \alpha,\sigma\right)  \cdot\left(
\beta,\tau\right)  =\left(  \alpha\beta,\sigma\oplus\tau\right)  $. But this
monoid is free, with the generators being the mopiscotions $\left(
\alpha,\sigma\right)  $ for which the permutation $\sigma$ is
connected\footnote{See \cite[Exercise 8.1.10]{GriRei} for the definition of a
connected permutation.} (as can be easily verified).
\end{proof}

\subsection{The $\Omega$ involution}

If $A$ is any $\mathbf{k}$-algebra, then its \emph{opposite algebra}
$A^{\operatorname*{op}}$ is defined as the $\mathbf{k}$-algebra obtained from
$A$ by \textquotedblleft reversing the order of factors in a
product\textquotedblright: i.e., as a $\mathbf{k}$-module,
$A^{\operatorname*{op}}=A$, but the product $ab$ of two elements of
$A^{\operatorname*{op}}$ equals their product $ba$ when viewed as elements of
$A$. This definition makes sense for both unital and non-unital $\mathbf{k}$-algebras.

A $\mathbf{k}$-algebra $A$ (unital or not) is said to be \emph{self-opposite}
if it is isomorphic to its opposite algebra $A^{\operatorname*{op}}$. The
non-unital $\mathbf{k}$-algebra $\operatorname*{NSym}\nolimits^{\left(
2\right)  }$ -- that is, $\operatorname*{NSym}$ equipped with the internal
multiplication $\ast$ -- is not self-opposite (at least not in a graded way)
in general\footnote{Namely, its $5$-th graded component $\operatorname*{NSym}%
\nolimits_{5}^{\left(  2\right)  }$ is not self-opposite when $\mathbf{k}$ is
a field of characteristic $0$. Indeed, this is implicit in \cite{Saliol08}: By
\cite[Theorem 2.1]{Saliol08}, it suffices to prove that the $S_{5}$-invariant
subalgebra $\left(  \mathbf{k}\mathcal{F}\right)  ^{S_{5}}$ is not
self-opposite. By \cite[Proposition 4.1]{Saliol08}, this subalgebra is a split
basic algebra. If it was self-opposite, its quiver (see \cite[\S 3]{Saliol08})
would thus be isomorphic to the quiver obtained from it by reversing all
arrows. But the quiver of $\left(  \mathbf{k}\mathcal{F}\right)  ^{S_{5}}$ is
described explicitly in \cite[Theorem 8.1]{Saliol08}, and has three sources
(i.e., vertices with no incoming arcs) but only two sinks (i.e., vertices with
no outgoing arcs). Thus, this quiver cannot be isomorphic to the quiver
obtained from it by reversing all arrows. This shows that the $\mathbf{k}%
$-algebra $\operatorname*{NSym}\nolimits_{5}^{\left(  2\right)  }$ is not
self-opposite when $\mathbf{k}$ is a field of characteristic $0$.
\par
Hence, $\operatorname*{NSym}\nolimits^{\left(  2\right)  }$ is not
self-opposite as a graded non-unital $\mathbf{k}$-algebra under this
condition. Theoretically, this does not preclude it being self-opposite
without the grading, but I don't consider this to be very likely.}. However,
the non-unital $\mathbf{k}$-algebra $\operatorname*{PNSym}\nolimits^{\left(
2\right)  }$ (as defined in Theorem \ref{thm.PNSym.mods}) is self-opposite.
Even better, the following holds:

\begin{proposition}
\label{prop.PNSym.self-opp}The $\mathbf{k}$-linear map%
\begin{align*}
\Omega:\operatorname*{PNSym}  &  \rightarrow\operatorname*{PNSym},\\
F_{\alpha,\sigma}  &  \mapsto F_{\alpha\cdot\sigma^{-1},\ \sigma^{-1}}%
\end{align*}
is a graded $\mathbf{k}$-bialgebra morphism from $\operatorname*{PNSym}$ to
$\operatorname*{PNSym}$ (in particular, it respects the external
multiplication $\cdot$ and the comultiplication $\Delta$) and is furthermore a
non-unital $\mathbf{k}$-algebra morphism from $\operatorname*{PNSym}%
\nolimits^{\left(  2\right)  }$ to $\left(  \operatorname*{PNSym}%
\nolimits^{\left(  2\right)  }\right)  ^{\operatorname*{op}}$ (that is, it
satisfies $\Omega\left(  a\ast b\right)  =\Omega\left(  b\right)  \ast
\Omega\left(  a\right)  $ for all $a,b\in\operatorname*{PNSym}$). Moreover, it
is an involution (i.e., satisfies $\Omega\circ\Omega=\operatorname*{id}$).
\end{proposition}

\begin{proof}
This can be checked by hand. Alternatively, use Proposition
\ref{prop.pas-on-dual} in a faithful representation argument along the lines
of the First proof idea for Theorem \ref{thm.PNSym.alg}.
\end{proof}

\subsection{The Janus monoid and the structure of internal multiplication}

We shall now take a closer look at the internal multiplication $\ast$ on
$\operatorname*{PNSym}$.

For this entire section, we fix an $n\in\mathbb{N}$. Consider the $n$-th
graded component $\operatorname*{PNSym}\nolimits_{n}$ of
$\operatorname*{PNSym}$. As we know, this component $\operatorname*{PNSym}%
\nolimits_{n}$ is closed under the internal multiplication $\ast$ (indeed,
this is clear from the definition of $\ast$). Moreover, the element
$F_{\left(  \left(  n\right)  ,\operatorname*{id}\right)  }$ of
$\operatorname*{PNSym}\nolimits_{n}$ is a neutral element for $\ast$. Thus,
$\operatorname*{PNSym}\nolimits_{n}$ becomes a (unital) $\mathbf{k}$-algebra
under the internal multiplication $\ast$. Let us denote this $\mathbf{k}%
$-algebra by $\operatorname*{PNSym}\nolimits_{n}^{\left(  2\right)  }$.

As we saw in Remark \ref{rmk.PNSym.p-i}, the surjection $\mathfrak{p}%
:\operatorname*{PNSym}\rightarrow\operatorname*{NSym}$ respects $\ast$. Thus,
$\operatorname*{PNSym}\nolimits_{n}^{\left(  2\right)  }$ surjects onto the
$\mathbf{k}$-algebra $\operatorname*{NSym}\nolimits_{n}^{\left(  2\right)  }$,
which is known to be isomorphic to the descent algebra of the symmetric group
$\mathfrak{S}_{n}$ (see \cite[\S 5.1]{ncsf1}). Hence, $\operatorname*{PNSym}%
\nolimits_{n}^{\left(  2\right)  }$ can be viewed as a \textquotedblleft
twisted\textquotedblright\ version of the descent algebra (though not in the
same sense as in \cite{PatSch06}). Therefore, we can try to extend to
$\operatorname*{PNSym}\nolimits_{n}^{\left(  2\right)  }$ the rich theory that
has been developed for the descent algebra (\cite{GarReu89}, \cite{Bidiga97},
\cite{Schocker}, etc.).

An important tool for understanding the descent algebra is the \emph{face
monoid} (aka \emph{Tits monoid}) of the braid arrangement. Its use has been
pioneered by Bidigare in \cite{Bidiga97}; a more recent exposition is found in
\cite{Saliol06}. As we will soon see, the analogous role for
$\operatorname*{PNSym}\nolimits_{n}^{\left(  2\right)  }$ is played by the
\emph{Janus algebra} of the braid arrangement, defined by Aguiar and Mahajan
in \cite[\S 1.9.3]{AguMah20}. We will introduce this algebra in a
combinatorial language, avoiding any mention of hyperplane arrangements, as
the combinatorial arguments stand well on their own legs here and the
geometric viewpoint would be a distraction.

\subsubsection{Set compositions and their friends}

We begin by defining a number of combinatorial structures related to \emph{set
compositions} (also known as \emph{ordered set partitions}):

\begin{itemize}
\item A \emph{weak set composition} of $\left[  n\right]  $ means a (finite)
tuple $\left(  A_{1},A_{2},\ldots,A_{k}\right)  $ of disjoint subsets of
$\left[  n\right]  $ such that $A_{1}\cup A_{2}\cup\cdots\cup A_{k}=\left[
n\right]  $. The entries $A_{i}$ of such a tuple are called its \emph{blocks}.
Note that some of these blocks $A_{i}$ can be empty, so $k$ can be arbitrarily
large. We let $\operatorname*{WSC}$ be the set of all weak set compositions of
$\left[  n\right]  $. This is an infinite set.

\item A \emph{set composition} of $\left[  n\right]  $ means a weak set
composition of $\left[  n\right]  $ whose blocks are all nonempty. For
instance, $\left(  \left\{  2,5\right\}  ,\ \left\{  1\right\}  ,\ \left\{
3,4\right\}  \right)  $ is a set composition of $\left[  5\right]  $, but
$\left(  \left\{  2,5\right\}  ,\ \varnothing,\ \left\{  1\right\}
,\ \left\{  3,4\right\}  \right)  $ is only a weak set composition of $\left[
5\right]  $. We let $\operatorname*{SC}$ be the set of all set compositions of
$\left[  n\right]  $. This is a finite set, whose size is known as the $n$-th
\emph{ordered Bell number}.

\item A \emph{weak set mopiscotion} of $\left[  n\right]  $ means a pair
$\left(  \mathbf{A},\sigma\right)  $ consisting of a weak set composition
$\mathbf{A}$ of $\left[  n\right]  $ that has $k$ blocks and a permutation
$\sigma\in\mathfrak{S}_{k}$. Likewise we define \emph{set mopiscotions}. We
let $\operatorname*{WSM}$ be the set of all weak set mopiscotions of $\left[
n\right]  $, and we let $\operatorname*{SM}$ be the set of all set
mopiscotions of $\left[  n\right]  $.

\item If $\mathbf{A}=\left(  A_{1},A_{2},\ldots,A_{k}\right)  $ is a weak set
composition of $\left[  n\right]  $, then $\operatorname*{comp}\mathbf{A}$ is
the weak composition of $n$ defined by%
\[
\operatorname*{comp}\mathbf{A}:=\left(  \left\vert A_{1}\right\vert
,\left\vert A_{2}\right\vert ,\ldots,\left\vert A_{k}\right\vert \right)  .
\]
If $\mathbf{A}$ is a set composition, then $\operatorname*{comp}\mathbf{A}$ is
a composition.

\item If $\mathbf{A}=\left(  A_{1},A_{2},\ldots,A_{k}\right)  \in
\operatorname*{WSC}$ is a weak set composition with $k$ blocks, and if
$\sigma\in\mathfrak{S}_{k}$, then $\mathbf{A}\cdot\sigma$ is the weak set
composition $\left(  A_{\sigma\left(  1\right)  },A_{\sigma\left(  2\right)
},\ldots,A_{\sigma\left(  k\right)  }\right)  $. This gives a right action of
$\mathfrak{S}_{k}$ on the set of all weak set compositions of $\left[
n\right]  $ with $k$ blocks. Two weak set compositions $\mathbf{A}$ and
$\mathbf{B}$ are said to be \emph{support-equivalent} if $\mathbf{A}%
=\mathbf{B}\cdot\sigma$ for some $\sigma\in\mathfrak{S}_{k}$ (that is, if
$\mathbf{A}$ can be obtained from $\mathbf{B}$ by reordering the blocks). This
is an equivalence relation; its equivalence classes are called the \emph{weak
set partitions} of $\left[  n\right]  $. (They can be viewed as set partitions
that allow empty blocks.) Likewise, we define \emph{set partitions} of
$\left[  n\right]  $ as support-equivalence classes of set compositions.
(These can be identified with the usual set partitions from enumerative combinatorics.)

If $\mathbf{A}$ is a weak set composition, then its support-equivalence class
is called its \emph{support} and is denoted $\operatorname*{supp}\mathbf{A}$.

For instance, the two set compositions $\left(  \left\{  2,5\right\}
,\ \left\{  1\right\}  ,\ \left\{  3,4\right\}  \right)  $ and $\left(
\left\{  1\right\}  ,\ \left\{  3,4\right\}  ,\ \left\{  2,5\right\}  \right)
$ are equivalent.

\item A \emph{weak set bicomposition} of $\left[  n\right]  $ means a pair
$\left(  \mathbf{A},\mathbf{B}\right)  $ of two weak set compositions
$\mathbf{A}$ and $\mathbf{B}$ that have the same support (i.e., satisfy
$\mathbf{A}=\mathbf{B}\cdot\sigma$ for some $\sigma\in\mathfrak{S}_{k}$, where
$k$ is the number of blocks of $\mathbf{B}$). We let $\operatorname*{WJ}$ be
the set of all weak set bicompositions of $\left[  n\right]  $. Likewise, we
define \emph{set bicompositions}, and we call their set $\operatorname*{J}$.
The letter $\operatorname*{J}$ is for \textquotedblleft
Janus\textquotedblright, as we regard $\mathbf{A}$ and $\mathbf{B}$ as the two
\textquotedblleft faces\textquotedblright\ of a weak set bicomposition
$\left(  \mathbf{A},\mathbf{B}\right)  $; this terminology goes back to
\cite[\S 1.9.3]{AguMah20}.
\end{itemize}

Between all these sets we have a number of maps:

\begin{itemize}
\item We have the so-called \emph{reduction maps}, whose purpose is to remove
empty blocks from weak set compositions. Specifically, these are the maps%
\begin{align*}
\operatorname*{red}  &  :\operatorname*{WSC}\rightarrow\operatorname*{SC},\\
\operatorname*{red}  &  :\operatorname*{WSM}\rightarrow\operatorname*{SM},\\
\operatorname*{red}  &  :\operatorname*{WJ}\rightarrow\operatorname*{J}%
\end{align*}
defined as follows:

\begin{itemize}
\item If $\mathbf{A}=\left(  A_{1},A_{2},\ldots,A_{k}\right)  \in
\operatorname*{WSC}$ is a weak set composition, then $\operatorname*{red}%
\mathbf{A}:=\left(  A_{j_{1}},A_{j_{2}},\ldots,A_{j_{h}}\right)
\in\operatorname*{SC}$, where $\left(  j_{1}<j_{2}<\cdots<j_{h}\right)  $ is
the list of all elements $i$ of $\left[  k\right]  $ satisfying $A_{i}%
\neq\varnothing$.

\item If $\left(  \mathbf{A},\sigma\right)  \in\operatorname*{WSM}$ is a weak
set mopiscotion with $\mathbf{A}=\left(  A_{1},A_{2},\ldots,A_{k}\right)  $
and $\sigma\in\mathfrak{S}_{k}$, then
\[
\operatorname*{red}\left(  \mathbf{A},\sigma\right)  :=\left(
\operatorname*{red}\mathbf{A},\tau\right)  \in\operatorname*{SM},
\]
where $\left(  j_{1}<j_{2}<\cdots<j_{h}\right)  $ is the list of all elements
$i$ of $\left[  k\right]  $ satisfying $A_{i}\neq\varnothing$, and where
$\tau\in\mathfrak{S}_{h}$ is the standardization of the list $\left(
\sigma\left(  j_{1}\right)  ,\sigma\left(  j_{2}\right)  ,\ldots,\sigma\left(
j_{h}\right)  \right)  $ (as in Definition \ref{def.red-wmopis}).

\item If $\left(  \mathbf{A},\mathbf{B}\right)  \in\operatorname*{WJ}$ is a
weak set bicomposition, then $\operatorname*{red}\left(  \mathbf{A}%
,\mathbf{B}\right)  :=\left(  \operatorname*{red}\mathbf{A}%
,\operatorname*{red}\mathbf{B}\right)  \in\operatorname*{J}$.
\end{itemize}

\item We have a map
\begin{align*}
g_{\operatorname*{W}}:\operatorname*{WSM}  &  \rightarrow\operatorname*{WJ},\\
\left(  \mathbf{A},\sigma\right)   &  \mapsto\left(  \mathbf{A},\ \mathbf{A}%
\cdot\sigma^{-1}\right)
\end{align*}
and an analogously defined map $g:\operatorname*{SM}\rightarrow
\operatorname*{J}$ (which is simply the restriction of $g_{\operatorname*{W}}$
to the subset $\operatorname*{SM}$).
\end{itemize}

Most importantly, we have monoid structures on some of the above sets:

\begin{itemize}
\item The \emph{lexicographic meet} $\mathbf{A}\wedge\mathbf{B}$ of two weak
set compositions $\mathbf{A}=\left(  A_{1},A_{2},\ldots,A_{k}\right)  $ and
$\mathbf{B}=\left(  B_{1},B_{2},\ldots,B_{\ell}\right)  $ of $\left[
n\right]  $ is defined to be the weak set composition
\begin{align*}
\mathbf{A}\wedge\mathbf{B}  &  :=(A_{1}\cap B_{1},\ A_{1}\cap B_{2}%
,\ \ldots,\ A_{1}\cap B_{\ell},\\
&  \ \ \ \ \ \ \ A_{2}\cap B_{1},\ A_{2}\cap B_{2},\ \ldots,\ A_{2}\cap
B_{\ell},\\
&  \ \ \ \ \ \ \ \ldots,\\
&  \ \ \ \ \ \ \ A_{k}\cap B_{1},\ A_{k}\cap B_{2},\ \ldots,\ A_{k}\cap
B_{\ell})
\end{align*}
of $\left[  n\right]  $. (This is just the $k\ell$-tuple consisting of all the
intersections $A_{i}\cap B_{j}$, listed in the order of lexicographically
increasing $\left(  i,j\right)  $.) This operation $\left(  \mathbf{A}%
,\mathbf{B}\right)  \mapsto\mathbf{A}\wedge\mathbf{B}$ makes
$\operatorname*{WSC}$ into a monoid (the associativity is easy to check); its
neutral element is the $1$-block weak set composition $\left(  \left[
n\right]  \right)  $.

\item The \emph{reduced lexicographic meet} $\mathbf{AB}$ of two set
compositions $\mathbf{A}$ and $\mathbf{B}$ of $\left[  n\right]  $ is defined
to be the set composition $\operatorname*{red}\left(  \mathbf{A}%
\wedge\mathbf{B}\right)  $. This operation $\left(  \mathbf{A},\mathbf{B}%
\right)  \mapsto\mathbf{AB}$ makes $\operatorname*{SC}$ into a monoid (but not
into a submonoid of $\operatorname*{WSC}$). This monoid is a \emph{band} --
i.e., a monoid in which every element is idempotent. It is known as the
\emph{face monoid} (or \emph{Tits monoid}) of the type-A braid arrangement
(due to a geometric interpretation of set compositions as faces of the
arrangement -- see, e.g., \cite[\S 1.2.1]{Saliol06}). The map
$\operatorname*{red}:\operatorname*{WSC}\rightarrow\operatorname*{SC}$ is a
surjective monoid morphism.

\item The \emph{lexicographic meet} $\left(  \mathbf{A},\sigma\right)
\wedge\left(  \mathbf{B},\tau\right)  $ of two weak set mopiscotions $\left(
\mathbf{A},\sigma\right)  $ and $\left(  \mathbf{B},\tau\right)  $ is defined
to be the weak set mopiscotion $\left(  \mathbf{A}\wedge\mathbf{B}%
,\ \tau\left[  \sigma\right]  \right)  $. This makes $\operatorname*{WSM}$
into a monoid (actually a submonoid of $\operatorname*{WSC}\times\mathfrak{S}%
$, where $\mathfrak{S}$ is as defined in the Second proof idea for Theorem
\ref{thm.PNSym.alg}). Its neutral element is $\left(  \left(  \left[
n\right]  \right)  ,\ \operatorname*{id}\nolimits_{\left[  1\right]  }\right)
$.

\item The \emph{reduced lexicographic meet} $\left(  \mathbf{A},\sigma\right)
\left(  \mathbf{B},\tau\right)  $ of two set mopiscotions $\left(
\mathbf{A},\sigma\right)  $ and $\left(  \mathbf{B},\tau\right)  $ is defined
to be the set mopiscotion $\operatorname*{red}\left(  \left(  \mathbf{A}%
,\sigma\right)  \wedge\left(  \mathbf{B},\tau\right)  \right)  $. This makes
$\operatorname*{SM}$ into a monoid. The map $\operatorname*{red}%
:\operatorname*{WSM}\rightarrow\operatorname*{SM}$ is a surjective monoid morphism.

\item The \emph{lexicographic meet} $\left(  \mathbf{A},\mathbf{A}^{\prime
}\right)  \wedge\left(  \mathbf{B},\mathbf{B}^{\prime}\right)  $ of two weak
set bicompositions $\left(  \mathbf{A},\mathbf{A}^{\prime}\right)  $ and
$\left(  \mathbf{B},\mathbf{B}^{\prime}\right)  $ is defined to be the weak
set bicomposition $\left(  \mathbf{A}\wedge\mathbf{B},\ \mathbf{B}^{\prime
}\wedge\mathbf{A}^{\prime}\right)  $ (note the reverse order of factors in the
second argument!). This makes $\operatorname*{WJ}$ into a monoid, known as the
(type-A) \emph{weak Janus monoid}.

\item The \emph{reduced lexicographic meet} $\left(  \mathbf{A},\mathbf{A}%
^{\prime}\right)  \left(  \mathbf{B},\mathbf{B}^{\prime}\right)  $ of two set
bicompositions $\left(  \mathbf{A},\mathbf{A}^{\prime}\right)  $ and $\left(
\mathbf{B},\mathbf{B}^{\prime}\right)  $ is defined to be the set
bicomposition $\operatorname*{red}\left(  \left(  \mathbf{A},\mathbf{A}%
^{\prime}\right)  \wedge\left(  \mathbf{B},\mathbf{B}^{\prime}\right)
\right)  =\left(  \operatorname*{red}\left(  \mathbf{A}\wedge\mathbf{B}%
\right)  ,\ \operatorname*{red}\left(  \mathbf{B}^{\prime}\wedge
\mathbf{A}^{\prime}\right)  \right)  $. This makes $\operatorname*{J}$ into a
monoid, known as the (type-A) \emph{Janus monoid}. The map
$\operatorname*{red}:\operatorname*{WJ}\rightarrow\operatorname*{J}$ is a
surjective monoid morphism.
\end{itemize}

\begin{proposition}
\label{prop.PNSym.g-square}The diagram%
\[
\xymatrix{
\operatorname{WSM} \ar[r]^{g_{\operatorname{W}}} \ar[d]_{\operatorname{red}} & \operatorname{WJ} \ar[d]^{\operatorname{red}} \\
\operatorname{SM} \ar[r]_g & \operatorname{J}
}
\]
is commutative.
\end{proposition}

\begin{proof}
[Proof idea.]A refreshing exercise in the definition of standardization.

\begin{fineprint}
Let $\left(  \mathbf{A},\sigma\right)  \in\operatorname*{WSM}$. We must show
that $\operatorname*{red}\left(  g_{\operatorname*{W}}\left(  \mathbf{A}%
,\sigma\right)  \right)  =g\left(  \operatorname*{red}\left(  \mathbf{A}%
,\sigma\right)  \right)  $.

Write the weak set composition $\mathbf{A}$ as $\mathbf{A}=\left(  A_{1}%
,A_{2},\ldots,A_{k}\right)  $. Then, the definition of $\operatorname*{red}%
:\operatorname*{WSM}\rightarrow\operatorname*{SM}$ yields
\begin{equation}
\operatorname*{red}\left(  \mathbf{A},\sigma\right)  =\left(
\operatorname*{red}\mathbf{A},\tau\right)  , \label{pf.prop.PNSym.g-square.1}%
\end{equation}
where $\left(  j_{1}<j_{2}<\cdots<j_{h}\right)  $ is the list of all elements
$i$ of $\left[  k\right]  $ satisfying $A_{i}\neq\varnothing$, and where
$\tau\in\mathfrak{S}_{h}$ is the standardization of the list $\left(
\sigma\left(  j_{1}\right)  ,\sigma\left(  j_{2}\right)  ,\ldots,\sigma\left(
j_{h}\right)  \right)  $ (as in Definition \ref{def.red-wmopis}). Thus,%
\begin{equation}
g\left(  \operatorname*{red}\left(  \mathbf{A},\sigma\right)  \right)
=g\left(  \operatorname*{red}\mathbf{A},\tau\right)  =\left(
\operatorname*{red}\mathbf{A},\ \left(  \operatorname*{red}\mathbf{A}\right)
\cdot\tau^{-1}\right)  \label{pf.prop.PNSym.g-square.2}%
\end{equation}
by the definition of $g$.

By the definition of $g_{\operatorname*{W}}$, we have $g_{\operatorname*{W}%
}\left(  \mathbf{A},\sigma\right)  =\left(  \mathbf{A},\ \mathbf{A}\cdot
\sigma^{-1}\right)  $, so that
\begin{equation}
\operatorname*{red}\left(  g_{\operatorname*{W}}\left(  \mathbf{A}%
,\sigma\right)  \right)  =\operatorname*{red}\left(  \mathbf{A},\ \mathbf{A}%
\cdot\sigma^{-1}\right)  =\left(  \operatorname*{red}\mathbf{A}%
,\ \operatorname*{red}\left(  \mathbf{A}\cdot\sigma^{-1}\right)  \right)  .
\label{pf.prop.PNSym.g-square.3}%
\end{equation}

We must prove that the left-hand sides of the equalities
(\ref{pf.prop.PNSym.g-square.3}) and (\ref{pf.prop.PNSym.g-square.2}) are
equal. Clearly, it suffices to show that the right-hand sides are equal. In
other words, it suffices to show that $\operatorname*{red}\left(
\mathbf{A}\cdot\sigma^{-1}\right)  =\left(  \operatorname*{red}\mathbf{A}%
\right)  \cdot\tau^{-1}$.

Both set compositions $\operatorname*{red}\left(  \mathbf{A}\cdot\sigma
^{-1}\right)  $ and $\left(  \operatorname*{red}\mathbf{A}\right)  \cdot
\tau^{-1}$ are obtained from $\mathbf{A}$ by removing all empty blocks and
permuting the blocks (either before or after the removal of empty blocks).
Thus, they both consist of the nonempty blocks of $\mathbf{A}$ in some order.
In order to show that $\operatorname*{red}\left(  \mathbf{A}\cdot\sigma
^{-1}\right)  =\left(  \operatorname*{red}\mathbf{A}\right)  \cdot\tau^{-1}$,
we only need to prove that the orders in which they contain these nonempty
blocks are the same. In other words, we must prove that if $p$ and $q$ are two
distinct elements of $\left[  k\right]  $ such that $A_{p}\neq\varnothing$ and
$A_{q}\neq\varnothing$, then the equivalence%
\begin{align}
&  \ \left(  A_{p}\text{ occurs before }A_{q}\text{ in }\operatorname*{red}%
\left(  \mathbf{A}\cdot\sigma^{-1}\right)  \right) \nonumber\\
&  \Longleftrightarrow\ \left(  A_{p}\text{ occurs before }A_{q}\text{ in
}\left(  \operatorname*{red}\mathbf{A}\right)  \cdot\tau^{-1}\right)
\label{pf.prop.PNSym.g-square.goal}%
\end{align}
holds. So let us prove this. Let $p$ and $q$ be two distinct elements of
$\left[  k\right]  $ such that $A_{p}\neq\varnothing$ and $A_{q}%
\neq\varnothing$. Then, we have $p=j_{x}$ and $q=j_{y}$ for some
$x,y\in\left[  h\right]  $. But we have the following chain of equivalences:%
\begin{align*}
&  \ \left(  A_{p}\text{ occurs before }A_{q}\text{ in }\operatorname*{red}%
\left(  \mathbf{A}\cdot\sigma^{-1}\right)  \right) \\
&  \Longleftrightarrow\ \left(  \sigma\left(  p\right)  <\sigma\left(
q\right)  \right) \\
&  \Longleftrightarrow\ \left(  \sigma\left(  j_{x}\right)  <\sigma\left(
j_{y}\right)  \right)  \ \ \ \ \ \ \ \ \ \ \left(  \text{since }p=j_{x}\text{
and }q=j_{y}\right) \\
&  \Longleftrightarrow\ \left(  \tau\left(  x\right)  <\tau\left(  y\right)
\right)
\end{align*}
(since $\tau$ is the standardization of the list $\left(  \sigma\left(
j_{1}\right)  ,\sigma\left(  j_{2}\right)  ,\ldots,\sigma\left(  j_{h}\right)
\right)  $, so that the relative order of the values of $\tau$ agrees with the
relative order of the entries of $\left(  \sigma\left(  j_{1}\right)
,\sigma\left(  j_{2}\right)  ,\ldots,\sigma\left(  j_{h}\right)  \right)  $).
We also have the following chain of equivalences:%
\begin{align*}
&  \ \left(  A_{p}\text{ occurs before }A_{q}\text{ in }\left(
\operatorname*{red}\mathbf{A}\right)  \cdot\tau^{-1}\right) \\
&  \Longleftrightarrow\ \left(  A_{j_{x}}\text{ occurs before }A_{j_{y}}\text{
in }\left(  \operatorname*{red}\mathbf{A}\right)  \cdot\tau^{-1}\right)
\ \ \ \ \ \ \ \ \ \ \left(  \text{since }p=j_{x}\text{ and }q=j_{y}\right) \\
&  \Longleftrightarrow\ \left(  A_{j_{x}}\text{ occurs before }A_{j_{y}}\text{
in }\left(  A_{j_{\tau^{-1}\left(  1\right)  }},A_{j_{\tau^{-1}\left(
2\right)  }},\ldots,A_{j_{\tau^{-1}\left(  h\right)  }}\right)  \right) \\
&  \ \ \ \ \ \ \ \ \ \ \ \ \ \ \ \ \ \ \ \ \left(
\begin{array}
[c]{c}%
\text{since the definition of }\operatorname*{red}\mathbf{A}\text{ shows}\\
\text{that }\operatorname*{red}\mathbf{A}=\left(  A_{j_{1}},A_{j_{2}}%
,\ldots,A_{j_{h}}\right)  \text{ and}\\
\text{thus }\left(  \operatorname*{red}\mathbf{A}\right)  \cdot\tau
^{-1}=\left(  A_{j_{\tau^{-1}\left(  1\right)  }},A_{j_{\tau^{-1}\left(
2\right)  }},\ldots,A_{j_{\tau^{-1}\left(  h\right)  }}\right)
\end{array}
\right) \\
&  \Longleftrightarrow\ \left(  x\text{ occurs before }y\text{ in }\left(
\tau^{-1}\left(  1\right)  ,\ \tau^{-1}\left(  2\right)  ,\ \ldots,\ \tau
^{-1}\left(  h\right)  \right)  \right) \\
&  \Longleftrightarrow\ \left(  \tau\left(  x\right)  <\tau\left(  y\right)
\right)  .
\end{align*}
Comparing these two chains of equivalences, we obtain the desired equivalence
(\ref{pf.prop.PNSym.g-square.goal}), which is all that stood between us and
the proof of Proposition \ref{prop.PNSym.g-square}.
\end{fineprint}
\end{proof}

\begin{proposition}
\label{prop.PNSym.g-mon}The maps $g_{\operatorname*{W}}:\operatorname*{WSM}%
\rightarrow\operatorname*{WJ}$ and $g:\operatorname*{SM}\rightarrow
\operatorname*{J}$ are monoid morphisms.
\end{proposition}

\begin{proof}
[Proof idea.]An easy exercise about $\tau\left[  \sigma\right]  $.

\begin{fineprint}
In more detail: It suffices to show that $g_{\operatorname*{W}}%
:\operatorname*{WSM}\rightarrow\operatorname*{WJ}$ is a monoid morphism, since
then the commutative diagram in Proposition \ref{prop.PNSym.g-square} (and the
surjectivity of $\operatorname*{red}:\operatorname*{WSM}\rightarrow
\operatorname*{SM}$) will yield the same claim about $g:\operatorname*{SM}%
\rightarrow\operatorname*{J}$.

So we must prove the equality $g_{\operatorname*{W}}\left(  \left(
\mathbf{A},\sigma\right)  \wedge\left(  \mathbf{B},\tau\right)  \right)
=g_{\operatorname*{W}}\left(  \mathbf{A},\sigma\right)  \wedge
g_{\operatorname*{W}}\left(  \mathbf{B},\tau\right)  $ for any two weak set
mopiscotions $\left(  \mathbf{A},\sigma\right)  $ and $\left(  \mathbf{B}%
,\tau\right)  $. Fix two weak set mopiscotions $\left(  \mathbf{A}%
,\sigma\right)  $ and $\left(  \mathbf{B},\tau\right)  $. The definition of
$g_{\operatorname*{W}}$ yields $g_{\operatorname*{W}}\left(  \mathbf{A}%
,\sigma\right)  =\left(  \mathbf{A},\ \mathbf{A}\cdot\sigma^{-1}\right)  $ and
$g_{\operatorname*{W}}\left(  \mathbf{B},\tau\right)  =\left(  \mathbf{B}%
,\ \mathbf{B}\cdot\tau^{-1}\right)  $. Thus,%
\begin{align}
g_{\operatorname*{W}}\left(  \mathbf{A},\sigma\right)  \wedge
g_{\operatorname*{W}}\left(  \mathbf{B},\tau\right)   &  =\left(
\mathbf{A},\ \mathbf{A}\cdot\sigma^{-1}\right)  \wedge\left(  \mathbf{B}%
,\ \mathbf{B}\cdot\tau^{-1}\right) \nonumber\\
&  =\left(  \mathbf{A}\wedge\mathbf{B},\ \left(  \mathbf{B}\cdot\tau
^{-1}\right)  \wedge\left(  \mathbf{A}\cdot\sigma^{-1}\right)  \right)  .
\label{pf.prop.PNSym.g-mon.1}%
\end{align}
On the other hand, we have $\left(  \mathbf{A},\sigma\right)  \wedge\left(
\mathbf{B},\tau\right)  =\left(  \mathbf{A}\wedge\mathbf{B},\ \tau\left[
\sigma\right]  \right)  $, so that%
\begin{align}
g_{\operatorname*{W}}\left(  \left(  \mathbf{A},\sigma\right)  \wedge\left(
\mathbf{B},\tau\right)  \right)   &  =g_{\operatorname*{W}}\left(  \left(
\mathbf{A}\wedge\mathbf{B},\ \tau\left[  \sigma\right]  \right)  \right)
\nonumber\\
&  =\left(  \mathbf{A}\wedge\mathbf{B},\ \left(  \mathbf{A}\wedge
\mathbf{B}\right)  \cdot\left(  \tau\left[  \sigma\right]  \right)
^{-1}\right)  . \label{pf.prop.PNSym.g-mon.2}%
\end{align}
We must prove that $g_{\operatorname*{W}}\left(  \left(  \mathbf{A}%
,\sigma\right)  \wedge\left(  \mathbf{B},\tau\right)  \right)
=g_{\operatorname*{W}}\left(  \mathbf{A},\sigma\right)  \wedge
g_{\operatorname*{W}}\left(  \mathbf{B},\tau\right)  $. In other words, we
must prove that the left-hand sides of the equalities
(\ref{pf.prop.PNSym.g-mon.1}) and (\ref{pf.prop.PNSym.g-mon.2}) are equal.
Clearly, it suffices to show that the right-hand sides are equal. This boils
down to showing that
\begin{equation}
\left(  \mathbf{B}\cdot\tau^{-1}\right)  \wedge\left(  \mathbf{A}\cdot
\sigma^{-1}\right)  =\left(  \mathbf{A}\wedge\mathbf{B}\right)  \cdot\left(
\tau\left[  \sigma\right]  \right)  ^{-1}. \label{pf.prop.PNSym.g-mon.3}%
\end{equation}

But this is all about permutations: Both weak set compositions $\left(
\mathbf{A}\wedge\mathbf{B}\right)  \cdot\left(  \tau\left[  \sigma\right]
\right)  ^{-1}$ and $\left(  \mathbf{B}\cdot\tau^{-1}\right)  \wedge\left(
\mathbf{A}\cdot\sigma^{-1}\right)  $ contain the same blocks (namely, the
pairwise intersections $A_{i}\cap B_{j}$), but we must show that these blocks
also appear in the same order. Both of our weak set compositions have $k\ell$
entries each, but it is best to index them not by the numbers $i\in\left[
k\ell\right]  $ but rather by the pairs $\left(  j,i\right)  \in\left[
\ell\right]  \times\left[  k\right]  $ (in lexicographically increasing
order). Upon this reindexing, the $\left(  j,i\right)  $-th entry of $\left(
\mathbf{B}\cdot\tau^{-1}\right)  \wedge\left(  \mathbf{A}\cdot\sigma
^{-1}\right)  $ becomes $B_{\tau^{-1}\left(  j\right)  }\cap A_{\sigma
^{-1}\left(  i\right)  }=A_{\sigma^{-1}\left(  i\right)  }\cap B_{\tau
^{-1}\left(  j\right)  }$. What about the $\left(  j,i\right)  $-th entry of
$\left(  \mathbf{A}\wedge\mathbf{B}\right)  \cdot\left(  \tau\left[
\sigma\right]  \right)  ^{-1}$ ? Recall the tormutation $\phi^{\prime
}\left\langle \phi\right\rangle $ defined in the second proof idea for Theorem
\ref{thm.PNSym.alg}. As we know, $\tau\left[  \sigma\right]  $ is really just
the standardization of the tormutation $\tau\left\langle \sigma\right\rangle
:\left[  k\right]  \times\left[  \ell\right]  \rightarrow\left[  \ell\right]
\times\left[  k\right]  $ that sends each $\left(  u,v\right)  $ to $\left(
\tau\left(  v\right)  ,\sigma\left(  u\right)  \right)  $. Our reindexing has
turned $\left(  \mathbf{A}\wedge\mathbf{B}\right)  \cdot\left(  \tau\left[
\sigma\right]  \right)  ^{-1}$ into $\left(  \mathbf{A}\wedge\mathbf{B}%
\right)  \cdot\left(  \tau\left\langle \sigma\right\rangle \right)  ^{-1}$.
But $\left(  \tau\left\langle \sigma\right\rangle \right)  ^{-1}\left(
j,i\right)  =\left(  \sigma^{-1}\left(  i\right)  ,\tau^{-1}\left(  j\right)
\right)  $. Hence, the $\left(  j,i\right)  $-th entry of $\left(
\mathbf{A}\wedge\mathbf{B}\right)  \cdot\left(  \tau\left[  \sigma\right]
\right)  ^{-1}$ is the $\left(  \sigma^{-1}\left(  i\right)  ,\tau^{-1}\left(
j\right)  \right)  $-th entry of $\mathbf{A}\wedge\mathbf{B}$; but the latter
entry is $A_{\sigma^{-1}\left(  i\right)  }\cap B_{\tau^{-1}\left(  j\right)
}$. This is the same answer that we obtained for the $\left(  j,i\right)  $-th
entry of $\left(  \mathbf{B}\cdot\tau^{-1}\right)  \wedge\left(
\mathbf{A}\cdot\sigma^{-1}\right)  $. Hence, we have shown that respective
entries of $\left(  \mathbf{A}\wedge\mathbf{B}\right)  \cdot\left(
\tau\left[  \sigma\right]  \right)  ^{-1}$ and $\left(  \mathbf{B}\cdot
\tau^{-1}\right)  \wedge\left(  \mathbf{A}\cdot\sigma^{-1}\right)  $ are
equal. Thus, (\ref{pf.prop.PNSym.g-mon.3}) follows, and we are done.
\end{fineprint}
\end{proof}

\begin{proposition}
\label{prop.PNSym.g-inj}The map $g:\operatorname*{SM}\rightarrow
\operatorname*{J}$ is a monoid isomorphism.
\end{proposition}

\begin{proof}
[Proof idea.]Also easy (but note that this does not apply to
$g_{\operatorname*{W}}$, since $g_{\operatorname*{W}}$ is not injective!).

\begin{fineprint}
In more detail: We already know from Proposition \ref{prop.PNSym.g-mon} that
$g$ is a monoid morphism. It remains to prove that $g$ is bijective.
Surjectivity is clear, since each $\left(  \mathbf{A},\mathbf{B}\right)
\in\operatorname*{J}$ satisfies $\mathbf{A}=\mathbf{B}\cdot\sigma$ for some
$\sigma\in\mathfrak{S}_{k}$. To prove injectivity, we must show that this
$\sigma$ is unique. But this is clear, because the blocks of $\mathbf{B}$ are
distinct (being nonempty disjoint sets) and thus cannot be nontrivially
permuted without changing $\mathbf{B}$.
\end{fineprint}
\end{proof}

We could also define a monoid structure on the sets of (weak) set partitions,
i.e., on the images of the support maps. Then, the support maps would be
monoid morphisms.

All the above-defined monoids come with a left action of the symmetric group
$\mathfrak{S}_{n}$ (unrelated to the right action of $\mathfrak{S}_{k}$ that
was used in defining supports). Namely:

\begin{itemize}
\item The symmetric group $\mathfrak{S}_{n}$ acts on $\operatorname*{WSC}$ and
on $\operatorname*{SC}$ by the rule%
\[
\sigma\cdot\left(  A_{1},A_{2},\ldots,A_{k}\right)  :=\left(  \sigma\left(
A_{1}\right)  ,\ \sigma\left(  A_{2}\right)  ,\ \ldots,\ \sigma\left(
A_{k}\right)  \right)  .
\]
(Here, $\sigma\left(  A_{i}\right)  $ is understood as usual -- i.e., as the
image of the subset $A_{i}$ of $\left[  n\right]  $ under $\sigma$.)

\item The symmetric group $\mathfrak{S}_{n}$ acts on $\operatorname*{WSM}$ and
on $\operatorname*{SM}$ by the rule%
\[
\sigma\cdot\left(  \mathbf{A},\tau\right)  :=\left(  \sigma\cdot
\mathbf{A},\ \tau\right)  .
\]
(Thus, $\sigma$ only affects the first argument of a weak set mopiscotion.)

\item The symmetric group $\mathfrak{S}_{n}$ acts on $\operatorname*{WJ}$ and
on $\operatorname*{J}$ by the rule%
\[
\sigma\cdot\left(  \mathbf{A},\mathbf{B}\right)  :=\left(  \sigma
\cdot\mathbf{A},\ \sigma\cdot\mathbf{B}\right)  .
\]
(That is, $\sigma$ acts diagonally on the two arguments.)
\end{itemize}

All these actions are actions by monoid automorphisms.

\begin{remark}
\label{rmk.PNSym.AM1}Let us briefly discuss the connection of our Janus monoid
$\operatorname*{J}$ to the Janus monoid of Aguiar and Mahajan.

In \cite[\S 1.9.3]{AguMah20}, Aguiar and Mahajan define the \emph{Janus
monoid} $\mathrm{J}\left[  \mathcal{A}\right]  $ of a hyperplane arrangement
$\mathcal{A}$. It consists of the so-called \emph{bifaces}, which are the
pairs $\left(  U,V\right)  $ of two faces of $\mathcal{A}$ such that $U$ and
$V$ have the same support (i.e., supporting hyperplane). When $\mathcal{A}$ is
the braid arrangement in $\mathbb{R}^{n}$, the faces of $\mathcal{A}$ are in
canonical bijection with the set compositions of $\left[  n\right]  $, and the
support of a face corresponds to the (unordered) set partition obtained by
forgetting the order of the blocks in the corresponding set composition. Thus,
the bifaces of the braid arrangement are what we call set bicompositions. The
multiplication of bifaces that defines the product in $\mathrm{J}\left[
\mathcal{A}\right]  $ turns out to be precisely our reduced lexicographic meet
operation on set bicompositions.
\end{remark}

\subsubsection{Algebras and the master diagram}

Each monoid $M$ induces a monoid algebra $\mathbf{k}\left[  M\right]  $, which
is a $\mathbf{k}$-algebra that is spanned freely (as a $\mathbf{k}$-module) by
the elements of $M$ and whose multiplication is just the multiplication of $M$
extended by linearity. In particular, all the above-defined monoids
$\operatorname*{WSC}$, $\operatorname*{SC}$, $\operatorname*{WSM}$,
$\operatorname*{SM}$, $\operatorname*{WJ}$ and $\operatorname*{J}$ have monoid
algebras $\mathbf{k}\left[  \operatorname*{WSC}\right]  $, $\mathbf{k}\left[
\operatorname*{SC}\right]  $, $\mathbf{k}\left[  \operatorname*{WSM}\right]
$, $\mathbf{k}\left[  \operatorname*{SM}\right]  $, $\mathbf{k}\left[
\operatorname*{WJ}\right]  $ and $\mathbf{k}\left[  \operatorname*{J}\right]
$. Some of these algebras have names: $\mathbf{k}\left[  \operatorname*{J}%
\right]  $ is known as the \emph{Janus algebra}, while $\mathbf{k}\left[
\operatorname*{SC}\right]  $ is known as the \emph{face algebra} or the
\emph{Tits algebra}. The symmetric group $\mathfrak{S}_{n}$ acts on each of
the above-listed monoids by monoid automorphisms, and thus acts on all their
monoid algebras by $\mathbf{k}$-algebra automorphisms.

Any monoid morphism $f:M\rightarrow N$ induces a $\mathbf{k}$-algebra morphism
$\mathbf{k}\left[  M\right]  \rightarrow\mathbf{k}\left[  N\right]  $, which
we shall again denote by $f$ by abuse of notation. In particular, the
commutative diagram in Proposition \ref{prop.PNSym.g-square} thus induces a
commutative diagram of $\mathbf{k}$-algebras and $\mathbf{k}$-algebra
morphisms%
\[
\xymatrix{
\kk\left[\operatorname{WSM}\right] \ar[r]^{g_{\operatorname{W}}} \ar[d]_{\operatorname{red}} & \kk\left[\operatorname{WJ}\right] \ar[d]^{\operatorname{red}} \\
\kk\left[\operatorname{SM}\right] \ar[r]_g & \kk\left[\operatorname{J}\right]
}
\]
(by Proposition \ref{prop.PNSym.g-mon}).

However, we can do more with the algebras than we could with the monoids. For
this purpose, we recall the $\mathbf{k}$-algebra $\operatorname*{PNSym}%
\nolimits_{n}^{\left(  2\right)  }$, which has basis
\[
\left(  F_{\alpha,\sigma}\right)  _{\left(  \alpha,\sigma\right)  \text{ is a
mopiscotion with }\left\vert \alpha\right\vert =n}.
\]
We also consider the analogously defined $\mathbf{k}$-algebra
$\operatorname*{WPNSym}\nolimits_{n}^{\prime\left(  2\right)  }$, which has
basis
\[
\left(  \widehat{F}_{\alpha,\sigma}\right)  _{\left(  \alpha,\sigma\right)
\text{ is a weak mopiscotion with }\left\vert \alpha\right\vert =n}%
\]
(see the Second proof idea for Theorem \ref{thm.PNSym.alg} for its precise
definition). There is a surjective $\mathbf{k}$-algebra morphism
\begin{align*}
\operatorname*{red}:\operatorname*{WPNSym}\nolimits_{n}^{\prime\left(
2\right)  }  &  \rightarrow\operatorname*{PNSym}\nolimits_{n}^{\left(
2\right)  },\\
\widehat{F}_{\alpha,\sigma}  &  \mapsto F_{\alpha,\sigma}%
\end{align*}
(since the rule for multiplying $F_{\alpha,\sigma}$'s in
$\operatorname*{PNSym}\nolimits_{n}^{\left(  2\right)  }$ is exactly the rule
for multiplying $\widehat{F}_{\alpha,\sigma}$'s in $\operatorname*{WPNSym}%
\nolimits_{n}^{\prime\left(  2\right)  }$). Now we claim the following.

\begin{theorem}
\label{thm.PNSym.f-diagram}Consider the two $\mathbf{k}$-linear maps
\begin{align*}
f_{\operatorname*{W}}:\operatorname*{WPNSym}\nolimits_{n}^{\prime\left(
2\right)  }  &  \rightarrow\mathbf{k}\left[  \operatorname*{WSM}\right]  ,\\
\widehat{F}_{\alpha,\sigma}  &  \mapsto\sum_{\substack{\mathbf{A}%
\in\operatorname*{WSC};\\\operatorname*{comp}\mathbf{A}=\alpha}}\left(
\mathbf{A},\sigma\right)
\end{align*}
and%
\begin{align*}
f:\operatorname*{PNSym}\nolimits_{n}^{\left(  2\right)  }  &  \rightarrow
\mathbf{k}\left[  \operatorname*{SM}\right]  ,\\
F_{\alpha,\sigma}  &  \mapsto\sum_{\substack{\mathbf{A}\in\operatorname*{SC}%
;\\\operatorname*{comp}\mathbf{A}=\alpha}}\left(  \mathbf{A},\sigma\right)  .
\end{align*}
Then:

\begin{enumerate}
\item[\textbf{(a)}] Both maps $f_{\operatorname*{W}}$ and $f$ are injective
$\mathbf{k}$-algebra morphisms.

\item[\textbf{(b)}] The diagram%
\[
\xymatrix{
\operatorname{WPNSym}_n^{\prime\left(2\right)} \ar[r]^-{f_{\operatorname{W}}} \ar[d]_{\operatorname{red}} & \kk\left[\operatorname{WSM}\right] \ar[r]^-{g_{\operatorname{W}}} \ar[d]_{\operatorname{red}} & \kk\left[\operatorname{WJ}\right] \ar[d]^{\operatorname{red}} \\
\operatorname{PNSym}_n^{\left(2\right)} \ar[r]_f & \kk\left[\operatorname{SM}\right] \ar[r]_g & \kk\left[\operatorname{J}\right]
}
\]
is commutative.

\item[\textbf{(c)}] Recall the $\mathfrak{S}_{n}$-actions on $\mathbf{k}%
\left[  \operatorname*{WSM}\right]  $ and $\mathbf{k}\left[
\operatorname*{SM}\right]  $. Then, the image of $f_{\operatorname*{W}}$ is
the $\mathfrak{S}_{n}$-invariant part of $\mathbf{k}\left[
\operatorname*{WSM}\right]  $ (that is, the $\mathbf{k}$-subalgebra of
$\mathbf{k}\left[  \operatorname*{WSM}\right]  $ consisting of all vectors
$v\in\mathbf{k}\left[  \operatorname*{WSM}\right]  $ that satisfy $\sigma\cdot
v=v$ for all $\sigma\in\mathfrak{S}_{n}$). Likewise, the image of $f$ is the
$\mathfrak{S}_{n}$-invariant part of $\mathbf{k}\left[  \operatorname*{SM}%
\right]  $.
\end{enumerate}
\end{theorem}

\begin{proof}
[Proof idea.]\textbf{(b)} Thanks to Proposition \ref{prop.PNSym.g-square}, we
only need to show the commutativity of the left square. But this is fairly
easy. \medskip

\textbf{(a)} Injectivity is easy for both $f_{\operatorname*{W}}$ and $f$.
Next, it is easy to prove that $f_{\operatorname*{W}}$ is a $\mathbf{k}%
$-algebra morphism. Since $\operatorname*{red}:\operatorname*{WPNSym}%
\nolimits_{n}^{\prime\left(  2\right)  }\rightarrow\operatorname*{PNSym}%
\nolimits_{n}^{\left(  2\right)  }$ is a surjective $\mathbf{k}$-algebra
morphism, we conclude that $f$ is a $\mathbf{k}$-algebra morphism as well.
\medskip

\textbf{(c)} This is pretty obvious, since the images of $\widehat{F}%
_{\alpha,\sigma}$ under $f_{\operatorname*{W}}$ (resp., the images of
$F_{\alpha,\sigma}$ under $f$) are precisely the orbit sums of the
$\mathfrak{S}_{n}$-action on $\operatorname*{WSM}$ (resp. $\operatorname*{SM}$).
\end{proof}

\begin{corollary}
\label{cor.PNSym.as-invariant-ring}The $\mathbf{k}$-algebra
$\operatorname*{PNSym}\nolimits_{n}^{\left(  2\right)  }$ is isomorphic to the
$\mathfrak{S}_{n}$-invariant part of $\mathbf{k}\left[  \operatorname*{J}%
\right]  $.
\end{corollary}

\begin{proof}
[Proof idea.]Consider the following chain of $\mathbf{k}$-algebra
isomorphisms:
\begin{align*}
\operatorname*{PNSym}\nolimits_{n}^{\left(  2\right)  }  &  \cong f\left(
\operatorname*{PNSym}\nolimits_{n}^{\left(  2\right)  }\right)
\ \ \ \ \ \ \ \ \ \ \left(  \text{by Theorem \ref{thm.PNSym.f-diagram}
\textbf{(a)}}\right) \\
&  =\left(  \mathfrak{S}_{n}\text{-invariant part of }\mathbf{k}\left[
\operatorname*{SM}\right]  \right)  \ \ \ \ \ \ \ \ \ \ \left(  \text{by
Theorem \ref{thm.PNSym.f-diagram} \textbf{(c)}}\right) \\
&  \cong\left(  \mathfrak{S}_{n}\text{-invariant part of }\mathbf{k}\left[
\operatorname*{J}\right]  \right)
\end{align*}
(since $g:\operatorname*{SM}\rightarrow\operatorname*{J}$ is an $\mathfrak{S}%
_{n}$-equivariant monoid isomorphism by Proposition \ref{prop.PNSym.g-inj},
and thus gives rise to an $\mathfrak{S}_{n}$-equivariant $\mathbf{k}$-algebra
isomorphism $\mathbf{k}\left[  \operatorname*{SM}\right]  \rightarrow
\mathbf{k}\left[  \operatorname*{J}\right]  $).
\end{proof}

\begin{fineprint}
Note that we have not used the associativity of the operation $\ast$ in our
proof of Corollary \ref{cor.PNSym.as-invariant-ring} (except in order to call
$\operatorname*{PNSym}\nolimits_{n}^{\left(  2\right)  }$ a $\mathbf{k}%
$-algebra; but we could have just as well called it a nonassociative
$\mathbf{k}$-algebra until we knew that it is associative). Thus, Corollary
\ref{cor.PNSym.as-invariant-ring} provides a new proof of the associativity of
the operation $\ast$ on $\operatorname*{PNSym}\nolimits_{n}$ (and thus on all
of $\operatorname*{PNSym}$).
\end{fineprint}

\subsubsection{The structure of $\operatorname*{PNSym}\nolimits_{n}^{\left(
2\right)  }$}

We shall next combine Theorem \ref{thm.PNSym.f-diagram} with some known
results about bands (\cite[Appendices A and B]{Brown04}) to uncover the
structure of the $\mathbf{k}$-algebra $\operatorname*{PNSym}\nolimits_{n}%
^{\left(  2\right)  }$.

We recall that a \emph{band} means a semigroup in which every element is
idempotent. It is easy to see that $\operatorname*{SC}$ and $\operatorname*{J}%
$ are bands.

Let us say some general words about finite bands first.

Let $S$ be any finite band. Then, a partial preorder (i.e., reflexive and
transitive relation) $\succcurlyeq$ is defined on $S$ by setting%
\[
\left(  x\succcurlyeq y\right)  \ \Longleftrightarrow\ \left(  x=xyx\right)
.
\]
Like any partial preorder, this preorder $\succcurlyeq$ induces an equivalence
relation $\sim$ defined by%
\[
\left(  x\sim y\right)  \ \Longleftrightarrow\ \left(  x\succcurlyeq y\text{
and }y\succcurlyeq x\right)  .
\]
The equivalence class of an element $x\in S$ is called the \emph{support} of
$x$, denoted $\operatorname*{supp}x$. The set of all $\operatorname*{supp}x$
with $x\in S$ is called the \emph{semilattice of supports} of $S$, and is
denoted $L$. It is a partially ordered set (since the preorder $\succcurlyeq$
descends to a partial order on $L$), and in fact a semilattice (by
\cite[Theorem A.11]{Brown04}). The join operation of this semilattice does in
fact descend from the multiplication on $S$: that is, we have $\left(
\operatorname*{supp}x\right)  \vee\left(  \operatorname*{supp}y\right)
=\operatorname*{supp}\left(  xy\right)  $ for all $x,y\in S$ (by
\cite[(A.3)]{Brown04}). Hence, $L$ (equipped with the join operation) is a
semigroup, and the map $\operatorname*{supp}:S\rightarrow L$ is a surjective
semigroup morphism. By linearizing $\operatorname*{supp}$, we obtain a
surjective non-unital $\mathbf{k}$-algebra morphism $\operatorname*{supp}%
:\mathbf{k}\left[  S\right]  \rightarrow\mathbf{k}\left[  L\right]  $. (In all
cases we are interested in, $S$ is actually a monoid, so all these
$\mathbf{k}$-algebras and morphisms are unital, but the theory works just as
well for proper semigroups.)

The monoid algebra $\mathbf{k}\left[  L\right]  $ is isomorphic to the
direct-power algebra $\mathbf{k}^{L}$ via a $\mathbf{k}$-algebra isomorphism
$\phi:\mathbf{k}\left[  L\right]  \rightarrow\mathbf{k}^{L}$ (see
\cite[\S B.1]{Brown04}). Composing this isomorphism $\phi$ with
$\operatorname*{supp}:\mathbf{k}\left[  S\right]  \rightarrow\mathbf{k}\left[
L\right]  $, we obtain a surjective non-unital $\mathbf{k}$-algebra morphism
$\psi=\phi\circ\operatorname*{supp}:\mathbf{k}\left[  S\right]  \rightarrow
\mathbf{k}^{L}$. It is shown in \cite[Theorem B.1]{Brown04} that the kernel
$\operatorname*{Ker}\psi$ is a nilpotent ideal of $\mathbf{k}\left[  S\right]
$. If $\mathbf{k}$ is a field, this entails that the $\mathbf{k}$-algebra
$\mathbf{k}\left[  S\right]  $ has Jacobson radical (and nilradical) equal to
$\operatorname*{Ker}\psi$ and semisimple quotient isomorphic to $\mathbf{k}%
^{L}$ (since $\mathbf{k}\left[  S\right]  /\operatorname*{Ker}\psi
\cong\operatorname{Im}\psi=\mathbf{k}^{L}$ is clearly semisimple). When $S$ is
a monoid, all these algebras and morphisms are unital.

Applying all this to $S=\operatorname*{SC}$, we recover classical properties
of the face algebra $\mathbf{k}\left[  \operatorname*{SC}\right]  $
(\cite[\S 2.3.3]{Bidiga97}), once we understand what the semilattice of
supports $L$ is. Namely, if $S=\operatorname*{SC}$, then the equivalence
relation $\sim$ on $S$ we have defined above turns out to be precisely the
support-equivalence relation for set compositions. Thus, the semilattice $L$
in this case is naturally isomorphic to the (reversed)\footnote{The reversal
is annoying but necessary: The join operation $\vee$ on the semilattice $S$ is
the meet operation $\wedge$ in our above terminology.} lattice of partitions
of the set $\left[  n\right]  $; the support map $\operatorname*{supp}%
:\operatorname*{SC}\rightarrow L$ essentially turns a set composition into a
set partition by forgetting the order of the blocks. Using the identification
of the descent algebra with the $\mathfrak{S}_{n}$-invariant part of
$\mathbf{k}\left[  \operatorname*{SC}\right]  $, we can furthermore derive
structural properties of the descent algebra from this (\cite[Theorem 3 in the
type-A case]{Solomo76}, see also \cite[\S 3.8.5]{Bidiga97} and \cite[Theorem
1.1]{GarReu89}).

It is not much harder to analyze the case of $S=\operatorname*{J}$ in the same
way. The relevant lemma is the following (easy proof left to the reader):

\begin{lemma}
Let $\left(  \mathbf{A},\mathbf{A}^{\prime}\right)  $ and $\left(
\mathbf{B},\mathbf{B}^{\prime}\right)  $ be two set bicompositions. Then,
$\operatorname*{supp}\left(  \mathbf{A},\mathbf{A}^{\prime}\right)
=\operatorname*{supp}\left(  \mathbf{B},\mathbf{B}^{\prime}\right)  $ if and
only if $\operatorname*{supp}\mathbf{A}=\operatorname*{supp}\mathbf{B}%
=\operatorname*{supp}\mathbf{A}^{\prime}=\operatorname*{supp}\mathbf{B}%
^{\prime}$.
\end{lemma}

Thus, the semilattice of supports $L$ for $S=\operatorname*{J}$ is the same as
for $S=\operatorname*{SC}$, namely the (reversed) lattice of partitions of
$\left[  n\right]  $. The above-mentioned general results about bands then
apply again. We obtain the following:

\begin{theorem}
\label{thm.PNSym.nil}\textbf{(a)} There is a surjective $\mathbf{k}$-algebra
morphism $\psi:\mathbf{k}\left[  \operatorname*{J}\right]  \rightarrow
\mathbf{k}^{L}$, where $L$ is the set of all set partitions of $\left[
n\right]  $. Its kernel $\operatorname*{Ker}\psi$ is nilpotent. Furthermore,
this morphism $\psi$ is $\mathfrak{S}_{n}$-equivariant (i.e., respects the
$\mathfrak{S}_{n}$-module structures on $\mathbf{k}\left[  \operatorname*{J}%
\right]  $ and $\mathbf{k}^{L}$). \medskip

\textbf{(b)} Now assume that $n!$ is invertible in $\mathbf{k}$. Then, there
is a surjective $\mathbf{k}$-algebra morphism $\psi^{\prime}%
:\operatorname*{PNSym}\nolimits_{n}^{\left(  2\right)  }\rightarrow
\mathbf{k}^{P}$, where $P$ is the set of all partitions of $n$. Its kernel
$\operatorname*{Ker}\left(  \psi^{\prime}\right)  $ is nilpotent.
\end{theorem}

\begin{proof}
[Proof idea.]\textbf{(a)} All of this follows from the discussion above the
theorem, except for the last claim about $\mathfrak{S}_{n}$-equivariance,
which is obvious. \medskip

\textbf{(b)} We note the following general fact:

\begin{statement}
\textit{Claim 1:} Let $f:V\rightarrow W$ be a surjective morphism of
representations of $\mathfrak{S}_{n}$. Then, the restriction of $f$ to the
$\mathfrak{S}_{n}$-invariant part of $V$ is a surjective map onto the
$\mathfrak{S}_{n}$-invariant part of $W$.
\end{statement}

\begin{proof}
[Proof of Claim 1.]Let $w$ belong to the $\mathfrak{S}_{n}$-invariant part of
$W$. Then, $w=f\left(  v\right)  $ for some $v\in V$ (since $f$ is
surjective). Consider this $v$, and set $v^{\prime}:=\dfrac{1}{n!}%
\sum\limits_{\sigma\in\mathfrak{S}_{n}}\sigma v$ (which is well-defined, since
$n!$ is invertible in $\mathbf{k}$). Then, $v^{\prime}$ lies in the
$\mathfrak{S}_{n}$-invariant part of $V$, and satisfies
\begin{align*}
f\left(  v^{\prime}\right)   &  =\dfrac{1}{n!}\sum\limits_{\sigma
\in\mathfrak{S}_{n}}\underbrace{f\left(  \sigma v\right)  }_{=\sigma f\left(
v\right)  }=\dfrac{1}{n!}\sum\limits_{\sigma\in\mathfrak{S}_{n}}%
\sigma\underbrace{f\left(  v\right)  }_{=w}\\
&  =\dfrac{1}{n!}\sum\limits_{\sigma\in\mathfrak{S}_{n}}\underbrace{\sigma
w}_{\substack{=w\\\text{(since }w\text{ is }\mathfrak{S}_{n}\text{-invariant)}%
}}=\dfrac{1}{n!}\underbrace{\sum\limits_{\sigma\in\mathfrak{S}_{n}}%
w}_{\substack{=\left\vert \mathfrak{S}_{n}\right\vert w\\=n!w}}=\dfrac{1}%
{n!}n!w=w.
\end{align*}
This shows that $w$ is an image under the restriction of $f$ to the
$\mathfrak{S}_{n}$-invariant part of $V$. This proves the desired
surjectivity, i.e., Claim 1.
\end{proof}

Now, the morphism $\psi$ from part \textbf{(a)} is $\mathfrak{S}_{n}%
$-equivariant, hence is a morphism of representations of $\mathfrak{S}_{n}$.
Thus, it restricts to a morphism $\psi^{\prime}$ from the $\mathfrak{S}_{n}%
$-invariant part of $\mathbf{k}\left[  \operatorname*{J}\right]  $ to the
$\mathfrak{S}_{n}$-invariant part of $\mathbf{k}^{L}$. This restriction
$\psi^{\prime}$ must again be surjective (by Claim 1, since $\psi$ is
surjective). Thus, $\psi^{\prime}$ is a surjective $\mathbf{k}$-algebra
morphism from the $\mathfrak{S}_{n}$-invariant part of $\mathbf{k}\left[
\operatorname*{J}\right]  $ to the $\mathfrak{S}_{n}$-invariant part of
$\mathbf{k}^{L}$.

But the former part is isomorphic to $\operatorname*{PNSym}\nolimits_{n}%
^{\left(  2\right)  }$ (by Corollary \ref{cor.PNSym.as-invariant-ring}),
whereas the latter part is isomorphic to $\mathbf{k}^{P}$ (since it is spanned
by the orbit sums in $\mathbf{k}^{L}$, but there are clearly $\left\vert
P\right\vert $ such orbit sums -- one for each partition of $n$ -- and these
orbit sums are orthogonal idempotents). Thus, Theorem \ref{thm.PNSym.nil}
\textbf{(b)} follows.
\end{proof}

When $\mathbf{k}$ is a field of characteristic $0$, Theorem
\ref{thm.PNSym.nil} \textbf{(b)} shows that the kernel $\operatorname*{Ker}%
\left(  \psi^{\prime}\right)  $ is the Jacobson radical (and the nilradical)
of $\operatorname*{PNSym}\nolimits_{n}^{\left(  2\right)  }$, while
$\mathbf{k}^{P}$ is the semisimple quotient of $\operatorname*{PNSym}%
\nolimits_{n}^{\left(  2\right)  }$. Note that $\mathbf{k}^{P}$ can also be
interpreted as the center of the symmetric group algebra $\mathbf{k}\left[
\mathfrak{S}_{n}\right]  $ when $n!$ is invertible in $\mathbf{k}$.

\begin{remark}
As we pointed out in Remark \ref{rmk.PNSym.AM1}, our Janus monoid
$\operatorname*{J}$ is isomorphic to the Janus monoid $\mathrm{J}\left[
\mathcal{A}\right]  $ of the type-A braid arrangement, as defined in
\cite[\S 1.9.3]{AguMah20}. Thus, our Janus algebra $\mathbf{k}\left[
\operatorname*{J}\right]  $ is isomorphic to the Janus algebra $\mathsf{J}%
\left[  \mathcal{A}\right]  $ defined loc. cit.. Hence, our $\mathbf{k}%
$-algebra $\operatorname*{PNSym}\nolimits_{n}^{\left(  2\right)  }$ is
isomorphic to the $\mathfrak{S}_{n}$-invariant part of the latter. This
invariant part has also been studied by Aguiar and Mahajan \cite[\S 1.12.3]%
{AguMah22} (who have even introduced a $q$-deformation thereof). While their
focus is different from ours, there are some overlaps in the results obtained.
In particular, the $\operatorname*{PNSym}\nolimits_{n}^{\left(  2\right)
}\cong\left(  \operatorname*{PNSym}\nolimits_{n}^{\left(  2\right)  }\right)
^{\operatorname*{op}}$ part of Proposition \ref{prop.PNSym.self-opp} appears
in \cite{AguMah22}, being entirely obvious from the Janus viewpoint (the
isomorphism corresponds to sending $\left(  \mathbf{A},\mathbf{B}\right)
\mapsto\left(  \mathbf{B},\mathbf{A}\right)  $ in the Janus monoid), and parts
of Theorem \ref{thm.PNSym.nil} \textbf{(a)} appear in \cite[\S 1.12.4]%
{AguMah22}. Finally, \cite[Proposition 11.6]{AguMah20} is an analogue of our
Theorem \ref{thm.PNSym.mods} for $\mathcal{A}$-bimonoids instead of connected
graded bialgebras.

Aguiar and Mahajan mostly study a single hyperplane arrangement $\mathcal{A}$
in isolation, so they do not appear to get any results about the external
multiplication and the coproduct on $\operatorname*{PNSym}$. However, it might
be possible to extend their theory to towers of arrangements, and interpret
these operations in geometric terms as well.
\end{remark}

\begin{noncompile}
OLD version:

We finish with some cryptic remarks connecting $\operatorname*{PNSym}$ with
Aguiar's and Mahajan's theory of Hopf monoids.

In \cite[\S 1.9.3]{AguMah20}, Aguiar and Mahajan define the \emph{Janus
algebra} $\mathrm{J}\left[  \mathcal{A}\right]  $ of a hyperplane arrangement
$\mathcal{A}$. When $\mathcal{A}$ is the braid arrangement in $\mathbb{R}^{n}%
$, this Janus algebra $\mathrm{J}\left[  \mathcal{A}\right]  $ has a basis
indexed by pairs $\left(  U,V\right)  $, where $U$ and $V$ are two set
compositions (i.e., ordered set partitions) of $\left[  n\right]  $ that
define the same (unordered) set partition (i.e., the blocks of $U$ are the
blocks of $V$, perhaps in a different order)\footnote{In the language of
hyperplane arrangements, set compositions are called \emph{faces}, and our
pairs $\left(  U,V\right)  $ of set compositions are called \emph{bifaces}.
Thus the name \textquotedblleft Janus algebra\textquotedblright.}. Such pairs
$\left(  U,V\right)  $ can equivalently be viewed as pairs $\left(
U,\sigma\right)  $, where $U$ is a set composition of $\left[  n\right]  $
into $k$ blocks (for some $k\in\mathbb{N}$), and where $\sigma\in
\mathfrak{S}_{k}$ is a permutation (the one that permutes the blocks of $U$ to
obtain $V$). However, viewing them as pairs $\left(  U,V\right)  $ exposes a
symmetry that would be hidden in the $\left(  U,\sigma\right)  $ formulation.
Note that \cite[Proposition 11.6]{AguMah20} is an analogue of our Theorem
\ref{thm.PNSym.mods} for $\mathcal{A}$-bimonoids instead of connected graded bialgebras.

Let us continue with the case when $\mathcal{A}$ is the braid arrangement in
$\mathbb{R}^{n}$. The symmetric group $\mathfrak{S}_{n}$ acts on the Janus
algebra $\mathrm{J}\left[  \mathcal{A}\right]  $ by the rule $\tau\cdot\left(
U,V\right)  =\left(  \tau\left(  U\right)  ,\tau\left(  V\right)  \right)  $
(for any $\tau\in\mathfrak{S}_{n}$), or (in the $\left(  U,\sigma\right)  $
formulation) by the rule $\tau\cdot\left(  U,\sigma\right)  =\left(
\tau\left(  U\right)  ,\sigma\right)  $. The invariant space (more precisely,
$\mathbf{k}$-module) $\mathrm{J}\left[  \mathcal{A}\right]  ^{\mathfrak{S}%
_{n}}$ under this action is a subalgebra of the Janus algebra $\mathrm{J}%
\left[  \mathcal{A}\right]  $, called the \emph{invariant Janus algebra}. It
has a basis formed by the orbit sums, which are in bijection with the
mopiscotions $\left(  \alpha,\sigma\right)  $ satisfying $\left\vert
\alpha\right\vert =n$ (since an orbit of a set composition of $\left[
n\right]  $ under the symmetric group $\mathfrak{S}_{n}$ is essentially an
integer composition of $n$, recording the sizes of the blocks).

Is $\mathrm{J}\left[  \mathcal{A}\right]  ^{\mathfrak{S}_{n}}$ isomorphic (as
an algebra) to the $n$-th graded component of $\operatorname*{PNSym}$ equipped
with the internal multiplication $\ast$ ? Yes! If so, then it stands to reason
that results by Aguiar and Mahajan should be translatable into properties of
$\operatorname*{PNSym}$, at least as far as the internal multiplication is
concerned\footnote{The external multiplication and the coproduct are less
likely to arise in this way, since they relate different graded components and
thus should correspond not to a single braid arrangement but rather a whole
family thereof.}. The invariant Janus algebra $\mathrm{J}\left[
\mathcal{A}\right]  ^{\mathfrak{S}_{n}}$ (and even a $q$-deformation thereof)
is studied by Aguiar and Mahajan in \cite[\S 1.12.3]{AguMah22}, although
establishing a dictionary between the geometric language of \cite{AguMah22}
and our combinatorial one is a rather daunting task.
\end{noncompile}

\subsection{Questions}

Much remains to be understood about $\operatorname*{PNSym}$. Many
combinatorial Hopf algebras can be embedded into algebras of noncommutative
formal power series (i.e., completions of free algebras) such that the
comultiplication $\Delta$ is an \textquotedblleft alphabet
doubling\textquotedblright\ operation. For instance, $\operatorname*{NSym}$
has such an embedding\footnote{This is explained, e.g., in \cite[\S 8.1]%
{GriRei}: Namely, \cite[Corollary 8.1.14(b)]{GriRei} embeds
$\operatorname*{NSym}$ in the Hopf algebra $\operatorname*{FQSym}$, whereas
\cite[(8.1.3)]{GriRei} embeds $\operatorname*{FQSym}$ in the algebra
$\mathbf{k}\left\langle \left\langle X_{1},X_{2},X_{3},\ldots\right\rangle
\right\rangle $ of noncommutative formal power series.}.

\begin{question}
Does $\operatorname*{PNSym}$ have such an embedding as well?
\end{question}

We can ask about some other features that certain combinatorial Hopf algebras have:

\begin{question}
Does $\operatorname*{PNSym}$ have a categorification (e.g., a presentation as
$K_{0}$ of some category)?
\end{question}

\begin{question}
Is there a cancellation-free formula for the antipode of
$\operatorname*{PNSym}$ ?
\end{question}

\begin{question}
Is $\operatorname*{PNSym}$ isomorphic to the dual of the commutative
combinatorial Hopf algebra arising from \cite[\S 3.8.3]{HiNoTh06}?
\end{question}

\begin{question}
What are the primitive elements of $\operatorname*{PNSym}$ ? We recall that
the primitive elements of $\operatorname*{NSym}$ correspond to the \emph{Lie
quasi-idempotents} in the descent algebra (\cite[Corollary 5.17]{ncsf1}), and
include several renowned families, such as the noncommutative power sums of
the first two kinds (\cite[\S 5.2]{ncsf1}, \cite[Exercises 5.4.5 and
5.4.12]{GriRei}) and the third kind (\cite[Definition 5.26]{ncsf2}) and the
Klyachko elements (\cite[\S 6.2]{ncsf2}). What is the meaning of primitive
elements of $\operatorname*{PNSym}$ ? Do some of them act as quasi-idempotents
on every connected graded bialgebra $H$ ?
\end{question}

\begin{question}
Is the dual of $\operatorname*{PNSym}$ a polynomial ring? (For comparison, the
dual of $\operatorname*{NSym}$ is $\operatorname*{QSym}$, which is a
polynomial ring by a result of Hazewinkel \cite[Theorem 6.4.3]{GriRei}. Note
that the dual of $\operatorname*{PNSym}$ is a commutative connected graded
$\mathbf{k}$-bialgebra, and thus a polynomial ring whenever $\mathbf{k}$ is a
field of characteristic $0$ by Leray's theorem \cite[Remark 1.7.30
(a)]{GriRei}. The interesting question is what happens for arbitrary
$\mathbf{k}$.)
\end{question}

\begin{question}
Let us rename the Janus monoid $\operatorname*{J}$ as $\operatorname*{J}%
\nolimits_{n}$ to stress its dependence on $n$. Can the Janus algebras
$\mathbf{k}\left[  \operatorname*{J}\nolimits_{n}\right]  $ for varying
$n\in\mathbb{N}$ be combined into a combinatorial Hopf algebra on the
$\mathbf{k}$-module $\bigoplus_{n\in\mathbb{N}}\mathbf{k}\left[
\operatorname*{J}\nolimits_{n}\right]  $ equipped with a second
multiplication, extending $\operatorname*{PNSym}$? (The second multiplication
would be coming from the $\mathbf{k}\left[  \operatorname*{J}\nolimits_{n}%
\right]  $'s, while the first multiplication and the comultiplication would
extend those on $\operatorname*{PNSym}$.) Such a Hopf algebra would probably
be a Janus analogue of the $NCQSym^{\ast}$ from \cite[\S 5.2]{BerZab09}.
\end{question}

Sarah Brauner suggests another question: It is known that for any
$n\in\mathbb{N}$, the span of the elements $\sum_{\substack{\alpha\text{ is a
composition of }n;\\\ell\left(  \alpha\right)  =k}}\mathbf{H}_{\alpha}$ over
all $k\in\left\{  0,1,\ldots,n\right\}  $ is a commutative subalgebra of the
$\mathbf{k}$-algebra $\operatorname*{NSym}\nolimits_{n}^{\left(  2\right)  }$;
this subalgebra is known as the \emph{Eulerian subalgebra} (this appears,
e.g., in \cite[\S 4.2]{Schocker} in the language of the descent algebra). Is
there an analogous \textquotedblleft Eulerian\textquotedblright\ subalgebra of
$\operatorname*{PNSym}\nolimits_{n}^{\left(  2\right)  }$ ? The following
appears to work for small $n$:

\begin{question}
Fix $n\in\mathbb{N}$, and set $\mathfrak{e}_{k,\sigma}:=\sum_{\substack{\alpha
\text{ is a composition of }n;\\\ell\left(  \alpha\right)  =k}}F_{\alpha
,\sigma}\in\operatorname*{PNSym}\nolimits_{n}$ for each $k\in\mathbb{N}$ and
each $\sigma\in\mathfrak{S}_{k}$. Is the span of these $\mathfrak{e}%
_{k,\sigma}$ a $\mathbf{k}$-subalgebra of $\operatorname*{PNSym}%
\nolimits_{n}^{\left(  2\right)  }$ (that is, closed under $\ast$)?
\end{question}

The answer is positive for all $n\leq4$, but this subalgebra is not
commutative for $n=3$ already.

\section{\label{sec.idcheck}Application to identity checking}

In this last section, we shall outline a way in which the above results can be
used. For an example, let us prove that every connected graded bialgebra $H$
satisfies%
\begin{equation}
\left(  p_{1}\star p_{2}-p_{2}\star p_{1}\right)  ^{5}=0
\label{eq.idcheck.p12}%
\end{equation}
(where the $5$-th power is taken with respect to the composition $\circ$). To
prove this equality, we can rewrite the left-hand side as $\left(  p_{\left(
1,2\right)  ,\operatorname*{id}}-p_{\left(  2,1\right)  ,\operatorname*{id}%
}\right)  ^{5}$ (where $\operatorname*{id}$ is the identity permutation in
$\mathfrak{S}_{2}$), and expand this (using Theorem \ref{thm.sol-mac-1} and
Proposition \ref{prop.sol-mac-0}) as a $\mathbb{Z}$-linear combination of
$p_{\alpha,\sigma}$'s. According to Theorem \ref{thm.pas-lin-ind}, for the
above equality (\ref{eq.idcheck.p12}) to hold, this $\mathbb{Z}$-linear
combination has to be trivial (i.e., all coefficients must be zeroes), which
we can directly check. Alternatively, using Theorem \ref{thm.PNSym.mods}, we
can reduce the equality (\ref{eq.idcheck.p12}) to the equality%
\[
\left(  F_{\left(  1,2\right)  ,\operatorname*{id}}-F_{\left(  2,1\right)
,\operatorname*{id}}\right)  ^{\ast5}=0\ \ \ \ \ \ \ \ \ \ \text{in
}\operatorname*{PNSym}%
\]
(where the \textquotedblleft$\ast5$\textquotedblright\ exponent means a $5$-th
power with respect to the internal product $\ast$). This equality is
straightforward to check using the definition of $\ast$.

Likewise, any identity such as (\ref{eq.idcheck.p12}) (that is, any identity
whose both sides are formed from the maps $p_{\alpha,\sigma}$ by addition,
convolution and composition) can be proved mechanically using computations
inside $\operatorname*{PNSym}$. This approach can be useful even if we don't
end up making the computations. For example, it shows that when proving an
identity such as (\ref{eq.idcheck.p12}) (with integer coefficients), it
suffices to prove it for $\mathbf{k}=\mathbb{Q}$ (since the Hopf algebra
$\operatorname*{PNSym}$ with all its structures is defined over $\mathbb{Z}$
and is free as a $\mathbf{k}$-module, so that its $\mathbb{Z}$-version embeds
naturally into its $\mathbb{Q}$-version). Moreover, it suffices to prove it in
the case when all graded components $H_{n}$ of $H$ are finite-dimensional
$\mathbf{k}$-vector spaces (thanks to Theorem \ref{thm.pas-lin-ind}
\textbf{(b)}). This makes a number of methods available (graded duals,
coradical filtration\footnote{Both of these methods have been used in proving
such identities (see, e.g., \cite{AguLau14}, \cite[\S 3]{Pang18}).}) that
could not be used in the a-priori generality of a connected graded bialgebra
over an arbitrary commutative ring. While this is perhaps not very unexpected,
it is reassuring and helpful.

This all can be applied to a slightly wider class of identities. Indeed, while
in (\ref{eq.idcheck.p12}) we don't allow the use of the identity map
$\operatorname*{id}\nolimits_{H}:H\rightarrow H$, we can actually express
$\operatorname*{id}\nolimits_{H}$ as the infinite sum $p_{0}+p_{1}%
+p_{2}+\cdots$. Infinite sums are not allowed in (\ref{eq.idcheck.p12}), but
we can replace them by finite partial sums if we restrict ourselves to the
submodule $H_{0}+H_{1}+\cdots+H_{k}$ of $H$. For example, if we want to prove
the equality%
\[
\left(  p_{1}\star\operatorname*{id}\nolimits_{H}-\,2\operatorname*{id}%
\nolimits_{H}\right)  \circ\left(  p_{1}\star\operatorname*{id}\nolimits_{H}%
\right)  ^{2}=0\ \ \ \ \ \ \ \ \ \ \text{on }H_{2}%
\]
for any connected graded bialgebra $H$ (this identity is part of \cite[Theorem
1.4 \textbf{(b)}]{aphae}), then we can replace each $\operatorname*{id}%
\nolimits_{H}$ by $p_{0}+p_{1}+p_{2}$, which renders the equality amenable to
our $\operatorname*{PNSym}$-method. Likewise, the antipode $S$ of a connected
graded Hopf algebra $H$ is not directly of a form supported by our method, but
can be written as an infinite sum%
\[
\sum_{k\in\mathbb{N}}\left(  -1\right)  ^{k}\left(  p_{1}+p_{2}+p_{3}%
+\cdots\right)  ^{\star k}=\sum_{k\in\mathbb{N}}\left(  -1\right)  ^{k}%
\sum_{\alpha\in\mathbb{N}^{k}\text{ is a composition}}p_{\alpha
,\operatorname*{id}},
\]
which -- when restricted to a submodule $H_{0}+H_{1}+\cdots+H_{k}$ -- can be
replaced by a finite partial sum. Thus, claims such as Aguiar's and Lauve's
result \textquotedblleft$\left(  S^{2}-\operatorname*{id}\right)  ^{k}=0$ on
$H_{k}$ for any connected graded Hopf algebra $H$\textquotedblright\ (see
\cite[Proposition 7]{AguLau14}) can (for a fixed $k\in\mathbb{N}$) be checked
using the $\operatorname*{PNSym}$-method (although this particular claim has
been extended and proved in a much more elementary way in
\cite{antipode-squared}). I believe that the field of Hopf algebra identities
is vast and mostly unexplored so far. \medskip

Using Theorem \ref{thm.pas-lin-ind2}, we can obtain a similar method for
checking identities for general (not just connected) graded bialgebras. Such
identities can be viewed as equalities in the bialgebra
$\operatorname*{WPNSym}\nolimits^{\prime}$ (constructed in our second proof of
Theorem \ref{thm.PNSym.alg}) instead of $\operatorname*{PNSym}$. Incidentally,
this shows that any such identity that holds for all graded $\mathbf{k}$-Hopf
algebras will necessarily hold for all graded $\mathbf{k}$-bialgebras as well.

\begin{question}
Is there a similar method for checking identities between maps $H^{\otimes
k}\rightarrow H^{\otimes\ell}$ as opposed to only maps $H\rightarrow H$ ?
\end{question}

The same problem without the grading has been solved by categorical methods in
Pirashvili's 2002 paper \cite{Pirash02}. \medskip

We end this section with an open question that generalizes
(\ref{eq.idcheck.p12}):

\begin{question}
Let $i,j\in\mathbb{N}$. What is the smallest integer $k\left(  i,j\right)
\in\mathbb{N}$ for which every connected graded bialgebra $H$ satisfies%
\[
\left(  p_{i}\star p_{j}-p_{j}\star p_{i}\right)  ^{k\left(  i,j\right)  }=0
\]
(where the power is with respect to composition)? In other words, what is the
smallest integer $k\left(  i,j\right)  \in\mathbb{N}$ for which
\[
\left(  F_{\left(  i,j\right)  ,\operatorname*{id}}-F_{\left(  j,i\right)
,\operatorname*{id}}\right)  ^{\ast k\left(  i,j\right)  }%
=0\ \ \ \ \ \ \ \ \ \ \text{in }\operatorname*{PNSym}\nolimits^{\left(
2\right)  }\text{ ?}%
\]

\end{question}

Some known values (computed using SageMath) are%
\begin{align*}
k\left(  0,j\right)   &  =1,\ \ \ \ \ \ \ \ \ \ k\left(  i,i\right)
=1,\ \ \ \ \ \ \ \ \ \ k\left(  1,2\right)  =5,\ \ \ \ \ \ \ \ \ \ k\left(
1,3\right)  =7,\\
k\left(  1,4\right)   &  =9,\ \ \ \ \ \ \ \ \ \ k\left(  1,5\right)
=11,\ \ \ \ \ \ \ \ \ \ k\left(  1,6\right)  =13,\ \ \ \ \ \ \ \ \ \ k\left(
2,3\right)  =9,\\
k\left(  2,4\right)   &  =9.
\end{align*}
(Of course, $k\left(  i,j\right)  =k\left(  j,i\right)  $ for all $i$ and
$j$.) It is striking that all these known values are odd.

\begin{noncompile}
In this last section, we shall outline two ways in which the above results can
be used. The first application -- known at least since Reutenauer's
\cite{Reuten93} -- is a proof of Solomon's Mackey formula for the symmetric
group; we briefly recall it here not to falsely claim any novelty, but to
suggest possible variants. The second -- which motivated most of this work --
is a method of checking identities in Hopf algebras.
\end{noncompile}

\appendix

\section{\label{sec.pas-proof}Appendix: Rigorous proof of Proposition
\ref{prop.mopis.reduce}}

\begin{fineprint}
We proved Proposition \ref{prop.mopis.reduce} using combinatorial handwaving;
let us now give a rigorous (if rather cumbersome and tiring) proof. Arguably,
this is just the above informal proof, rewritten without Sweedler notation and
handwaving, but the rewrite has turned out to be surprisingly nontrivial.

\begin{proof}
[Rigorous proof of Proposition \ref{prop.mopis.reduce}.]Forget that we fixed
$\left(  \alpha,\sigma\right)  $ and $\left(  \beta,\tau\right)  $.

In the following, the \textquotedblleft hat\textquotedblright%
\ symbol\ $\ \widehat{}$\ \ over an entry of a list will signify that this
entry must be omitted from the list. For instance, \textquotedblleft$\left(
\alpha_{1},\alpha_{2},\ldots,\widehat{\alpha_{q}},\ldots,\alpha_{k}\right)
$\textquotedblright\ denotes the list $\left(  \alpha_{1},\alpha_{2}%
,\ldots,\alpha_{k}\right)  $ without its $q$-th entry.

For each weak mopiscotion $\left(  \alpha,\sigma\right)  $ with $\alpha
=\left(  \alpha_{1},\alpha_{2},\ldots,\alpha_{k}\right)  \in\mathbb{N}^{k}$
and $\sigma\in\mathfrak{S}_{k}$, and for each index $q\in\left[  k\right]  $
that satisfies $\alpha_{q}=0$, we define a weak mopiscotion
$\operatorname*{del}\left(  \alpha,\sigma,q\right)  $ as follows:

\begin{itemize}
\item Let $\alpha^{\prime}$ denote the weak composition $\left(  \alpha
_{1},\alpha_{2},\ldots,\widehat{\alpha_{q}},\ldots,\alpha_{k}\right)  $. This
is the $\left(  k-1\right)  $-tuple obtained from $\alpha$ by removing the
$q$-th entry, which is $\alpha_{q}=0$. Note that any $0$'s that appear in
$\alpha$ in positions other than the $q$-th are retained in $\alpha^{\prime}$.

\item Let $\sigma^{\prime}\in\mathfrak{S}_{k-1}$ be the standardization of the
list $\left(  \sigma\left(  1\right)  ,\sigma\left(  2\right)  ,\ldots
,\widehat{\sigma\left(  q\right)  },\ldots,\sigma\left(  k\right)  \right)  $.

\item Then, we define $\operatorname*{del}\left(  \alpha,\sigma,q\right)  $ to
be the weak mopiscotion $\left(  \alpha^{\prime},\sigma^{\prime}\right)  $.
\end{itemize}

We call $\operatorname*{del}\left(  \alpha,\sigma,q\right)  $ the
\emph{deletion} of $q$ from $\left(  \alpha,\sigma\right)  $.

We can view this deletion as a single step from the weak mopiscotion $\left(
\alpha,\sigma\right)  $ to its reduction $\operatorname*{red}\left(
\alpha,\sigma\right)  $. The formal reason for this is the following lemma:

\begin{statement}
\textit{Claim 1:} Let $\left(  \alpha,\sigma\right)  $ be a weak mopiscotion
with $\alpha=\left(  \alpha_{1},\alpha_{2},\ldots,\alpha_{k}\right)
\in\mathbb{N}^{k}$. Let $q\in\left[  k\right]  $ be such that $\alpha_{q}=0$.
Let $\left(  \alpha^{\prime},\sigma^{\prime}\right)  :=\operatorname*{del}%
\left(  \alpha,\sigma,q\right)  $. Then,%
\[
\operatorname*{red}\left(  \alpha,\sigma\right)  =\operatorname*{red}\left(
\alpha^{\prime},\sigma^{\prime}\right)  .
\]
In other words, deleting $q$ from $\left(  \alpha,\sigma\right)  $ and then
reducing the resulting weak mopiscotion produces the same result as reducing
$\left(  \alpha,\sigma\right)  $ right away.
\end{statement}

\begin{proof}
[Proof of Claim 1.]This is one of the kind of proofs that give combinatorics a
bad name; but we have little choice. Set
\[
\left(  \beta,\tau\right)  :=\operatorname*{red}\left(  \alpha,\sigma\right)
\ \ \ \ \ \ \ \ \ \ \text{and}\ \ \ \ \ \ \ \ \ \ \left(  \gamma
,\omega\right)  :=\operatorname*{red}\left(  \alpha^{\prime},\sigma^{\prime
}\right)  .
\]
Thus, we must show that $\left(  \beta,\tau\right)  =\left(  \gamma
,\omega\right)  $.

Both compositions $\beta$ and $\gamma$ are obtained from $\alpha$ by removing
all zeroes (either all at once in the case of $\beta$, or starting with
$\alpha_{q}$ and then the rest in the case of $\gamma$). Hence, $\beta=\gamma
$. It remains to show that $\tau=\omega$.

\begin{noncompile}
Let $h\in\mathbb{N}$ be such that $\beta=\gamma\in\mathbb{N}^{h}$. Then,
$\tau,\omega\in\mathfrak{S}_{h}$ (since $\left(  \beta,\tau\right)  $ and
$\left(  \gamma,\omega\right)  $ are mopiscotions).
\end{noncompile}

Let $\left(  r_{1}<r_{2}<\cdots<r_{k-1}\right)  $ be the list of all elements
of $\left[  k\right]  \setminus\left\{  q\right\}  $, in increasing order.
Thus,%
\[
\left[  k\right]  \setminus\left\{  q\right\}  =\left\{  r_{1}<r_{2}%
<\cdots<r_{k-1}\right\}
\]
and%
\begin{equation}
\left(  1<2<\cdots<\widehat{q}<\cdots<k\right)  =\left(  r_{1}<r_{2}%
<\cdots<r_{k-1}\right)  \label{pf.prop.mopis.reduce.rilist}%
\end{equation}
(an equality of strictly increasing $\left(  k-1\right)  $-tuples).
Explicitly, we have $r_{i}=i+\left[  i\geq q\right]  $ for each $i\in\left[
k-1\right]  $, where we use the Iverson bracket notation ($\left[
\mathcal{A}\right]  $ denotes the truth value of a statement $\mathcal{A}$).

By the definition of $\operatorname*{del}\left(  \alpha,\sigma,q\right)  $, we
have $\alpha^{\prime}=\left(  \alpha_{1},\alpha_{2},\ldots,\widehat{\alpha
_{q}},\ldots,\alpha_{k}\right)  $. Let us also denote this $\left(
k-1\right)  $-tuple $\alpha^{\prime}$ as $\alpha^{\prime}=\left(  \alpha
_{1}^{\prime},\alpha_{2}^{\prime},\ldots,\alpha_{k-1}^{\prime}\right)  $.
Thus,%
\begin{align*}
\left(  \alpha_{1}^{\prime},\alpha_{2}^{\prime},\ldots,\alpha_{k-1}^{\prime
}\right)   &  =\alpha^{\prime}=\left(  \alpha_{1},\alpha_{2},\ldots
,\widehat{\alpha_{q}},\ldots,\alpha_{k}\right) \\
&  =\left(  \alpha_{r_{1}},\alpha_{r_{2}},\ldots,\alpha_{r_{k-1}}\right)
\ \ \ \ \ \ \ \ \ \ \left(  \text{by (\ref{pf.prop.mopis.reduce.rilist}%
)}\right)  .
\end{align*}
In other words,%
\begin{equation}
\alpha_{i}^{\prime}=\alpha_{r_{i}}\ \ \ \ \ \ \ \ \ \ \text{for each }%
i\in\left[  k-1\right]  . \label{pf.prop.mopis.reduce.aap}%
\end{equation}

Let $\left(  j_{1}<j_{2}<\cdots<j_{h}\right)  $ be the list of all elements
$i$ of $\left[  k-1\right]  $ satisfying $\alpha_{i}^{\prime}\neq0$, in
increasing order. Then, by the definition of a reduction, $\omega$ is the
standardization of $\left(  \sigma^{\prime}\left(  j_{1}\right)
,\sigma^{\prime}\left(  j_{2}\right)  ,\ldots,\sigma^{\prime}\left(
j_{h}\right)  \right)  $ (since $\left(  \gamma,\omega\right)
=\operatorname*{red}\left(  \alpha^{\prime},\sigma^{\prime}\right)  $). Hence,
by the definition of standardization, we know that for every two elements $a$
and $b$ of $\left[  h\right]  $ satisfying $a<b$, we have the equivalence%
\begin{equation}
\left(  \omega\left(  a\right)  <\omega\left(  b\right)  \right)
\ \Longleftrightarrow\ \left(  \sigma^{\prime}\left(  j_{a}\right)  \leq
\sigma^{\prime}\left(  j_{b}\right)  \right)  .
\label{pf.prop.mopis.reduce.equiv1}%
\end{equation}

Recall that $\left(  j_{1}<j_{2}<\cdots<j_{h}\right)  $ is the list of all
elements $i$ of $\left[  k-1\right]  $ satisfying $\alpha_{i}^{\prime}\neq0$.
Thus, the number of these elements is $h$. In other words, the $\left(
k-1\right)  $-tuple $\alpha^{\prime}$ has exactly $h$ nonzero entries. Hence,
the $k$-tuple $\alpha$ has exactly $h$ nonzero entries, too (since the
$\left(  k-1\right)  $-tuple $\alpha^{\prime}$ is obtained from $\alpha$ by
removing the zero entry $\alpha_{q}=0$, and thus has the same nonzero entries
as $\alpha$).

Next, we observe that the chain of inequalities $r_{j_{1}}<r_{j_{2}}%
<\cdots<r_{j_{h}}$ holds, since it is a sub-chain of $r_{1}<r_{2}%
<\cdots<r_{k-1}$ (because the subscripts satisfy $j_{1}<j_{2}<\cdots<j_{h}$).

Recall that $\left(  j_{1}<j_{2}<\cdots<j_{h}\right)  $ is the list of all
elements $i$ of $\left[  k-1\right]  $ satisfying $\alpha_{i}^{\prime}\neq0$.
Thus, for each $i\in\left\{  j_{1}<j_{2}<\cdots<j_{h}\right\}  $, we have
$\alpha_{i}^{\prime}\neq0$ and therefore $\alpha_{r_{i}}\neq0$ (since
(\ref{pf.prop.mopis.reduce.aap}) yields $\alpha_{r_{i}}=\alpha_{i}^{\prime
}\neq0$). In other words, the numbers $\alpha_{r_{j_{1}}},\alpha_{r_{j_{2}}%
},\ldots,\alpha_{r_{j_{h}}}$ are all nonzero. Thus, $\alpha_{r_{j_{1}}}%
,\alpha_{r_{j_{2}}},\ldots,\alpha_{r_{j_{h}}}$ are $h$ nonzero entries of
$\alpha$ (all occupying different positions in $\alpha$, because $r_{j_{1}%
}<r_{j_{2}}<\cdots<r_{j_{h}}$). There cannot be any further nonzero entries of
$\alpha$ besides these (since $\alpha$ has exactly $h$ nonzero entries). Thus,
$\alpha_{r_{j_{1}}},\alpha_{r_{j_{2}}},\ldots,\alpha_{r_{j_{h}}}$ are
\textbf{all} the nonzero entries of $\alpha$, listed from left to right (since
$r_{j_{1}}<r_{j_{2}}<\cdots<r_{j_{h}}$).

In other words, $\left(  r_{j_{1}}<r_{j_{2}}<\cdots<r_{j_{h}}\right)  $ is the
list of all elements $i$ of $\left[  k\right]  $ satisfying $\alpha_{i}\neq0$,
in increasing order.

Therefore, by the definition of reduction, $\tau$ is the standardization of
$\left(  \sigma\left(  r_{j_{1}}\right)  ,\sigma\left(  r_{j_{2}}\right)
,\ldots,\sigma\left(  r_{j_{h}}\right)  \right)  $ (since $\left(  \beta
,\tau\right)  =\operatorname*{red}\left(  \alpha,\sigma\right)  $). Hence, by
the definition of standardization, we know that for every two elements $a$ and
$b$ of $\left[  h\right]  $ satisfying $a<b$, we have the equivalence%
\begin{equation}
\left(  \tau\left(  a\right)  <\tau\left(  b\right)  \right)
\ \Longleftrightarrow\ \left(  \sigma\left(  r_{j_{a}}\right)  \leq
\sigma\left(  r_{j_{b}}\right)  \right)  . \label{pf.prop.mopis.reduce.equiv2}%
\end{equation}

Finally, recall that $\left(  \alpha^{\prime},\sigma^{\prime}\right)
=\operatorname*{del}\left(  \alpha,\sigma,q\right)  $. Hence, the definition
of deletion yields that $\sigma^{\prime}$ is the standardization of $\left(
\sigma\left(  1\right)  ,\sigma\left(  2\right)  ,\ldots,\widehat{\sigma
\left(  q\right)  },\ldots,\sigma\left(  k\right)  \right)  $. In other words,
$\sigma^{\prime}$ is the standardization of $\left(  \sigma\left(
r_{1}\right)  ,\sigma\left(  r_{2}\right)  ,\ldots,\sigma\left(
r_{k-1}\right)  \right)  $ (since (\ref{pf.prop.mopis.reduce.rilist}) shows
that $\left(  \sigma\left(  1\right)  ,\sigma\left(  2\right)  ,\ldots
,\widehat{\sigma\left(  q\right)  },\ldots,\sigma\left(  k\right)  \right)
=\left(  \sigma\left(  r_{1}\right)  ,\sigma\left(  r_{2}\right)
,\ldots,\sigma\left(  r_{k-1}\right)  \right)  $). Hence, for every two
elements $a$ and $b$ of $\left[  k-1\right]  $ satisfying $a<b$, we have the
equivalence%
\begin{equation}
\left(  \sigma^{\prime}\left(  a\right)  <\sigma^{\prime}\left(  b\right)
\right)  \ \Longleftrightarrow\ \left(  \sigma\left(  r_{a}\right)  \leq
\sigma\left(  r_{b}\right)  \right)  \label{pf.prop.mopis.reduce.equiv3}%
\end{equation}
(by the definition of standardization).

Now, for any two elements $a$ and $b$ of $\left[  h\right]  $ satisfying
$a<b$, we have the following chain of equivalences:%
\begin{align*}
\left(  \omega\left(  a\right)  <\omega\left(  b\right)  \right)  \  &
\Longleftrightarrow\ \left(  \sigma^{\prime}\left(  j_{a}\right)  \leq
\sigma^{\prime}\left(  j_{b}\right)  \right)  \ \ \ \ \ \ \ \ \ \ \left(
\text{by (\ref{pf.prop.mopis.reduce.equiv1})}\right) \\
&  \Longleftrightarrow\ \left(  \sigma^{\prime}\left(  j_{a}\right)
<\sigma^{\prime}\left(  j_{b}\right)  \right) \\
&  \ \ \ \ \ \ \ \ \ \ \ \ \ \ \ \ \ \ \ \ \left(
\begin{array}
[c]{c}%
\text{since }a<b\text{ entails }j_{a}<j_{b}\\
\text{(because }j_{1}<j_{2}<\cdots<j_{h}\text{) and}\\
\text{thus }j_{a}\neq j_{b}\text{ and thus }\sigma^{\prime}\left(
j_{a}\right)  \neq\sigma^{\prime}\left(  j_{b}\right) \\
\text{(since }\sigma^{\prime}\text{ is a permutation)}%
\end{array}
\right) \\
&  \Longleftrightarrow\ \left(  \sigma\left(  r_{j_{a}}\right)  \leq
\sigma\left(  r_{j_{b}}\right)  \right)  \ \ \ \ \ \ \ \ \ \ \left(
\begin{array}
[c]{c}%
\text{by (\ref{pf.prop.mopis.reduce.equiv3})}\\
\text{(applied to }j_{a}\text{ and }j_{b}\text{ instead of }a\text{ and
}b\text{),}\\
\text{since }a<b\text{ entails }j_{a}<j_{b}\\
\text{(because }j_{1}<j_{2}<\cdots<j_{h}\text{)}%
\end{array}
\right) \\
&  \Longleftrightarrow\ \left(  \tau\left(  a\right)  <\tau\left(  b\right)
\right)  \ \ \ \ \ \ \ \ \ \ \left(  \text{by
(\ref{pf.prop.mopis.reduce.equiv2})}\right) \\
&  \Longleftrightarrow\ \left(  \tau\left(  a\right)  \leq\tau\left(
b\right)  \right)  \ \ \ \ \ \ \ \ \ \ \left(
\begin{array}
[c]{c}%
\text{since }a<b\text{ entails }a\neq b\text{ and}\\
\text{thus }\tau\left(  a\right)  \neq\tau\left(  b\right)  \text{ (since
}\tau\text{ is a permutation)}%
\end{array}
\right)  .
\end{align*}
This entails that $\omega$ is the standardization of the list $\left(
\tau\left(  1\right)  ,\ \tau\left(  2\right)  ,\ \ldots,\ \tau\left(
h\right)  \right)  $. But the latter standardization is just $\tau$ itself
(since $\tau$ is a permutation of $\left[  h\right]  $). Hence, we have proved
that $\omega$ is just $\tau$. In other words, $\tau=\omega$. Combining this
with $\beta=\gamma$, we obtain $\left(  \beta,\tau\right)  =\left(
\gamma,\omega\right)  $. As explained, this proves Claim 1.
\end{proof}

We will need one further combinatorial lemma about deletion:

\begin{statement}
\textit{Claim 2:} Let $\left(  \alpha,\sigma\right)  $ be a weak mopiscotion
with $\alpha=\left(  \alpha_{1},\alpha_{2},\ldots,\alpha_{k}\right)
\in\mathbb{N}^{k}$. Let $q\in\left[  k\right]  $ be such that $\alpha_{q}=0$.
Let $\left(  \beta,\tau\right)  :=\operatorname*{del}\left(  \alpha
,\sigma,q\right)  $ and $r:=\sigma\left(  q\right)  $. Write the $\left(
k-1\right)  $-tuple $\beta$ as $\beta=\left(  \beta_{1},\beta_{2},\ldots
,\beta_{k-1}\right)  $.

Let $\left(  x_{1},x_{2},\ldots,x_{k}\right)  $ be a $k$-tuple of arbitrary
objects. Let $\left(  y_{1},y_{2},\ldots,y_{k-1}\right)  :=\left(  x_{1}%
,x_{2},\ldots,\widehat{x_{r}},\ldots,x_{k}\right)  $. Then,%
\begin{equation}
\left(  x_{\sigma\left(  1\right)  },x_{\sigma\left(  2\right)  }%
,\ldots,x_{\sigma\left(  q-1\right)  }\right)  =\left(  y_{\tau\left(
1\right)  },y_{\tau\left(  2\right)  },\ldots,y_{\tau\left(  q-1\right)
}\right)  \label{pf.prop.mopis.reduce.c2.x1}%
\end{equation}
and%
\begin{equation}
\left(  x_{\sigma\left(  q+1\right)  },x_{\sigma\left(  q+2\right)  }%
,\ldots,x_{\sigma\left(  k\right)  }\right)  =\left(  y_{\tau\left(  q\right)
},y_{\tau\left(  q+1\right)  },\ldots,y_{\tau\left(  k-1\right)  }\right)
\label{pf.prop.mopis.reduce.c2.x2}%
\end{equation}
and%
\begin{equation}
\left(  \alpha_{1},\alpha_{2},\ldots,\alpha_{q-1}\right)  =\left(  \beta
_{1},\beta_{2},\ldots,\beta_{q-1}\right)  \label{pf.prop.mopis.reduce.c2.a1}%
\end{equation}
and%
\begin{equation}
\left(  \alpha_{q+1},\alpha_{q+2},\ldots,\alpha_{k}\right)  =\left(  \beta
_{q},\beta_{q+1},\ldots,\beta_{k-1}\right)  .
\label{pf.prop.mopis.reduce.c2.a2}%
\end{equation}

\end{statement}

\begin{proof}
[Proof of Claim 2.]By assumption, we have $\left(  \beta,\tau\right)
=\operatorname*{del}\left(  \alpha,\sigma,q\right)  $. By the definition of
deletion, this means that $\beta=\left(  \alpha_{1},\alpha_{2},\ldots
,\widehat{\alpha_{q}},\ldots,\alpha_{k}\right)  $, whereas $\tau$ is the
standardization of the list $\left(  \sigma\left(  1\right)  ,\sigma\left(
2\right)  ,\ldots,\widehat{\sigma\left(  q\right)  },\ldots,\sigma\left(
k\right)  \right)  $.

We have
\begin{align*}
\left(  \alpha_{1},\alpha_{2},\ldots,\alpha_{q-1},\alpha_{q+1},\alpha
_{q+2},\ldots,\alpha_{k}\right)   &  =\left(  \alpha_{1},\alpha_{2}%
,\ldots,\widehat{\alpha_{q}},\ldots,\alpha_{k}\right) \\
&  =\beta=\left(  \beta_{1},\beta_{2},\ldots,\beta_{k-1}\right)  .
\end{align*}
This is an equality between $\left(  k-1\right)  $-tuples. Splitting it into
two parts (between the $\left(  q-1\right)  $-th and $q$-th entries), we
obtain%
\[
\left(  \alpha_{1},\alpha_{2},\ldots,\alpha_{q-1}\right)  =\left(  \beta
_{1},\beta_{2},\ldots,\beta_{q-1}\right)
\]
and%
\[
\left(  \alpha_{q+1},\alpha_{q+2},\ldots,\alpha_{k}\right)  =\left(  \beta
_{q},\beta_{q+1},\ldots,\beta_{k-1}\right)  .
\]
Thus, we have proved (\ref{pf.prop.mopis.reduce.c2.a1}) and
(\ref{pf.prop.mopis.reduce.c2.a2}). It remains to prove
(\ref{pf.prop.mopis.reduce.c2.x1}) and (\ref{pf.prop.mopis.reduce.c2.x2}).

Let $\partial_{r}$ be the unique strictly increasing bijection from $\left[
k-1\right]  $ to $\left[  k\right]  \setminus\left\{  r\right\}  $.
Explicitly, the map $\partial_{r}$ sends the numbers $1,2,\ldots,k-1$ to
$1,2,\ldots,\widehat{r},\ldots,k$, respectively. Similarly, let $\partial_{q}$
be the unique strictly increasing bijection from $\left[  k-1\right]  $ to
$\left[  k\right]  \setminus\left\{  q\right\}  $. We have%
\[
\left(  y_{1},y_{2},\ldots,y_{k-1}\right)  =\left(  x_{1},x_{2},\ldots
,\widehat{x_{r}},\ldots,x_{k}\right)  =\left(  x_{\partial_{r}\left(
1\right)  },x_{\partial_{r}\left(  2\right)  },\ldots,x_{\partial_{r}\left(
k-1\right)  }\right)
\]
(since the definition of $\partial_{r}$ yields $\left(  1,2,\ldots
,\widehat{r},\ldots,k\right)  =\left(  \partial_{r}\left(  1\right)
,\ \partial_{r}\left(  2\right)  ,\ \ldots,\ \partial_{r}\left(  k-1\right)
\right)  $). In other words,%
\begin{equation}
y_{i}=x_{\partial_{r}\left(  i\right)  }\ \ \ \ \ \ \ \ \ \ \text{for each
}i\in\left[  k-1\right]  . \label{pf.prop.mopis.reduce.c2.pf.yi=}%
\end{equation}

Recall that $\left(  \alpha,\sigma\right)  $ is a weak mopiscotion and
$\alpha\in\mathbb{N}^{k}$. Hence, $\sigma\in\mathfrak{S}_{k}$.

The map $\sigma$ is a bijection from $\left[  k\right]  $ to $\left[
k\right]  $ (since $\sigma\in\mathfrak{S}_{k}$) and sends $q$ to $r$ (since
$r=\sigma\left(  q\right)  $). Hence, it restricts to a bijection
$\overline{\sigma}$ from $\left[  k\right]  \setminus\left\{  q\right\}  $ to
$\left[  k\right]  \setminus\left\{  r\right\}  $. Consider this bijection
$\overline{\sigma}$. The composition $\partial_{r}^{-1}\circ\overline{\sigma
}\circ\partial_{q}$ is thus a bijection from $\left[  k-1\right]  $ to
$\left[  k-1\right]  $, that is, a permutation of $\left[  k-1\right]  $. It
is easy to see that this composition $\partial_{r}^{-1}\circ\overline{\sigma
}\circ\partial_{q}$ is precisely the permutation $\tau$%
\ \ \ \ \footnote{\textit{Proof.} Let $\psi:=\partial_{r}^{-1}\circ
\overline{\sigma}\circ\partial_{q}$. As we know, this $\psi$ is a permutation
of $\left[  k-1\right]  $. Hence, the standardization of the list $\left(
\psi\left(  1\right)  ,\ \psi\left(  2\right)  ,\ \ldots,\ \psi\left(
k-1\right)  \right)  $ is $\psi$ itself.
\par
Recall that $\tau$ is the standardization of the list
\[
\left(  \sigma\left(  1\right)  ,\sigma\left(  2\right)  ,\ldots
,\widehat{\sigma\left(  q\right)  },\ldots,\sigma\left(  k\right)  \right)
=\left(  \sigma\left(  \partial_{q}\left(  1\right)  \right)  ,\ \sigma\left(
\partial_{q}\left(  2\right)  \right)  ,\ \ldots,\ \sigma\left(  \partial
_{q}\left(  k-1\right)  \right)  \right)
\]
(since the definition of $\partial_{q}$ yields $\left(  1,2,\ldots
,\widehat{q},\ldots,k\right)  =\left(  \partial_{q}\left(  1\right)
,\ \partial_{q}\left(  2\right)  ,\ \ldots,\ \partial_{q}\left(  k-1\right)
\right)  $). By the definition of standardization, this shows that for every
two elements $a$ and $b$ of $\left[  k-1\right]  $ satisfying $a<b$, we have
the equivalence $\left(  \tau\left(  a\right)  <\tau\left(  b\right)  \right)
\ \Longleftrightarrow\ \left(  \sigma\left(  \partial_{q}\left(  a\right)
\right)  \leq\sigma\left(  \partial_{q}\left(  b\right)  \right)  \right)  $.
Thus, for every two elements $a$ and $b$ of $\left[  k-1\right]  $ satisfying
$a<b$, we have the chain of equivalences%
\begin{align*}
\left(  \tau\left(  a\right)  <\tau\left(  b\right)  \right)  \  &
\Longleftrightarrow\ \left(  \sigma\left(  \partial_{q}\left(  a\right)
\right)  \leq\sigma\left(  \partial_{q}\left(  b\right)  \right)  \right) \\
&  \Longleftrightarrow\ \left(  \overline{\sigma}\left(  \partial_{q}\left(
a\right)  \right)  \leq\overline{\sigma}\left(  \partial_{q}\left(  b\right)
\right)  \right) \\
&  \ \ \ \ \ \ \ \ \ \ \ \ \ \ \ \ \ \ \ \ \left(
\begin{array}
[c]{c}%
\text{since }\overline{\sigma}\text{ is a restriction of the map }%
\sigma\text{,}\\
\text{and thus we have }\sigma\left(  \partial_{q}\left(  a\right)  \right)
=\overline{\sigma}\left(  \partial_{q}\left(  a\right)  \right) \\
\text{and }\sigma\left(  \partial_{q}\left(  b\right)  \right)  =\overline
{\sigma}\left(  \partial_{q}\left(  b\right)  \right)
\end{array}
\right) \\
&  \Longleftrightarrow\ \left(  \partial_{r}^{-1}\left(  \overline{\sigma
}\left(  \partial_{q}\left(  a\right)  \right)  \right)  \leq\partial_{r}%
^{-1}\left(  \overline{\sigma}\left(  \partial_{q}\left(  b\right)  \right)
\right)  \right) \\
&  \ \ \ \ \ \ \ \ \ \ \ \ \ \ \ \ \ \ \ \ \left(
\begin{array}
[c]{c}%
\text{since the map }\partial_{r}^{-1}\text{ is strictly increasing}\\
\text{(being the inverse of the strictly increasing}\\
\text{bijection }\partial_{r}\text{), and thus preserves inequalities}%
\end{array}
\right) \\
&  \Longleftrightarrow\ \left(  \underbrace{\left(  \partial_{r}^{-1}%
\circ\overline{\sigma}\circ\partial_{q}\right)  }_{=\psi}\left(  a\right)
\leq\underbrace{\left(  \partial_{r}^{-1}\circ\overline{\sigma}\circ
\partial_{q}\right)  }_{=\psi}\left(  b\right)  \right) \\
&  \Longleftrightarrow\ \left(  \psi\left(  a\right)  \leq\psi\left(
b\right)  \right)  .
\end{align*}
Hence, $\tau$ is the standardization of the list $\left(  \psi\left(
1\right)  ,\ \psi\left(  2\right)  ,\ \ldots,\ \psi\left(  k-1\right)
\right)  $. In other words, $\tau$ is $\psi$ (since the standardization of the
list $\left(  \psi\left(  1\right)  ,\ \psi\left(  2\right)  ,\ \ldots
,\ \psi\left(  k-1\right)  \right)  $ is $\psi$ itself). Thus, $\tau
=\psi=\partial_{r}^{-1}\circ\overline{\sigma}\circ\partial_{q}$. That is, the
permutation $\partial_{r}^{-1}\circ\overline{\sigma}\circ\partial_{q}$ is just
$\tau$.}. In other words, $\partial_{r}^{-1}\circ\overline{\sigma}%
\circ\partial_{q}=\tau$. Hence, $\overline{\sigma}\circ\partial_{q}%
=\partial_{r}\circ\tau$. Therefore, each $i\in\left[  k-1\right]  $ satisfies%
\begin{align}
\sigma\left(  \partial_{q}\left(  i\right)  \right)   &  =\overline{\sigma
}\left(  \partial_{q}\left(  i\right)  \right)  \ \ \ \ \ \ \ \ \ \ \left(
\text{since }\overline{\sigma}\text{ is a restriction of }\sigma\right)
\nonumber\\
&  =\left(  \overline{\sigma}\circ\partial_{q}\right)  \left(  i\right)
=\left(  \partial_{r}\circ\tau\right)  \left(  i\right)
\ \ \ \ \ \ \ \ \ \ \left(  \text{since }\overline{\sigma}\circ\partial
_{q}=\partial_{r}\circ\tau\right) \nonumber\\
&  =\partial_{r}\left(  \tau\left(  i\right)  \right)  .
\label{pf.prop.mopis.reduce.c2.pf.6}%
\end{align}

Now, for each $i\in\left[  k-1\right]  $, we have%
\begin{align*}
y_{\tau\left(  i\right)  }  &  =x_{\partial_{r}\left(  \tau\left(  i\right)
\right)  }\ \ \ \ \ \ \ \ \ \ \left(  \text{by
(\ref{pf.prop.mopis.reduce.c2.pf.yi=}), applied to }\tau\left(  i\right)
\text{ instead of }i\right) \\
&  =x_{\sigma\left(  \partial_{q}\left(  i\right)  \right)  }%
\ \ \ \ \ \ \ \ \ \ \left(  \text{since (\ref{pf.prop.mopis.reduce.c2.pf.6})
yields }\partial_{r}\left(  \tau\left(  i\right)  \right)  =\sigma\left(
\partial_{q}\left(  i\right)  \right)  \right)  .
\end{align*}
In other words,%
\begin{align*}
\left(  y_{\tau\left(  1\right)  },y_{\tau\left(  2\right)  },\ldots
,y_{\tau\left(  k-1\right)  }\right)   &  =\left(  x_{\sigma\left(
\partial_{q}\left(  1\right)  \right)  },x_{\sigma\left(  \partial_{q}\left(
2\right)  \right)  },\ldots,x_{\sigma\left(  \partial_{q}\left(  k-1\right)
\right)  }\right) \\
&  =\left(  x_{\sigma\left(  1\right)  },x_{\sigma\left(  2\right)  }%
,\ldots,\widehat{x_{\sigma\left(  q\right)  }},\ldots,x_{\sigma\left(
k\right)  }\right)
\end{align*}
(since the definition of $\partial_{q}$ yields $\left(  \partial_{q}\left(
1\right)  ,\ \partial_{q}\left(  2\right)  ,\ \ldots,\ \partial_{q}\left(
k-1\right)  \right)  =\left(  1,2,\ldots,\widehat{q},\ldots,k\right)  $). In
other words,%
\[
\left(  x_{\sigma\left(  1\right)  },x_{\sigma\left(  2\right)  }%
,\ldots,\widehat{x_{\sigma\left(  q\right)  }},\ldots,x_{\sigma\left(
k\right)  }\right)  =\left(  y_{\tau\left(  1\right)  },y_{\tau\left(
2\right)  },\ldots,y_{\tau\left(  k-1\right)  }\right)  .
\]
This is an equality between two $\left(  k-1\right)  $-tuples. Splitting it
into two parts (between the $\left(  q-1\right)  $-th and $q$-th entries), we
obtain%
\[
\left(  x_{\sigma\left(  1\right)  },x_{\sigma\left(  2\right)  }%
,\ldots,x_{\sigma\left(  q-1\right)  }\right)  =\left(  y_{\tau\left(
1\right)  },y_{\tau\left(  2\right)  },\ldots,y_{\tau\left(  q-1\right)
}\right)
\]
and%
\[
\left(  x_{\sigma\left(  q+1\right)  },x_{\sigma\left(  q+2\right)  }%
,\ldots,x_{\sigma\left(  k\right)  }\right)  =\left(  y_{\tau\left(  q\right)
},y_{\tau\left(  q+1\right)  },\ldots,y_{\tau\left(  k-1\right)  }\right)  .
\]
Thus, (\ref{pf.prop.mopis.reduce.c2.x1}) and (\ref{pf.prop.mopis.reduce.c2.x2}%
) are proved. This completes the proof of Claim 2.
\end{proof}

Thanks to Claim 1, we can easily reduce Proposition \ref{prop.mopis.reduce} to
the following claim about deletion:

\begin{statement}
\textit{Claim 3:} Let $\left(  \alpha,\sigma\right)  $ be a weak mopiscotion
with $\alpha=\left(  \alpha_{1},\alpha_{2},\ldots,\alpha_{k}\right)
\in\mathbb{N}^{k}$. Let $q\in\left[  k\right]  $ be such that $\alpha_{q}=0$.
Let $\left(  \beta,\tau\right)  :=\operatorname*{del}\left(  \alpha
,\sigma,q\right)  $. Then, $p_{\alpha,\sigma}=p_{\beta,\tau}$.
\end{statement}

\begin{proof}
[Proof of Claim 3.]Let $r:=\sigma\left(  q\right)  $. Write the $\left(
k-1\right)  $-tuple $\beta$ as $\beta=\left(  \beta_{1},\beta_{2},\ldots
,\beta_{k-1}\right)  $.

Define the $\mathbf{k}$-linear maps%
\begin{align*}
f:H^{\otimes k}  &  \rightarrow H^{\otimes\left(  k-1\right)  },\\
x_{1}\otimes x_{2}\otimes\cdots\otimes x_{k}  &  \mapsto\epsilon\left(
x_{r}\right)  \cdot x_{1}\otimes x_{2}\otimes\cdots\otimes\widehat{x_{r}%
}\otimes\cdots\otimes x_{k}%
\end{align*}
and
\begin{align*}
g:H^{\otimes\left(  k-1\right)  }  &  \rightarrow H^{\otimes k},\\
x_{1}\otimes x_{2}\otimes\cdots\otimes x_{k-1}  &  \mapsto x_{1}\otimes
x_{2}\otimes\cdots\otimes x_{q-1}\otimes1_{H}\otimes x_{q}\otimes
x_{q+1}\otimes\cdots\otimes x_{k-1}.
\end{align*}
It is well-known and easy to prove that $f\circ\Delta^{\left[  k\right]
}=\Delta^{\left[  k-1\right]  }$ and $m^{\left[  k\right]  }\circ g=m^{\left[
k-1\right]  }$. (Indeed, the latter equality says that a product of the form
$x_{1}x_{2}\cdots x_{q-1}1_{H}x_{q}x_{q+1}\cdots x_{k-1}$ can be simplified by
removing the factor $1_{H}$; the former equality is just the dual result with
$q$ replaced by $r$.) Now, if we can show that%
\begin{equation}
P_{\alpha}\circ\sigma^{-1}=g\circ P_{\beta}\circ\tau^{-1}\circ f
\label{pf.prop.mopis.reduce.c2-mid}%
\end{equation}
as maps from $H^{\otimes k}$ to $H^{\otimes k}$, then we will be able to
conclude that
\begin{align*}
p_{\alpha,\sigma}  &  =m^{\left[  k\right]  }\circ\underbrace{P_{\alpha}%
\circ\sigma^{-1}}_{=g\circ P_{\beta}\circ\tau^{-1}\circ f}\circ\,\Delta
^{\left[  k\right]  }\ \ \ \ \ \ \ \ \ \ \left(  \text{by the definition of
}p_{\alpha,\sigma}\right) \\
&  =\underbrace{m^{\left[  k\right]  }\circ g}_{=m^{\left[  k-1\right]  }%
}\circ\,P_{\beta}\circ\tau^{-1}\circ\underbrace{f\circ\Delta^{\left[
k\right]  }}_{=\Delta^{\left[  k-1\right]  }}\\
&  =m^{\left[  k-1\right]  }\circ P_{\beta}\circ\tau^{-1}\circ\Delta^{\left[
k-1\right]  }=p_{\beta,\tau}\ \ \ \ \ \ \ \ \ \ \left(  \text{by the
definition of }p_{\beta,\tau}\right)  ,
\end{align*}
and so Claim 3 will be proved.

Hence, it remains to prove (\ref{pf.prop.mopis.reduce.c2-mid}). Let us compare
what the maps $P_{\alpha}\circ\sigma^{-1}$ and $g\circ P_{\beta}\circ\tau
^{-1}\circ f$ do to a pure tensor $x_{1}\otimes x_{2}\otimes\cdots\otimes
x_{k}\in H^{\otimes k}$. For this purpose, we fix $x_{1},x_{2},\ldots,x_{k}\in
H$, and we define the $\left(  k-1\right)  $-tuple%
\[
\left(  y_{1},y_{2},\ldots,y_{k-1}\right)  :=\left(  x_{1},x_{2}%
,\ldots,\widehat{x_{r}},\ldots,x_{k}\right)  \in H^{k-1}.
\]
Then,
\[
y_{1}\otimes y_{2}\otimes\cdots\otimes y_{k-1}=x_{1}\otimes x_{2}\otimes
\cdots\otimes\widehat{x_{r}}\otimes\cdots\otimes x_{k}.
\]
We note that $p_{0}\left(  h\right)  =\epsilon\left(  h\right)  1_{H}$ for
each $h\in H$ (since $H$ is connected). Thus, in particular, $p_{0}\left(
x_{r}\right)  =\epsilon\left(  x_{r}\right)  1_{H}$. Now
\begin{align*}
&  \left(  P_{\alpha}\circ\sigma^{-1}\right)  \left(  x_{1}\otimes
x_{2}\otimes\cdots\otimes x_{k}\right) \\
&  =P_{\alpha}\left(  \sigma^{-1}\left(  x_{1}\otimes x_{2}\otimes
\cdots\otimes x_{k}\right)  \right) \\
&  =P_{\alpha}\left(  x_{\sigma\left(  1\right)  }\otimes x_{\sigma\left(
2\right)  }\otimes\cdots\otimes x_{\sigma\left(  k\right)  }\right)
\ \ \ \ \ \ \ \ \ \ \left(
\begin{array}
[c]{c}%
\text{by the definition of the}\\
\text{action of }\sigma^{-1}\text{ on }H^{\otimes k}%
\end{array}
\right) \\
&  =p_{\alpha_{1}}\left(  x_{\sigma\left(  1\right)  }\right)  \otimes
p_{\alpha_{2}}\left(  x_{\sigma\left(  2\right)  }\right)  \otimes
\cdots\otimes p_{\alpha_{k}}\left(  x_{\sigma\left(  k\right)  }\right)
\ \ \ \ \ \ \ \ \ \ \left(  \text{by the definition of }P_{\alpha}\right) \\
&  =\underbrace{p_{\alpha_{1}}\left(  x_{\sigma\left(  1\right)  }\right)
\otimes p_{\alpha_{2}}\left(  x_{\sigma\left(  2\right)  }\right)
\otimes\cdots\otimes p_{\alpha_{q-1}}\left(  x_{\sigma\left(  q-1\right)
}\right)  }_{\substack{=p_{\beta_{1}}\left(  y_{\tau\left(  1\right)
}\right)  \otimes p_{\beta_{2}}\left(  y_{\tau\left(  2\right)  }\right)
\otimes\cdots\otimes p_{\beta_{q-1}}\left(  y_{\tau\left(  q-1\right)
}\right)  \\\text{(by (\ref{pf.prop.mopis.reduce.c2.a1}) and
(\ref{pf.prop.mopis.reduce.c2.x1}))}}}\\
&  \ \ \ \ \ \ \ \ \ \ \otimes\underbrace{p_{\alpha_{q}}\left(  x_{\sigma
\left(  q\right)  }\right)  }_{\substack{=p_{0}\left(  x_{r}\right)
\\\text{(since }\alpha_{q}=0\\\text{and }\sigma\left(  q\right)  =r\text{)}%
}}\otimes\underbrace{p_{\alpha_{q+1}}\left(  x_{\sigma\left(  q+1\right)
}\right)  \otimes p_{\alpha_{q+2}}\left(  x_{\sigma\left(  q+2\right)
}\right)  \otimes\cdots\otimes p_{\alpha_{k}}\left(  x_{\sigma\left(
k\right)  }\right)  }_{\substack{=p_{\beta_{q}}\left(  y_{\tau\left(
q\right)  }\right)  \otimes p_{\beta_{q+1}}\left(  y_{\tau\left(  q+1\right)
}\right)  \otimes\cdots\otimes p_{\beta_{k-1}}\left(  y_{\tau\left(
k-1\right)  }\right)  \\\text{(by (\ref{pf.prop.mopis.reduce.c2.a2}) and
(\ref{pf.prop.mopis.reduce.c2.x2}))}}}\\
&  =p_{\beta_{1}}\left(  y_{\tau\left(  1\right)  }\right)  \otimes
p_{\beta_{2}}\left(  y_{\tau\left(  2\right)  }\right)  \otimes\cdots\otimes
p_{\beta_{q-1}}\left(  y_{\tau\left(  q-1\right)  }\right) \\
&  \ \ \ \ \ \ \ \ \ \ \otimes\underbrace{p_{0}\left(  x_{r}\right)
}_{=\epsilon\left(  x_{r}\right)  1_{H}}\otimes\,p_{\beta_{q}}\left(
y_{\tau\left(  q\right)  }\right)  \otimes p_{\beta_{q+1}}\left(
y_{\tau\left(  q+1\right)  }\right)  \otimes\cdots\otimes p_{\beta_{k-1}%
}\left(  y_{\tau\left(  k-1\right)  }\right) \\
&  =\epsilon\left(  x_{r}\right)  \cdot p_{\beta_{1}}\left(  y_{\tau\left(
1\right)  }\right)  \otimes p_{\beta_{2}}\left(  y_{\tau\left(  2\right)
}\right)  \otimes\cdots\otimes p_{\beta_{q-1}}\left(  y_{\tau\left(
q-1\right)  }\right) \\
&  \ \ \ \ \ \ \ \ \ \ \otimes1_{H}\otimes p_{\beta_{q}}\left(  y_{\tau\left(
q\right)  }\right)  \otimes p_{\beta_{q+1}}\left(  y_{\tau\left(  q+1\right)
}\right)  \otimes\cdots\otimes p_{\beta_{k-1}}\left(  y_{\tau\left(
k-1\right)  }\right)  .
\end{align*}
Comparing this with
\begin{align*}
&  \left(  g\circ P_{\beta}\circ\tau^{-1}\circ f\right)  \left(  x_{1}\otimes
x_{2}\otimes\cdots\otimes x_{k}\right) \\
&  =\left(  g\circ P_{\beta}\circ\tau^{-1}\right)  \left(  f\left(
x_{1}\otimes x_{2}\otimes\cdots\otimes x_{k}\right)  \right) \\
&  =\left(  g\circ P_{\beta}\circ\tau^{-1}\right)  \left(  \epsilon\left(
x_{r}\right)  \cdot\underbrace{x_{1}\otimes x_{2}\otimes\cdots\otimes
\widehat{x_{r}}\otimes\cdots\otimes x_{k}}_{=y_{1}\otimes y_{2}\otimes
\cdots\otimes y_{k-1}}\right) \\
&  \ \ \ \ \ \ \ \ \ \ \ \ \ \ \ \ \ \ \ \ \left(  \text{by the definition of
}f\right) \\
&  =\left(  g\circ P_{\beta}\circ\tau^{-1}\right)  \left(  \epsilon\left(
x_{r}\right)  \cdot y_{1}\otimes y_{2}\otimes\cdots\otimes y_{k-1}\right) \\
&  =\epsilon\left(  x_{r}\right)  \cdot\left(  g\circ P_{\beta}\circ\tau
^{-1}\right)  \left(  y_{1}\otimes y_{2}\otimes\cdots\otimes y_{k-1}\right)
\ \ \ \ \ \ \ \ \ \ \left(  \text{by }\mathbf{k}\text{-linearity of }g\circ
P_{\beta}\circ\tau^{-1}\right) \\
&  =\epsilon\left(  x_{r}\right)  \cdot g\left(  P_{\beta}\left(  \tau
^{-1}\left(  y_{1}\otimes y_{2}\otimes\cdots\otimes y_{k-1}\right)  \right)
\right) \\
&  =\epsilon\left(  x_{r}\right)  \cdot g\left(  P_{\beta}\left(
y_{\tau\left(  1\right)  }\otimes y_{\tau\left(  2\right)  }\otimes
\cdots\otimes y_{\tau\left(  k-1\right)  }\right)  \right) \\
&  \ \ \ \ \ \ \ \ \ \ \ \ \ \ \ \ \ \ \ \ \left(  \text{by the definition of
the action of }\tau^{-1}\text{ on }H^{\otimes\left(  k-1\right)  }\right) \\
&  =\epsilon\left(  x_{r}\right)  \cdot g\left(  p_{\beta_{1}}\left(
y_{\tau\left(  1\right)  }\right)  \otimes p_{\beta_{2}}\left(  y_{\tau\left(
2\right)  }\right)  \otimes\cdots\otimes p_{\beta_{k-1}}\left(  y_{\tau\left(
k-1\right)  }\right)  \right) \\
&  \ \ \ \ \ \ \ \ \ \ \ \ \ \ \ \ \ \ \ \ \left(  \text{by the definition of
}P_{\beta}\right) \\
&  =\epsilon\left(  x_{r}\right)  \cdot p_{\beta_{1}}\left(  y_{\tau\left(
1\right)  }\right)  \otimes p_{\beta_{2}}\left(  y_{\tau\left(  2\right)
}\right)  \otimes\cdots\otimes p_{\beta_{q-1}}\left(  y_{\tau\left(
q-1\right)  }\right) \\
&  \ \ \ \ \ \ \ \ \ \ \otimes1_{H}\otimes p_{\beta_{q}}\left(  y_{\tau\left(
q\right)  }\right)  \otimes p_{\beta_{q+1}}\left(  y_{\tau\left(  q+1\right)
}\right)  \otimes\cdots\otimes p_{\beta_{k-1}}\left(  y_{\tau\left(
k-1\right)  }\right) \\
&  \ \ \ \ \ \ \ \ \ \ \ \ \ \ \ \ \ \ \ \ \left(  \text{by the definition of
}g\right)  ,
\end{align*}
we obtain
\[
\left(  P_{\alpha}\circ\sigma^{-1}\right)  \left(  x_{1}\otimes x_{2}%
\otimes\cdots\otimes x_{k}\right)  =\left(  g\circ P_{\beta}\circ\tau
^{-1}\circ f\right)  \left(  x_{1}\otimes x_{2}\otimes\cdots\otimes
x_{k}\right)  .
\]
Since we have proved this for all $x_{1},x_{2},\ldots,x_{k}\in H$, we thus
conclude that the two $\mathbf{k}$-linear maps $P_{\alpha}\circ\sigma^{-1}$
and $g\circ P_{\beta}\circ\tau^{-1}\circ f$ agree on all pure tensors. Thus,
these two maps are identical. Hence, (\ref{pf.prop.mopis.reduce.c2-mid}) is
proved, and as we saw above, Claim 3 follows.
\end{proof}

Now, let us derive Proposition \ref{prop.mopis.reduce} from Claim 3.

Indeed, we define the \emph{length} $\ell\left(  \alpha\right)  $ of a weak
composition $\alpha$ to be the unique integer $k\in\mathbb{N}$ such that
$\alpha\in\mathbb{N}^{k}$.

We shall now prove Proposition \ref{prop.mopis.reduce} by induction on the
length $\ell\left(  \alpha\right)  $.

The \textit{base case} ($\ell\left(  \alpha\right)  =0$) is obvious (since in
this case, $\alpha$ is the empty tuple $\left(  {}\right)  $, and thus we have
$\left(  \beta,\tau\right)  =\left(  \alpha,\sigma\right)  $).

For the \textit{induction step}, we fix a positive integer $k$, and assume (as
the induction hypothesis) that Proposition \ref{prop.mopis.reduce} holds
whenever $\ell\left(  \alpha\right)  =k-1$. We must now prove that Proposition
\ref{prop.mopis.reduce} also holds whenever $\ell\left(  \alpha\right)  =k$.
For this purpose, we fix a weak mopiscotion $\left(  \alpha,\sigma\right)  $
with $\ell\left(  \alpha\right)  =k$, and we set $\left(  \beta,\tau\right)
=\operatorname*{red}\left(  \alpha,\sigma\right)  $. We must prove that
$p_{\alpha,\sigma}=p_{\beta,\tau}$.

If all entries of $\alpha$ are nonzero, then this is obvious (since in this
case we have $\left(  \beta,\tau\right)  =\operatorname*{red}\left(
\alpha,\sigma\right)  =\left(  \alpha,\sigma\right)  $). Thus, we WLOG assume
that not all entries of $\alpha$ are nonzero. In other words, some entry of
$\alpha$ is $0$.

Write $\alpha$ as $\alpha=\left(  \alpha_{1},\alpha_{2},\ldots,\alpha
_{k}\right)  $ (since $\alpha\in\mathbb{N}^{k}$). Then, there exists some
$q\in\left[  k\right]  $ such that $\alpha_{q}=0$ (since some entry of
$\alpha$ is $0$). Consider this $q$. Let $\left(  \alpha^{\prime}%
,\sigma^{\prime}\right)  :=\operatorname*{del}\left(  \alpha,\sigma,q\right)
$. Claim 3 (applied to $\left(  \alpha^{\prime},\sigma^{\prime}\right)  $
instead of $\left(  \beta,\tau\right)  $) then yields $p_{\alpha,\sigma
}=p_{\alpha^{\prime},\sigma^{\prime}}$. But Claim 1 yields
$\operatorname*{red}\left(  \alpha,\sigma\right)  =\operatorname*{red}\left(
\alpha^{\prime},\sigma^{\prime}\right)  $, so that $\left(  \beta,\tau\right)
=\operatorname*{red}\left(  \alpha,\sigma\right)  =\operatorname*{red}\left(
\alpha^{\prime},\sigma^{\prime}\right)  $. However, $\ell\left(
\alpha^{\prime}\right)  =k-1$ (by the definition of deletion), and thus our
induction hypothesis shows that Proposition \ref{prop.mopis.reduce} holds for
the weak mopiscotion $\left(  \alpha^{\prime},\sigma^{\prime}\right)  $
instead of $\left(  \alpha,\sigma\right)  $. Thus, we have $p_{\alpha^{\prime
},\sigma^{\prime}}=p_{\beta,\tau}$ (since $\left(  \beta,\tau\right)
=\operatorname*{red}\left(  \alpha^{\prime},\sigma^{\prime}\right)  $). Thus,
$p_{\alpha,\sigma}=p_{\alpha^{\prime},\sigma^{\prime}}=p_{\beta,\tau}$. Hence,
Proposition \ref{prop.mopis.reduce} is proved for our $\left(  \alpha
,\sigma\right)  $. This completes the induction step, and thus Proposition
\ref{prop.mopis.reduce} is proved.
\end{proof}
\end{fineprint}

\end{document}